\documentclass[11 pt]{amsart}
\usepackage{graphicx} 
\usepackage{amsmath}
\usepackage{mathtools}
\usepackage
{amstext, amscd, setspace, tikz-cd,mathtools, enumerate, graphics, latexsym, siunitx}

\usepackage{pst-node}
\usepackage{tikz-cd} 

\usepackage{amssymb}
\usepackage{hyperref}

\usepackage{PTSerif} 
\usepackage{graphicx}

\usepackage[cmtip,all]{xy}
\newcommand{\properideal}{%
\mathrel{\ooalign{$\lneq$\cr\raise.22ex\hbox{$\lhd$}\cr}}}

\newcommand{\Z}{{\mathbb Z}}

\theoremstyle{definition}
\numberwithin{equation}{subsubsection}

\newtheorem{theorem}{Theorem}[section]
\newtheorem{corollary}{Corollary}[theorem]

\newtheorem{lemma}[theorem]{Lemma}
\newtheorem{proposition}[theorem]{Proposition}

\newtheorem{definition}[theorem]{Definition}
\newtheorem{example}[theorem]{Example}

\newtheorem{remark}{Remark}[theorem]

\numberwithin{equation}{section}

\title[Root Clusters and Multiclusters over Imperfect Hilbertian Fields]{Root Clusters and Multiclusters over Imperfect Hilbertian Fields}
\author{Shubham Jaiswal}

\address{Department of Mathematics, IIT Bombay, Powai, Mumbai 400 076, India.}

\email{mynameissj555@gmail.com}

\subjclass[2020]{11R32, 12E25, 12E30, 12F05, 12F10, 12F15}

\date{\today}

\keywords{root clusters and multiclusters, imperfect Hilbertian fields, finite inseparable extensions}

\begin{document}

\begin{abstract}
    We extend the theory of root clusters from perfect fields to general fields which are not necessarily perfect. We introduce the following notions for field extensions over any given base field and study their interesting properties: root cluster size, multicluster size and their generalizations root capacity, multiroot capacity; ascending index, ascending normal index and their generalizations intersection indicium, intersection normal indicium; compositum indicium and compositum normal indicium. We establish our results on the Inverse problems
for these generalized notions over Hilbertian fields which generalizes our earlier results which were over number fields. In particular, we show over a given Hilbertian field, the existence of a polynomial for given degree, cluster size and multicluster size and existence of an extension for given root capacity and multiroot capacity with respect to that polynomial.
\end{abstract}

\maketitle

\section{Introduction}

The theory of root clusters over perfect fields (in particular, over number fields) was substantially developed by the author and others in \cite{Bhagwat_2025}, \cite{jaiswal2025rootcapacityintersectionindicium}, \cite{jaiswal2025variantsinverseclustersize}, \cite{krithika2023root}, \cite{perlis2004roots}. In this paper we extend the theory of root clusters to general fields which are not necessarily perfect. In Problem 10.2.4 in Chapter 10 of PhD Thesis \cite{ShubhamJaiswalPhDThesis} of the author, it was asked that can we generalize the Inverse cluster size problem for number fields (Theorem 3.1.1 in \cite{Bhagwat_2025}) and the Inverse ascending index problem for number fields (Theorem 9.0.5 in \cite{Bhagwat_2025}) from number
fields to a bigger class of fields? These and more general questions are answered in this paper. 

\smallskip

In Section \ref{notion section}, we introduce the following notions for field extensions over any given base field: root cluster size, multicluster size, descending normal index, ascending index and ascending normal index. We establish interesting properties of these quantities like the following : The cluster size of an extension divides the cluster size of the maximal separable subextension of the extension (Proposition \ref{r prop} (4)) and these two quantities need not be equal (Example \ref{interesting eg}) and the quotient of the cluster size of the maximal separable subextension divided by the cluster size of the extension is the number of distinct extensions isomorphic to a given extension and having the same maximal separable subextension  (Proposition \ref{s/s'}); The ascending normal index of an extension is the product of ascending index of the extension and the degree of the maximal purely inseparable subextension of the extension (Proposition \ref{ad prop} (1)); The multicluster size of an extension is equal to the descending normal index of the extension (Proposition \ref{unique desc normal} (3)). \smallskip

 We enrich the theory by means of systematic generalizations of the above notions to more expansive frameworks of root capacity, multiroot capacity, intersection indicium, intersection normal indicium, compositum indicium and compositum normal indicium that provide metrics that allow
for a more nuanced measurement of how isomorphic copies of a given extension are distributed within various field extensions. We prove important properties of these quantities. We also give many equivalent criteria for an extension to be normal in terms of the above quantities.

\smallskip

In Section \ref{GNM Section}, we introduce the notion of general normal magnification for any finite extension as a generalization of general magnification which was for finite separable extensions. We establish how the above quantities behave under general normal magnification in Propositions \ref{GNM theorem} and \ref{GNM thm II}. We introduce the notions of unique chains and unique normal chains for extensions (which naturally arise as a consequence of cluster size, ascending index, ascending normal index and descending normal index) and study their properties and observe how they behave under general normal magnification.\smallskip

In Section \ref{inv prob section}, we give proofs of all the following Inverse problems over Hilbertian fields.

\subsection{Inverse problem on Cluster Size, Multicluster Size, Root Capacity and Multiroot Capacity over Hilbertian fields}
\hfill

The following is Theorem 1.3 in \cite{jaiswal2025rootcapacityintersectionindicium} which is referred to as the Inverse root capacity problem for number fields and which also implies Theorem 3.1.1 in \cite{Bhagwat_2025} referred to as the Inverse cluster size problem for number fields.

\begin{theorem} \label{inv root cap} Let $K$ be a number field. Given $(n,r,\rho)$ where $n>2$ and $r\mid n$ and $r\mid \rho$ and $\rho\leq n$ and $\rho\neq n-1$. There exist extensions $L/K$ and $M/K$ such that $[L:K]=n$ and cluster size $r_K(L)=r$ and root capacity $\rho_K (M,L)=\rho$. For $\rho\neq 0$, we get $M/K$ as an extension of $L/K$ contained in $\tilde{L}$.
    
\end{theorem}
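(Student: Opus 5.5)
Recall the definitions from the number field setting. For a finite separable extension $L/K$ with Galois closure $\tilde L$, $G=\mathrm{Gal}(\tilde L/K)$ and $H=\mathrm{Gal}(\tilde L/L)$, the cluster size is
\[ r_K(L)=\#\{\text{roots of a primitive polynomial of } L \text{ lying in } L\}=[N_G(H):H]. \]
The root capacity $\rho_K(M,L)$ is the number of roots of such a polynomial lying in $M$. Equivalently, it is the number of $K$-embeddings $L\to M$ counted through images: the number of conjugates $gHg^{-1}\supseteq \mathrm{Gal}(\tilde L/M)$, times $r$.

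The plan is to realize everything inside a group-theoretic model and then descend to $K$. Any finite group is a Galois group over a Hilbertian field once the regular inverse Galois problem is available for it. Over a number field, $S_n$ is realized by Hilbert irreducibility, with subgroups fixed as needed. So the approach is to choose a finite group $G$ realizable over $K$ (ideally $S_m$ or a wreath product), a subgroup $H$ of index $n$ with $[N_G(H):H]=r$, and a subgroup $J\le G$ such that exactly $\rho/r$ of the cosets $gN_G(H)$ satisfy $J\le gHg^{-1}$. Then set $L=\tilde L^H$ and $M=\tilde L^{J}$.

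\textbf{Cluster size.} Take the construction already used for the inverse cluster size problem. Let $n=rk$ and $G=S_k\wr C_r$, or more simply $G=C_r\times S_k$ acting on $\{1,\dots,r\}\times\{1,\dots,k\}$ when $k\ge 3$. Let $H$ be a point stabilizer, of index $n$. Its normalizer modulo $H$ is $C_r$, which gives $r_K(L)=r$. The degenerate small cases $k\le 2$ are handled separately by ad hoc choices.

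\textbf{Root capacity.} For $\rho=0$, take $M=K$, or any extension not containing a copy of $L$. For $\rho\ne 0$, we need $M\supseteq L$ and the number of roots of $f$ in $M$ equal to $\rho$. Roots lie in blocks of size $r$, the orbits of $N_G(H)/H$, so $\rho$ must be a multiple of $r$. Let $J$ be the pointwise stabilizer of $\rho/r$ blocks, i.e.\ fix $\rho/r$ letters in the $S_k$ factor, intersected with what fixes those blocks. Then the fixed points of $J$ are exactly those $\rho$ roots, provided the remaining $k-\rho/r$ letters are moved without fixed points. This requires $k-\rho/r\ne 1$, which is precisely the condition $\rho\ne n-1$ when $r=1$. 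When $r>1$ the remaining letters form $\ge r$ roots, so the condition becomes $\rho\ne n-r$. That case needs a different $J$, for example one acting via a transposition-free subgroup twisted by $C_r$. Since $n-r\ne n-1$ is allowed, it must be produced.

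\textbf{Main obstacle.} The hardest step is the exceptional values ($\rho=n-r$ with $r>1$, and small $k$). Here I would replace the direct product by the wreath product $S_r\wr S_k$ or $C_r\wr S_k$ with suitable $H$. In that setting $J$ can act on the last block by a fixed-point-free element not coming from $S_k$. I would also verify realizability of $G$ over $K$, by Hilbert irreducibility for symmetric groups together with a cyclic extension linearly disjoint from them. Finally, $M=\tilde L^J$ contains $L$ because $J\le H$, and it lies in $\tilde L$ by construction.
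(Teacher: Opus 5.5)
Your framework is right: cluster size is $[N_G(H):H]$, and root capacity is $r$ times the number of conjugates of $H$ containing $J$. You also correctly see that $C_r\times S_k$ cannot reach $\rho=n-r$. Every $J\le H=1\times S_{k-1}$ has fixed-point set $\{1,\dots,r\}\times\mathrm{Fix}(\bar J)$ with $\bar J\le S_k$, and no subgroup of $S_k$ fixes exactly $k-1$ letters. For $k=2$ that $H$ is trivial, giving $r_K(L)=n$.

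Your proposed remedy is the right one, and it is the paper's construction. For $r>1$ the paper takes $G=C_r\wr C_s=(\mathbb{Z}/r\mathbb{Z})^s\rtimes\mathbb{Z}/s\mathbb{Z}$ with $s=n/r$; your $C_r\wr S_k$ works the same way.
\begin{itemize}
\item A point stabilizer acts trivially on its own packet. Through the base group it has no fixed points on any other packet, so $r_K(L)=r$.
\item The pointwise stabilizer $J$ of $a$ packets still contains the base group on the remaining packets. So $\tilde L^{J}$ contains exactly $ar$ roots for every $1\le a\le s$, including $a=s-1$ and $s\le 2$.
\end{itemize}
The wreath product therefore handles all $r>1$ uniformly, and you can drop the direct-product case and the unspecified ad hoc small cases. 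For $r=1$ your $S_n$ argument coincides with the paper's. Do discard the $S_r\wr S_k$ option: its natural point stabilizer $(S_{r-1}\times S_r^{k-1})\rtimes S_{k-1}$ fixes only one point when $r\ge 3$, so it gives cluster size $1$.

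The genuine gap is realizability. The method you state, an $S_k$-extension composed with a linearly disjoint cyclic extension, yields Galois group $C_r\times S_k$. That is not the wreath product your exceptional cases require. To realize $C_r^k\rtimes S_k$ or $C_r^s\rtimes C_s$ you must solve a split embedding problem with abelian kernel. The paper does this via the regularity of $A\rtimes B$ when $A$ is finite abelian and $B$ is regular over $K$ (Fried--Jarden, Cor.~16.4.8; Lemma~\ref{hilb fie}~(6)), applied with $B=\mathbb{Z}/s\mathbb{Z}$. It then takes $L=\tilde L^{H}$, equivalently uses Lemma~\ref{perm}. Over a number field, Shafarevich's theorem would also cover the solvable group $C_r\wr C_s$. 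With that ingredient and the uniform wreath-product construction written out, your argument becomes the paper's proof.
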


We extend Theorem \ref{inv root cap} for Hilbertian fields.

\begin{theorem}\label{root cap gen res}
Let $K$ be a Hilbertian field. Suppose $K$ is either perfect of any characteristic or $K$ is imperfect with $char(K)=p>0$. Given $(n,r,l,\rho,\lambda)$ where $n>1$ and $l\mid n$ and $l\mid\lambda$ and $\lambda\leq n$ and $l/r=\lambda/\rho=p^{\mu}$ where $\mu\geq 0$ (for $K$ perfect, $\mu=0$). Assume that in the case when $r=1$ we have $\lambda\neq n-p^{\mu}$ and $n\neq 2p^{\mu}$.\smallskip

Then there exist extensions $L/K$ and $M/K$ such that $[L:K]=n$ and cluster size $r_K(L)=r$ and multicluster size $l_K(L)=l$ and root capacity $\rho_K (M,L)=\rho$ and multiroot capacity $\lambda_K(M,L)=\lambda$. For $\lambda \neq 0$, we get $M/K$ as an extension of $L/K$ contained in $\tilde{L}$.\smallskip

In fact we obtain $L/K$ as simple extension i.e. $L=K(\alpha)$ for some $\alpha\in \bar{K}$. Thus there exists a degree $n$ polynomial $f$ over $K$ and an extension $M/K$ with cluster size $r_K(f)=r$ and multicluster size $l_K(f)=l$ and root capacity $\rho_K(M,f)=\rho$ and multiroot capacity $\lambda_K(M,f)=\lambda$.

\end{theorem}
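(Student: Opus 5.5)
The plan is to reduce to the separable case and then perform a purely inseparable ``lift'' of height $\mu$. Put $m=n/p^{\mu}$. The numerical hypotheses translate exactly into the hypotheses of Theorem \ref{inv root cap} for the triple $(m,r,\rho)$:
\begin{itemize}
\item $l/r=\lambda/\rho=p^{\mu}$ and $l\mid n$, $l\mid\lambda$, $\lambda\le n$ give $r\mid m$, $r\mid\rho$ and $\rho\le m$;
\item when $r=1$, the excluded values $\lambda=n-p^{\mu}$ and $n=2p^{\mu}$ become $\rho=m-1$ and $m=2$.
\end{itemize}
When $r>1$ these exclusions are not needed, as in the number field case. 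So the first step is to establish the separable version of Theorem \ref{inv root cap} over an arbitrary Hilbertian field $K$, with $\mu=0$ and $l=r$.

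For this separable step I would re-run the proof from \cite{jaiswal2025rootcapacityintersectionindicium}. That proof is group-theoretic: it picks a finite group $G$ with a subgroup $H$ of index $m$ such that
\begin{itemize}
\item $|N_G(H)/H|=r$, which gives the cluster size;
\item for a suitable intermediate subgroup $J$, the number of conjugates of $H$ containing $J$, weighted appropriately, is $\rho$, which gives the root capacity.
\end{itemize}
It then realizes $G$ as a Galois group over $K$. The groups used there are built from symmetric groups and wreath or direct products of them. Over a Hilbertian field, $S_k$ is realizable with prescribed separable generic polynomials, and so are direct and wreath products of realizable groups (by the standard Hilbertian results, e.g.\ Fried--Jarden, via regular realizations over $K(t)$ and specialization). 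Hence the same $G$ can be realized, and the same $(L_0,M_0)$ with $[L_0:K]=m$, $r_K(L_0)=r$ and $\rho_K(M_0,L_0)=\rho$ is obtained. In the perfect case this already finishes the proof, since then $\mu=0$, $l=r$ and $\lambda=\rho$.

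For imperfect $K$ with $\mu>0$, write $L_0=K(\beta)$ with $\beta$ separable. I will choose $t\in K\setminus K^{p}$ and set $L=K(\alpha)$ with $\alpha^{p^{\mu}}=\beta+t$. The minimal polynomial of $\alpha$ is then $g(x^{p^{\mu}})$, where $g$ is the minimal polynomial of $\beta+t$. The steps are:
\begin{itemize}
\item Show $[L:K]=p^{\mu}m=n$. The point is that $\beta+t\notin L_0^{p}$; this can be arranged by translating $\beta$ by a suitable element of $K$, using that $L_0/K$ is separable so that $K^{1/p}$ and $L_0$ are linearly disjoint over $K$.
\item Compute the invariants. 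The maximal separable subextension of $L$ is $L_0$. By Proposition \ref{ad prop} and Proposition \ref{unique desc normal}(3), $l_K(L)=r\,p^{\mu}=l$.
\item Show $r_K(L)=r_K(L_0)$. By Proposition \ref{s/s'}, this amounts to showing that the isomorphic copies $K(\alpha')$, with $\alpha'$ a root of $g(x^{p^{\mu}})$ lying in $L$, are all equal to $L$. This holds because each such $\alpha'$ is the unique $p^{\mu}$-th root of $\sigma(\beta)+t$ and lies in $L$ exactly when $\sigma(\beta)\in L_0$.
\end{itemize}
Finally, take $M=M_0(\alpha)$, the compositum of $M_0$ with $L$ inside $\tilde L$. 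Roots of $g(x^{p^{\mu}})$ in $M$ correspond bijectively to roots of $g$ in $M_0$, so $\rho_K(M,L)=\rho$ and, counting multiplicities, $\lambda_K(M,L)=p^{\mu}\rho=\lambda$. When $\rho=0$, keep $M=M_0$. Since $L$ is simple, $f=g(x^{p^{\mu}})$ gives the polynomial form of the statement.

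The main obstacle I expect is the separable step, namely checking that every group-theoretic construction in the number field proof is realizable over an arbitrary Hilbertian field, including in positive characteristic where $K$ is imperfect. The inseparable part also needs care: the choice of $t$ must not collapse distinct conjugates, and this requires exactly the linear-disjointness argument described above.
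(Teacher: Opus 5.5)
\textbf{There is a genuine gap in your inseparable lift, at the step $r_K(L)=r_K(L_0)$.} Your reduction to $(m,r,\rho)$ and your separable step follow the paper's approach. The paper realizes $\mathfrak{S}_m$ for $r=1$ and $(\Z/r\Z)^s\rtimes\Z/s\Z$ for $r>1$ via Lemma \ref{hilb fie}(1),(6) and Lemma \ref{perm}.

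The lift is where it breaks. Uniqueness of $p^{\mu}$-th roots only tells you where an extension of $\tau\in Aut(L_0/K)$ would have to send $\alpha$. It does not put $\alpha'=(\tau(\beta)+t)^{1/p^{\mu}}$ inside $L=L_0(\alpha)$. In fact $\alpha'\in L$ if and only if $\tau(\beta)+t\in L^{p^{\mu}}=L_0^{p^{\mu}}(\beta+t)$, and this fails for a general primitive element $\beta$. So the danger is not that conjugates collapse. It is that conjugate fields with the same separable part $L_0$ come apart, which lowers the cluster size by Proposition \ref{s/s'}. This is exactly the phenomenon of Example \ref{interesting eg}, and your conditions $t\in K\setminus K^p$, $\beta+t\notin L_0^p$ do not prevent it.

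Here is a concrete instance.
\begin{itemize}
\item Take $K=\mathbb{F}_p(x,y)$ with $p>2$, $L_0=K(\delta)$ with $\delta^2+x\delta+y=0$ (quadratic Galois), $\beta=\delta$ and $t=x$.
\item Since $L_0=L_0^p(x,y)$ and $[L_0:L_0^p]=[K:K^p]=p^2$, we have $\delta\notin L_0^p(x)$; otherwise $y=-\delta^2-x\delta\in L_0^p(x)$.
\item Hence $\beta+t\notin L_0^p$, and $[L:K]=2p$ as you want.
\item The conjugate $\tau(\beta)+t=-\delta$ is not in $L_0^p(\delta+x)$. Otherwise this field, of degree at most $p$ over $L_0^p$, would contain $\delta$, $x$ and hence $y$.
\item So $\tau$ does not extend, and $r_K(L)=1\neq 2=r_K(L_0)$.
\end{itemize}
For the admissible data $(n,r,l,\rho,\lambda)=(2p,2,2p,2,2p)$, whose separable input must be a quadratic Galois extension, your construction therefore gives $l_K(L)=p$. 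Your claimed bijection between roots of $g(x^{p^{\mu}})$ in $M_0(\alpha)$ and roots of $g$ in $M_0$ rests on the same false implication. So $\rho_K(M,L)$ and $\lambda_K(M,L)$ are not controlled either.

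\textbf{How to repair it.} Take the primitive element to be a $p^{\mu}$-th power: $\beta=\gamma^{p^{\mu}}$ with $L_0=K(\gamma)$.
\begin{itemize}
\item Then $\beta+t\notin L_0^p$, because $L_0^p\cap K=K^p$ for separable $L_0/K$.
\item Also $\alpha=\gamma+t^{1/p^{\mu}}$, so $L=L_0I$ with $I=K(t^{1/p^{\mu}})$ purely inseparable of degree $p^{\mu}$.
\item Every $\tau\in Aut(L_0/K)$ extends to $L$ by fixing $t^{1/p^{\mu}}$.
\item The roots of $f$ are $\gamma_i+t^{1/p^{\mu}}$, and such a root lies in $M=M_0I$ if and only if $\gamma_i\in M_0$.
\end{itemize}
This is precisely the paper's construction $L=SI$, $M=M'I$. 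There the invariants are read off by general normal magnification (Propositions \ref{gen norm for L}(1), \ref{L'S prop}(4), \ref{GNM thm II}). Your explicit generator $\gamma+t^{1/p^{\mu}}$ would even replace the paper's appeal to the strong primitive element theorem.

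\textbf{Small cases.} Theorem \ref{inv root cap} requires degree $>2$. So your reduction does not literally cover $m=1$ (i.e.\ $n=p^{\mu}$) or $m=r=2$ (i.e.\ $n=2p^{\mu}$). These are easy, but you must treat them separately, as the paper does in its Cases 1 and 3.
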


\subsection{Inverse problem on Ascending Index, Ascending Normal Index, Intersection Indicium and Intersection Normal Indicium over Hilbertian fields}
\hfill

The following is Theorem 1.5 in \cite{jaiswal2025rootcapacityintersectionindicium} which is referred to as the Inverse intersection indicium problem for number fields and which also implies Theorem 9.0.5 in \cite{Bhagwat_2025} referred to as the Inverse ascending index problem for number fields.

\begin{theorem}\label{inv tau}
    Let $K$ be a number field. Given $(n,t,\tau)$ where $n>2$ and $t\ |\ \tau\ |\ n$. Assume that in the case when $t=1$ we have $\tau\neq 2$ and $n\neq 2\tau$ and assume that in the case when $t$ is odd we have either $\tau\neq 2t$ or $n\neq 2\tau$. There exist extensions $L/K$ and $M/K$ such that $[L:K]=n$ and ascending index $t_K(L)=t$ and intersection indicium $\tau_K (M,L)=\tau$. We get $M/K$ as an extension of $L/K$ contained in $\tilde{L}$.

\end{theorem}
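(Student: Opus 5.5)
The plan is to reduce everything to a statement about finite permutation groups, build a suitable group with the prescribed indices, and realize it over $K$. The two facts used over a number field $K$ are: every abelian group (in particular a cyclic group of order $t$) occurs as a Galois group over $K$, and $S_m$ occurs over $K$ by Hilbert irreducibility.

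\textbf{Group-theoretic dictionary.} Let $\tilde L/K$ be a Galois closure, $G=\mathrm{Gal}(\tilde L/K)$ and $H=\mathrm{Gal}(\tilde L/L)$, so that $n=[G:H]$. I will use the descriptions of $t_K(L)$ and $\tau_K(M,L)$ from the previous work. The ascending index is governed by the largest subfield $L^{\mathrm{asc}}\subseteq L$ containing all the conjugates of $L$ that meet it "normally", that is, by a subgroup $H\le N\le G$. For $M=\tilde L^{J}$ with $J\le H$, the intersection indicium $\tau_K(M,L)$ is the analogous index, computed with the conjugates of $L$ that lie inside $M$. Hence it suffices to produce:
\begin{itemize}
\item a group $G$ realizable over $K$;
\item a core-free subgroup $H$ of index $n$ in $G$;
\item a subgroup $J\le H$,
\end{itemize}
such that these two indices equal $t$ and $\tau$.

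\textbf{Construction.} Write $n=\tau\cdot m$ and $\tau=t\cdot s$. Take
\[
G=C_t\times S_s\times S_m ,
\]
realized over $K$ by a compositum of three linearly disjoint extensions:
\begin{itemize}
\item a cyclic extension $E/K$ of degree $t$;
\item an extension $F_1/K$ of degree $s$ with Galois group $S_s$;
\item an extension $F_2/K$ of degree $m$ with Galois group $S_m$.
\end{itemize}
Linear disjointness is arranged by choosing the symmetric extensions via Hilbert irreducibility, so that they avoid the finitely many proper subfields of the compositum built so far. Set $L=E F_1 F_2$, which has degree $t s m = n$. The conjugates of $L$ all contain the normal part $E$. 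Because $S_s$ and $S_m$ act primitively with trivial point-stabilizer cores when $s,m\ge 3$, the ascending index sees exactly the normal part, giving $t_K(L)=t$. For $M$, I take $M=L\cdot F_1'$, where $F_1'$ is the Galois closure of $F_1$. Then every conjugate of $E F_1$ lies in $M$, while only $L$ itself among the conjugates of the $F_2$-part does. I expect this to make the intersection indicium equal to $[EF_1:K]=ts=\tau$. Finally I verify $L\subseteq M\subseteq \tilde L$.

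\textbf{Main obstacle.} The delicate step is the degenerate cases, which is exactly where the hypotheses enter. When $s=2$ or $m=2$, the groups $S_2$ are normal and the factor is itself Galois, so it gets absorbed into the normal part. This inflates $t$, or makes $\tau=n$, in unwanted ways. The excluded cases ($t=1$ with $\tau=2$ or $n=2\tau$, and $t$ odd with $\tau=2t$ and $n=2\tau$) are precisely those where no choice avoids this. In the remaining cases with $s=2$ or $m=2$, I would first try replacing $C_t\times S_2$ by a non-split or non-abelian group of order $2t$. Examples are the dihedral group $D_t$ or a cyclic group $C_{2t}$ acting through a suitable non-normal subgroup, which is possible when $t$ is even or $t>1$. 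For $m=2$, the analogous move is to enlarge the $S_m$-factor to a wreath product $S_2\wr S_k$, so that the corresponding subfield is not Galois. I would check case by case that the indices still come out to $(t,\tau)$ and that each such group is realizable over a number field; solvable groups of this type are realizable by standard embedding-problem arguments. The last step is to confirm $t\mid\tau\mid n$ and $n>2$ in each case.
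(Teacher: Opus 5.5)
\textbf{1. The generic computation of $\tau_K(M,L)$ is backwards.} By definition $\tau_K(M,L)=[\bigcap_i L_i:K]$, where $L_i$ runs over the conjugates of $L$ lying in $M$. So a factor all of whose conjugates lie in $M$ contributes only its Galois core.

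For your $M=LF_1'=EF_1'F_2$, the conjugates of $L$ inside $M$ are the fields $EF_1^{(i)}F_2$. By linear disjointness (Lemma~\ref{lin disj lem}, or Proposition~\ref{GNM thm II}) their intersection is $E\bigl(\bigcap_i F_1^{(i)}\bigr)F_2=EF_2$. Hence $\tau_K(M,L)=tm=tn/\tau$, not $ts$.

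The repair is to adjoin the Galois closure of the other factor, $M=EF_1\tilde{F_2}$. The intersection is then $EF_1$, of degree $ts=\tau$, provided $t_K(F_2)=1$. This is exactly the paper's Case~2 ($L=L'J$, $M=\tilde{L'}J$ with $t_K(L')=1$, $t_K(J)=t$), with $J=EF_1$.

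\textbf{2. The cases $s=2$ or $m=2$ are left as a plan, and the suggested remedies cannot work.} These cases occupy most of the paper's proof.

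\emph{The case $s=2$ (and also $m=2$, $s=1$).} Here you need a field of degree $2t$ with ascending index exactly $t$. A group of order $2t$ such as $D_t$ or $C_{2t}$ cannot supply it. A degree-$2t$ field whose Galois closure also has degree $2t$ is Galois, so its ascending index is $2t$; moreover $C_{2t}$ has no non-normal subgroups at all. You need a larger transitive group of degree $2t$, e.g.\ $(\mathbb{Z}/2\mathbb{Z})^t\rtimes\mathbb{Z}/t\mathbb{Z}$ permuting $t$ blocks of size $2$. This is the paper's construction with $r=2$.

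\emph{The case $m=2$.} Enlarging the $S_m$-factor is pointless, since a quadratic extension is always Galois. The quadratic part must instead be absorbed into a non-Galois degree-$2t$ field $L'$ with $t_K(L')=t$. One then takes $L=L'J$ and $M=\tilde{L'}J$ with $[J:K]=\tau/t$ and $t_K(J)=1$. This is the paper's Case~3, which needs $\tau/t>2$.

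\emph{The case $m=s=2$,} i.e.\ $n=4t$, $\tau=2t$ with $t$ even. One must regroup again: a quartic with ascending index $2$ times a degree-$t$ field with ascending index $t/2$ (the paper's Case~4).

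\emph{The base case $(n,t,\tau)=(8,2,4)$} cannot come from any such product decomposition. Writing $8=2\cdot 4$ would need a quartic with ascending index $1$ having a set of conjugates that meet in a quadratic field. That is impossible, because a quadratic subfield is automatically Galois. Here the paper needs a genuinely new example, $L=\mathbb{Q}(3^{1/8})$ and $M=L(i)$, whose Galois group is $\mathbb{Z}/8\mathbb{Z}\rtimes(\mathbb{Z}/8\mathbb{Z})^\times$; it then realizes this group over $K$.

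As it stands, your argument, even after the fix in point 1, covers only the cases $s,m\neq 2$.
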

   
We extend Theorem \ref{inv tau} for Hilbertian fields.

\begin{theorem}\label{tau gen res}
Let $K$ be a Hilbertian field. Suppose $K$ is either perfect of any characteristic or $K$ is imperfect with $char(K)=p>0$. Given $(n,t,a,\tau,\alpha)$ where $n>1$ and $a\mid \alpha \mid n$ and $a/t=\alpha/\tau=p^{\mu}$ where $\mu\geq 0$ (for $K$ perfect, $\mu=0$). Assume that in the case when $t=1$ we have $\alpha\neq 2p^{\mu}$ and $n\neq 2\alpha$ and $n\neq 2p^{\mu}$ and assume that in the case when $t$ is odd we have either $\alpha\neq 2a$ or $n\neq 2\alpha$. \smallskip

Then there exist extensions $L/K$ and $M/K$ such that $[L:K]=n$ and ascending index $t_K(L)=t$ and ascending normal index $a_K(L)=a$ and intersection indicium $\tau_K (M,L)=\tau$ and intersection normal indicium $\alpha_K(M,L)=\alpha$. For $\alpha \neq 0$, we get $M/K$ as an extension of $L/K$ contained in $\tilde{L}$. In fact we obtain $L/K$ as simple extension i.e. $L=K(\alpha)$ for some $\alpha\in \bar{K}$.
\end{theorem}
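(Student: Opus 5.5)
We reduce to the separable case by a purely inseparable ``twist'', mirroring the proof of Theorem \ref{root cap gen res}. Write $n=p^{\mu}m'$ with $\alpha\mid n$ and $a/t=\alpha/\tau=p^{\mu}$. Set $n_s=n/p^{\mu}$, so $t\mid\tau\mid n_s$. The strategy has three steps.

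\textbf{Step 1: the separable problem.} Find a separable extension $L_s/K$ of degree $n_s$ with ascending index $t_K(L_s)=t$, together with $M_s\supseteq L_s$ inside $\tilde L_s$ with $\tau_K(M_s,L_s)=\tau$. Here we follow the number field proof of Theorem \ref{inv tau}, which used Galois groups such as $S_n$ and wreath/semidirect products, choosing a subgroup $H$ of index $n_s$ whose normalizer has the prescribed index behaviour. Realize that group over $K$ using Hilbertianity: the Hilbert irreducibility theorem lets us specialize the generic polynomial with group $S_N$, and the other groups arise by standard constructions. The hypotheses excluding the cases with $t=1$ or $t$ odd and $n=2\alpha$ and so on are exactly the excluded cases of Theorem \ref{inv tau}, rescaled by $p^{\mu}$.

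\textbf{Step 2: the inseparable twist.} Choose $c\in K\setminus K^p$, which exists since $K$ is imperfect. Put $L=L_s(\beta)$ with $\beta^{p^{\mu}}=c\,\gamma$ for a suitable primitive element $\gamma$ of $L_s$, arranged so that $L=K(\theta)$ is simple and the maximal separable subextension of $L$ is exactly $L_s$. A natural choice is $\theta=\gamma+\beta$, or $\theta$ with $\theta^{p^{\mu}}=c+\gamma$. This gives $[L:K]=n$ with purely inseparable degree $p^{\mu}$. Take $M=M_s L$.

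\textbf{Step 3: compute the invariants.} Apply the earlier propositions:
\begin{itemize}
\item By Proposition \ref{ad prop}(1), $a_K(L)=t_K(L)\,p^{\mu}$.
\item The ascending index of $L$ equals that of $L_s$, since conjugates of $L$ correspond bijectively to conjugates of $L_s$ once the $p^{\mu}$-th root is pinned down. We prove this using Proposition \ref{s/s'}, or by direct use of the analogous indicium results.
\item Similarly, $\tau_K(M,L)=\tau_K(M_s,L_s)$ and $\alpha_K(M,L)=p^{\mu}\tau$.
\end{itemize}
The perfect case is $\mu=0$ and is just Step 1.

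\textbf{Main obstacle.} The key difficulty is Step 2. Distinct $K$-conjugates of $L$ can share the same separable part (compare Proposition \ref{s/s'} and Example \ref{interesting eg}), and this would disturb the correspondence $t_K(L)=t_K(L_s)$. We must therefore pick the twist so that no nontrivial isomorphic copies arise, or so that they arise uniformly. To control this, we first try $\theta^{p^{\mu}}=c\gamma$ with $c$ a $p$-independent element. We then check that the field $K(\sigma\gamma,(c\sigma\gamma)^{1/p^{\mu}})$ determines $\sigma\gamma$, which fixes the conjugate and hence the correspondence.
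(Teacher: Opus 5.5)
\textbf{Verdict: there is a genuine gap, in Steps 2--3.} The inseparable twist $(c\gamma)^{1/p^{\mu}}$ does not control $i_K(L)$, and with your natural choices it fails even on valid data.

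\textbf{Where it breaks.} Proposition \ref{ad prop}(1) gives $a_K(L)=t_K(L)\cdot i_K(L)$, where $i_K(L)=[L_I:K]$ is the degree of the maximal purely inseparable subextension, not $[L:L_S]$. Your twist $L=L_s((c\gamma)^{1/p^{\mu}})$ makes $[L:L_S]=p^{\mu}$. It need not make $L$ contain any element of $K^{1/p}\setminus K$, and you give no criterion for a ``suitable'' $\gamma$.

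\textbf{A concrete counterexample.} Let $K=\mathbb{F}_p(x,y)$ with $p>2$; this is Hilbertian, being a rational function field over $\mathbb{F}_p(x)$. Take the admissible data $(n,t,a,\tau,\alpha)=(2p,2,2p,2,2p)$, with $L_s=K(\gamma)$ where $\gamma^2+x\gamma+y=0$, and the $p$-independent element $c=x$.
\begin{itemize}
\item $x\gamma$ is a root of $t^2+x^2t+x^2y$, and the argument of Example \ref{interesting eg} goes through verbatim.
\item $(x\gamma)^{1/p}\notin L_s$, because comparing coefficients would force $-x^2/2\in K^p$.
\item Let $\theta=(x\gamma)^{1/p}$ and $\phi$ its conjugate. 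If $K(\theta)=K(\phi)$, then $u=\theta+\phi$ and $v=\theta\phi$ satisfy $u^p=-x^2$ and $v^p=x^2y$. This forces $K(x^{1/p},y^{1/p})\subseteq L$, so $p^2\mid 2p$, a contradiction.
\item Hence $L$ is not normal. So $L_I=K$, since $[L_I:K]=p$ would give $L=L_SL_I$, which is normal because $L_S/K$ is Galois (Corollary \ref{LI cor}).
\item Therefore $a_K(L)=2\neq 2p$.
\end{itemize}
Your alternative $\theta^{p}=c+\gamma$ fails the same way: $x+\gamma$ is a root of $t^2-xt+y$, which is Example \ref{interesting eg} with $x$ replaced by $-x$.

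\textbf{The indicium claims are also unsupported.} With $M=M_sL$, a conjugate $L_s^{\sigma}((c\,\sigma\gamma)^{1/p^{\mu}})$ lies in $M$ only if $(c\,\sigma\gamma)^{1/p^{\mu}}\in M$. Nothing forces this when $[K:K^p]>p$. So $M$ can contain fewer copies of $L$ than $M_s$ contains of $L_s$, and $\tau_K(M,L)=\tau$, $\alpha_K(M,L)=p^{\mu}\tau$ do not follow.

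\textbf{The obstacle you name is not the real one.} $t_K(L)=t_K(L_S)$ holds for every $L$ (Proposition \ref{asc ind}(1)). So copies of $L$ sharing a separable part never affect $t$. Checking that $K(\sigma\gamma,(c\,\sigma\gamma)^{1/p^{\mu}})$ determines $\sigma\gamma$ does nothing for $i_K(L)$.

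\textbf{The missing idea is to take the inseparable part from the base field.}
\begin{itemize}
\item Put $I=K(c^{1/p^{\mu}})$, which has degree $p^{\mu}$ by Lemma \ref{p insep}, and set $L=L_sI$, $M=M_sI$.
\item Then $L_S=L_s$, $L_I=I$ and $L=L_SL_I$.
\item Since $I$ is linearly disjoint from $\tilde{L_s}$, $L$ (resp.\ $M$) is obtained by general normal magnification from $L_s$ (resp.\ $M_s$) through $I$, by Proposition \ref{gen norm for L}(1).
\item Propositions \ref{GNM theorem} and \ref{GNM thm II} then give $t_K(L)=t$, $a_K(L)=tp^{\mu}$, $\tau_K(M,L)=\tau_K(M_s,L_s)\,\tau_K(I,I)=\tau$ and $\alpha_K(M,L)=\tau p^{\mu}$.
\item $L=K(\beta,c^{1/p^{\mu}})$ with $\beta$ separable is simple by the strong form of the primitive element theorem.
\end{itemize}
When $[K:K^p]=p$ (e.g.\ global function fields), $L_s^{1/p^{\mu}}$ is the only purely inseparable extension of $L_s$ of degree $p^{\mu}$. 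There your twist happens to coincide with this construction. The theorem, however, also covers fields such as $\mathbb{F}_p(x,y)$.

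\textbf{Two smaller points.}
\begin{itemize}
\item The cases $n/p^{\mu}\in\{1,2\}$ lie outside the range $n>2$ of Theorem \ref{inv tau} and must be handled directly. This includes your claim that the perfect case is ``just Step 1'', which fails for $n=2$.
\item The ``standard constructions'' in Step 1 need to be made explicit, as in the paper. That means composita of linearly disjoint pieces via Lemma \ref{lin disj} and general normal magnification. It also means realizing $(\mathbb{Z}/r\mathbb{Z})^s\rtimes\mathbb{Z}/s\mathbb{Z}$ and $\mathbb{Z}/8\mathbb{Z}\rtimes(\mathbb{Z}/8\mathbb{Z})^{\times}$ through Lemma \ref{hilb fie}(6) and Lemma \ref{perm}.
\end{itemize}
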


\subsection{Inverse problem on Compositum Indicium and Compositum Normal Indicium over Hilbertian fields}
\hfill

The following is Theorem 1.6 in \cite{jaiswal2025rootcapacityintersectionindicium} which encapsulates some interesting cases of the Inverse compositum indicium problem for number fields.

\begin{theorem}\label{inv gamma}

Let $K$ be a number field. Given $(n,\gamma)$ where $n>2$ and $n \mid \gamma\mid  n!$. Assume that $n=2^m a_1a_2\cdots a_k$ with each $a_i>2$ and $m=0$ or $1$ and $k\geq 1$ and $\gamma=2^m b_1b_2\cdots b_k$ with each (i) $b_i=\ ^{a_i}P_j$ for $j\leq a_i$ or (ii) $b_i=a_i\phi(a_i/l)$ for $a_i$ odd and $l\mid a_i$ (where $\phi$ is the Euler totient function) or (iii) $b_i=a_i\cdot r^{a-1}$ for $r>1$ and $r\mid a_i$ and $a\leq (a_i/r)$. There exist extensions $L/K$ and $M/K$ such that $[L:K]=n$ and compositum indicium $\gamma_K (M,L)=\gamma$. We get $M/K$ as an extension of $L/K$ contained in $\tilde{L}$.

\end{theorem}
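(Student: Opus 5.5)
The plan is to reduce to the case $k=1$, $m=0$ by a multiplicativity argument, and then handle each of the three shapes (i)--(iii) of $b_i$ with an explicit Galois-theoretic construction. I read the compositum indicium $\gamma_K(M,L)$, for $M\subseteq\tilde L$, as the degree over $K$ of the compositum of all $K$-isomorphic copies of $L$ lying inside $M$.

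\textbf{Reduction to one factor.} First I would prove a product lemma. Suppose $L_1,\dots,L_k$ (and a quadratic $L_0$ when $m=1$) are finite separable extensions of $K$ whose Galois closures $\tilde L_i$ are linearly disjoint over $K$, and let $M_i\subseteq \tilde L_i$ be extensions of $L_i$. Put $L=L_0L_1\cdots L_k$ and $M=M_0M_1\cdots M_k$. Then the claim is
\[
[L:K]=\prod_i [L_i:K]\quad\text{and}\quad \gamma_K(M,L)=\prod_i \gamma_K(M_i,L_i).
\]
The reason is that $\mathrm{Gal}(\tilde L/K)=\prod_i \mathrm{Gal}(\tilde L_i/K)$. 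Every $K$-embedding of $L$ is then a tuple of embeddings of the $L_i$. Such an embedding lands in $M$ exactly when each component lands in $M_i$, because the subgroup fixing $M$ is the product of the subgroups fixing the $M_i$. Hence the compositum of the copies of $L$ in $M$ is the product of the composita of the copies of $L_i$ in $M_i$, and degrees multiply by linear disjointness.

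The quadratic factor with $m=1$ contributes exactly $2$, since a quadratic extension is normal and so $\gamma=2$. Over a number field, infinitely many linearly disjoint realizations of any given Galois group exist for all the groups used below. I would obtain them by Hilbert irreducibility for $S_a$ and by Shafarevich's theorem (or explicit constructions) for the solvable groups. This lets me choose all the $\tilde L_i$ pairwise disjoint.

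\textbf{The three shapes of $b_i$.} Fix $a=a_i>2$.

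Case (i): take $\tilde L_i/K$ with group $S_a$ acting on the roots $\alpha_1,\dots,\alpha_a$ of an irreducible polynomial, and set $L_i=K(\alpha_1)$ and $M_i=K(\alpha_1,\dots,\alpha_j)$. For $j\ge1$ the copies of $L_i$ inside $M_i$ are exactly $K(\alpha_1),\dots,K(\alpha_j)$. This uses that $\mathrm{Stab}(\alpha_1,\dots,\alpha_j)$ fixes no further root, which holds for $j\le a-2$, while for $j\ge a-1$ we have $M_i=\tilde L_i$. So their compositum is $M_i$, of degree ${}^{a}P_j$.

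Case (ii): for $a$ odd and $l\mid a$, take the group $G=\mathbb Z/a\rtimes U$ acting affinely on $\mathbb Z/a$. Here $U\le(\mathbb Z/a)^\times$ is chosen of order $\phi(a/l)$ and acting faithfully; I would take $U$ to be a lift of $(\mathbb Z/(a/l))^\times$ via a splitting, adjusting if necessary. Let $L_i$ be the fixed field of $U$ (a point stabilizer) and $M_i=\tilde L_i$. The action is transitive and faithful, so the compositum of all conjugates is $\tilde L_i$, of degree $a\,\phi(a/l)$.

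Case (iii): for $r\mid a$ with $r>1$, use an imprimitive group preserving $a/r$ blocks of size $r$. For instance, take $G=(\mathbb Z/r)^{a/r}\rtimes C_{a/r}$ acting on $a$ points, let $L_i$ be a point stabilizer field, and let $M_i$ be the fixed field of the subgroup that fixes all points in $a-1$ chosen ``coordinates'' of the base group. The copies of $L_i$ in $M_i$ then generate a field of degree $a\cdot r^{a-1}$, with $a\le a_i/r$ as in the statement.

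\textbf{Main obstacle.} The delicate part is cases (ii) and (iii). There one must check that the copies of $L_i$ contained in $M_i$ are exactly the intended ones, so that the compositum has the predicted degree. Equivalently, one must determine which point stabilizers contain $\mathrm{Gal}(\tilde L_i/M_i)$ and compute the intersection of those stabilizers. I would do this first, purely group-theoretically, adjusting the choice of $U$ and of the imprimitive subgroup until the intersection has index exactly $b_i$. Realizing the resulting groups over $K$, and then applying the product lemma, is comparatively routine.
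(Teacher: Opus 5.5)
\textbf{Overall.} Your architecture is the paper's. That is part (1) of its proof of Theorem~\ref{gamma gen res}, which covers number fields because they are Hilbertian. It has four ingredients:
\begin{itemize}
\item multiplicativity of $\gamma_K$ under linear disjointness of normal closures, which is the paper's General Normal Magnification, Propositions~\ref{GNM theorem} and~\ref{GNM thm II};
\item $\mathfrak S_{a_i}$ with $M_i=K(\alpha_1,\dots,\alpha_j)$ in case (i);
\item the imprimitive group $(\mathbb Z/r)^s\rtimes\mathbb Z/s$, $s=a_i/r$, in case (iii);
\item a disjoint quadratic field when $m=1$.
\end{itemize}
Case (i) and your product lemma are correct, with one repair. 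Pairwise linear disjointness of the $\tilde L_i$ does not give $\mathrm{Gal}(\tilde L/K)=\prod_i\mathrm{Gal}(\tilde L_i/K)$; think of $\mathbb Q(\sqrt2),\mathbb Q(\sqrt3),\mathbb Q(\sqrt6)$. Choose the realizations inductively so that $\tilde L_{t+1}$ is linearly disjoint from $\tilde L_0\tilde L_1\cdots\tilde L_t$, as in Lemma~\ref{lin disj}. All your groups are regular over $K$ by Lemma~\ref{hilb fie}, so Hilbert irreducibility supplies the needed disjoint realizations directly.

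\textbf{Case (iii) gives the wrong degree.} You use $a$ both for $a_i$ and for the exponent parameter; write $c\le s$ for the latter. The copy $L_j$ attached to block $j$ has stabiliser $H_j=\{((x_1,\dots,x_s),0): x_j=0\}$. For $c\ge2$, the subgroup fixing every point of $c-1$ blocks is $H_1\cap\cdots\cap H_{c-1}$, of order $r^{s-c+1}$. Its fixed field has degree $sr^{c-1}=a_ir^{c-2}$, off by a factor of $r$. For $c=1$ the fixed field contains no copy of $L_i$ at all. You need $c$ blocks: take $M_i$ to be the fixed field of $N_c=H_1\cap\cdots\cap H_c$, i.e.\ $M_i=L_1\cdots L_c$. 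Then:
\begin{itemize}
\item $N_c\subseteq H_j$ if and only if $j\le c$, so the copies of $L_i$ in $M_i$ are exactly $L_1,\dots,L_c$ and they generate $M_i$;
\item $[M_i:K]=r^ss/r^{s-c}=sr^c=a_ir^{c-1}$.
\end{itemize}
The $-1$ in the exponent comes from $a_i=rs$, not from dropping a block. This is precisely the paper's choice.

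\textbf{Case (ii) is a genuinely different route.} The paper uses the full affine group $\mathbb Z/a_i\rtimes(\mathbb Z/a_i)^\times$, the generic Galois group of $x^{a_i}-c$. It imports from Proposition 2.22 of the earlier paper (via Jacobson--V\'elez) an intermediate field $M_i\subseteq\tilde L_i$, proper unless $l=1$; that radical construction is where oddness of $a_i$ comes from. You instead shrink the group to $G=\mathbb Z/a\rtimes U$ with $|U|=\phi(a/l)$ and take $M_i=\tilde L_i$, so $\gamma_K(M_i,L_i)=|G|=a\phi(a/l)$ is immediate. This is correct and simpler. It never uses that $a$ is odd, and it actually gives $b_i=a_id$ for every $d\mid\phi(a_i)$.

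Drop the ``lift via a splitting'', though. The reduction $(\mathbb Z/a)^\times\to(\mathbb Z/(a/l))^\times$ need not split: for $a=27$, $a/l=9$ it is $\mathbb Z/18\to\mathbb Z/6$, which does not split. You only need some subgroup of order $\phi(a/l)$. It exists because $(\mathbb Z/a)^\times$ is abelian and $\phi(a/l)\mid\phi(a)$. Faithfulness and transitivity of the affine action are automatic for any $U\le\mathrm{Aut}(\mathbb Z/a)$.
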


We extend Theorem \ref{inv gamma} for Hilbertian fields.

\begin{theorem}
    \label{gamma gen res}
Let $K$ be a Hilbertian field. Suppose $K$ is either perfect of any characteristic or $K$ is imperfect with $char(K)=p>0$. Given $(n,\gamma, \Gamma)$ where $n>1$ and $n \mid \Gamma\mid  n!$ and $\Gamma/\gamma=p^{\mu}$ where $\mu\geq 0$ (for $K$ perfect, $\mu=0$). Assume that $n=2^m a_1a_2\cdots a_k \cdot p^{\mu}$ with each $a_i>2$ and $m=0$ or $1$ and $\Gamma=2^m b_1b_2\cdots b_k \cdot p^{\mu}$ with each (i) $b_i=\ ^{a_i}P_j$ for $j\leq a_i$ or (ii) $b_i=a_i\phi(a_i/l)$ for $a_i$ odd and $l\mid a_i$ or (iii) $b_i=a_i\cdot r^{a-1}$ for $r>1$ and $r\mid a_i$ and $a\leq (a_i/r)$. Assume in the case $n=2^m p^{\mu}$ that $\Gamma=n$. \smallskip

Then there exist extensions $L/K$ and $M/K$ such that $[L:K]=n$ and compositum indicium $\gamma_K (M,L)=\gamma$ and compositum normal indicium $\Gamma_K(M,L)=\Gamma$. We get $M/K$ as an extension of $L/K$ contained in $\tilde{L}$. In fact we obtain $L/K$ as simple extension i.e. $L=K(\alpha)$ for some $\alpha\in \bar{K}$. \end{theorem}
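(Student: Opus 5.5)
The plan is to reduce to the separable situation by splitting off a purely inseparable part of degree $p^{\mu}$. Over a Hilbertian field we can realize any prescribed transitive group action. So we first build a finite separable extension $L_0/K$ of degree $n_0=n/p^{\mu}$, together with $M_0\subseteq \tilde{L_0}$, whose compositum indicium equals $\gamma$. Then we adjoin a $p^{\mu}$-th root of a $p$-independent element to obtain $L$ and $M$ with the same separable data. By the definitions in Section \ref{notion section}, the compositum normal indicium is the compositum indicium multiplied by the inseparable degree, in the same way that Proposition \ref{ad prop} (1) relates $a_K(L)$ and $t_K(L)$. This multiplication produces $\Gamma=\gamma p^{\mu}$.

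\textbf{Separable part.} Write $n_0=2^m a_1\cdots a_k$ and $\gamma=2^m b_1\cdots b_k$. For each $i$ we choose a transitive permutation group $G_i$ of degree $a_i$ and a subgroup $H_i$ corresponding to $M_i$, exactly as in the proof of Theorem \ref{inv gamma} over number fields:
\begin{itemize}
\item in case (i), $S_{a_i}$ with $M_i$ the field generated by $j$ roots;
\item in case (ii), the group $\Z/a_i\rtimes(\Z/a_i)^{\times}$, or a suitable subgroup of it, giving $a_i\phi(a_i/l)$;
\item in case (iii), a wreath-type group built from blocks of size $r$.
\end{itemize}
The factor $2^m$ is handled by a quadratic piece. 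We then take $G=\prod G_i$ acting on the product set, or a wreath construction as in that proof. The only arithmetic input used there is that $G$ occurs as a Galois group over $K$ with a prescribed point stabilizer. This holds over any Hilbertian field: every finite group realizable as a regular Galois group over $K(x)$ specializes to $K$ by Hilbert irreducibility, and the groups above (symmetric groups, affine groups, iterated wreath products of these) are known to be regular over $\mathbb{Q}(x)$, hence over $K(x)$. Adjoining independent regular realizations keeps the factors linearly disjoint, so the computation of $\gamma_K(M_0,L_0)$ as the product of the factor indicia carries over verbatim. When $n=2^m p^{\mu}$, the hypothesis $\Gamma=n$ forces $\gamma=n_0$, which we get from a Galois extension $L_0$ with $M_0=L_0$, e.g.\ a quadratic or trivial extension.

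\textbf{Inseparable part.} If $\mu>0$, then $K$ is imperfect, and we pick $b\in K\setminus K^p$. We set $L=L_0(\beta)$ with $\beta^{p^{\mu}}=b$, and choose a separable generator $\theta$ of $L_0$. Then $L=K(\theta+\beta)$ is simple, since a separable and a purely inseparable element generate a simple extension. We also set $M=M_0(\beta)$. We then check three things from the definitions:
\begin{itemize}
\item the maximal separable subextension of $L$ is $L_0$;
\item $\tilde L=\tilde{L_0}(\beta)$, and every $K$-embedding of $L$ fixes $\beta$;
\item isomorphic copies of $L$ inside $\tilde L$ correspond bijectively to copies of $L_0$, so $\gamma_K(M,L)=\gamma_K(M_0,L_0)=\gamma$, while the normal variant picks up the factor $[K(\beta):K]=p^{\mu}$.
\end{itemize}
For the third point we need $b\notin K^p$, so that $[K(\beta):K]=p^{\mu}$ and this extension is linearly disjoint from the separable $\tilde{L_0}$. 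We also need $M\subseteq\tilde L$ to contain $L$.

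\textbf{Main obstacle.} We expect the main obstacle to be the transfer of the number-field group-theoretic realizations to an arbitrary Hilbertian field, including characteristic $p$, where tame versus wild issues might seem to interfere. We would resolve it by using only regular realizations over $\mathbb{Q}(x)$ or $\mathbb{F}_p(x)$ of the specific groups above. These are classical for $S_a$ and the affine groups (generic polynomials exist), and for wreath products by the standard fact that regular realizability is preserved under wreath products over Hilbertian fields. After that, Hilbert's irreducibility theorem gives the required specialization.
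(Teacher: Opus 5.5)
Your proposal is correct and is essentially the paper's proof. You build the separable pair $L_0\subseteq M_0\subseteq\tilde{L_0}$ of degree $n/p^{\mu}$ from the number-field group constructions realized over the Hilbertian field $K$ (with a quadratic or trivial $L_0$ when $n=2^mp^{\mu}$), and then take $L=L_0I$ and $M=M_0I$ with $I=K(\beta)$ purely inseparable of degree $p^{\mu}$.

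The differences are minor. The paper obtains $\gamma_K(M,L)=\gamma$ and $\Gamma_K(M,L)=\gamma p^{\mu}$ from general normal magnification (Proposition \ref{GNM thm II}), where you verify them directly. It also cites the strong primitive element theorem, where you use $K(\theta+\beta)$.

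Two cautions. Your opening claim that one can realize \emph{any} transitive action is an overstatement; only the specific groups listed are needed. In characteristic $p$ the regularity must come from $\mathbb{F}_p$ (e.g.\ abelian-by-regular semidirect products), not from $\mathbb{Q}$, as you eventually acknowledge.
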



\smallskip

\section{Root Clusters and Multiclusters}\label{notion section}

Let $K$ be a field which is not necessarily perfect. We fix an algebraic closure $\bar{K}$ once and for all and work with finite extensions of $K$ contained in $\bar K$.

\subsection{Root Multicluster Size, Descending Normal Index and Ascending Normal Index}\hfill

Let $f \in K[t]$ be an irreducible polynomial of degree $n$. We denote the set of all distinct roots of $f$ by $R$ and the multiset of all roots of $f$ counted with multiplicity by $R'$. The following notion was introduced by Perlis in \cite{perlis2004roots} and was reformulated by Krithika and Vanchinathan in \cite{krithika2023root}.

\begin{definition}
    
A cluster of $f$ is defined as the subset of $R$ whose elements belong to the field generated by a single root of $f$ over $K$. All the clusters of $f$ have the same cardinality which is defined as the cluster size of $f$ over $K$ denoted as $r_K(f)$. The number of clusters of $f$ over $K$ is denoted as $s_K(f)$.

\end{definition}

For the case in which $K$ is not necessarily perfect, Perlis introduced another notion in \cite{perlisroots} called linear factor quantum number of a polynomial which we now reformulate as follows.

\begin{definition}
 A multicluster of $f$ is defined as the submultiset of $R'$ whose elements belong to the field generated by a single root of $f$ over $K$. All the multiclusters of $f$ have the same cardinality which is defined as the multicluster size of $f$ over $K$ denoted as $l_K(f)$. 

 
\end{definition}

\begin{proposition} \label{perlis}(Reformulation of some results in Perlis \cite{perlis2004roots}, \cite{perlisroots})

\begin{enumerate}
\item $|R'|=deg(f)=n$ and $|R|=Sep\ deg(f)$. 

\item The number of multiclusters of $f$ over $K$ is also $s_K(f)$. The number of distinct fields inside $\bar{K}$ isomorphic to $K(\alpha)/K$ is also $s_K(f)$.

\item  Let $d=|R|$. Then $r_K(f)\cdot s_K(f)=d$ and $l_K(f)\cdot s_K(f)=n$.

\item $r_K(f)=|Aut(K(\alpha)/K)|$ where $\alpha\in \bar{K}$ is any root of $f$.

\end{enumerate}
\end{proposition}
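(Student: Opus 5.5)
The plan is to reduce everything to the action of the group $G=Aut(\bar K/K)$ on the roots of $f$, and to use the standard decomposition $K(\alpha)\supseteq K(\alpha)_s\supseteq K$. Here $K(\alpha)_s=K(\alpha^{p^e})$ is the maximal separable subextension, and $p^e$ is the inseparable degree of $f$. Write $f(t)=g(t^{p^e})$ with $g$ irreducible and separable. Then every root $\beta$ of $f$ has multiplicity exactly $p^e$, and the map $\beta\mapsto \beta^{p^e}$ is a bijection from $R$ onto the roots of $g$. This gives (1) at once: $n=p^e\deg g$, $|R|=\deg g=Sep\ deg(f)$, and $|R'|=n$ since multiplicities add up to the degree.

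For (4), I would use that $G$ acts transitively on $R$. An automorphism $\sigma\in Aut(K(\alpha)/K)$ is determined by $\sigma(\alpha)$, which must be a root of $f$ lying in $K(\alpha)$. Conversely, each root $\beta\in K(\alpha)$ gives a $K$-embedding $K(\alpha)\to\bar K$ with $\alpha\mapsto\beta$. Its image $K(\beta)$ has the same degree $n$ and lies inside $K(\alpha)$, so it equals $K(\alpha)$ and the embedding is an automorphism. Hence the cluster of $\alpha$, namely $R\cap K(\alpha)$, is in bijection with $Aut(K(\alpha)/K)$. The same bijection shows that all clusters have the same size: for $\tau\in G$ with $\tau(\alpha)=\alpha'$ we have $\tau(R\cap K(\alpha))=R\cap K(\alpha')$. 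For multiclusters, each root of $f$ in $K(\alpha)$ contributes its full multiplicity $p^e$. So the multicluster of $\alpha$ is the cluster of $\alpha$ with every element repeated $p^e$ times, and $l_K(f)=p^e r_K(f)$.

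For (2) and (3), note that clusters partition $R$. By the previous paragraph, $\beta\in K(\alpha)$ is equivalent to $K(\beta)=K(\alpha)$, so two clusters are either equal or disjoint. Clusters therefore correspond bijectively to the distinct fields $K(\beta)$ with $\beta\in R$. These are exactly the fields in $\bar K$ that are $K$-isomorphic to $K(\alpha)$, since any $K$-embedding sends $\alpha$ to some root $\beta$ and has image $K(\beta)$. Multiclusters are indexed by the same set of fields, so their number is also $s_K(f)$. Counting gives $r_K(f)s_K(f)=|R|=d$. Multiplying by $p^e$ gives $l_K(f)s_K(f)=p^e d=n$.

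The only point needing care is the first step: showing that an irreducible $f$ in characteristic $p$ has the form $g(t^{p^e})$ with $g$ separable, and that all of its roots have equal multiplicity $p^e$. This is standard: since $\bar K$ is perfect, $t^{p^e}-\beta^{p^e}=(t-\beta)^{p^e}$. In the perfect case one simply takes $e=0$.
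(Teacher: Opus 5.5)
Your argument is correct and complete. The paper gives no proof of this proposition: it is stated as a reformulation of results of Perlis with citations only. The structural facts you derive first ($f=h(t^{p^{\mu}})$ with $h$ separable, $n=p^{\mu}d$, $l_K(f)=p^{\mu}r_K(f)$) appear in the next proposition as consequences of Lang, VII \S 4.

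The closest argument the paper does give is the proof of Proposition~\ref{r prop}~(3). There, for an arbitrary finite $L/K$, it takes the set $E$ of $K$-embeddings $L\to\bar K$, with $|E|=[L_S:K]$, and lets $Aut(L/K)$ act freely on the right. The orbits are exactly the embeddings with a common image, so $s_K(L)=|E|/|Aut(L/K)|$.

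Your root-level argument is the same count, transported along the bijection $\sigma\mapsto\sigma(\alpha)$ between $E$ and $R$ when $L=K(\alpha)$. The orbits become your clusters $R\cap K(\alpha)=\{\beta\in R: K(\beta)=K(\alpha)\}$. Freeness of the action becomes your bijection between $Aut(K(\alpha)/K)$ and the cluster of $\alpha$.

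What your version adds is an explicit treatment of multiplicities. Part (1) and the multicluster statements in (2) and (3) need this, and the embedding count does not see it. What the paper's version buys is independence from a primitive element, which is what lets it pass from polynomials to arbitrary extensions in Proposition~\ref{r prop}.
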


Some observations following from results in VII \S 4 of \cite{lang2012algebra}:

\begin{proposition}
\hfill
\begin{enumerate}

\item $d\mid n$ and $r_K(f)\mid l_K(f)$.

\item If $K$ is perfect then $d=n$ and $r_K(f)=l_K(f)$.
    
\item If $K$ is not perfect and $char(K)=p>0$ then 

\begin{enumerate}
\item $f(x)=h(x^{p^{\mu}})$ where $h$ is a separable polynomial over $K$ of degree $d$ and $n=p^{\mu}\cdot d$ and $l_K(f)=p^{\mu}\cdot  r_K(f)$ for some integer $\mu \geq 0$.

\item Let $\alpha\in \bar{K}$ be any root of $f$ and let $K(\alpha)_S$ be the maximal separable subextension of $K(\alpha)/K$. Then $K(\alpha)_S=K(\alpha^{p^{\mu}})$; $d=[K(\alpha)_S:K]$; $l_K(f)=[K(\alpha):K(\alpha)_S]\cdot r_K(f)$.  

\end{enumerate}

\end{enumerate}
\end{proposition}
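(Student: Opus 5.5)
The plan is to deduce everything from the standard structure theory of irreducible polynomials in Lang VII \S 4, together with Proposition \ref{perlis}. Fix a root $\alpha\in\bar K$ of $f$, let $d=|R|$, and work with the roots of $f$ as they sit inside $\bar K$.

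For (2), I would use that over a perfect field every irreducible polynomial is separable. Then $R=R'$ as multisets, since every root has multiplicity one. Hence $d=n$, and each multicluster coincides with the corresponding cluster, so $r_K(f)=l_K(f)$.

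For (3a), let $K$ be imperfect with $char(K)=p>0$. Choose $\mu\geq 0$ maximal such that $f\in K[x^{p^{\mu}}]$, and write $f(x)=h(x^{p^{\mu}})$. Irreducibility of $f$ forces $h$ to be irreducible. Maximality of $\mu$ gives $h\notin K[x^p]$, so $h'\neq 0$, and an irreducible polynomial with nonzero derivative is separable. Let $\beta_1,\dots,\beta_e$ be the distinct roots of $h$, where $e=\deg h$. Then
\[
f(x)=\prod_i (x^{p^{\mu}}-\beta_i)=\prod_i (x-\beta_i^{1/p^{\mu}})^{p^{\mu}}.
\]
The elements $\beta_i^{1/p^{\mu}}$ are pairwise distinct, because $p^{\mu}$-th roots are unique in characteristic $p$. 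So every root of $f$ has multiplicity exactly $p^{\mu}$, $d=e$, and $n=p^{\mu}d$. Both clusters and multiclusters are defined by the condition "lies in $K(\alpha)$". Each distinct root in a cluster therefore contributes $p^{\mu}$ elements to the corresponding multicluster, which gives $l_K(f)=p^{\mu}r_K(f)$. Alternatively, this follows by comparing $r s=d$ and $l s=n$ from Proposition \ref{perlis}(3).

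For (1), in the perfect case both claims follow from (2). In the imperfect case they follow from (3a), since $d\mid n$ and $r_K(f)\mid l_K(f)$ are immediate from $n=p^{\mu}d$ and $l_K(f)=p^{\mu}r_K(f)$. One could also get these directly from Proposition \ref{perlis}(3), noting $s_K(f)\mid d$.

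For (3b), set $\beta=\alpha^{p^{\mu}}$, which is a root of the separable irreducible polynomial $h$. Then $K(\beta)/K$ is separable of degree $d$. The extension $K(\alpha)/K(\beta)$ is purely inseparable, since $\alpha^{p^{\mu}}\in K(\beta)$. Its degree is $n/d=p^{\mu}$, because $[K(\alpha):K]=n$.

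The main point is to show that $K(\beta)$ is the whole maximal separable subextension $K(\alpha)_S$. Since $K(\beta)/K$ is separable, $K(\beta)\subseteq K(\alpha)_S$. Conversely, $K(\alpha)/K(\beta)$ is purely inseparable, so $K(\alpha)_S/K(\beta)$ is both separable and purely inseparable, and hence trivial. This gives $K(\alpha)_S=K(\alpha^{p^{\mu}})$ and $d=[K(\alpha)_S:K]$.

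Finally, $[K(\alpha):K(\alpha)_S]=p^{\mu}$, so the formula $l_K(f)=[K(\alpha):K(\alpha)_S]\cdot r_K(f)$ is a restatement of (3a).

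The only genuinely delicate step is the multiplicity computation in (3a). It relies on injectivity of the Frobenius map, which ensures that distinct $\beta_i$ yield distinct roots of $f$ each appearing with multiplicity exactly $p^{\mu}$.
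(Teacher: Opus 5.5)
Your argument is correct and follows the paper's approach. The paper gives no written proof; it just cites the structure theory of irreducible polynomials in Lang VII \S 4 (the factorization $f(x)=h(x^{p^{\mu}})$ with $h$ separable and every root of multiplicity $p^{\mu}$) together with Proposition~\ref{perlis}. You have written out exactly that standard argument, including the identification $K(\alpha)_S=K(\alpha^{p^{\mu}})$.

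One side remark needs correcting. Part (1) does not follow ``directly'' from Proposition~\ref{perlis}(3) by noting $s_K(f)\mid d$. The identities $r_K(f)s_K(f)=d$ and $l_K(f)s_K(f)=n$ only give $l_K(f)/r_K(f)=n/d$, so you still need $d\mid n$ from (3a). Your main route through (2) and (3a) is fine.
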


For a perfect base field, the notion of cluster size of an extension was defined by the author and Bhagwat in \cite{Bhagwat_2025} which we can similarly define for any separable extension over any base field.

\begin{definition}
Let $L/K$ be a separable extension. By primitive element theorem $L=K(\alpha)$ for some $\alpha\in\bar{K}$. The cluster size of $L/K$ is defined as $r_K(L):= r_K(f)$ where $f$ is minimal polynomial of $\alpha$. This is well defined because of separability (See Section 2.1 in \cite{Bhagwat_2025}). Clearly $r_K(L)=|Aut(L/K)|$ by Proposition \ref{perlis}.\smallskip

We can similarly define number of clusters of $L/K$ as $s_K(L):=s_K(f)$. Therefore $r_K(L) \cdot s_K(L) =  [L : K]$. We also have that $s_K(L)$ is the number of distinct fields inside $\bar{K}$ isomorphic to $L/K$ (See Section 3.2 in \cite{Bhagwat_2025}).

\end{definition}

We now generalize the above notions for any extension.

\begin{definition}
  Let $L/K$ be any extension. We define the cluster size of $L/K$, $r_K(L):= |Aut(L/K)|$. We define the number of clusters of $L/K$ as $s_K(L):=$ the number of distinct fields inside $\bar{K}$ isomorphic to $L/K$.
\end{definition}

We also define the following for any extension.

\begin{definition}
    Consider $L/K$ and let $L_S/K$ be the maximal separable subextension of $L/K$. We define the multicluster size of $L/K$, $l_K(L):= [L:L_S]\cdot r_K(L)$. One can also refer $s_K(L)$ as the number of multiclusters with reference to Proposition \ref{perlis}.
\end{definition}

We have the following interesting properties.

\begin{proposition}
\label{r prop}    Consider $L/K$ and let $L_S/K$ be as above. Let $A=Aut(L/K)$.

    \begin{enumerate}

\item $r_K(L)=[L:L^{A}]$. Also $N=L^{A}$ is the unique intermediate extension $N/K$ such that $L/N$ is Galois of maximum possible degree.\smallskip

\item $l_K(L_S)=r_K(L_S)$ and $r_{L_S}(L)=s_{L_S}(L)=1$ and $l_{L_S}(L)=[L:L_S]$. \smallskip   

\item $r_K(L)\cdot s_K(L)=[L_S:K]$ and $l_K(L)\cdot s_K(L)=[L:K]$. Also $[L^A:K]=[L:L_S]\cdot s_K(L)$. \smallskip

\item   $r_K(L)\mid r_K(L_S)$ and $s_K(L_S)\mid s_K(L)$.\smallskip

\item If $L/K$ is normal then 

\begin{enumerate}
    \item 

 $r_K(L_S)=[L_S :K]$ and $s_K(L_S)=1$.\smallskip

\item $r_K(L)=r_K(L_S)$ and $l_K(L)=[L:K]$ and $s_K(L)=1$.\smallskip

\item $r_K(L^{A})=s_K(L^{A})=1$ and $l_K(L^{A})=[L^{A}:K]$

\end{enumerate}
      
    \end{enumerate}

\end{proposition}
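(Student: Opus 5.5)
The plan rests on one structural fact: any $K$-automorphism of $L$ fixes $L_S$ setwise. Indeed, $\sigma(L_S)$ is a separable subextension of $L/K$ of the same degree, so $\sigma(L_S)\subseteq L_S$, and hence equality holds. Restriction therefore gives a homomorphism $\pi\colon A=Aut(L/K)\to Aut(L_S/K)$. This map is injective. If $\sigma$ is trivial on $L_S$, then $\sigma$ is an $L_S$-embedding of $L$, and $L/L_S$ is purely inseparable, so $L_S$ has only one embedding of $L$ over it in $\bar K$; thus $\sigma=\mathrm{id}$. I will use this throughout.

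\textbf{Item (1).} Apply Artin's theorem to the finite group $A$ acting on $L$: $L/L^A$ is Galois with group $A$, so $r_K(L)=|A|=[L:L^A]$. For uniqueness, suppose $K\subseteq N'\subseteq L$ with $L/N'$ Galois. Then $Gal(L/N')\le A$, so $N'\supseteq L^A$. Hence $[L:N']\le [L:L^A]$, with equality exactly when $N'=L^A$.

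\textbf{Item (2).} Since $L_S/K$ is separable, $L_S$ has no inseparable part over itself, so $l_K(L_S)=r_K(L_S)$ by definition. Over $L_S$, the extension $L/L_S$ is purely inseparable. So $Aut(L/L_S)$ is trivial, giving $r_{L_S}(L)=1$. Any $L_S$-embedding of $L$ into $\bar K$ is unique, so $s_{L_S}(L)=1$. Its maximal separable subextension is $L_S$ itself, so $l_{L_S}(L)=[L:L_S]\cdot 1$.

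\textbf{Item (3).} This is an orbit–stabilizer count. The $K$-embeddings of $L$ into $\bar K$ number $[L:K]_s=[L_S:K]$. The group $A$ acts freely on them by precomposition, and two embeddings have the same image exactly when they differ by an element of $A$. Hence $s_K(L)=[L_S:K]/|A|$, which gives $r_K(L)s_K(L)=[L_S:K]$. Multiplying by $[L:L_S]$ gives $l_K(L)s_K(L)=[L:K]$. Finally, $[L^A:K]=[L:K]/[L:L^A]=l_K(L)s_K(L)/r_K(L)=[L:L_S]s_K(L)$.

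\textbf{Item (4).} Injectivity of $\pi$ gives $|A|\mid |Aut(L_S/K)|$, so $r_K(L)\mid r_K(L_S)$. Using (3) twice, $s_K(L)/s_K(L_S)=\dfrac{[L_S:K]/r_K(L)}{[L_S:K]/r_K(L_S)}=r_K(L_S)/r_K(L)$, which is an integer. Hence $s_K(L_S)\mid s_K(L)$.

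\textbf{Item (5).} Assume $L/K$ is normal. Then $L_S/K$ is normal as well: a conjugate of an element of $L_S$ lies in $L$ and is separable over $K$, so it lies in $L_S$. Thus $L_S/K$ is Galois, so $r_K(L_S)=[L_S:K]$ and $s_K(L_S)=1$, which is (a).

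For (b), normality says every $K$-embedding of $L$ into $\bar K$ has image $L$, so $s_K(L)=1$. Then (3) gives $r_K(L)=[L_S:K]=r_K(L_S)$ and $l_K(L)=[L:K]$.

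For (c), (3) gives $[L^A:K]=[L:L_S]$. Here I need a finer argument: in the normal case $L^A$ is purely inseparable over $K$. This is the standard fact (Lang V \S6) that for normal $L/K$, $L^{Aut(L/K)}$ is the maximal purely inseparable subextension. Given that, $L^A$ has a unique $K$-embedding into $\bar K$, so $s_K(L^A)=1$. Its only $K$-automorphism is the identity, so $r_K(L^A)=1$. Its separable part is $K$, so $l_K(L^A)=[L^A:K]\cdot 1$.

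The main obstacle is the step in (5)(c) that $L^A$ is purely inseparable over $K$. To justify it, take $x\in L^A$. For normal $L$, every conjugate of $x$ over $K$ is $\tau(x)$ for some $K$-embedding $\tau$ of $L$ into $\bar K$. Since $\tau(L)=L$, each such $\tau$ lies in $A$. So $x$ has only one conjugate, and $x$ is purely inseparable over $K$.
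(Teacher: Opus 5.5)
Your proof is correct and follows essentially the same route as the paper: Artin's theorem for (1), the free action of $A$ on the $K$-embeddings of $L$ for (3), the injective restriction map $A\to Aut(L_S/K)$ for (4), and pure inseparability of $L^A$ over $K$ when $L/K$ is normal for (5)(c). The differences are cosmetic. In (5)(b) you get $r_K(L)=r_K(L_S)$ from $s_K(L)=1$ and (3), whereas the paper shows the restriction map is an isomorphism. You also prove the uniqueness in (1) and the pure inseparability of $L^A$ directly, where the paper cites references.
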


\begin{proof}\hfill
\begin{enumerate}
\item   We have $L/L^{Aut(L/K)}$ to be Galois with Galois group $Aut(L/K)$ (Theorem 2 in VIII \S 1 of \cite{lang2012algebra}). Thus $|Aut(L/K)|=[L:L^{Aut(L/K)}]$. The other assertion follows from Remarks 7.1.2 and 7.1.3 in \cite{Bhagwat_2025}. \smallskip

\item $L/L_S$ is purely inseparable (Proposition 11 in VII \S 7 of \cite{lang2012algebra}). Thus $|Aut(L/L_S)|=1$.\smallskip

\item Let $E$ be the set of embeddings of $L$ in $\bar{K}$ fixing $K$. Now $|E|=[L_S:K]$ (Corollary 3 in VII \S 7 of \cite{lang2012algebra}). Consider the right action of $Aut(L/K)$ on $E$. This action is free and any orbit is precisely the subset of $E$ consisting of embeddings having the same image inside $\bar{K}$. Thus the number of distinct fields inside $\bar{K}$ isomorphic to $L/K$ is precisely $|E|/|Aut(L/K)|$.

\smallskip

\item Let $\iota: Aut(L/K)\rightarrow Aut(L_S/K)$ be defined by $\iota(\sigma):=\sigma|_{L_S}$. Then $\iota$ is a well defined injective homomorphism. Also $r_K(L)\cdot s_K(L)=[L_S:K]=r_K(L_S)\cdot s_K(L_S)$.\smallskip

\item Now $L/K$ is normal. Thus $L_S/K$ is Galois (Corollary 2 in VII \S 7 of \cite{lang2012algebra}). Thus $|Aut(L_S/K)|=[L_S:K]$. Since $L/K$ is normal we have $\iota$ in part (4) to be an isomorphism. Also normality of $L/K$ implies that $L^{A}/K$ is purely inseparable (Proposition 12 in VII \S 7 of \cite{lang2012algebra}).

\end{enumerate}    
\end{proof}

\begin{corollary}
\label{cor}   
       The extension $L/K$ is normal $\iff s_K(L)=1\iff r_K(L)=[L_S:K]\iff l_K(L)=[L:K]\iff [L:L_S]=[L^A:K]\iff L^A/K$ is purely inseparable.

\end{corollary}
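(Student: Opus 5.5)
We prove the chain by showing that every condition is equivalent to $s_K(L)=1$. The tools are Proposition \ref{r prop} (3) and (5), plus the standard fact that a finite extension $L/K$ inside $\bar K$ is normal if and only if every $K$-embedding $L\to\bar K$ has image $L$.

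\emph{Normal $\iff s_K(L)=1$.} If $L/K$ is normal, Proposition \ref{r prop} (5)(b) already gives $s_K(L)=1$. Conversely, suppose $s_K(L)=1$. By definition $L$ is then the only subfield of $\bar K$ that is $K$-isomorphic to $L$. So every $K$-embedding $\sigma:L\to\bar K$ satisfies $\sigma(L)=L$, and hence $L/K$ is normal.

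\emph{The middle equalities.} These follow directly from Proposition \ref{r prop} (3). Since $r_K(L)\cdot s_K(L)=[L_S:K]$ with $r_K(L)\geq 1$, we get $s_K(L)=1\iff r_K(L)=[L_S:K]$. The identity $l_K(L)\cdot s_K(L)=[L:K]$ gives $s_K(L)=1\iff l_K(L)=[L:K]$. Finally, $[L^A:K]=[L:L_S]\cdot s_K(L)$ gives $s_K(L)=1\iff [L:L_S]=[L^A:K]$.

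\emph{The last equivalence.} If $L/K$ is normal, Proposition \ref{r prop} (5) (via Lang's Proposition 12 in VII \S 7) gives that $L^A/K$ is purely inseparable. For the converse, suppose $L^A/K$ is purely inseparable. Then $L^A\subseteq K^{p^{-\infty}}$, and $L/L^A$ is Galois by Proposition \ref{r prop} (1).

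Take an irreducible $g\in K[t]$ with a root $\beta\in L$. Let $h$ be the minimal polynomial of $\beta$ over $L^A$. Since $L/L^A$ is Galois, $h$ splits in $L$. Because $L^A/K$ is purely inseparable, some power $h^{p^e}$ has coefficients in $K$ (raise the coefficients to a high enough $p$-power). Then $g$ divides $h^{p^e}$, since $\beta$ is a root of $h^{p^e}$. Hence every root of $g$ is a root of $h$, and so lies in $L$. Therefore $L/K$ is normal.

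This converse is the only step needing more than bookkeeping. Alternatively, it follows by a degree count: $[L^A:K]_s=1$ together with $[L:L^A]=r_K(L)$ gives $[L_S:K]=[L:K]_s=[L:L^A]_s=r_K(L)$, which is one of the conditions already shown equivalent to normality.
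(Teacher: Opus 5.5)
Your proof is correct, and its skeleton matches the paper's. Everything is funneled through $s_K(L)=1$ using the identities of Proposition \ref{r prop} (3), with Proposition \ref{r prop} (5) supplying the forward implications. You also make explicit the step $s_K(L)=1\Rightarrow L/K$ normal, which the paper leaves implicit.

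The only nontrivial direction, $L^A/K$ purely inseparable $\Rightarrow L/K$ normal, you handle by a different route.

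\emph{The paper's route.} It first shows $L=L_SL^A$ for every $L/K$, since $L/L_SL^A$ is both separable and purely inseparable. Because $L^A/K$ is purely inseparable and $L_S/K$ is separable, it then gets $[L^A:K]=[L_SL^A:L_S]=[L:L_S]$, which is one of the listed numerical conditions.

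\emph{Your route.} You check the splitting criterion directly. The minimal polynomial $h$ of $\beta$ over $L^A$ splits in $L$ because $L/L^A$ is Galois. Since $h^{p^e}\in K[t]$, every root of $g$ is a root of $h$ and so lies in $L$.

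\emph{What each buys.} Your argument is self-contained and needs no degree bookkeeping. The paper's route yields the decomposition $L=L_SL^A$ as a byproduct, and that identity is reused later (Proposition \ref{L'S prop} (1), Proposition \ref{unique desc normal} (3), Proposition \ref{LSNi}). Your alternative separable-degree count is essentially the paper's argument in different notation.

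One cosmetic point: writing $L^A\subseteq K^{p^{-\infty}}$ presumes $char(K)=p>0$. When $K$ is perfect, pure inseparability forces $L^A=K$, and your argument goes through with $e=0$.
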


\begin{proof}
    We will prove $L^A/K$ is purely inseparable $\implies L/K$ is normal. Firstly we claim that for any extension $L/K$, we have $L=L_S L^A$. This is because $L/L_S$ is purely inseparable and $L/L^A$ is Galois, thus separable, which implies that $L/(L_S L^A)$ is both purely inseparable and separable. Now since $L^A/K$ is purely inseparable, by Corollary 3 in VII \S 7 of \cite{lang2012algebra}, $[L^A:K]=[L_SL^A:L_S]=[L:L_S]$.
\end{proof}

For a perfect base field, the notion of ascending index of an extension (as a dual notion to cluster size of an extension) was defined by the author and Bhagwat in \cite{Bhagwat_2025} which we can similarly define for any extension over any base field.

\begin{definition}\label{asc ind def}
    Let $L/K$ be any extension over any base field. We define the ascending index of $L/K$ as $t_K(L):= [F:K]$ where $F/K$ is the largest Galois subextension (hence unique) of $L/K$. Clearly $F\subset L_S$. Let $u_K(L):=[L_S:F]$.
\end{definition}

\begin{proposition}\label{asc ind}
Consider $L/K$.

    \begin{enumerate}
 \item $t_K(L)\cdot u_K(L)=[L_S:K]$ and $t_K(L)=t_K(L_S)$ and $u_K(L)=u_K(L_S)$. We also have $t_{L_S}(L)=u_{L_S}(L)=1$.\smallskip

 \item If $L/K$ is normal then  

 \begin{enumerate}
     \item $t_K(L)=[L_S:K]$ and $u_K(L)=1$.\smallskip

     \item $t_K(L^{A})=u_K(L^{A})=1$.
 \end{enumerate}
 
    \end{enumerate}
\end{proposition}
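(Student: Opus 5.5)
The plan is to reduce every assertion to the single fact that the Galois subextensions of $L/K$ coincide with those of $L_S/K$, and then read off the remaining equalities from the definitions and from Proposition \ref{r prop}.

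\textbf{Part (1).} Let $F$ be the largest Galois subextension of $L/K$ as in Definition \ref{asc ind def}. Since $F\subset L_S$, the tower $K\subset F\subset L_S$ gives $[L_S:K]=[L_S:F][F:K]=u_K(L)\,t_K(L)$.

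For $t_K(L)=t_K(L_S)$, I first show that a subextension $E/K$ of $L/K$ is Galois if and only if it is a Galois subextension of $L_S/K$. Any Galois $E\subset L$ is separable, so $E\subset L_S$ because $L_S$ contains every separable element of $L$. The converse is trivial. Hence both extensions have the same largest Galois subextension $F$, which gives $t_K(L)=t_K(L_S)$. Since $(L_S)_S=L_S$, we also get $u_K(L_S)=[L_S:F]=u_K(L)$.

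Over the base $L_S$, the extension $L/L_S$ is purely inseparable (as in Proposition \ref{r prop}(2)). So its maximal separable subextension is $L_S$ itself, and its largest Galois subextension is also $L_S$. Both $t_{L_S}(L)$ and $u_{L_S}(L)$ are therefore $[L_S:L_S]=1$.

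\textbf{Part (2)(a).} If $L/K$ is normal, then $L_S/K$ is Galois (Corollary 2 in VII \S 7 of \cite{lang2012algebra}, as already used in the proof of Proposition \ref{r prop}(5)). So $L_S$ is itself the largest Galois subextension: $F=L_S$, giving $t_K(L)=[L_S:K]$ and $u_K(L)=1$.

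\textbf{Part (2)(b).} By Proposition \ref{r prop}(5), as shown in its proof, normality of $L/K$ makes $L^A/K$ purely inseparable. Then $(L^A)_S=K$, so every Galois subextension of $L^A/K$, being separable, equals $K$. Hence $t_K(L^A)=1$, and $u_K(L^A)=[(L^A)_S:K]/t_K(L^A)=1$.

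\textbf{Main obstacle.} The only step needing care is the claim that every Galois subextension of $L$ lies in $L_S$. It rests on two facts: Galois implies separable, and $L_S$ is by definition the set of all elements of $L$ separable over $K$. Everything else is bookkeeping with degrees in towers.
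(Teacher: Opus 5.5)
Your proof is correct. It follows essentially the route the paper intends: the paper states this proposition without proof, relying only on the observation $F\subset L_S$ from Definition \ref{asc ind def} and on the facts, used in the proof of Proposition \ref{r prop}(5), that for normal $L/K$ the extension $L_S/K$ is Galois and $L^A/K$ is purely inseparable. Your tower-law bookkeeping and your remark that the Galois subextensions of $L/K$ are exactly those of $L_S/K$ supply precisely these omitted details.
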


We also define the following general notions.

\begin{definition}\label{dni ani}
      Let $L/K$ be any extension. We define the descending normal index of $L/K$ as $d_K(L):= [L:N']$ where $N'/K$ is a subextension of $L/K$ such that $L/N'$ is normal  with maximum possible degree.\smallskip
      
      We also define the ascending normal index of $L/K$ as $a_K(L):= [F':K]$ where $F'/K$ is a subextension of $L/K$ such that $F'/K$ is normal  with maximum possible degree.
\end{definition}

\begin{proposition}
\label{ad prop}
 Consider $L/K$ and let $L_I/K$ be the maximal purely inseparable subextension of $L/K$ and $i_K(L)=[L_I:K]$. Let $F'/K$ be the maximal normal subextension of $L/K$ and $F/K$ be the largest Galois subextension of $L/K$. Then
    \begin{enumerate}

\item $F'=FL_I$ and $a_K(L)=t_K(L)\cdot i_K(L)$.

\item $a_K(L)\cdot u_K(L)=[L_SL_I:K]$.

\item $L/K$ is normal $\iff a_K(L)=[L:K]$. 

    \end{enumerate}
\end{proposition}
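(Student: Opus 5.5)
Part (1) carries most of the work; parts (2) and (3) then follow by degree counting.

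For (1), first note that a maximal normal subextension $F'$ exists and is unique. If $N_1,N_2\subseteq L$ are normal over $K$, then so is the compositum $N_1N_2$, which is the splitting field of the product of the relevant polynomials. So $F'$ is the compositum of all normal subextensions. Next, $FL_I$ is normal over $K$: $F/K$ is Galois, hence normal, and $L_I/K$ is purely inseparable, hence normal, since each element has a single conjugate. Their compositum is therefore normal, which gives $FL_I\subseteq F'$.

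For the reverse inclusion, use that $F'/K$ is normal. By Corollary 2 in VII \S 7 of \cite{lang2012algebra}, $(F')_S/K$ is Galois, and by Proposition 12 there, $F'=(F')_S\,(F')_I$, with $(F')_I=(F')^{Aut(F'/K)}$ purely inseparable over $K$. Since $(F')_S$ is a Galois subextension of $L$, we have $(F')_S\subseteq F$. Since $(F')_I$ is a purely inseparable subextension of $L$, we have $(F')_I\subseteq L_I$. Hence $F'\subseteq FL_I$, and so $F'=FL_I$.

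For the degree formula, $F/K$ is separable and $L_I/K$ is purely inseparable, so they are linearly disjoint and $[FL_I:K]=[F:K][L_I:K]$. This follows from Corollary 3 in VII \S 7 of \cite{lang2012algebra}: $[FL_I:F]_s=1$ and $[FL_I:L_I]$ is separable of degree at most $[F:K]$, and comparing separable and inseparable degrees in the tower $K\subseteq F\subseteq FL_I$ gives equality. Thus $a_K(L)=t_K(L)\,i_K(L)$.

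For (2), the same linear disjointness argument applies to $L_S$ and $L_I$, giving $[L_SL_I:K]=[L_S:K]\,i_K(L)$. By Proposition \ref{asc ind}(1), $[L_S:K]=t_K(L)u_K(L)$. Combining these with (1) gives $[L_SL_I:K]=t_K(L)\,u_K(L)\,i_K(L)=a_K(L)\,u_K(L)$.

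For (3), if $L/K$ is normal then $F'=L$, so $a_K(L)=[L:K]$. Conversely, if $a_K(L)=[L:K]$, then $F'\subseteq L$ has full degree, so $F'=L$ and $L$ is normal.

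The main obstacle is the decomposition $F'=(F')_S(F')_I$ for a normal extension. I would quote it directly from Lang, Proposition 12 in VII \S 7, rather than reprove it.
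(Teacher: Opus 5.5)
Your proposal is correct and follows the paper's proof essentially step for step. You get $FL_I\subseteq F'$ from normality, and the reverse inclusion from the decomposition $F'=(F')_S\,(F')^{Aut(F'/K)}$ in Proposition 12 of VII \S 7 of Lang, with $(F')_S\subseteq F$ and the purely inseparable factor inside $L_I$. The degree identities in (1) and (2) come from Corollary 3 there, and your extra remarks (existence of $F'$ as a compositum, and the separable/inseparable degree bookkeeping for $[FL_I:K]$) only fill in details the paper leaves implicit.
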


\begin{proof}
    
\hfill

\begin{enumerate}

\item   Clearly $F\subset F'$. Also since $L_I/K$ is the maximal purely inseparable subextension of $L/K$, it is a normal subextension. Thus $L_I\subset F'$. Hence $FL_I\subset F'$. By Corollary 3 in VII \S 7 of \cite{lang2012algebra}, we have that $[FL_I:K]=[F:K]\cdot [L_I:K]=t_K(L)\cdot i_K(L)$. Now by Proposition 12 in VII \S 7 of \cite{lang2012algebra}, $F'=F'_S F'^{Aut(F'/K)}$ where $F'_S$ is the maximal separable subextension of $F'/K$ and $F'^{Aut(F'/K)}$ is purely inseparable over $K$. Clearly $F'_S\subset L_S$ and $F'^{Aut(F'/K)}\subset L_I$. Now $F'_S$ is Galois by Corollary 2 in VII \S 7 of \cite{lang2012algebra}. Thus $F'_S\subset F$. Hence $F'\subset FL_I$ and therefore $F'=FL_I$. Thus also $F'_S=F$ and $F'^{Aut(F'/K)}=L_I$.

\smallskip

\item By Corollary 3 in VII \S 7 of \cite{lang2012algebra}, we have $[L_SL_I:L_S]=[L_I:K]$.\smallskip

\item This is trivial.

\end{enumerate}
\end{proof}

\begin{corollary}\label{LI cor}
      $L/K$ is normal $\iff$ $t_K(L)=[L:L_I]$ $\iff$ ($u_K(L)=1$ and $L=L_S L_I$) $\iff$ $L^A=L_I$ (where $A=Aut(L/K)$) $\iff$ $L/L_I$ is Galois.
\end{corollary}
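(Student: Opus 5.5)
The plan is to prove the cycle of implications: normal $\Rightarrow$ $L/L_I$ Galois $\Rightarrow$ $L^A=L_I$ $\Rightarrow$ $t_K(L)=[L:L_I]$ $\Rightarrow$ ($u_K(L)=1$ and $L=L_SL_I$) $\Rightarrow$ normal. The basic tool throughout is Proposition \ref{ad prop} together with Corollary \ref{cor}. We also use the fact from Corollary 3 in VII \S 7 of \cite{lang2012algebra} that separable and purely inseparable subextensions are linearly disjoint in degree. In particular $[L_SL_I:K]=[L_S:K]\,[L_I:K]$ and $[L:L_I]\geq [L_SL_I:L_I]=[L_S:K]$, with equality in the last inequality exactly when $L=L_SL_I$.

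\emph{Normal $\Rightarrow$ $L/L_I$ Galois.} Suppose $L/K$ is normal. Then $L/L_I$ is normal. By Corollary \ref{cor}, $L^A/K$ is purely inseparable, so $L^A\subset L_I$. Since $L/L^A$ is Galois, it is separable, and hence $L/L_I$ is separable as well. Therefore $L/L_I$ is Galois.

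\emph{$L/L_I$ Galois $\Rightarrow$ $L^A=L_I$.} Since $L_I/K$ is purely inseparable, every $K$-automorphism of $L$ fixes $L_I$, so $Aut(L/K)=Aut(L/L_I)$. If $L/L_I$ is Galois, then $L^A=L^{Aut(L/L_I)}=L_I$.

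\emph{$L^A=L_I$ $\Rightarrow$ $t_K(L)=[L:L_I]$.} If $L^A=L_I$, then $L^A/K$ is purely inseparable, so $L$ is normal by Corollary \ref{cor}. Proposition \ref{asc ind}(2) then gives $t_K(L)=[L_S:K]$. Moreover, in the proof of Corollary \ref{cor} we saw that $L=L_SL^A=L_SL_I$, so $[L:L_I]=[L_S:K]$ by linear disjointness. Hence $t_K(L)=[L:L_I]$.

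\emph{$t_K(L)=[L:L_I]$ $\Rightarrow$ ($u_K(L)=1$ and $L=L_SL_I$).} We have the chain
\[
t_K(L)\leq t_K(L)\,u_K(L)=[L_S:K]\leq [L:L_I].
\]
If $t_K(L)=[L:L_I]$, both inequalities are equalities. This gives $u_K(L)=1$ and $[L_S:K]=[L:L_I]$, which forces $L=L_SL_I$.

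\emph{($u_K(L)=1$ and $L=L_SL_I$) $\Rightarrow$ normal.} If $u_K(L)=1$, then $L_S=F$ is Galois over $K$. Proposition \ref{ad prop}(1) gives $F'=FL_I=L_SL_I=L$, so $L$ is normal.

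The main obstacle is the step from $L^A=L_I$ to normality. It relies on the fact, proved inside Corollary \ref{cor}, that $L=L_SL^A$ always holds. Beyond that, the only care needed is the bookkeeping of degrees via linear disjointness of $L_S$ and $L_I$ over $K$.
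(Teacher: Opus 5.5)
Your proof is correct and follows essentially the same route as the paper: both rest on Corollary \ref{cor} (normality $\iff$ $L^A/K$ purely inseparable), the inclusion $L_I\subset L^A$ (equivalently $Aut(L/L_I)=Aut(L/K)$), and the decomposition $L=L_SL^A$. Your cycle is in fact more complete than the paper's short argument, which never spells out the equivalences with $t_K(L)=[L:L_I]$ and ($u_K(L)=1$ and $L=L_SL_I$); your chain $t_K(L)\le t_K(L)\,u_K(L)=[L_S:K]\le[L:L_I]$, together with Proposition \ref{ad prop}(1), supplies exactly those missing steps.
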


\begin{proof}
     We will show that $L^A=L_I$ $\iff$ $L^A$ is purely inseparable. Then by Corollary \ref{cor} we will have $L^A=L_I\iff L/K$ is normal. Suppose $L^A/K$ is  purely inseparable. Thus $L^A\subset L_I$. Since $L/L^A$ is Galois, so $L_I/L^A$ is both purely inseparable and separable. Therefore $L^A=L_I$. \smallskip
     
     Now suppose $L/L_I$ is Galois. As $Aut(L/L_I)\subset Aut(L/K)$, $L_I=L^{Aut(L/L_I)}\supset L^A$. Therefore $L^A=L_I$.
\end{proof}

\begin{remark}\label{LI in LA}
    Note that $i_{L_I}(L)=1$. Also note that $L_I\subset L^A$ is always true. If $x\in L_I$ then there is a $\mu\geq 0$ such that $x^{p^{\mu}}\in K$. Thus for any $\sigma\in Aut(L/K)$, $\sigma(x^{p^{\mu}})=x^{p^{\mu}}$. So $\sigma(x)^{p^{\mu}}=x^{p^{\mu}}$. Thus $(\sigma(x)-x)^{p^{\mu}}=0$. Thus $\sigma(x)=x$ for any $\sigma\in A$ and so $x\in L^A$. 
\end{remark}

\begin{lemma}
    \label{normal intersection}

    Consider $L/K$ and subextensions $K_1/K$ and $K_2/K$.
    \begin{enumerate}
\item If $L/K_1$ and $L/K_2$ are Galois, then $L/(K_1\cap K_2)$ is Galois.
        \smallskip

\item If $L/K_1$ and $L/K_2$ are normal, then $L/(K_1\cap K_2)$ is normal.
    \end{enumerate}
\end{lemma}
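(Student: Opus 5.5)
For part (1) I will use Artin's characterization of Galois extensions. Since $L/K_1$ is Galois, we have $K_1=L^{H_1}$ with $H_1=Aut(L/K_1)$, and likewise $K_2=L^{H_2}$. Both groups are finite: $L/K$ is finite and we work with finite extensions throughout. Both sit inside $Aut(L/K)$, which is finite by Proposition \ref{r prop}(1). Let $H$ be the subgroup of $Aut(L/K)$ generated by $H_1$ and $H_2$. Then $H$ is finite, and by Artin's theorem (Theorem 1 in VI \S 1 of \cite{lang2012algebra}) $L/L^H$ is Galois with group $H$. It then suffices to show $L^H=K_1\cap K_2$. An element fixed by all of $H$ is fixed by $H_1$ and by $H_2$, so it lies in $L^{H_1}\cap L^{H_2}=K_1\cap K_2$. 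Conversely, an element of $K_1\cap K_2$ is fixed by every generator of $H$, hence by all of $H$.

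For part (2) I will reduce to part (1) using Corollary \ref{LI cor} applied over the bases $K_1$ and $K_2$. Write $(L_I)_{K_j}$ for the maximal purely inseparable subextension of $L/K_j$. Normality of $L/K_j$ means $L/(L_I)_{K_j}$ is Galois, with $(L_I)_{K_j}=L^{Aut(L/K_j)}$. Set $E_j=(L_I)_{K_j}$ and $K_0=K_1\cap K_2$. Part (1) applied to $E_1,E_2$ gives that $L/(E_1\cap E_2)$ is Galois, and its fixed group is $H=\langle H_1,H_2\rangle$, where $H_j=Aut(L/K_j)$.

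It remains to see that $L^H/K_0$ is purely inseparable. Then Corollary \ref{cor} over the base $K_0$ gives normality. Here $L^H=L^{Aut(L/K_0)}$ holds automatically: $H\subseteq Aut(L/K_0)$ gives $L^{Aut(L/K_0)}\subseteq L^H$, and the reverse inclusion would follow once $L^H/K_0$ is purely inseparable, by Remark \ref{LI in LA}. So the key point is pure inseparability of $L^H=E_1\cap E_2$ over $K_1\cap K_2$.

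For $x\in E_1\cap E_2$ there are $\mu_1,\mu_2$ with $x^{p^{\mu_1}}\in K_1$ and $x^{p^{\mu_2}}\in K_2$. Taking $\mu=\max(\mu_1,\mu_2)$, the element $x^{p^\mu}$ lies in both $K_1$ and $K_2$, hence in $K_0$. So $x$ is purely inseparable over $K_0$. If $char(K)=0$ everything is separable and part (2) reduces to part (1) directly.

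The main obstacle I expect is bookkeeping rather than substance. One must ensure Corollary \ref{cor} and Corollary \ref{LI cor} are applied with base $K_j$ or $K_0$ in place of $K$. One must also check that the fixed field of the generated group $H$ is exactly $E_1\cap E_2$. A cleaner alternative avoids this. Take a finite $L$, so $L=K_0(\beta_1,\dots,\beta_m)$, and choose a $K_0$-embedding $\sigma:L\to\bar K$. Since $L/K_1$ is normal, $L$ is the splitting field over $K_1$ of some family of polynomials. I would then try to show directly that $\sigma(L)=L$. However, $\sigma$ need not fix $K_1$, so this direct route seems harder. I will therefore commit to the fixed-field argument above.
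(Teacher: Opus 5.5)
Your proof is correct and follows essentially the paper's route. In part (1) the paper uses the full group $Aut(L/K)$ and the observation $L^{Aut(L/K)}\subseteq K_1\cap K_2$ instead of the subgroup generated by $Aut(L/K_1)$ and $Aut(L/K_2)$. In part (2) it likewise passes to the fields $L^{Aut(L/K_j)}$, applies part (1), checks pure inseparability over $K_1\cap K_2$ by the same $p^{\mu}$ argument, and concludes via Corollary \ref{LI cor} where you use the equivalent criterion in Corollary \ref{cor}.
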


\begin{proof}
    \hfill

    \begin{enumerate}
\item Let $A=Aut(L/K)$ and $A_1=Aut(L/K_1)$ and $A_2=Aut(L/K_2)$. Now $A_1,A_2\subset A$. Also $L/L^A$ is Galois. Since $L/K_1$ and $L/K_2$ are Galois, $K_1=L^{A_1}\supset L^A$ and $K_2=L^{A_2}\supset L^A$. Thus $K_1\cap K_2\supset L^A$. Thus $L/(K_1\cap K_2)$ is Galois.

\smallskip

\item Let $L'_I/K_1$ and $L''_I/K_2$ be the respective maximal purely inseparable subextensions of $L/K_1$ and $L/K_2$. Since $L/K_1$ and $L/K_2$ are normal, by Corollary \ref{LI cor} we have $L^{A_1}=L'_I$ and $L^{A_2}=L''_I$ where $A_1$ and $A_2$ are as in previous part. Thus $L/L'_I$ and $L/L''_I$ are Galois. By part (1), $L/(L'_I\cap L''_I)$ is Galois. Let $x\in L'_I\cap L''_I$. We can choose a large integer $\mu$ such that $x^{p^{\mu}}\in K_1\cap K_2$. Thus $(L'_I\cap L''_I)/(K_1\cap K_2)$ is purely inseparable. Let $L'''_I$ be maximal purely inseparable subextension of $L/(K_1\cap K_2)$. Then $L'_I\cap L''_I\subset L'''_I$ and $L'''_I/(L'_I\cap L''_I)$ is both purely inseparable and separable. Thus $L'''_I=L'_I\cap L''_I$. As $L/L'''_I$ is Galois, by Corollary \ref{LI cor} we have that $L/(K_1\cap K_2)$ is normal.

    \end{enumerate}
\end{proof}

\begin{lemma}
    Consider separable extension $S/K$ and purely inseparable extension $I/K$. Then $S/K$ is Galois $\iff$ $SI/I$ is Galois.
\end{lemma}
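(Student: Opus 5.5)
The plan is to use the standard fact that a separable extension and a purely inseparable extension of $K$ are linearly disjoint over $K$. Concretely, by Corollary 3 in VII \S 7 of \cite{lang2012algebra}, $[SI:I]=[S:K]$. It is also immediate that $SI/I$ is separable, since $SI=I(S)$ is generated over $I$ by elements separable over $K$, hence separable over $I$. So in both directions the only issue is normality, and I will compare automorphism groups, using $r$ as in Proposition \ref{r prop}.

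For ($\Rightarrow$), assume $S/K$ is Galois. Write $S=K(\beta)$ by the primitive element theorem, with minimal polynomial $g\in K[t]$. Then $g$ splits into distinct linear factors in $S\subset SI$. Also $SI=I(\beta)$, so $SI$ is the splitting field over $I$ of the separable polynomial $g\in I[t]$. Hence $SI/I$ is Galois.

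For ($\Leftarrow$), assume $SI/I$ is Galois, so $|Aut(SI/I)|=[SI:I]=[S:K]$. Consider the restriction map $\sigma\mapsto\sigma|_S$ from $Aut(SI/I)$ to the set of $K$-embeddings $S\to\bar K$. The key step is to show the image lies in $Aut(S/K)$, i.e.\ $\sigma(S)=S$. Take $x\in S$; then $\sigma(x)$ is a root of the minimal polynomial of $x$ over $K$, so it is separable over $K$. It also lies in $SI$. The maximal separable subextension of $SI/K$ contains $S$, and it is exactly $S$: indeed $SI/S$ is purely inseparable, being generated by the purely inseparable elements of $I$. Thus $\sigma(x)\in S$. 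The restriction is injective because $\sigma$ is determined on $SI=S\cdot I$ by its values on $S$, as it fixes $I$. Therefore $|Aut(S/K)|\ge [S:K]$, and so $S/K$ is Galois.

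The main obstacle is the identification $(SI)_S=S$ together with the degree equality $[SI:I]=[S:K]$, since both rest on the separable/purely inseparable disjointness. I will cite Lang for these rather than reprove them; everything else is formal.
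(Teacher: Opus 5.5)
Your proof is correct and takes essentially the same route as the paper. The forward direction is identical: the minimal polynomial of a primitive element splits in $S\subset SI$. The converse rests on the same two facts, $[SI:I]=[S:K]$ and $(SI)_S=S$. The only difference is packaging: the paper notes that $f$ remains the minimal polynomial of $\alpha$ over $I$, so normality of $SI/I$ puts all roots of $f$ in $SI$ and hence in $S$, whereas you restrict $Aut(SI/I)$ injectively into $Aut(S/K)$ and count. For the counting step you still need $\sigma(S)\subseteq S$ to give $\sigma(S)=S$, which follows from finite-dimensionality over $K$.
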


\begin{proof}
     Since $S/K$ is separable, by primitive element theorem, $S=K(\alpha)$ for some $\alpha\in S$. Let $f$ be minimal polynomial of $\alpha$ over $K$.
     Now $f$ is separable. Let $deg(f)=k$ and $\alpha_1,\dots,\alpha_k$ be all the distinct roots of $f$ in $\bar{K}$. Now since $[SI:I]=[S:K]$. So $[I(\alpha):I]=[K(\alpha):K]$. Thus $f$ is also the minimal polynomial of $\alpha$ over $I$. Suppose $S/K$ is Galois. Then all $\alpha_i\in S$. Thus all $\alpha_i\in SI$. So $SI/I$ is Galois. Conversely suppose $SI/I$ is Galois. Thus all $\alpha_i\in SI$. Since $SI/S$ is purely inseparable, the maximal separable subextension of $SI/K$ is precisely $S/K$. Also since all $\alpha_i$ are separable over $K$ and contained in $SI$, we have all $\alpha_i\in S$. So $S/K$ is Galois.
\end{proof}

\begin{proposition}\label{L'S prop}
    Consider any $L/K$ with $A=Aut(L/K)$. As noted in Remark \ref{LI in LA}, $L_I\subset L^A$.  Let $(L^A)_S$ and $L'_S$ be respective maximal separable subextensions of $L^A/K$ and $L^A/L_I$. Then 

    \begin{enumerate}
    
\item $(L^A)_S=L_S\cap L^A$ and $L_S/(L^A)_S$ is Galois, $Gal(L_S/(L^A)_S)\cong A$ and $[L_S:(L^A)_S]=r_K(L)$ and $[(L^A)_S:K]=s_K(L)$ and $[L:L_S]=[L^A:(L^A)_S]$. Also $r_K(L)\mid r_K(L_S)$.
\smallskip
    
\item $L'_S=L_S L_I\cap L^A$ and $(L_S L_I)/ L'_S$ is Galois, $Gal((L_S L_I)/L'_S)\cong A$ and $[L_S L_I : L'_S]=r_K(L)$ and $[L'_S:L_I]=s_K(L)$ and $[L:L_S L_I]=[L^A:L'_S]$ and $L'_S\cap L_S=(L^A)_S$ and $(L^A)_S L_I=L'_S$.\smallskip

\item $L=L_SL_I$ $\iff$ $L^A=L'_S$ $\iff$ $L^A/L_I$ is separable $\iff$ $[L:L_S]=i_K(L)$.\smallskip

\item  $L=L_SL_I$ $\implies$ $(L^A)_S=(L_S)^{A'}$ where $A'=Aut(L_S/K)$ and thus $r_K(L)=r_K(L_S)$.

    \end{enumerate}

\end{proposition}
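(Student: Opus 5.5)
The whole argument rests on the decomposition $L=L_SL^A$ from the proof of Corollary \ref{cor}, together with Galois theory for the Galois extension $L/L^A$, whose group is $A$ by Proposition \ref{r prop}(1). We also use the standard facts from VII \S 7 of \cite{lang2012algebra}: if $S/K$ is separable and $I/K$ is purely inseparable, then $S$ and $I$ are linearly disjoint over $K$, so $[SI:I]=[S:K]$ and $SI\cap$ (separable closure) behaves well.

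\textbf{Part (1).} Since $L/L^A$ is Galois (hence separable) and $L_S/K$ is separable, every element of $L$ is separable over $L^A$. Combined with $L=L_SL^A$, the natural translation theorem gives that $L/L^A$ Galois implies $L_S/(L_S\cap L^A)$ is Galois. Restriction $A\to \mathrm{Aut}(L_S/(L_S\cap L^A))$ is injective because $L=L_SL^A$. It is surjective because, for a finite Galois $L/L^A$ with $L=L_S\cdot L^A$, the restriction map $\mathrm{Gal}(L/L^A)\to \mathrm{Gal}(L_S/L_S\cap L^A)$ is onto; this is the usual proof via the fixed field of the image, which is $L_S\cap L^A$. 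Hence $[L_S:L_S\cap L^A]=|A|=r_K(L)$.

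Next, $L_S\cap L^A$ is separable over $K$, so it lies in $(L^A)_S$. Conversely $(L^A)_S\subset L_S$, so $(L^A)_S=L_S\cap L^A$. Then $[(L^A)_S:K]=[L_S:K]/r_K(L)=s_K(L)$ by Proposition \ref{r prop}(3). Finally $[L:L_S]=[L:K]/[L_S:K]$ and $[L^A:(L^A)_S]=[L^A:K]/s_K(L)$, and these agree by the identity $[L^A:K]=[L:L_S]\,s_K(L)$ of Proposition \ref{r prop}(3). The divisibility $r_K(L)\mid r_K(L_S)$ is Proposition \ref{r prop}(4).

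\textbf{Part (2).} Replace $K$ by $L_I$. By Remark \ref{LI in LA}, $L_I\subset L^A$, and $\mathrm{Aut}(L/L_I)=A$ because every automorphism fixes $L_I$. Over $L_I$ the maximal separable subextension of $L$ is $L_SL_I$: it is separable over $L_I$, and $L/L_SL_I$ is purely inseparable. Applying part (1) over the base $L_I$ gives:
\begin{itemize}
\item $L'_S=L_SL_I\cap L^A$;
\item $L_SL_I/L'_S$ is Galois with group $A$, so $[L_SL_I:L'_S]=r_K(L)$;
\item $[L:L_SL_I]=[L^A:L'_S]$.
\end{itemize}
By linear disjointness, $[L_SL_I:L_I]=[L_S:K]$. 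Dividing by $r_K(L)$ gives $[L'_S:L_I]=s_K(L)$.

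For the remaining identities, first note that $L'_S\cap L_S$ is separable over $K$ and contained in $L^A$, so it lies in $(L^A)_S$. Conversely $(L^A)_S\subset L_S\cap L_SL_I\cap L^A$, so $L'_S\cap L_S=(L^A)_S$. Next, $(L^A)_SL_I\subset L'_S$, and $[(L^A)_SL_I:L_I]=[(L^A)_S:K]=s_K(L)=[L'_S:L_I]$ by disjointness, so $(L^A)_SL_I=L'_S$.

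\textbf{Part (3).} From part (2), $[L:L_SL_I]=[L^A:L'_S]$, so $L=L_SL_I\iff L^A=L'_S$. The latter holds $\iff L^A/L_I$ is separable, by the definition of $L'_S$. For the last equivalence, $[L:L_S]=[L:L_SL_I]\,[L_SL_I:L_S]=[L:L_SL_I]\,i_K(L)$, so $[L:L_S]=i_K(L)\iff L=L_SL_I$.

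\textbf{Part (4).} Assume $L=L_SL_I$. Restriction $\iota\colon A\to A'$ is injective. To see it is surjective, note that any $\tau\in A'$ extends uniquely to $L_SL_I$ fixing $L_I$, because $L_S$ and $L_I$ are linearly disjoint and $L_SL_I\cong L_S\otimes_K L_I$. The extension is an automorphism of $L$, so $\iota$ is an isomorphism and $r_K(L)=r_K(L_S)$. Then $(L^A)_S=L_S\cap L^A$ equals the fixed field in $L_S$ of $\iota(A)=A'$, which is $(L_S)^{A'}$.

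\textbf{Main obstacle.} The delicate step is the surjectivity and fixed-field claim in part (1): that $L_S\cap L^A$ is exactly the fixed field of the restricted group. Here I would argue directly. An element $x\in L_S$ fixed by every $\sigma|_{L_S}$ is fixed by all of $A$, hence lies in $L^A$. So the fixed field of the image is $L_S\cap L^A$, and Artin's theorem gives both Galoisness and the degree.
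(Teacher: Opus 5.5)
Your proof is correct, and parts (1) and (4) take a different route from the paper.

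\textbf{Parts (2) and (3).} Part (2) is exactly the paper's ``alternative'' argument: apply (1) over the base $L_I$, using $\mathrm{Aut}(L/L_I)=A$ and that $L_SL_I$ is the maximal separable subextension of $L/L_I$. You also fill in the identities $L'_S\cap L_S=(L^A)_S$ and $(L^A)_SL_I=L'_S$, which the paper leaves to the reader. Part (3) is the same as the paper's.

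\textbf{Part (1).} The paper views $L=L_SL^A$ as the compositum over $(L^A)_S$ of the separable $L_S$ and the purely inseparable $L^A$. It then transfers Galoisness and the group from $L/L^A$ down to $L_S/(L^A)_S$ via the preceding lemma ($S/K$ Galois $\iff SI/I$ Galois) and linear disjointness. You instead restrict $A$ to $L_S$ and apply Artin's theorem to the restricted group, whose fixed field is $L_S\cap L^A$. This gives Galoisness, the group and the degree in one step, without that lemma.

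\textbf{Part (4).} The paper shows $L/(L_S)^{A'}L_I$ is Galois with group $\cong\mathrm{Gal}(L_S/(L_S)^{A'})$. Hence $L^A\subset (L_S)^{A'}L_I$ and $r_K(L_S)\mid r_K(L)$, and it concludes by the reverse divisibility and a degree count, getting $L^A=(L_S)^{A'}L_I$ as a bonus. You prove directly that restriction $A\to A'$ is onto, by extending $\tau\in A'$ through $L\cong L_S\otimes_K L_I$. This gives the isomorphism $A\cong A'$ explicitly, and your fixed-field description from (1) then yields $(L^A)_S=(L_S)^{A'}$ at once. A lighter way to get surjectivity: any $K$-embedding of $L$ extending $\tau$ fixes $L_I$ (its elements have a single conjugate) and maps $L_S$ onto $L_S$, hence maps $L=L_SL_I$ onto itself.

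\textbf{One sentence should go.} The ``natural translation theorem'' does not give that $L/L^A$ Galois and $L=L_SL^A$ imply $L_S/(L_S\cap L^A)$ Galois. That theorem goes from $E/k$ Galois to $EF/F$ Galois, and the converse you invoke is false in general. Take $E=\mathbb{Q}(\sqrt[3]{2})$ and $F=\mathbb{Q}(\omega\sqrt[3]{2})$ with $\omega$ a primitive cube root of unity. Then $EF/F$ is quadratic, hence Galois, yet $E\cap F=\mathbb{Q}$ and $E/\mathbb{Q}$ is not Galois.

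What actually makes your argument work is that each $\sigma\in A$ maps $L_S$ onto itself, since automorphisms preserve separability over $K$; this is what makes restriction to $L_S$ well defined. State that explicitly, and the Artin argument in your last paragraph carries the step on its own.
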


\begin{proof}

\hfill

\begin{enumerate}

\item Clearly $(L^A)_S=L_S\cap L^A$. Now $L_S/(L^A)_S$ is separable and $L^A/(L^A)_S$ is purely inseparable and $L=L_S L^A$ and $L/L^A$ is Galois with $Gal(L/L^A)\cong A$. So the assertions follows. For showing $r_K(L)\mid r_K(L_S)$, we observe that as $L_S/(L^A)_S$ is Galois, $(L_S)^{A'}\subset (L^A)_S$ where $A'=Aut(L_S/K)$.

\smallskip

    \item   As $L_S L_I/L_I$ is separable, $(L_S L_I\cap L^A)/L_I$ is separable. Thus $L_S L_I\cap L^A\subset L'_S$. As $L'_S / L_I$ is separable, $(L_S L'_S)/(L_S L_I)$ is separable. As $L/L_S$ is purely inseparable, $L/L_S L_I$ is purely inseparable and thus $(L_S L'_S)/(L_S L_I)$ is also purely inseparable. So $L_S L'_S=L_S L_I$. So $L'_S\subset L_SL_I$ and thus $L'_S=L_S L_I\cap L^A$.\smallskip

Alternatively this can be proven by applying part (1) to the extension $L/L_I$ by observing that $L_SL_I/L_I$ is maximal separable subextension of $L/L_I$ and that $L^{Aut(L/L_I)}=L^A$ as $Aut(L/L_I)=A$ and $[L_SL_I:L_I]=[L_S:K]$. The other assertions follow. \smallskip

\item  Follows from part (2).

\smallskip

\item Suppose $L=L_S L_I$. Now $(L_S)^{A'}/K$ is separable, so $(L_S)^{A'}L_I/(L_S)^{A'}$ is purely inseparable with degree $[I:K]$. As $L_S/(L_S)^{A'}$ is Galois. Therefore $L/(L_S)^{A'}L_I$ is Galois with $Gal(L/(L_S)^{A'}L_I)\cong Gal(L_S/(L_S)^{A'})$. Therefore $L^A\subset (L_S)^{A'}L_I$. Hence $r_K(L_S)=[L_S:(L_S)^{A'}]=[L:(L_S)^{A'}L_I]\mid [L:L^A]= r_K(L)$. By part (1), $r_K(L)\mid r_K(L_S)$. Thus $r_K(L)=r_K(L_S)$ and $L^A= (L_S)^{A'}L_I$. Also $(L_S)^{A'}\subset (L^A)_S$ and by part (1), $[L_S:(L^A)_S]=r_K(L)$. Thus $(L^A)_S=(L_S)^{A'}$. 

    \end{enumerate}
\end{proof}


\begin{proposition} \label{unique desc normal}
  Consider $L/K$. Then
  
\begin{enumerate}

\item There is a unique subextension $N'/K$ such that $L/N'$ is normal with $[L:N']=d_K(L)$. 

\smallskip

\item If $N''/K$ is a subextension such that $L/N''$ is normal, then $N''\supset N'$.

\smallskip

\item  $N'=(L^A)_S$ (where $A=Aut(L/K)$) and $d_K(L)=r_K(L)\cdot [L:L_S]=l_K(L)$ and we have $d_K(L)\cdot s_K(L)=[L:K]$. Thus the multicluster size is the descending normal index.

\smallskip

\item $L/K$ is normal $\iff d_K(L)=[L:K]$.
    \end{enumerate}
\end{proposition}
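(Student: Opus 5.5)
The plan is to first establish part (2) in a strong form, namely that \emph{every} subextension $N''/K$ with $L/N''$ normal contains $(L^A)_S$. Parts (1), (3) and (4) then follow quickly.

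Fix $N''$ with $L/N''$ normal, and let $L''_I$ be the maximal purely inseparable subextension of $L/N''$.

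\begin{enumerate}
\item By Corollary \ref{LI cor} applied over $N''$, we have $L^{Aut(L/N'')}=L''_I$, and $L/L''_I$ is Galois.
\item Since $Aut(L/N'')\subset A$, we get $L^A\subset L^{Aut(L/N'')}=L''_I$.
\item Hence $(L^A)_S\subset L''_I$. Now $(L^A)_S$ is separable over $K$, hence over $N''$. It also lies in $L''_I$, which is purely inseparable over $N''$. So the compositum $N''(L^A)_S/N''$ is both separable and purely inseparable, forcing $(L^A)_S\subset N''$.
\end{enumerate}

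Next we check that $N'':=(L^A)_S$ itself qualifies, i.e.\ that $L/(L^A)_S$ is normal. We use the last criterion of Corollary \ref{LI cor} over the base $(L^A)_S$: it suffices to show that $L/L^A$ is Galois and that $L^A$ is the maximal purely inseparable subextension of $L/(L^A)_S$.

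\begin{enumerate}
\item $L/L^A$ is Galois by Proposition \ref{r prop}(1).
\item $L^A/(L^A)_S$ is purely inseparable, since $(L^A)_S$ is the maximal separable subextension of $L^A/K$ (Lang VII \S7).
\item Any element of $L$ purely inseparable over $(L^A)_S$ is purely inseparable over $L^A$. It is also separable over $L^A$, because $L/L^A$ is Galois. Hence it lies in $L^A$.
\end{enumerate}

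So $L^A$ is that maximal purely inseparable subextension, $L/L^A$ is Galois, and $L/(L^A)_S$ is normal.

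Combining the two steps, $(L^A)_S$ is the smallest subextension over which $L$ is normal. This gives (2), and also (1): any $N''$ attaining the maximal degree $[L:N'']$ contains $(L^A)_S$ and has the same degree, so it equals $(L^A)_S$. Thus $N'=(L^A)_S$.

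For (3), Proposition \ref{L'S prop}(1) gives $[(L^A)_S:K]=s_K(L)$. Therefore
\[
d_K(L)=[L:K]/s_K(L)=l_K(L)=r_K(L)\cdot [L:L_S],
\]
using Proposition \ref{r prop}(3) for the middle equality and the definition of $l_K(L)$ for the last.

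For (4), $d_K(L)=[L:K]$ holds iff $N'=K$, i.e.\ iff $L/K$ is normal. Equivalently, by (3) it holds iff $s_K(L)=1$, which is Corollary \ref{cor}.

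The main obstacle is the normality step: correctly identifying $L^A$ as the maximal purely inseparable subextension of $L/(L^A)_S$, and applying Corollary \ref{LI cor} over a base field other than $K$. Care is needed that $Aut(L/(L^A)_S)=A$; this holds since $A$ fixes $L^A$, which contains $(L^A)_S$.
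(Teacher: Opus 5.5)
Your proof is correct, but it is organized differently from the paper's.

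The paper obtains (1) and (2) abstractly, before knowing what $N'$ is. It uses Lemma~\ref{normal intersection}(2): the intermediate fields over which $L$ is normal are closed under intersection, so a maximal-degree one is unique and contained in all others. Only in (3) does it identify $N'$, in three steps:
\begin{enumerate}
\item It shows $L/(L^A)_S$ is normal, using $L=L_SL^A$ with $L_S/(L^A)_S$ Galois and $L^A/(L^A)_S$ purely inseparable.
\item It deduces $N'\subset (L^A)_S$ from (2), hence $Aut(L/N')=A$.
\item It applies Corollary~\ref{cor} over $N'$ to see that $(L^A)_S/N'$ is purely inseparable as well as separable.
\end{enumerate}

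You run that final \emph{separable and purely inseparable} argument for an arbitrary $N''$. Your key observation is that the inclusion $L^A\subset L^{Aut(L/N'')}$ already suffices, so you need not know beforehand that $N''\subset L^A$. This identifies the minimum $(L^A)_S$ directly and gives (1), (2) and the first claim of (3) at once, with no need for Lemma~\ref{normal intersection}. The paper's route, in exchange, exposes the structural reason for uniqueness, namely closure under intersections, which parallels the Galois case of Lemma~\ref{normal intersection}(1).

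Your other two steps are also valid alternatives to the paper's. For normality of $L/(L^A)_S$, you identify $L^A$ as the maximal purely inseparable subextension of $L/(L^A)_S$ and invoke the last criterion of Corollary~\ref{LI cor}. For the degree formula, you go through $[(L^A)_S:K]=s_K(L)$ instead of $[L_S:(L^A)_S]\cdot[L:L_S]$.

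The point you flag, $Aut(L/(L^A)_S)=A$, is true but not actually needed. The criterion you use only asks that $L$ be Galois over the maximal purely inseparable subextension.
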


\begin{proof}

\hfill

\begin{enumerate}
    \item  If $N_1/K$ is another subextension such that $L/N_1$ is normal with $[L:N_1]=d_K(L)$, then by Lemma \ref{normal intersection} (2), we have $L/(N_1\cap N')$ is also normal and $[L:(N_1\cap N')]\geq d_K(L)$. By Definition \ref{dni ani}, $[L:(N_1\cap N')]= d_K(L)$. Thus $N_1=N_1\cap N'=N'$.\smallskip

    \item Now if $N''/K$ is a subextension such that $L/N''$ is normal, then $L/(N'\cap N'')$ is also normal. Thus $N'\cap N''=N'$. Therefore $N''\supset N'$.\smallskip

\item By Proposition \ref{L'S prop} (1), $L_S/(L^A)_S$ is Galois. Also $L^A/(L^A)_S$ is purely inseparable and $L=L_SL^A$. Thus $L/(L^A)_S$ is normal. By part (2), $(L^A)_S\supset N'$. Also $L^A\supset N'$. Thus the unique intermediate extension $N/N'$ of $L/N'$ such that $L/N$ is Galois of maximum possible degree is $N=L^{Aut(L/N')}=L^A$. Since $L/N'$ is normal, $L^A/N'$ is purely inseparable by Corollary \ref{cor}. Thus $(L^A)_S/N'$ is both purely inseparable and separable. Thus $N'=(L^A)_S$.\smallskip

Thus $d_K(L)=[L:N']=[L:(L^A)_S]=[L_S:(L^A)_S]\cdot [L:L_S]$ and $[L_S:(L^A)_S]=r_K(L)$ by Proposition \ref{L'S prop} (1). The other assertions follow.

\smallskip

\item This is trivial.
\end{enumerate}
    
\end{proof}

\begin{lemma}\label{compositum}
    Consider $L/K$ and $L'/K$. Then 

    \begin{enumerate}
        \item $(L\cap L')_S=L_S \cap L'_S$ and $(LL')_S=L_S L'_S$.

        \item $(L\cap L')_I=L_I \cap L'_I$. Suppose $L/K$ and $L'/K$ are normal, then $(LL')_I=L_I L'_I$.
    \end{enumerate}
\end{lemma}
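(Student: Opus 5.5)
We argue inside $\bar K$, with $K$ of characteristic $p>0$ in the parts that need it; in characteristic $0$ everything is separable and the statements are trivial. We use the standard facts from VII \S 7 of \cite{lang2012algebra}. An element is separable over $K$ if and only if its minimal polynomial is separable. An element $x$ is purely inseparable over $K$ if and only if $x^{p^{\mu}}\in K$ for some $\mu\geq 0$. Separable elements form a field, and so do purely inseparable elements.

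For part (1), the separable parts are handled elementwise. Take $x\in (L\cap L')_S$. Then $x$ lies in $L$ and in $L'$ and is separable over $K$, so $x\in L_S\cap L'_S$. Conversely, an element of $L_S\cap L'_S$ lies in $L\cap L'$ and is separable over $K$, hence lies in $(L\cap L')_S$.

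For the compositum, $L_SL'_S\subset LL'$ is generated by separable elements, so it is separable over $K$. Hence $L_SL'_S\subset (LL')_S$. Next, $L/L_S$ is purely inseparable, so $LL'_S/L_SL'_S$ is purely inseparable, since it is generated by elements of $L$, each purely inseparable over $L_S$. In the same way $LL'/LL'_S$ is purely inseparable, so $LL'/L_SL'_S$ is purely inseparable. Now $(LL')_S/L_SL'_S$ is separable, because $(LL')_S/K$ is, and it is purely inseparable, being a subextension of $LL'/L_SL'_S$. Therefore it is trivial, and $(LL')_S=L_SL'_S$.

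For part (2), the intersection statement is the same elementwise argument with \emph{purely inseparable} in place of \emph{separable}: $x\in L\cap L'$ has $x^{p^{\mu}}\in K$ exactly when it lies in both $L_I$ and $L'_I$. For the compositum, $L_IL'_I$ is generated by purely inseparable elements, so $L_IL'_I\subset (LL')_I$. The reverse inclusion is the main point, and it is where normality enters.

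Since $L$ and $L'$ are normal over $K$, Corollary~\ref{LI cor} shows that $L/L_I$ and $L'/L'_I$ are Galois. Put $I=L_IL'_I$. Then $LI/I$ and $L'I/I$ are Galois, because base change of a Galois extension is Galois. Hence the compositum $LL'=LI\cdot L'I$ is Galois over $I$, and in particular separable over $I$.

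Now $(LL')_I/I$ is purely inseparable, since every element of $(LL')_I$ has a $p$-power in $K\subset I$. It is also separable, being a subextension of $LL'/I$. Hence $(LL')_I=I=L_IL'_I$.

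The only delicate step is this last one, showing that $LL'/L_IL'_I$ is separable. We obtained it from Corollary~\ref{LI cor} applied to $L$ and $L'$ separately, together with the fact that base change and compositum of Galois extensions are Galois.
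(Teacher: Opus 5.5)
Your proof is correct and is essentially the paper's. For (1) you show $LL'/L_SL'_S$ is purely inseparable through a tower generated by purely inseparable elements, where the paper raises $\sum l_i l'_i$ to a large $p^{\mu}$-th power; for (2) your use of Corollary~\ref{LI cor} to get $LL'$ separable over $L_IL'_I$ is exactly the paper's ``alternative'' proof, its primary argument instead comparing degrees in $(LL')_S(L_IL'_I)=(LL')_S(LL')_I$.
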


\begin{proof} The result is trivial when $K$ is perfect. Suppose it is not perfect and $char(K)=p>0$.

\begin{enumerate}
    \item We prove the second assertion. Clearly $L_S L'_S \subset (LL')_S$. Now let $\Sigma\ l_i l'_i\in LL'$ where each $l_i\in L, l'_i\in L'$. We can choose a large integer $\mu$ such that each $l_i^{p^\mu}\in L_S$ and ${l'}_i^{p^{\mu}}\in L'_S$. Thus $(\Sigma\ l_il'_i)^{p^{\mu}}=\Sigma\ l_i^{p^{\mu}} {l'}_i^{p^{\mu}} \in L_S L'_S$. Hence $LL'/ L_SJ_S$ is purely inseparable. Thus $(LL')_S /L_SJ_S$ is purely inseparable. But $(LL')_S /L_SJ_S$ is also separable, so $(LL')_S=L_S L'_S$. 
\smallskip

\item
We prove the second assertion. As $L/K$ and $L'/K$ are normal, $LL'/K$ is normal. By Corollary \ref{LI cor}, $L^A=L_I$ and $L=L_S L_I$. Similarly we have $L'=L'_S L'_I$ and $LL'=(LL')_S (LL')_I$. Thus $(L_S L_I)(L'_S L'_I)=(LL')_S (LL')_I$.\smallskip

Now clearly $L_I L'_I\subset (LL')_I$. From Part (1), $(LL')_S=L_S L'_S$. Thus $(LL')_S(L_I L'_I)=(LL')_S (LL')_I$. By Corollary 3 in VII \S 7 in \cite{lang2012algebra}, it follows that $[(LL')_S:K][(L_I L'_I):K]=[(LL')_S:K][(LL')_I:K]$. Therefore $[(L_I L'_I):K]=[(LL')_I:K]$. Hence $L_I L'_I=(LL')_I$.\smallskip

Alternatively, one can observe that as $L/L_I$ and $L'/L'_I$ are separable, any element of $L$ or $L'$ is also separable over $L_I L'_I$. Thus $LL'/L_I L'_I$ is separable. Hence $(LL')_I/ L_IL'_I$ is both separable and purely inseparable.\end{enumerate}\end{proof}

Now we generalize Proposition 2.2 in \cite{jaiswal2025rootcapacityintersectionindicium}.

\begin{proposition}\label{f'f}
Consider $L/K$. Let $F'/K$ be the largest normal subextension of $L/K$ and $F/K$ be the largest Galois subextension of $L/K$. Then  

\begin{enumerate}
    \item 
   $\tilde{L}=\Pi_{i=1}^s L_i$ and $F'=\cap_{i=1}^{s} L_i$ where $\tilde{L}$ is normal closure of $L/K$ and $L_i$'s are the $s=s_K(L)$ many distinct fields isomorphic to $L$ over $K$.

\smallskip

    \item 
   $\tilde{(L_S)}=\Pi_{j=1}^{s'} (L_S)_j$ and $F=\cap_{j=1}^{s'} (L_S)_j$ where $\tilde{(L_S)}$ is Galois closure of $L_S/K$ and $(L_S)_j$'s are the $s'=s_K(L_S)$ many distinct fields isomorphic to $L_S$ over $K$.

\smallskip

    \item $\tilde{(L_S)}=\Pi_{i=1}^{s} (L_i)_S$ and $F=\cap_{i=1}^{s} (L_i)_S$ where $L_i$'s are as above.

    \smallskip

    \item $(\tilde{L})_S=\tilde{(L_S)}$ where $(\tilde{L})_S$ is the maximal separable subextension of $\tilde{L}/K$.

    \smallskip

    \item $F'_S=F$.
    \smallskip

    \item For any $i$, $(L_i)_I=L_I$.\smallskip

    \item If $L=L_SL_I$ then $(\tilde{L})_I=L_I$.
    
\end{enumerate}
\end{proposition}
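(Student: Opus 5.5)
We prove the seven parts in order; each later part uses the earlier ones together with the separable/purely inseparable bookkeeping of Lemma \ref{compositum}, Corollary \ref{LI cor} and Proposition \ref{L'S prop}.

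\textbf{Part (1).} The fields $L_i$ are the images $\sigma(L)$ of the $K$-embeddings $\sigma:L\to\bar K$. The normal closure $\tilde L$ is generated by all conjugates of generators of $L$, so $\tilde L=\Pi_{i=1}^s L_i$.
\begin{itemize}
\item Let $G=\cap_i L_i$. Any $K$-automorphism $\tau$ of $\bar K$ permutes the $L_i$, hence $\tau(G)=G$. Therefore $G/K$ is normal and $G\subset L$, giving $G\subset F'$.
\item Conversely, $F'/K$ is normal, so $\sigma(F')=F'$ for every embedding $\sigma$. Thus $F'\subset\sigma(L)=L_i$ for all $i$, and $F'\subset G$.
\end{itemize}

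\textbf{Part (2).} This is the same argument applied to the separable extension $L_S/K$. Here normal closure coincides with Galois closure, and the largest normal subextension of $L_S/K$ is Galois. Since $F\subset L_S$ by Definition \ref{asc ind def}, it is the largest Galois subextension of $L_S/K$.

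\textbf{Part (3).} Each $(L_i)_S=\sigma(L)_S=\sigma(L_S)$ is a conjugate of $L_S$. Conversely, every conjugate $\sigma(L_S)$ of $L_S$ arises this way, because $\sigma$ extends to $L$. So the set $\{(L_i)_S\}$ equals the set $\{(L_S)_j\}$, possibly with repetitions, consistent with $s_K(L_S)\mid s_K(L)$. Hence the compositum and the intersection are unchanged, and part (3) follows from part (2).

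\textbf{Part (4).} Iterating Lemma \ref{compositum}(1) gives $(\Pi_i L_i)_S=\Pi_i (L_i)_S$. Combined with parts (1) and (3), this is $\tilde{(L_S)}$.

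\textbf{Part (5).} The intersection part of Lemma \ref{compositum}(1), iterated, gives $F'_S=(\cap_i L_i)_S=\cap_i (L_i)_S=F$ by parts (1) and (3). Alternatively, this was already obtained inside the proof of Proposition \ref{ad prop}(1).

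\textbf{Part (6).} Elements of $L_I$ satisfy $x^{p^\mu}\in K$, so every embedding $\sigma$ fixes them, as in Remark \ref{LI in LA}. Hence $L_I=\sigma(L_I)=\sigma(L)_I=(L_i)_I$, where the middle equality holds because $\sigma$ carries the maximal purely inseparable subextension of $L/K$ onto that of $\sigma(L)/K$.

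\textbf{Part (7).} This is the only step needing care; the obstacle is that Lemma \ref{compositum}(2) is stated only for normal extensions, whereas the $L_i$ need not be normal.
\begin{itemize}
\item First show $\tilde L=(\tilde{L})_S\, L_I$. Conjugating $L=L_SL_I$ by $\sigma$ and using part (6) gives $L_i=(L_i)_SL_I$. Taking the compositum, $\tilde L=\Pi_i (L_i)_S\cdot L_I=\tilde{(L_S)}L_I$, which equals $(\tilde L)_S L_I$ by part (4).
\item Now $\tilde L/L_I$ is generated by separable elements, so it is separable. Hence $(\tilde L)_I/L_I$ is both separable and purely inseparable, so $(\tilde L)_I=L_I$.
\item For the degree check one may alternatively use Corollary 3 in VII \S 7 of \cite{lang2012algebra}: $[\tilde L:(\tilde L)_S]=[L_I:K]$.
\end{itemize}
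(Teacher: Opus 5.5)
Your proof is correct and follows essentially the same route as the paper's: parts (1)--(6) rest on the same ingredients (conjugates of $L$ being permuted by $K$-automorphisms, extension of embeddings from $L_S$ to $L$, iterating Lemma~\ref{compositum}(1), and $\sigma(L_I)=L_I$ for every $K$-embedding $\sigma$), and part (7) reduces, as in the paper, to $\tilde L=\tilde{(L_S)}L_I$ being separable over $L_I$. The only small difference is in (7), where you get $L_i=(L_i)_SL_I$ directly from part (6); the paper instead first uses $s_K(L)=s_K(L_S)$ from Proposition~\ref{L'S prop}(4) to list the conjugates of $L$ as $(L_S)_jL_I$, so your version is slightly more self-contained.
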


\begin{proof}\hfill
    \begin{enumerate}
    
        \item 
        Clearly $L_i\subset \tilde{L}$ for each $i$. Thus $\Pi_{i=1}^s L_i\subset \tilde{L}$. It is enough to show that $(\Pi_{i=1}^s L_i)/K$ is normal. Consider an embedding $\sigma: \Pi_{i=1}^s L_i\rightarrow \tilde{L}$ fixing $K$. Now $\sigma$ permutes $L_i$'s. Thus $\sigma (\Pi_{i=1}^s L_i)=\Pi_{i=1}^s L_i$.

        \smallskip
        
       Now consider the $K$-isomorphism $\sigma_i:L\rightarrow L_i$ for each $i$. Since $F'/K$ is normal, we have $\sigma_i(F')=F'$ for each $i$. Thus $F'\subset L_i$ for each $i$. Thus $F'\subset \cap_{i=1}^s L_i$. It is enough to show that $(\cap_{i=1}^s L_i)/K$ is normal. Consider an embedding $\sigma: \cap_{i=1}^s L_i\rightarrow \tilde{L}$ fixing $K$. We can extend this to a $K$-isomorphism $\tilde{\sigma}: \tilde{L}\rightarrow \tilde{L}$. Now $\tilde{\sigma}$ permutes $L_i$'s. Let $l\in \cap_{i=1}^s L_i$. So $\sigma(l)\in \cap_{i=1}^s \tilde{\sigma}(L_i)=\cap_{i=1}^s L_i$.

     \smallskip  
     
     \item Since $F/K$ is the largest normal subextension of $L_S/K$, the result follows from Part (1).\smallskip

        \item   It is enough to show that the sets $A=\{(L_i)_S\}_{i=1}^s$ and $B=\{(L_S)_j\}_{j=1}^{s'}$ are equal. Consider $(L_i)_S\in A$ and consider the $K$-isomorphism $\sigma : L\rightarrow L_i$. One can show that $\sigma(L_S)= (\sigma(L))_S$. Thus $\sigma(L_S)=(L_i)_S$. Thus $(L_i)_S\in B$.\smallskip

        Conversely, consider $(L_S)_j\in B$ and consider the $K$-isomorphism $\sigma: L_S\rightarrow (L_S)_j$. Since $L/L_S$ is purely inseparable and thus normal, so by Theorem 3.20 in \cite{morandi2012field} we can find a normal extension $L'/(L_S)_j$ with an isomorphism $\tilde{\sigma}: L\rightarrow L'$ such that $\tilde{\sigma}|_{L_S}=\sigma$. Thus $\tilde{\sigma}$ is a $K$-isomorphism. Thus $L'/K$ is isomorphic to $L/K$. Also $(L')_S=(\tilde{\sigma}(L))_S=\tilde{\sigma}(L_S)=\sigma(L_S)=(L_S)_j$. Thus $(L_S)_j\in A$.

\smallskip

        \item By Part (1) and Lemma \ref{compositum} (1), $(\tilde{L})_S=\Pi_{i=1}^s (L_i)_S$. Thus by Part (3) we are done.
\smallskip

 \item Follows from Lemma \ref{compositum} (1). Also follows from proof of Proposition \ref{ad prop} (1).\smallskip
 
 \item Consider the $K$-isomorphism $\sigma : L\rightarrow L_i$. Since $L_I/K$ is normal, we have $\sigma(L_I)=L_I$. Thus $L_I\subset (L_i)_I$. Similarly we have $(L_i)_I\subset L_I$.\smallskip

 \item As $L=L_SL_I$, we have $s_K(L)=s_K(L_S)$ by Proposition \ref{L'S prop} (4) which we denote by $s$. Let $\{(L_S)_i\}_{i=1}^s$ be all the distinct fields isomorphic to $L_S/K$. Then $\{(L_S)_i L_I\}_{i=1}^s$ are precisely all the distinct fields isomorphic to $L/K$. Thus $\tilde{L}=\Pi_{i=1}^s ((L_S)_iL_I)= (\Pi_{i=1}^s (L_S)_i)L_I=\tilde{(L_S)}L_I$. Thus $\tilde{L}/L_I$ is Galois. Clearly $L_I\subset (\tilde{L})_I$. So $(\tilde{L})_I/L_I$ is both purely inseparable and separable, so we are done.
 
 \end{enumerate}\end{proof}


 \begin{proposition}\label{s/s'}
     Consider $L/K$. The number of distinct extensions $L'/K$ that are isomorphic to $L/K$ such that $L'_S=L_S$ is exactly $s_K(L)/s_K(L_S)$.
 \end{proposition}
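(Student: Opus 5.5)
Proof plan. Let $s=s_K(L)$ and $s'=s_K(L_S)$, and let $L_1,\dots,L_s$ be the distinct fields in $\bar K$ that are $K$-isomorphic to $L$. I will study the map $\pi: L_i \mapsto (L_i)_S$ from this set to the set of fields $K$-isomorphic to $L_S$. The proof of Proposition \ref{f'f} (3) shows that $\pi$ is well defined (since $\sigma(L_S)=(\sigma(L))_S$) and surjective (by extending a $K$-isomorphism $L_S\to (L_S)_j$ to $L$, via Theorem 3.20 in \cite{morandi2012field}). The number we want is $|\pi^{-1}(L_S)|$. The plan is to show that every fibre of $\pi$ has the same size. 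Then each fibre has size $s/s'$, and in particular this holds for the fibre over $L_S$.

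To get equal fibres, I will use $G=Aut(\bar K/K)$, which acts transitively on $\{L_i\}$ and transitively on $\{(L_S)_j\}$. The map $\pi$ is $G$-equivariant: $\pi(\tau L_i)=(\tau L_i)_S=\tau((L_i)_S)$, because any $K$-isomorphism preserves maximal separable subextensions. Fix a target $(L_S)_j$ and choose $\tau\in G$ with $\tau(L_S)=(L_S)_j$. Such a $\tau$ exists by extending a $K$-embedding of $L_S$ to $\bar K$. Then $\tau$ gives a bijection $\pi^{-1}(L_S)\to\pi^{-1}((L_S)_j)$, with inverse $\tau^{-1}$. Hence all $s'$ fibres have equal cardinality $c$, so $c\cdot s'=s$ and $c=s/s'$. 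This also confirms Proposition \ref{r prop} (4) that $s'\mid s$.

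As a cross-check, I can compute $c$ directly through embeddings. The $K$-embeddings $L\to\bar K$ whose image has maximal separable part $L_S$ are exactly the embeddings $\phi$ with $\phi(L_S)=L_S$. Restriction $\phi\mapsto\phi|_{L_S}$ lands in $Aut(L_S/K)$. Each automorphism of $L_S$ extends uniquely to $L$, since $L/L_S$ is purely inseparable and embeddings of a purely inseparable extension are unique. So there are $r_K(L_S)$ such embeddings. By Proposition \ref{r prop} (3), each image field accounts for $r_K(L)$ embeddings, so $c=r_K(L_S)/r_K(L)$. This agrees with $s/s'$ because $r_K(L)s_K(L)=[L_S:K]=r_K(L_S)s_K(L_S)$.

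I expect the main obstacle to be justifying $\sigma(L_S)=(\sigma(L))_S$ and the unique extension of embeddings across $L/L_S$ cleanly. The first holds because a $K$-isomorphism preserves separability over $K$. The second holds because an element $x$ with $x^{p^m}\in L_S$ has a uniquely determined image, namely the unique $p^m$-th root of $\phi(x^{p^m})$. I will use the second argument as the main proof.
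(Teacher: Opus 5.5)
Your proposal is correct and is essentially the paper's proof. The paper equalizes the fibres of $L_i\mapsto (L_i)_S$ by extending a $K$-isomorphism $\sigma\colon L\to L''$ to $\tilde{\sigma}\colon\tilde{L}\to\tilde{L}$, where you use an element of $Aut(\bar{K}/K)$, and it gives your embedding count $r_K(L_S)/r_K(L)=s_K(L)/s_K(L_S)$ as its alternative argument.
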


 \begin{proof}
Let $L_i$'s be the $s=s_K(L)$ many distinct fields isomorphic to $L$ over $K$ and $S=\{L_i\}_{i=1}^s$ and $(L_S)_j$'s be the $s'=s_K(L_S)$ many distinct fields isomorphic to $L_S$ over $K$. We partition $S$ into $s'$ many disjoint subsets $S_j=\{L_i\in S\mid (L_i)_S=(L_S)_j\}$. By proof of Proposition \ref{f'f} (3), each $S_j\neq \emptyset$. We  will show that each $S_j$ has same cardinality and hence $|S_j|=s_K(L)/s_K(L_S)$ for each $j$. By relabeling, let $L=L_1,\dots,L_m$ be all the distinct fields $L'/K$ that are isomorphic to $L/K$ such that $L'_S=L_S$. Let $L''/K$ be a field isomorphic to $L/K$ such that $L''_S\neq L_S$. Consider the $K$-isomorphism $\sigma : L\rightarrow L''$. Then $\sigma(L_S)=L''_S$. We can extend $\sigma$ to a $K$-isomorphism $\tilde{\sigma}: \tilde{L}\rightarrow \tilde{L}$ with $\tilde{\sigma}|_{L}=\sigma$. Now for $1\leq i\leq m$, $\tilde{\sigma}(L_i)$ are distinct fields with $(\tilde{\sigma}(L_i))_S=\tilde{\sigma}((L_i)_S)=\tilde{\sigma}(L_S)=\sigma(L_S)=L''_S$. If $m'$ is the number of all distinct fields isomorphic to $L/K$ with maximal separable subextension $L''_S$, then $m\leq m'$. Similarly we can show $m'\leq m$, so we are done.


\smallskip

     Alternatively, consider the notations as in proof of Proposition \ref{r prop} (3). Now embeddings of $L$ in $\bar{K}$ fixing $K$ are in one to one correspondence with embeddings of $L_S$ in $\bar{K}$ fixing $K$. Thus the embeddings $\sigma : L\rightarrow \bar{K}$ such that $\sigma_K=id_K$ and $\sigma(L_S)=L_S$ are in one to one correspondence with elements in $Aut(L_S/K)$. Thus the number of distinct fields $L'/K$ that are isomorphic to $L/K$ such that $L'_S=L_S$ is $|Aut(L_S/K)|/|Aut(L/K)|$. Now by Proposition \ref{r prop} (4), $r_K(L_S)/r_K(L)=s_K(L)/s_K(L_S)$.
 \end{proof}

 \begin{proposition}
  Consider finite $L/K$ and let $\tilde{L}/K$ be its normal closure. Then we have $[\tilde{L}:K]\mid ([L_S:K]!\cdot [L:L_S]^{s_K(L)})$. Thus also $[\tilde{L}:K]\mid [L:K]!$.
 \end{proposition}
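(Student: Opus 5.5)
We need to show $[\tilde{L}:K] \mid [L_S:K]!\cdot [L:L_S]^{s}$, where $s=s_K(L)$, and then deduce $[\tilde L:K]\mid [L:K]!$.

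The plan is to split $[\tilde{L}:K]$ as $[(\tilde{L})_S:K]\cdot[\tilde{L}:(\tilde{L})_S]$ and bound each factor separately. By Proposition \ref{f'f} (4), $(\tilde{L})_S=\tilde{(L_S)}$ is the Galois closure of $L_S/K$. Its Galois group acts faithfully and transitively on the $[L_S:K]$ embeddings of $L_S$ into $\bar K$, so it embeds in $S_{[L_S:K]}$. This gives $[(\tilde{L})_S:K]\mid [L_S:K]!$.

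For the inseparable factor, use Proposition \ref{f'f} (1): $\tilde{L}=\prod_{i=1}^s L_i$, hence $\tilde{L}=\tilde{(L_S)}\,L_1\cdots L_s$. We build a tower $E_0=\tilde{(L_S)}$ and $E_i=E_{i-1}L_i$, which ends at $E_s=\tilde L$. Each $L_i$ is isomorphic to $L$, so $[L_i:(L_i)_S]=[L:L_S]$. Moreover $(L_i)_S\subset \tilde{(L_S)}\subset E_{i-1}$, and $L_i/(L_i)_S$ is purely inseparable.

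The key step is the claim that $[E_{i-1}L_i:E_{i-1}]$ divides $[L_i:(L_i)_S]$. The extension $E_{i-1}L_i/E_{i-1}$ is generated by $L_i$, which is purely inseparable over $(L_i)_S\subset E_{i-1}$. So this is a purely inseparable extension whose degree is a power of $p$. I would prove the divisibility by choosing generators $x_1,\dots,x_m$ of $L_i$ over $(L_i)_S$ and adjoining them one at a time. At each step the minimal polynomial over the larger field is $t^{p^{e'}}-c$ with $e'\le e$, where $t^{p^e}-c'$ is the minimal polynomial over the smaller field. The step degrees over $E_{i-1}$ are therefore bounded by the corresponding step degrees over $(L_i)_S$.

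This step is the main obstacle, because "bounded by" is not literally "divides". Since every degree involved is a power of $p$, an inequality between $p$-powers is the same as divisibility. So we get $[E_{i-1}L_i:E_{i-1}]\le [L_i:(L_i)_S]$ with both sides $p$-powers, hence divisibility. Multiplying over the tower gives $[\tilde L:\tilde{(L_S)}]\mid [L:L_S]^s$, and combining the two factors gives the first assertion. If $K$ is perfect, every inseparable factor is $1$ and this part is trivial.

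For the final claim, put $m=[L_S:K]$ and $q=[L:L_S]$, so that $[L:K]=mq$. It suffices to show $m!\,q^{s}\mid (mq)!$. Using $s\le m$ from Proposition \ref{r prop} (3), it is enough to show $m!\,q^m\mid (mq)!$. This holds because $(mq)!/(q!)^m$ is a multinomial coefficient, hence an integer. Also $(q!)^m=q^m((q-1)!)^m$, and $(mq)!/(m!(q!)^m)$ is an integer, since it counts the partitions of $mq$ objects into $m$ unordered blocks of size $q$. Therefore $m!\,q^m$ divides $m!(q!)^m$, which divides $(mq)!$.
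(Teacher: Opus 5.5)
Your proof is correct and follows essentially the same route as the paper. You split $[\tilde{L}:K]$ into $[(\tilde{L})_S:K]=[\tilde{(L_S)}:K]$, which divides $[L_S:K]!$, and $[\tilde{L}:(\tilde{L})_S]$, which you bound by adjoining the conjugates $L_i$ one at a time (your tower $E_i=E_{i-1}L_i$ is the paper's induction on $s_K(L)$ unrolled). You then finish with $s_K(L)\le [L_S:K]$ and $m!\,(q!)^m\mid (mq)!$. You also make explicit two points the paper leaves implicit: the compositum bound is a priori only an inequality and becomes a divisibility because all the degrees involved are powers of $p$, and $m!\,(q!)^m$ divides $(mq)!$ because $(mq)!/(m!\,(q!)^m)$ counts partitions into $m$ unordered blocks of size $q$.
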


 \begin{proof}
As $L_S/K$ is separable, $[\tilde{(L_S)}:K]\mid [L_S:K]!$. Now $(\tilde{L})_S=\tilde{(L_S)}$ by Proposition \ref{f'f} (4). So $[(\tilde{L})_S:K]\mid [L_S:K]!$. Now we will show that $[\tilde{L}: (\tilde{L})_S]\mid [L:L_S]^{s_K(L)}$ which will prove the result. Now $\tilde{L}=\Pi_{i=1}^{s_K(L)} L_i$ where $L_i$'s are all the distinct fields isomorphic to $L$ over $K$. Also $\tilde{(L_S)}=(\tilde{L})_S=\Pi_{i=1}^{s_K(L)} (L_i)_S$. Let $s=s_K(L)$ and observe that $(L_1\cdots L_{s-1})/(L_1\cdots L_{s-1})_S$ is purely inseparable and $(\tilde{L})_S/(L_1\cdots L_{s-1})_S$ is separable. Thus $[(L_1\cdots L_{s-1})(\tilde{L})_S:(\tilde{L})_S]=[(L_1\cdots L_{s-1}):(L_1\cdots L_{s-1})_S]$. Similarly $[L_s (\tilde{L})_S:(\tilde{L})_S]=[L_s: (L_s)_S]=[L:L_S]$. Therefore we have $[\tilde{L}: (\tilde{L})_S]\mid [(L_1\cdots L_{s-1})(\tilde{L})_S:(\tilde{L})_S]\cdot [L_s (\tilde{L})_S:(\tilde{L})_S]=[(L_1\cdots L_{s-1}):(L_1\cdots L_{s-1})_S]\cdot [L:L_S]$. By induction, $[(L_1\cdots L_{s-1}):(L_1\cdots L_{s-1})_S]\mid [L:L_S]^{s-1}$. Thus $[\tilde{L}: (\tilde{L})_S]\mid [L:L_S]^s$.\smallskip

Clearly, $([L_S:K]!\cdot [L:L_S]^{s_K(L)})\mid ([L_S:K]!\cdot [L:L_S]^{[L_S:K]})\mid ([L_S:K]!\cdot ([L:L_S]!)^{[L_S:K]}) \mid ([L_S:K]\cdot [L:L_S])!=[L:K]!$.
 \end{proof}

 \begin{proposition}
    Let $L/K$ be a finite extension. Suppose $L$ is generated by $k$ elements over $K$. Then any subextension of $L/K$ can be generated by $\leq k$ elements over $K$.
 \end{proposition}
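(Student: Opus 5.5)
The plan is to characterize the minimal number of generators of a finite extension by a numerical invariant that is monotone on subextensions. Write $g(L/K)$ for the least number of generators of $L/K$.

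If $K$ is perfect (in particular finite or of characteristic $0$), every finite extension is separable. By the primitive element theorem every subextension is then simple, so it is generated by $1\le k$ elements and there is nothing to prove. So assume $char(K)=p>0$, and for a finite extension $E/K$ set
$$\delta_K(E):=\log_p [E:KE^p].$$

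The first step is to prove that $g(E/K)=\max(1,\delta_K(E))$ whenever $E\neq K$.

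For the upper bound on $\delta_K(E)$: if $E=K(x_1,\dots,x_m)$, then $E=KE^p(x_1,\dots,x_m)$ and each $x_i^p\in KE^p$. Hence $[E:KE^p]\le p^m$, i.e. $\delta_K(E)\le g(E/K)$.

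For the reverse inequality, take a $p$-basis $y_1,\dots,y_\delta$ of $E$ over $KE^p$, so that $E=KE^p(y_1,\dots,y_\delta)$. Iterating, $E=KE^{p^n}(y_1,\dots,y_\delta)$ for every $n\ge 1$. Choose $n$ with $E^{p^n}\subset E_S$ (possible since $E/E_S$ is purely inseparable and finite). Then $E=E_S(y_1,\dots,y_\delta)$.

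Now write $E_S=K(\theta)$ with $\theta$ separable. If $\delta\ge 1$, the classical strengthening of the primitive element theorem says that $K(\theta,y_1)$ is simple when $\theta$ is separable; for infinite $K$, $\theta+cy_1$ works for all but finitely many $c$. Absorbing $\theta$ into $y_1$ this way gives $\delta$ generators. If $\delta=0$, then $E=E_S$ is simple. This yields $g(E/K)=\max(1,\delta_K(E))$.

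With this characterization, the proposition reduces to showing $\delta_K(M)\le\delta_K(L)$ for every subextension $M/K$ of $L/K$. Granting that, $g(M/K)\le\max(1,\delta_K(L))\le g(L/K)\le k$.

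I expect this monotonicity to be the main obstacle. It is essentially the Becker--MacLane theorem, and it is not formal, because a $p$-independent subset of $M$ over $KM^p$ need not remain $p$-independent in $L$ over $KL^p$.

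My first attempt would go through Kähler differentials. For finite $E/K$ one has $\dim_E\Omega_{E/K}=\delta_K(E)$, since a $p$-basis gives a basis of differentials. One then compares $\Omega_{M/K}\otimes_M L\to\Omega_{L/K}$ with the exact sequence ending in $\Omega_{L/M}$, using $\delta_M(L)=\dim\Omega_{L/M}$. Since the first map may fail to be injective, I would control its kernel by a degree count instead. Reduce first to $K\subset M\subset L$ all inside the purely inseparable parts: replace $K$ by $M_S$ and $L$ by $L_S$-adjusted fields, using $\delta_K(E)=\delta_{E_S}(E)$, which holds because $E_S\subset KE^p$. Then do induction on $[L:M]$ through a tower of degree-$p$ steps $M=M_0\subset M_1\subset\cdots\subset L$. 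At each step show that $\delta$ drops by at most one going down and never increases, by comparing $[M_{i+1}:KM_{i+1}^p]$ with $[M_i:KM_i^p]$ via the relations $[M_{i+1}:M_i]=p$ and $[KM_{i+1}^p:KM_i^p]\in\{1,p\}$.

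If this tower argument stalls, I would fall back on citing the Becker--MacLane result directly for the monotonicity of $\delta$.
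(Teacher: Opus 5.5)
Your argument is correct, but it takes a different route from the paper. The paper never introduces $[E:KE^p]$. It applies a cited result of Gerstenhaber on purely inseparable extensions to $L'L_S/L_S\subseteq L/L_S$, getting $l\le k$ generators. It transfers this to $L'/L'_S$ by a second cited lemma, using that $L_S/L'_S$ is separable and $L'/L'_S$ is purely inseparable. It then absorbs a primitive element of $L'_S$ by the strong primitive element theorem.

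You instead package everything into the invariant $\delta_K$. Your formula $g(E/K)=\max(1,\delta_K(E))$ is correctly proved, and its last step is the same absorption as in the paper. It replaces both citations, and the statement reduces to monotonicity of $\delta_K$. This gives a self-contained proof and an exact formula for the minimal number of generators. The paper's proof is shorter only because it outsources the inseparable part.

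The step you flag as the main obstacle is in fact elementary. For $K\subseteq M\subseteq L$, compare the towers $KM^p\subseteq M\subseteq L$ and $KM^p\subseteq KL^p\subseteq L$:
\[ [L:KL^p]\,[KL^p:KM^p]=[L:M]\,[M:KM^p]. \]
Moreover $[KL^p:KM^p]\le[L^p:M^p]=[L:M]$, because $KL^p$ is the $KM^p$-span of $L^p$ and so is spanned by any $M^p$-basis of $L^p$ (Frobenius is a field isomorphism). Hence $\delta_K(M)\le\delta_K(L)$ immediately, with no differentials, no degree-$p$ tower and no reduction to purely inseparable parts.

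Your tower argument is this same count restricted to degree-$p$ steps, and it is valid. However, it needs $L/M$ to be purely inseparable, and the reduction you only gesture at is not spelled out. It can be done by passing through $ML_S$. There $K(ML_S)^p=KM^pL_S$, and $M$ is linearly disjoint from $KM^pL_S$ over $KM^p$ (purely inseparable versus separable), so $\delta_K(ML_S)=\delta_K(M)$.
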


 \begin{proof}
   Let $L'/K$ be a subextension of $L/K$. As $L/K$ is generated by $k$ elements, we have that $L/L_S$ is also generated by the same $k$ elements. Now $L/L_S$ is purely inseparable and $L'L_S/L_S$ is a subextension of $L/L_S$. By Corollary of Theorem 6.3 in \cite{gerstenhaber1981geometry}, $L'L_S/L_S$ is generated by $l\leq k$ elements. Now $L_S/L'_S$ is separable and $L'/L'_S$ is purely inseparable. Thus by Lemma 6.8 (and the remark that follows) in \cite{gerstenhaber1981geometry}, $L'/L'_S$ is  generated by $m\leq l$ elements say $\alpha_1,\dots,\alpha_m$. By primitive element theorem $L'_S=K(\beta)$ for some $\beta\in \bar{K}$. Thus $L'=K(\beta, \alpha_1, \dots, \alpha_m)$. By stronger version of primitive element theorem, Theorem 1 in \cite{brown2012mathematics} (or by Theorem C.1 in \cite{conradseparability}), $K(\beta, \alpha_1)/K$ is a simple extension, say $K(\gamma)/K$. Thus $L'=K(\gamma, \alpha_2, \dots, \alpha_m)$. Therefore $L'/K$ is generated by $m$ elements where $m\leq k$.
 \end{proof}

Generalizations of certain statements in \cite{Bhagwat_2025} and \cite{jaiswal2025rootcapacityintersectionindicium}.

\begin{proposition} Consider $M/L/K$. Then 
    \begin{enumerate}
\item $r_L(M)\mid r_K(M)$ and $l_L(M)\mid l_K(M)$.\smallskip

\item $t_K(L)\mid t_K(M)$ and $i_K(L)\mid i_K(M)$ and $a_K(L)\mid a_K(M)$.
\smallskip

\item $t_K(M)\mid (t_L(M)\cdot [L:K])$ and $i_K(M)\mid (i_L(M)\cdot [L:K])$ and $a_K(M)\mid (a_L(M)\cdot [L:K])$.\smallskip

\item Suppose $M/K$ is normal and $M/L$ is Galois. Then $$r_K(L)=[N_{Aut(M/K)}(Gal(M/L)): Gal(M/L)].$$

    \end{enumerate}
\end{proposition}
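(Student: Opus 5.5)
The plan is to treat each part by exhibiting the relevant fields or groups for the smaller extension inside those for the larger one, and then reading off divisibility from degree multiplicativity or Lagrange's theorem.

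\emph{Part (1).} Since $Aut(M/L)$ is a subgroup of $Aut(M/K)$, Lagrange's theorem gives $r_L(M)\mid r_K(M)$. For the multicluster size, recall $l_K(M)=[M:M_S]\cdot r_K(M)$. I would first identify the maximal separable subextension of $M/L$ as $LM_S$. Indeed, $LM_S/L$ is separable, and $M/LM_S$ is purely inseparable because $M/M_S$ is. Hence $l_L(M)=[M:LM_S]\cdot r_L(M)$. Since $M_S\subset LM_S\subset M$, we have $[M:LM_S]\mid [M:M_S]$. Multiplying this with $r_L(M)\mid r_K(M)$ gives $l_L(M)\mid l_K(M)$.

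\emph{Parts (2) and (3).} These are monotonicity and base-change arguments.
\begin{enumerate}
\item For (2), the largest Galois subextension $F_L$ of $L/K$ is a Galois subextension of $M/K$, so $F_L\subset F_M$ and $[F_L:K]\mid[F_M:K]$. The same inclusion argument applies to $L_I\subset M_I$ and to the maximal normal subextensions $F'_L\subset F'_M$.
\item For (3), the compositum $F_ML/L$ is Galois, since base change of a Galois extension is Galois. So $F_ML$ lies in the largest Galois subextension of $M/L$, and therefore $[F_ML:L]\mid t_L(M)$.
\item Then $t_K(M)=[F_M:K]$ divides $[F_ML:K]=[F_ML:L]\,[L:K]$, which divides $t_L(M)[L:K]$.
\item The same argument works with $M_IL/L$, which is purely inseparable, and with $F'_ML/L$, which is normal as the compositum of a normal extension with $L$. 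This gives the statements for $i$ and $a$. Alternatively, the statement for $a$ follows from the other two via Proposition \ref{ad prop} (1).
\end{enumerate}

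\emph{Part (4).} Put $G=Aut(M/K)$ and $H=Gal(M/L)$; since $M/L$ is Galois, $L=M^H$. The plan is to build a surjective homomorphism $N_G(H)\to Aut(L/K)$, $\sigma\mapsto\sigma|_L$, with kernel $H$, and then use $r_K(L)=|Aut(L/K)|$.
\begin{enumerate}
\item \emph{Well-definedness.} For $\sigma\in G$, the field $\sigma(L)$ satisfies $M^{\sigma H\sigma^{-1}}=\sigma(M^H)=\sigma(L)$. If $\sigma$ normalizes $H$, then $\sigma(L)=L$, so $\sigma|_L\in Aut(L/K)$.
\item \emph{Surjectivity.} Any $\tau\in Aut(L/K)$ extends to an automorphism $\sigma$ of $M$, because $M/K$ is normal (Lang VII \S3). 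Then $\sigma(L)=L$. Now $M/\sigma(L)$ is Galois with group $\sigma H\sigma^{-1}$, and $M/L$ is Galois with group $H$. Equality of the base fields forces $\sigma H\sigma^{-1}=H$, since both equal $Aut(M/L)$. Hence $\sigma\in N_G(H)$.
\item \emph{Kernel.} An element restricts to the identity on $L$ exactly when it fixes $L$ pointwise, so the kernel is $Aut(M/L)=H$.
\end{enumerate}

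I expect the main obstacle to be the inseparability bookkeeping in parts (1) and (4). In (1) this is the identification of the maximal separable subextension of $M/L$ as $LM_S$. In (4) it is the point that extending $K$-embeddings of $L$ to $M$ needs only normality, not separability, of $M/K$, while the Galois correspondence is used only for the Galois extension $M/L$.
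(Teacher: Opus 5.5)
Your proof is correct and follows the paper's argument essentially step for step. The same inclusions drive parts (1)--(3): $M_S\subset LM_S$, $F_L\subset F_M$, and $F_ML$ lying inside the largest Galois subextension of $M/L$. For (4), you verify directly that restriction gives an isomorphism $N_G(H)/H\cong Aut(L/K)$, whereas the paper cites Proposition 6.1.3 of \cite{Bhagwat_2025} and uses normality of $M/K$ for surjectivity.

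One caveat concerns your aside that the $a$-statement in (3) follows from the $t$- and $i$-statements via Proposition \ref{ad prop} (1). This does not work as written: multiplying those two divisibilities only yields $a_K(M)\mid a_L(M)\,[L:K]^2$. You would need the sharper bounds $t_K(M)\mid t_L(M)[L_S:K]$ and $i_K(M)\mid i_L(M)[L:L_S]$, so rely instead on your primary argument via normality of $F'_ML/L$.
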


\begin{proof}\hfill

\begin{enumerate}
    \item We have $Aut(M/L)\subset Aut(M/K)$. Thus $|Aut(M/L)|\mid |Aut(M/K)|$. Let $M_S/K$ and $M'_S/L$ be the respective maximal separable subextensions of $M/K$ and $M/L$. Now $M_SL/L$ is also separable. Thus $M_S\subset M_SL\subset M'_S$. So $[M:M'_S]\mid [M:M_S]$. Alternatively, $l_K(M)=d_K(M)$ and we have $d_L(M)\mid d_K(M)$ by Proposition \ref{unique desc normal} (2). \smallskip

    \item Let $F/K$ and $F_0/K$ be the respective largest Galois subextensions of $L/K$ and $M/K$. Clearly $F\subset F_0$. Thus $[F:K]\mid [F_0:K]$. Also $L_I\subset M_I$ where $M_I$ is the maximal purely inseparable subextension of $M/K$. Let $F'/K$ and $F'_0/K$ be the respective largest normal subextensions of $L/K$ and $M/K$. Clearly $F'\subset F'_0$. \smallskip

\item We show $a_K(M)\mid (a_L(M)\cdot [L:K])$. Let $F'/L$ and $F'_0/K$ be the respective maximal normal subextensions of $M/L$ and $M/K$. Now $F'_0L/L$ is also normal. Thus $F'_0\subset F'_0L\subset F'$. Hence $[F'_0:K]\mid ([F':L]\cdot [L:K])$. The other assertions can be similarly proved.

    \smallskip

\item  By Proposition 6.1.3 in \cite{Bhagwat_2025}, we have the injective map \[\Phi: N_{{\rm Aut}(M/K)}({\rm Aut}(M/L))/ {\rm Aut}(M/L) \xhookrightarrow{}{\rm Aut}(M^{{\rm Aut}(M/L)}/K)\] mapping $\sigma$ to $\sigma|_{M^{Aut(M/L)}}$. As $M/K$ is normal, $\Phi$ is an isomorphism. As $M/L$ is Galois, $M^{Aut(M/L)}=L$. 
\end{enumerate}
    
\end{proof}

    


An interesting example.

\begin{example}\label{interesting eg}

We will construct an $L/K$ such that $L_S/K$ is a nontrivial Galois extension but $L/K$ is not normal. Let $p>2$ be a prime and $K=\mathbb{F}_p(x,y)$ where $x$ and $y$ are indeterminates. Consider thr polynomial $f(t)=t^2+xt+y\in K[t]$ and let $\alpha,\beta\in \bar{K}$ be its roots. Thus $\alpha+\beta=-x$ and $\alpha\beta=y$. We claim that $\alpha\neq \beta$. Suppose $\alpha=\beta$. Then $x^2=4y$ which contradicts algebraic independence of $x$ and $y$ over $K$. We claim that $\alpha,\beta\not \in K$. Suppose $\alpha\in K$ or $\beta\in K$. Then $\sqrt{x^2-4y}\in K$. Let $\sqrt{x^2-4y}=g(x,y)/h(x,y)$ where $g(x,y), h(x,y)\in \mathbb{F}_p[x,y]=A$. Now $A$ is a UFD. Also we can assume that $g$ and $h$ have no common factors. Now $A/(x^2-4y)\cong \mathbb{F}_p[x]$. Thus $x^2-4y$ is prime in $A$. As $g(x,y)^2=(x^2-4y)h(x,y)^2$, we have $(x^2-4y) \mid g(x,y)$ and so $(x^2-4y) \mid h(x,y)$ which is a contradiction. Thus $K(\alpha)/K$ is a quadratic Galois extension.\smallskip

    Consider $\theta=\alpha^{1/p},\phi=\beta^{1/p} \in \bar{K}$. We claim that $\theta, \phi\not \in K(\alpha)$. Suppose $\theta\in K(\alpha)=K(\delta)$ where $\delta=\sqrt{x^2-4y}$. Thus $\theta=\alpha^{1/p}=a+b\delta$ where $a,b\in K$. Hence $\alpha=a^p+b^p \delta^p$. Thus $\alpha=\frac{-x}{2}+\frac{\delta}{2}=a^p+b^p(\delta^2)^{(p-1)/2} \delta$. Therefore $\frac{-x}{2}=a^p$ and $\frac{1}{2}=b^p(\delta^2)^{(p-1)/2}$. Let $a=g_1(x,y)/h_1(x,y)$ where $g_1,h_1\in A$ have no common factors. Now $x$ is prime in $A$. As $-x h_1^p=2 g_1^p$, we have $x\mid g_1$ and so $x\mid h_1$ which is a contradiction. We get a similar contradiction by assuming $\phi\in K(\alpha)=K(\beta)$. Therefore $K(\theta)/K(\alpha)$ and $K(\phi)/K(\alpha)$ are degree $p$ purely inseparable extensions.\smallskip

    Let $L=K(\theta)$. Thus $[L:K]=[L:K(\alpha)]\cdot [K(\alpha):K]=2p$. Clearly $L_S=K(\alpha)$ and $L_S/K$ is Galois. As $\theta$ and $\phi$ are roots of the irreducible polynomial $f(t^p)=t^{2p} + xt^p +y$ over $K$ (As $f(t^p)=(t-\theta)^p(t-\phi)^p$), it is enough to show that $\phi\not \in K(\theta)$ for proving that $L/K$ is not normal. Assume on the contrary that $K(\theta)=K(\phi)$. Now let $u=-(\theta+\phi)$ and $v=\theta\phi$. Observe that $u^p=-(\theta^p+\phi^p)=-(\alpha+\beta)=x$ and $v^p=\theta^p\phi^p=\alpha\beta=y$. Thus $[K(u,v):K]=p^2$. But Since $u,v\in K(\theta)$, we have $p^2\mid 2p$ which is a contradiction. \smallskip

    Now we present some further observations about $L/K$. We have $r_K(L)=1, s_K(L)=2, l_K(L)=p, r_K(L_S)=2, s_K(L_S)=1$. We claim that $L_I=K$. As $[L:K]=2p$ with $p>2$, either $[L_I:K]=p$ or $L_I=K$. If $[L_I:K]=p$ then $[L_SL_I:K]=2p$. So $L=L_SL_I/K$ is normal which is a contradiction. Thus $a_K(L)=t_K(L)=2$ as maximal normal subextension $F'=L_SL_I=L_S$. Also $\tilde{L}=K(\theta, \phi)$ and $[\tilde{L}:K]=2p^2$. Let $A'=Aut(\tilde{L}/K)$. So $r_K(\tilde{L})=2=|A'|$. Let $1\neq \sigma\in A$ which maps $\sigma(\theta)=\phi$ and $\sigma(\phi)=\theta$. Thus $\sigma$ fixes $-(\theta+\phi)=u$ and $\theta\phi=v$. As $[K(u,v):K]=p^2$, we have $[\tilde{L}:K(u,v)]=2$. So $\tilde{L}^{A'}=(\tilde{L})_I=K(u,v)$. Thus $L_I\subsetneq (\tilde{L})_I$. Thus also $L_I L'_I\subsetneq (LL')_I$ where $L'=K(\phi)$. Also $(\tilde{L})_S=\tilde{L_S}=L_S=K(\alpha)$.

\end{example}

\subsection{Multiroot Capacity, Intersection Normal Indicium and Compositum Normal Indicium}\hfill

\smallskip

For a perfect base field, the notion of root capacity was defined by the author and Bhagwat in Definition 6.2.1 in \cite{Bhagwat_2025} which we can similarly define for any separable extension over any base field.

\begin{definition}
    \label{root capacity}
Let $L/K$ be a separable extension. By primitive element theorem $L=K(\alpha)$ for some $\alpha\in\bar{K}$. Let $f$ be minimal polynomial of $\alpha$ over $K$. For an extension $M/K$, root capacity of $M$ with respect to $L$ (with base field $K$ fixed) $\rho_K(M,L)$ is the number of roots of $f$ that are contained in $M$ (This is well defined by Proposition 6.2.2 in \cite{Bhagwat_2025}).  \smallskip

Equivalently by Proposition 6.2.6 (1) in \cite{Bhagwat_2025}, $\rho_K(M,L)=a\cdot r_K(L)$ where $a$ is number of distinct fields inside $M$ isomorphic to $L$ over $K$.
\end{definition}

We now generalize the above notion for any extension.

\begin{definition}
     Let $L/K$ be any extension. For an extension $M/K$, we define the root capacity of $M$ with respect to $L$ (with base field $K$ fixed) as $\rho_K(M,L):= a\cdot r_K(L)$ where $a$ is number of distinct fields inside $M$ isomorphic to $L$ over $K$. \smallskip

     For a simple extension $L=K(\alpha)$ over $K$ with $f$ being the minimal polynomial of $\alpha$ over $K$, $\rho_K(M,K(\alpha))$ counts the number of distinct roots of $f$ that are contained in $M$, which we denote by $\rho_K(M,f)$ and call root capacity of $M$ with respect to $f$.
\end{definition}

We also define the following for any extension.

\begin{definition}
     Let $L/K$ be any extension. For an extension $M/K$, we define the multiroot capacity of $M$ with respect to $L$ (with base field $K$ fixed) as $\lambda_K(M,L):= a\cdot l_K(L)$ where $a$ is number of distinct fields inside $M$ isomorphic to $L$ over $K$. \smallskip

     For a simple extension $L=K(\alpha)$ over $K$ with $f$ being the minimal polynomial of $\alpha$ over $K$, $\lambda_K(M,K(\alpha))$ counts the number of roots of $f$ with multiplicity that are contained in $M$, which we denote by $\lambda_K(M,f)$ and call multiroot capacity of $M$ with respect to $f$.
\end{definition}

\begin{proposition}\label{multi root cap prop}
Consider $L/K$ and $M/K$.

\begin{enumerate}
    \item $\lambda_K(M,L)=[L:L_S]\cdot \rho_K(M,L)$.\smallskip

    \item $\rho_K(L,L)=r_K(L)$ and $\lambda_K(L,L)=l_K(L)$ and $\rho_K(\tilde{L},L)=s_K(L)\cdot r_K(L)=[L_S:K]$ and $\lambda_K(\tilde{L},L)=[L:K]$.

    \smallskip

    \item $\lambda_K(L,L_S)=\rho_K(L,L_S)=r_K(L_S)$.\smallskip

    \item $\rho_K(L,L_I)=r_K(L_I)=1$ and $\lambda_K(L,L_I)=l_K(L_I)=[L_I:K]$.

\end{enumerate}
\end{proposition}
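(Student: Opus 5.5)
Every part of the statement follows by unwinding the definitions $\rho_K(M,L)=a\cdot r_K(L)$ and $\lambda_K(M,L)=a\cdot l_K(L)$, where $a$ is the number of distinct subfields of $M$ that are $K$-isomorphic to $L$. We then plug in the identities already established for $r$, $l$ and $s$ in Propositions \ref{r prop}, \ref{unique desc normal} and \ref{f'f}.

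For (1), the definition of multicluster size gives $l_K(L)=[L:L_S]\cdot r_K(L)$. Multiplying both sides by $a$ gives $\lambda_K(M,L)=[L:L_S]\cdot\rho_K(M,L)$ directly.

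For (2), we count the fields involved. Take $M=L$: any subfield of $L$ that is $K$-isomorphic to $L$ has the same degree over $K$, so it equals $L$, and $a=1$. Hence $\rho_K(L,L)=r_K(L)$ and $\lambda_K(L,L)=l_K(L)$. Take $M=\tilde L$: by Proposition \ref{f'f} (1), all $s_K(L)$ conjugates $L_i$ lie in $\tilde L$, so $a=s_K(L)$. Then Proposition \ref{r prop} (3) gives $\rho_K(\tilde L,L)=s_K(L)\,r_K(L)=[L_S:K]$ and $\lambda_K(\tilde L,L)=s_K(L)\,l_K(L)=[L:K]$.

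For (3), apply the definitions to the separable extension $L_S$ in place of $L$ and with $M=L$. By Proposition \ref{r prop} (2), $l_K(L_S)=r_K(L_S)$, so $\rho_K(L,L_S)=\lambda_K(L,L_S)$, and it remains to show $a=1$. A subfield $E\subset L$ that is $K$-isomorphic to $L_S$ is separable over $K$, so $E\subset L_S$. Since $[E:K]=[L_S:K]$, we get $E=L_S$. This "separable subfields land in $L_S$" step is the only point needing an argument beyond substitution.

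For (4), the extension $L_I/K$ is normal, so its only $K$-conjugate in $\bar K$ is $L_I$ itself, and $a=1$. Its automorphism group is trivial because it is purely inseparable, as in Remark \ref{LI in LA} with $L_I$ in place of $L$. Hence $r_K(L_I)=1$. Its maximal separable subextension is $K$, so $l_K(L_I)=[L_I:K]\cdot 1$. Combining these gives $\rho_K(L,L_I)=1$ and $\lambda_K(L,L_I)=[L_I:K]$.

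There is no real obstacle here. The only care needed is in correctly counting the conjugate copies $a$ in each case.
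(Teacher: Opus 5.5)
Your proposal is correct. The paper states Proposition~\ref{multi root cap prop} without proof, treating it as an immediate consequence of the definitions of $\rho_K$, $\lambda_K$, $l_K$ together with Proposition~\ref{r prop}~(2),~(3), and your argument is exactly that unwinding. You also correctly handle the only points needing comment, namely the counts $a=1$ (for $M=L$, and for $L_S$ and $L_I$ inside $L$) and $a=s_K(L)$ (for $M=\tilde{L}$).
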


\begin{proposition}
Consider $L/K$. Consider the compositum $M$ of all distinct fields $L'/K$ that are isomorphic to $L/K$ such that $L'_S=L_S$. Then $\rho_K(M,L)=r_K(L_S)=\rho_K(M,L_S)$ 
\end{proposition}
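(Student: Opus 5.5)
The plan is to compute both root capacities by counting, in each case, the distinct fields inside $M$ that are $K$-isomorphic to the relevant extension. Here $M=\Pi_{i=1}^m L_i$, where $L=L_1,\dots,L_m$ are all the distinct fields isomorphic to $L/K$ with $(L_i)_S=L_S$. By Proposition \ref{s/s'}, $m=s_K(L)/s_K(L_S)$, and by Proposition \ref{r prop} (3) this equals $r_K(L_S)/r_K(L)$.

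For $\rho_K(M,L)$, the number of fields in $M$ isomorphic to $L$ is at least $m$, since all $L_i$ lie in $M$. For the reverse inequality, suppose $L'\subset M$ is isomorphic to $L/K$. By Lemma \ref{compositum} (1), $M_S=\Pi (L_i)_S=L_S$, and clearly $L'_S\subset M_S=L_S$. Since $[L'_S:K]=[L_S:K]$ (an isomorphism carries $L_S$ to $L'_S$, as noted in the proof of Proposition \ref{f'f} (3)), we get $L'_S=L_S$, so $L'$ is one of the $L_i$. Hence exactly $m$ such fields lie in $M$, and $\rho_K(M,L)=m\cdot r_K(L)=r_K(L_S)$.

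For $\rho_K(M,L_S)$, the same containment argument applies. Any field $E\subset M$ isomorphic to $L_S$ is separable, so $E\subset M_S=L_S$. Having the same degree, $E=L_S$. Thus exactly one such field lies in $M$, and $\rho_K(M,L_S)=1\cdot r_K(L_S)=r_K(L_S)$.

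The main point to check carefully is $M_S=L_S$, which rests on Lemma \ref{compositum} (1) extended inductively to finitely many factors. The rest is bookkeeping with Propositions \ref{s/s'} and \ref{r prop} (3).
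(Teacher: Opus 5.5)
Your proposal is correct and follows essentially the same route as the paper: count the $s_K(L)/s_K(L_S)$ fields $L_i$ via Proposition \ref{s/s'}, get $M_S=L_S$ from Lemma \ref{compositum} (1), and conclude that the only fields in $M$ isomorphic to $L$ (resp.\ $L_S$) are the $L_i$ (resp.\ $L_S$ itself). Your explicit degree comparison justifying $L'_S=L_S$, and your use of Proposition \ref{r prop} (3) to convert $s_K(L)/s_K(L_S)$ into $r_K(L_S)/r_K(L)$, fill in two steps that the paper leaves implicit.
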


\begin{proof}

 By Proposition \ref{s/s'}, the number of distinct extensions $L'/K$ that are isomorphic to $L/K$ such that $L'_S=L_S$ is exactly $s_K(L)/s_K(L_S)$ which we denote by $k$. Let $L_1,\dots, L_k$ be those fields. Thus $M=L_1\cdots L_k$. Suppose $L''/K$ be a field isomorphic to $L/K$ such that $L''\subset M$. Thus $(L'')_S\subset M_S=(L_1\cdots L_k)_S=(L_1)_S\cdots (L_k)_S=L_S$ by Lemma \ref{compositum} (1). Hence $(L'')_S=L_S$. Thus $L''=L_i$ for some $1\leq i\leq k$. Hence $L_1,\dots, L_k$ are all the distinct fields that are isomorphic to $L$ over $K$ and contained in $M$. Thus $\rho_K(M,L)=k\cdot r_K(L)=(s_K(L)/s_K(L_S))\cdot r_K(L)=r_K(L_S)$. Since $M_S=L_S$, the only field that is isomorphic to $L_S$ over $K$ and contained in $M$ is $L_S$ itself. Thus $\rho_K(M,L_S)=1\cdot r_K(L_S)=r_K(L_S)$.    
\end{proof}

For a perfect base field, the notion of intersection indicium was defined by the author in Definition 2.13 in \cite{jaiswal2025rootcapacityintersectionindicium} as a generalization of ascending index. We can similarly define, for any extension over any base field, intersection indicium as a generalization of ascending index and intersection normal indicium as a generalization of ascending normal index in light of Proposition \ref{f'f}.

\begin{definition}

 Let $L/K$ be any extension. For an extension $M/K$, let $L_1,L_2,\dots , L_a$ be all the distinct fields inside $M$ isomorphic to $L$ over $K$. Let $P=\cap_{i=1}^a (L_i)_S$ and $P'=\cap_{i=1}^a L_i$.
 \smallskip
 
 We define the intersection indicium of $M$ with respect to $L$ (with base field $K$ fixed) as\\
 $\tau_K(M,L):= [P:K]$. We also define the intersection normal indicium of $M$ with respect to $L$ (with base field $K$ fixed) as $\alpha_K(M,L):= [P':K]$. If none of the fields isomorphic to $L/K$ is contained in $M$, then we define $\tau_K(M,L)=0=\alpha_K(M,L)$.
\end{definition}

\begin{proposition} \label{int norm ind prop}Consider $L/K$ and $M/K$.

\begin{enumerate}

 \item $\tau_K(L,L)=[L_S:K]=\tau_K(L,L_S)$ and $\alpha_K(L,L)=[L:K]$ and $\tau_K(\tilde{L},L)=t_K(L)=\tau_K(\tilde{L}, L_S)$ and $\alpha_K(\tilde{L}, L)=a_K(L)$.\smallskip
 
 For any extension $M/K$, we have $t_K(L)\ |\ \tau_K(M,L)$ and $a_K(L)\mid \alpha_K(M,L)$. If $\tau_K(M,L)\neq 0\neq \alpha_K(M,L)$ then $\tau_K(M,L)\ |\ [L_S:K]$ and $\alpha_K(M,L)\mid [L:K]$. \smallskip

\item $(\tau_K(M,L)\cdot  i_K(L))\mid \alpha_K(M,L)\mid (\tau_K(M,L)\cdot [L:L_S])$. \smallskip

\item $L=L_SL_I$ $\iff$ for every $M/K$ we have $\alpha_K(M,L)=\tau_K(M,L)\cdot  i_K(L)$.

\smallskip

    \item $\alpha_K(M,L_S)=\tau_K(M,L_S)\mid \tau_K(M,L)$.\smallskip

\item  $M/K$ is normal $\implies$ $\alpha_K(M,L)=\tau_K(M,L)\cdot  i_K(L)$.
\end{enumerate}
    
\end{proposition}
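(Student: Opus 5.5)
The plan is to work throughout with the fields $L_1,\dots,L_a$ inside $M$ that are isomorphic to $L$ over $K$. I will use three facts repeatedly. First, $(L_i)_I=L_I$ for every $i$, by Proposition \ref{f'f} (6). Second, the maximal separable subextension of an intersection is the intersection of the maximal separable subextensions, by Lemma \ref{compositum} (1) and induction; this gives $P'_S=P$, where $P=\cap(L_i)_S$ and $P'=\cap L_i$. Third, Corollary 3 in VII \S 7 of \cite{lang2012algebra} controls degrees of composita of separable and purely inseparable extensions.

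For (1), take $M=L$. The only copy of $L$ inside $L$ is $L$ itself, so $\tau_K(L,L)=[L_S:K]$ and $\alpha_K(L,L)=[L:K]$. Any copy of $L_S$ inside $L$ is separable over $K$, hence lies in $L_S$, and it has the same degree, so it equals $L_S$; this gives $\tau_K(L,L_S)=[L_S:K]$. For $M=\tilde L$, all $s_K(L)$ conjugates lie in $M$. Proposition \ref{f'f} (1),(2),(3) then gives $P=F$, $P'=F'$, and the analogous statement for $L_S$, so $\tau_K(\tilde L,L)=t_K(L)=\tau_K(\tilde L,L_S)$ and $\alpha_K(\tilde L,L)=a_K(L)$. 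For the divisibilities, $F$ is Galois, so each $K$-isomorphism $L\to L_i$ carries $F$ into $F$; hence $F\subset (L_i)_S$ for all $i$, and so $F\subset P$. Likewise normality of $F'$ gives $F'\subset P'$. Conversely, $P\subset (L_1)_S$ and $P'\subset L_1$ give the upper divisibilities when $a\ge 1$.

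For (2), $L_I\subset L_i$ for all $i$, so $PL_I\subset P'$. Since $P/K$ is separable and $L_I/K$ is purely inseparable, $[PL_I:K]=\tau_K(M,L)\, i_K(L)$, which gives the left divisibility. For the right divisibility, $P'/P$ is purely inseparable because $P'_S=P$. Inseparable degree is multiplicative in the tower $P\subset P'\subset L_1$, so $[P':P]=[P':P]_i$ divides $[L_1:P]_i=[L_1:(L_1)_S]=[L:L_S]$. For (3), assume first $L=L_SL_I$. Then $L_1=(L_1)_SL_I$ is separable over $L_I$, so $P'/L_I$ is separable, and hence $P'/PL_I$ is separable. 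It is also purely inseparable, being a subextension of $P'/P$, so $P'=PL_I$. For the converse, take $M=L$ in the hypothesis: this gives $[L:K]=[L_S:K]\, i_K(L)=[L_SL_I:K]$, which forces $L=L_SL_I$.

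For (4), copies of $L_S$ are separable, so $P=P'$ for them and $\alpha_K(M,L_S)=\tau_K(M,L_S)$. Each $(L_i)_S$ is a copy of $L_S$ inside $M$, so the intersection of all copies of $L_S$ in $M$ is a subfield of $P$, and its degree divides $[P:K]$. If $a=0$, the divisibility is into $0$ and is trivial. For (5), if $M/K$ is normal and contains some $L_i$, then $\tilde L\subset M$, so all conjugates of $L$ lie in $M$. Then (1) gives $\tau_K(M,L)=t_K(L)$ and $\alpha_K(M,L)=a_K(L)$, and Proposition \ref{ad prop} (1) finishes; the case $a=0$ is trivial. I expect the main obstacle to be the separable/inseparable degree bookkeeping in (2) and (3), in particular justifying $P'_S=P$ and the multiplicativity of inseparable degree.
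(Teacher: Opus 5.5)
Your proof is correct and follows essentially the same route as the paper: $P'_S=P$ via Lemma \ref{compositum} (1), $PL_I\subset P'$ for the lower divisibility in (2), Proposition \ref{f'f} for the values at $M=L$ and $M=\tilde L$, comparison of the sets of conjugates for (4), and reduction to the case $M\supset\tilde L$ for (5). The differences are minor. For the upper divisibility in (2) you use multiplicativity of inseparable degree in the tower $P\subset P'\subset L_1$, whereas the paper writes $[P':P]=[P'L_S:L_S]\mid[L:L_S]$; working with $L_1$ makes explicit the harmless normalization $L=L_1$ that the paper's step tacitly needs. In the forward direction of (3) you show $P'=PL_I$ directly, whereas the paper squeezes the two divisibilities of (2) using $[L:L_S]=i_K(L)$.
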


\begin{proof}\hfill

\begin{enumerate}
\item Follows from Proposition \ref{f'f} (1), (3).\smallskip

\item  By Lemma \ref{compositum} (1), we have $P=\cap_{i=1}^a (L_i)_S=(\cap_{i=1}^a\ L_i)_S=P'_S$. By Proposition \ref{f'f} (6), $L_I\subset  P'$. As $PL_I\subset P'$, we have $(\tau_K(M,L)\cdot  i_K(L))\mid \alpha_K(M,L)$. Now $L_S/P$ is separable and $P'/P$ is purely inseparable. Thus $[P':K]/[P:K]=[P':P]=[P'L_S:L_S]\mid [L:L_S]$. Hence $\alpha_K(M,L)\mid (\tau_K(M,L)\cdot [L:L_S])$.
\smallskip

\item If $L=L_SL_I$, we have $[L:L_S]=i_K(L)$. Hence by part (2), $\alpha_K(M,L)=\tau_K(M,L)\cdot  i_K(L)$. Conversely, suppose for every $M/K$ we have $\alpha_K(M,L)=\tau_K(M,L)\cdot  i_K(L)$. Let $M=L$. By part (1), $[L:K]=\alpha_K(L,L)=\tau_K(L,L)\cdot i_K(L)=[L_S:K]\cdot [L_I:K]=[L_SL_I:K]$. Thus $L=L_SL_I$.\smallskip

\item Let $L_1,L_2,\dots , L_a$ be all the distinct fields inside $M$ isomorphic to $L$ over $K$ and let \\
$(L_S)_1,(L_S)_2,\dots , (L_S)_b$ be all the distinct fields inside $M$ isomorphic to $L_S$ over $K$. Clearly $\{(L_1)_S,(L_2)_S,\dots , (L_a)_S\}\subset \{(L_S)_1,(L_S)_2,\dots , (L_S)_b\}$ and the result follows.\smallskip

\item  If $M/K$ is normal then $\alpha_K(M,L)=a_K(L)=t_K(L)\cdot i_K(L)=\tau_K(M,L)\cdot  i_K(L)$.
    \end{enumerate}
\end{proof}



For a perfect base field, the notion of compositum indicium was defined by the author in Definition 2.16 in \cite{jaiswal2025rootcapacityintersectionindicium} as a generalization of degree of extension. We can similarly define, for any extension over any base field, compositum indicium as a generalization of degree of maximal separable subextension of an extension and compositum normal indicium as a generalization of degree of extension in light of Proposition \ref{f'f}. 

\begin{definition}

 Let $L/K$ be any extension. For an extension $M/K$, let $L_1,L_2,\dots , L_a$ be all the distinct fields inside $M$ isomorphic to $L$ over $K$.\smallskip
 
 We define the compositum indicium of $M$ with respect to $L$ (with base field $K$ fixed) as\\
 $\gamma_K(M,L):= [\Pi_{i=1}^a (L_i)_S:K]$. We also define the compositum normal indicium of $M$ with respect to $L$ (with base field $K$ fixed) as $\Gamma_K(M,L):= [\Pi_{i=1}^a L_i:K]$. If none of the fields isomorphic to $L/K$ is contained in $M$, then we define $\gamma_K(M,L)=0=\Gamma_K(M,L)$.\smallskip

 We also define the quantity $\iota_K(M,L) :=[(\Pi_{i=1}^a\ L_i)_I :K]$. If none of the fields isomorphic to $L/K$ is contained in $M$, then $\iota_K(M,L)=0$. 
\end{definition}

\begin{proposition} Consider $L/K$ and $M/K$.

\begin{enumerate}

 \item $\gamma_K(L,L)=[L_S:K]=\gamma_K(L,L_S)$ and $\Gamma_K(L,L)=[L:K]$ and $\iota_K(L,L)=i_K(L)$ and $\gamma_K(\tilde{L},L)=[(\tilde{L})_S:K]=[\tilde{(L_S)}:K]=\gamma_K(\tilde{L}, L_S)$ and $\Gamma_K(\tilde{L}, L)=[\tilde{L}:K]$ and $\iota_K(\tilde{L},L)=[(\tilde{L})_I:K]=i_K(\tilde{L})$. 

 \smallskip

 For any extension $M/K$, $[L_S:K]\ |\ \gamma_K(M,L)$ and $[L:K]\mid \Gamma_K(M,L)$ and $i_K(L)\mid \iota_K(M,L)$. If $\gamma_K(M,L)\neq 0\neq \Gamma_K(M,L)$ then $\gamma_K(M,L)\ |\ [(\tilde{L})_S:K]$ and $\Gamma_K(M,L)\mid [\tilde{L}:K]$. If $\iota_K(M,L)\neq 0$ then $\iota_K(M,L)\mid i_K(\tilde{L})$. \smallskip

\item $(\gamma_K(M,L)\cdot  \iota_K(M, L))\mid \Gamma_K(M,L)\mid (\gamma_K(M,L)\cdot i_K(\tilde{L}))$. \smallskip

\item $L=L_SL_I$ $\iff$ for every $M/K$ we have $\Gamma_K(M,L)=\gamma_K(M,L)\cdot  i_K(L)$.

\smallskip

    \item $\gamma_K(M,L)\mid \gamma_K(M,L_S)= \Gamma_K(M,L_S)$.
\smallskip

\item $M/K$ is normal $\implies$ $\gamma_K(M,L)\cdot \iota_K(M,L)=\Gamma_K(M,L)$.
\end{enumerate}
    
\end{proposition}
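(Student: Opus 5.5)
We follow the pattern of Proposition \ref{int norm ind prop}. Throughout, let $L_1,\dots,L_a$ be the distinct fields inside $M$ that are $K$-isomorphic to $L$, and put $C=\Pi_{i=1}^a L_i$. The case $a=0$ is trivial by convention, so assume $a\geq 1$. By Lemma \ref{compositum} (1), $C_S=\Pi_{i=1}^a (L_i)_S$. Hence $\gamma_K(M,L)=[C_S:K]$, $\Gamma_K(M,L)=[C:K]$ and $\iota_K(M,L)=[C_I:K]$. All assertions will reduce to degree bookkeeping with $C_S$, $C_I$ and $C$, using Corollary 3 in VII \S 7 of \cite{lang2012algebra}: if $S/K$ is separable and $I/K$ is purely inseparable, then $[SI:K]=[S:K][I:K]$.

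For part (1), take $M=L$. Then $a=1$, so $C=L$, which gives the values of $\gamma_K(L,L)$, $\Gamma_K(L,L)$ and $\iota_K(L,L)$. For $L_S$ inside $L$: any $K$-conjugate of $L_S$ contained in $L$ is separable, hence lies in $L_S$, hence equals $L_S$. So $\gamma_K(L,L_S)=[L_S:K]$.

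For $M=\tilde L$, Proposition \ref{f'f} (1) gives $C=\tilde L$. Proposition \ref{f'f} (3),(4) then give $C_S=\tilde{(L_S)}=(\tilde L)_S$. Moreover, every conjugate of $L_S$ lies in $\tilde L$, and Proposition \ref{f'f} (2) identifies their compositum, which yields $\gamma_K(\tilde L,L_S)$.

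For general $M$, the divisibilities follow from three containments: $L_S\subset C_S$, $L\subset C$, and $L_I\subset C_I$ (the last because $(L_i)_I=L_I$ by Proposition \ref{f'f} (6)). For the upper bounds, $C\subset\tilde L$, so $C_S\subset(\tilde L)_S$ and $C_I\subset(\tilde L)_I$.

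For part (2), first note $C_SC_I\subset C$, and $[C_SC_I:K]=\gamma\cdot\iota$ by the Lang corollary. For the upper bound, $C/C_S$ is purely inseparable and $(\tilde L)_S/C_S$ is separable. Hence $[C:C_S]=[C(\tilde L)_S:(\tilde L)_S]$, which divides $[\tilde L:(\tilde L)_S]$. By Proposition \ref{f'f} (4) and the Lang corollary applied to the normal extension $\tilde L=(\tilde L)_S(\tilde L)_I$ (Corollary \ref{LI cor}), this last degree is $i_K(\tilde L)$.

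For part (3), first assume $L=L_SL_I$. Each $L_i=(L_i)_SL_I$, so $C=C_SL_I$. Then $\Gamma=\gamma\cdot i_K(L)$, since $C_S$ is separable and $L_I$ is purely inseparable over $K$. Conversely, the hypothesis with $M=L$ gives $[L:K]=[L_S:K]\,i_K(L)=[L_SL_I:K]$, and therefore $L=L_SL_I$.

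For part (4), conjugates of $L_S$ are separable, so $\gamma_K(M,L_S)=\Gamma_K(M,L_S)$. Each $(L_i)_S$ is among the conjugates of $L_S$ inside $M$, so $C_S$ is contained in their compositum and the divisibility follows.

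Part (5) is the main obstacle, because a compositum of non-normal fields need not satisfy $C=C_SC_I$ (Example \ref{interesting eg}). Our plan is to show that $M/K$ normal forces $C/K$ normal. Any $K$-embedding $\sigma$ of $C$ into $\bar K$ extends to an automorphism of $\bar K$. This automorphism preserves $M$, since $M/K$ is normal, so it permutes the $L_i$ and hence fixes $C$. Then Corollary \ref{LI cor} applied to $C$ gives $C=C_SC_I$, and degree multiplicativity yields $\Gamma=\gamma\cdot\iota$.
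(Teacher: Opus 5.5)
Your proof is correct and is essentially the paper's argument. Parts (1), (2) and (4) match it almost line by line. There are two small variations:
\begin{itemize}
\item In (3), you get $C=C_SL_I$ directly from $L_i=(L_i)_SL_I$. The paper instead invokes $(\tilde L)_I=L_I$ from Proposition \ref{f'f} (7) and squeezes $\Gamma_K(M,L)$ with the bounds of part (2).
\item In (5), you show $C/K$ is normal. The paper simply uses $C=\tilde L$ when $M/K$ is normal.
\end{itemize}
Both variations are sound and rest on the same idea.

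Two small corrections.
\begin{itemize}
\item In (1), the containments $L_S\subset C_S$ and $L\subset C$ need not hold, since $L$ itself need not lie in $M$. Use $(L_1)_S\subset C_S$ and $L_1\subset C$, together with $[L_1:K]=[L:K]$ and $[(L_1)_S:K]=[L_S:K]$.
\item The case $a=0$ is not trivial by convention for (4). Take $L/K$ nontrivial and purely inseparable, and $M=K$. Then $\gamma_K(M,L)=0$ while $\gamma_K(M,L_S)=\gamma_K(K,K)=1$, and $0$ does not divide $1$. So (4) only holds when $M$ contains at least one conjugate of $L$. This is a defect in the statement itself, which the paper's proof also passes over.
\end{itemize}
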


\begin{proof} Let $L_M=\Pi_{i=1}^a\ L_i$. 

\begin{enumerate}
\item Follows from Proposition \ref{f'f} (1), (3), (4). By Proposition \ref{f'f} (6), $L_I\subset (L_M)_I \subset (\tilde{L})_I$.

\smallskip

\item  By Lemma \ref{compositum} (1), we have $(L_M)_S=(\Pi_{i=1}^a\ L_i)_S=\Pi_{i=1}^a\ (L_i)_S$. As $(L_M)_S (L_M)_I \subset L_M$, we have $(\gamma_K(M,L)\cdot  \iota_K(M,L))\mid \Gamma_K(M,L)$. Now $(\tilde{L})_S/(L_M)_S$ is separable and $L_M/(L_M)_S$ is purely inseparable. Thus $[L_M:K]/[(L_M)_S:K]=[L_M:(L_M)_S]=[L_M(\tilde{L})_S:(\tilde{L})_S]\mid [\tilde{L}:(\tilde{L})_S]=[(\tilde{L})_I:K]=i_K(\tilde{L})$. Hence $\Gamma_K(M,L)\mid (\gamma_K(M,L)\cdot i_K(\tilde{L}))$. \smallskip

\item If $L=L_SL_I$, we have $(\tilde{L})_I=L_I$ by Proposition \ref{f'f} (7) and so $i_K(L)=\iota_K(M,L)=i_K(\tilde{L})$. Hence by part (2), $\Gamma_K(M,L)=\gamma_K(M,L)\cdot  i_K(L)$. Conversely, suppose for every $M/K$ we have $\Gamma_K(M,L)=\gamma_K(M,L)\cdot  i_K(L)$. Let $M=L$. By part (1), $[L:K]=\Gamma_K(L,L)=\gamma_K(L,L)\cdot i_K(L)=[L_S:K]\cdot [L_I:K]=[L_SL_I:K]$. Thus $L=L_SL_I$.\smallskip

\item Follows from proof of Proposition \ref{int norm ind prop} (4).

\smallskip

\item If $M/K$ is normal then $\Gamma_K(M,L)=[\tilde{L}:K]=[(\tilde{L})_S:K]\cdot [(\tilde{L})_I :K]=\gamma_K(M,L)\cdot  \iota_K(M,L)$.

    \end{enumerate}
\end{proof}


\section{General Normal Magnification}\label{GNM Section}
   
The notion of general magnification for separable extensions was introduced by the author and others in \cite{jaiswal2025variantsinverseclustersize}. We generalize this notion further for any finite extension.

\subsection{General Normal Magnification}

\begin{definition}\label{GNM}
    A finite extension $M/K$ is said to be obtained by general normal magnification from a subextension $L/K$ if we have the following:
\smallskip
 
 \begin{enumerate}
 \item  $[L:K] > 1,$ 
 \smallskip
      
\item there exists a finite extension $J/K$ such that the normal closure $\tilde{L}$ of $L$ in $\bar{K}$ and normal closure $\tilde{J}$ of $J$ in $\bar{K}$ are linearly disjoint over $K$. \smallskip

\item $LJ=M$.\smallskip

 \end{enumerate}

The magnification is called trivial if $J = K$ and nontrivial otherwise. 
 \end{definition}

 \begin{remark}
     Definition \ref{GNM} is symmetric in the nontrivial case. Suppose $M/K$ is obtained by nontrivial general normal magnification from $L/K$ through $J/K$. Then $M/K$ is obtained by nontrivial general normal magnification from $J/K$ through $L/K$.
 \end{remark}

 \begin{proposition}
\label{gen norm for L}     
 
Consider $L/K$.
 \begin{enumerate}
     \item 
   For any general $L/K$ if $[L_S:K]>1$, we have that $L_SL_I/K$ is obtained by general normal magnification from $L_S/K$ through $L_I/K$. 
     
     \smallskip
     
\item Suppose $L/K$ is normal. Then if $[L_S:K]>1$ then we have that $L/K$ is obtained by general normal magnification from $L_S/K$ through $L_I/K$. 

     \smallskip

\item  For any general $L/K$ with $[L_S:K]>1$, we have that $\tilde{L}/K$ is obtained by general magnification from $(\tilde{L})_S/K$ through $(\tilde{L})_I/K$.

     \smallskip

  \item  With notations as in Proposition \ref{f'f}, if $t_K(L)>1$ then, $F'/K$ is obtained by general normal magnification from $F/K$ through $L_I/K$.
 
     \end{enumerate}
 \end{proposition}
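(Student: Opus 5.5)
The plan is to verify the three conditions of Definition \ref{GNM} in each case, taking $J$ to be the appropriate purely inseparable extension. The key observation is that a purely inseparable extension $I/K$ is normal, so $\tilde{I}=I$. It is also linearly disjoint from every separable extension of $K$. Concretely, if $S/K$ is separable and $I/K$ is purely inseparable, then $[SI:K]=[S:K]\cdot[I:K]$ by Corollary 3 in VII \S 7 of \cite{lang2012algebra}, which is the degree criterion for linear disjointness over $K$.

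For (1), take $L\mapsto L_S$ and $J=L_I$, so that $M=L_SL_I$. Condition (1) of Definition \ref{GNM} is the hypothesis $[L_S:K]>1$, and condition (3) holds by construction. For condition (2), the normal closure $\tilde{(L_S)}$ is the Galois closure of the separable extension $L_S/K$, so it is separable over $K$. Meanwhile $\tilde{L_I}=L_I$ is purely inseparable. By the observation above, $\tilde{(L_S)}$ and $L_I$ are linearly disjoint over $K$.

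Part (2) then follows from (1) together with Corollary \ref{LI cor}: if $L/K$ is normal, then $L=L_SL_I$. For (3), apply (2) to the normal extension $\tilde{L}/K$. Its maximal separable subextension is $(\tilde{L})_S=\tilde{(L_S)}$ by Proposition \ref{f'f} (4), and $[(\tilde{L})_S:K]\ge[L_S:K]>1$. For (4), note that $F'$ is normal with $(F')_S=F$ by Proposition \ref{f'f} (5), and $(F')_I=L_I$ from the proof of Proposition \ref{ad prop} (1). Also $[F:K]=t_K(L)>1$. So (2) applied to $F'/K$ gives the claim; equivalently, $F'=FL_I$ directly by Proposition \ref{ad prop} (1).

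The only step needing any care is the linear disjointness in (1). I will justify it by the degree criterion rather than by a tensor-product argument. When $K$ is perfect, $L_I=K$ and everything is trivial.
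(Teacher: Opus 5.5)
Your proposal is correct and follows the paper's proof essentially step for step: (1) via linear disjointness of the separable $\tilde{(L_S)}$ and the purely inseparable $L_I$, (2) via $L=L_SL_I$ from Corollary \ref{LI cor}, (3) via $(\tilde{L})_S=\tilde{(L_S)}$ from Proposition \ref{f'f} (4), and (4) via $F'=FL_I$. The only difference is in how you justify linear disjointness in (1): the paper cites Example 20.13 in Morandi, whereas you use the degree criterion $[SI:K]=[S:K]\cdot[I:K]$. Similarly, in (4) you apply (2) to the normal extension $F'/K$ with $(F')_S=F$ and $(F')_I=L_I$, which is equivalent to the paper's direct appeal to Proposition \ref{ad prop} (1).
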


 \begin{proof}
     \hfill
     \begin{enumerate}
     \item Now $L_I/K$ is normal. By Example 20.13 in \cite{morandi2012field}, $\tilde{(L_S)}$ and $L_I$ are linearly disjoint over $K$.

     \smallskip
     
     \item Now as $L/K$ is normal, $L_S/K$ is Galois. By Corollary \ref{LI cor}, we have $L=L_S L_I$. 
         
         \smallskip

    \item By Proposition \ref{f'f} (4), $(\tilde{L})_S=\tilde{(L_S)}$. So result follows as $[L_S:K]>1\iff [\tilde{(L_S)}:K]>1$.

    \smallskip

         \item By Proposition \ref{ad prop} (1) we have $F'=FL_I$.

     \end{enumerate}
 \end{proof}

\begin{corollary}

    Consider $L/K$ with $L_S/K$ being a nontrivial Galois extension. We have $L/K$ is obtained by general normal magnification from $L_S/K$ through some $J/K$ $\iff$ $L/K$ is normal. 
\end{corollary}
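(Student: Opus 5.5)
The statement has two directions. For the backward direction I would just quote Proposition \ref{gen norm for L} (2). Suppose $L/K$ is normal. Since $L_S/K$ is a nontrivial Galois extension, $[L_S:K]>1$. That proposition then shows $L/K$ is obtained by general normal magnification from $L_S/K$ through $J=L_I$.

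For the forward direction, suppose $L=L_SJ$ for some finite $J/K$ such that $\tilde{L_S}$ and $\tilde{J}$ are linearly disjoint over $K$. Because $L_S/K$ is Galois, $\tilde{L_S}=L_S$. The plan is to show $J/K$ is purely inseparable, so that $J\subset L_I$. Then $L=L_SJ\subset L_SL_I\subset L$ gives $L=L_SL_I$, and Corollary \ref{LI cor} (the criterion $u_K(L)=1$ and $L=L_SL_I$, with $u_K(L)=1$ because $F=L_S$ is Galois) yields normality. Alternatively, $L_SL_I/K$ is normal as a compositum of normal extensions.

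To see that $J$ is purely inseparable, I would look at $J_S\subset J\subset L$. We have $J_S\subset L_S$, since $J_S$ is separable over $K$ and lies in $L$. Also $J_S\subset J\subset\tilde J$. So $J_S\subset L_S\cap\tilde J$. Linear disjointness of $L_S$ and $\tilde J$ over $K$ forces $L_S\cap\tilde J=K$, hence $J_S=K$. This says $J/K$ is purely inseparable, i.e.\ $J\subset L_I$, which is what we wanted.

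The main obstacle is just this step: deducing $J_S=K$ cleanly, and making sure the intersection argument uses only that linear disjointness implies trivial intersection. That implication is standard: if $x\in L_S\cap\tilde J$ is not in $K$, then $1,x$ are $K$-independent in $L_S$ but dependent over $\tilde J$.
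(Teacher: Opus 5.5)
Your proof is correct and is essentially the paper's argument. The backward direction uses Proposition~\ref{gen norm for L}~(2) with $J=L_I$. The forward direction shows $J_S=K$, hence $J\subset L_I$, $L=L_SL_I$, and normality because $L_S/K$ is Galois. The one difference is that you justify $J_S=K$ explicitly via $J_S\subset L_S\cap\tilde J=K$ from linear disjointness, a step the paper only asserts.
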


\begin{proof}
    One implication follows from Proposition \ref{gen norm for L} (2) with $J=L_I$. Conversely suppose $L/K$ is obtained by general normal magnification from $L_S/K$ through some $J/K$. Then $J_S=K$ otherwise $\tilde{L_S}$ and $\tilde{J}$ will not be linearly disjoint over $K$. Thus $J/K$ is purely inseparable and so $J\subset L_I$. As $L=L_SJ$, we have $L=L_SL_I$. Also as $L_S/K$ is Galois, $L/K$ is normal.
\end{proof}

The following follows from Proposition \ref{L'S prop}.

\begin{proposition} With notations as in Proposition \ref{L'S prop}, we have the following:\begin{enumerate}
\item  If $s_K(L)>1$ then we have that $L'_S/K$ is obtained by general normal magnification from $(L^A)_S/K$ through $L_I/K$. 
\smallskip

\item Suppose $r_K(L)>1$. Then\begin{enumerate}
\item $L/(L^A)_S$ is obtained by general normal magnification from $L_S/(L^A)_S$ through $L^A/(L^A)_S$.

\item $L_SL_I/(L^A)_S$ is obtained by general normal magnification from $L_S/(L^A)_S$ through the subextension $L'_S/(L^A)_S$.

\item  $L/L'_S$ is obtained by general normal magnification from $L_SL_I/L'_S$ through $L^A/L'_S$. 

\end{enumerate}

  \end{enumerate}
\end{proposition}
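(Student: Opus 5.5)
The plan is to verify the three conditions of Definition \ref{GNM} directly in each case, using the identities of Proposition \ref{L'S prop}. Each case has the same shape: the first field is separable and in fact Galois over the relevant base, and the magnifying field is purely inseparable over that base. So one linear-disjointness principle handles all four items.

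The principle is this. If $E/B$ is finite separable and $I/B$ is finite purely inseparable, then the Galois closure $\tilde E$ of $E/B$ and $I$ are linearly disjoint over $B$. A purely inseparable $I/B$ is normal, so $\tilde I = I$. Linear disjointness of finite extensions is equivalent to $[\tilde E I:B]=[\tilde E:B][I:B]$. This degree identity is Corollary 3 in VII \S 7 of \cite{lang2012algebra}, or equivalently Example 20.13 of \cite{morandi2012field}, exactly as used in Proposition \ref{gen norm for L} (1).

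For (1), take base $K$, $L=(L^A)_S$ and $J=L_I$. By Proposition \ref{L'S prop} (1), $[(L^A)_S:K]=s_K(L)>1$, and $(L^A)_S/K$ is separable. The field $L_I/K$ is purely inseparable, so the principle gives linear disjointness. By Proposition \ref{L'S prop} (2), $(L^A)_S L_I=L'_S$.

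For (2), the base is $(L^A)_S$ in (a) and (b), and $L'_S$ in (c). In each case the first field is Galois over the base with group isomorphic to $A$, by Proposition \ref{L'S prop} (1),(2). Hence its degree over the base is $r_K(L)>1$, and it is its own normal closure.

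In (a), the field $L^A$ is purely inseparable over $(L^A)_S$, since $(L^A)_S$ is the maximal separable subextension of $L^A/K$. The compositum is $L_SL^A=L$, as shown in the proof of Corollary \ref{cor}.

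In (b), we have $L'_S=(L^A)_SL_I$ by Proposition \ref{L'S prop} (2). Since $L_I/K$ is purely inseparable, $L'_S/(L^A)_S$ is purely inseparable. The compositum is $L_S L'_S=L_SL_I$, since $L'_S\subset L_SL_I$ and $L_I\subset L'_S$.

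In (c), the field $L'_S$ is the maximal separable subextension of $L^A/L_I$, so $L^A/L'_S$ is purely inseparable. The compositum is $L_SL_I\cdot L^A\supseteq L_SL^A=L$, hence equals $L$.

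In all three subcases the principle then supplies the linear disjointness condition. The only step needing care is checking, for each magnifying field, that it is genuinely purely inseparable over the chosen base rather than merely over $K$. I expect this to be the main (mild) obstacle, and handle it through the maximal-separable-subextension characterizations above. Everything else is bookkeeping from Proposition \ref{L'S prop}.
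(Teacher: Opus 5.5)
Your proof is correct and follows essentially the paper's approach: the paper only says the result "follows from Proposition \ref{L'S prop}". Your argument supplies exactly those details — the degrees $s_K(L)$ and $r_K(L)$ from Proposition \ref{L'S prop}, the composita $(L^A)_SL_I=L'_S$, $L_SL^A=L$ and $L_SL'_S=L_SL_I$, and linear disjointness of a separable extension's normal closure from a purely inseparable extension, as in Proposition \ref{gen norm for L} (1).
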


\begin{lemma}\label{lin disj lem}
    Consider extensions $L/K$ and $J/K$ such that $L$ and $J$ are linearly disjoint over $K$. Let $L_1/K, L_2/K$ be subextensions of $L/K$ and $J_1/K,J_2/K$ be subextensions of $J/K$. Then we have $L_1 J_1 \cap L_2 J_2= (L_1\cap L_2)(J_1\cap J_2)$.
\end{lemma}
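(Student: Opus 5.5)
The inclusion $(L_1\cap L_2)(J_1\cap J_2)\subset L_1J_1\cap L_2J_2$ is immediate, so the whole task is the reverse inclusion. The plan is to reduce it to a statement about intersections in a tensor product. Linear disjointness of $L$ and $J$ over $K$ means that multiplication induces an injective $K$-algebra map $L\otimes_K J\to LJ$. All fields involved are finite over $K$ (the paper works with finite extensions), so the compositum $L_iJ_i$ equals the $K$-span of the products $L_iJ_i$. Hence under this map $L_iJ_i$ is the image of $L_i\otimes_K J_i$, and the map is injective on all of $L\otimes_K J$. Consequently
\[L_1J_1\cap L_2J_2 \cong (L_1\otimes_K J_1)\cap (L_2\otimes_K J_2),\]
with the intersection taken inside $L\otimes_K J$.

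It then remains to prove the linear-algebra identity
\[(L_1\otimes J_1)\cap(L_2\otimes J_2)=(L_1\cap L_2)\otimes(J_1\cap J_2)\]
for subspaces $L_1,L_2\subset L$ and $J_1,J_2\subset J$ of $K$-vector spaces. This identity maps back onto $(L_1\cap L_2)(J_1\cap J_2)$, and that is the $K$-span of products, which is the compositum.

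For the identity I would proceed in two steps. First, show $(L_1\otimes J)\cap(L_2\otimes J)=(L_1\cap L_2)\otimes J$. Choose a $K$-basis $\{e_k\}$ of $J$, so that every element of $L\otimes J$ is uniquely $\sum_k x_k\otimes e_k$ with $x_k\in L$. Such an element lies in $L_i\otimes J$ if and only if every $x_k\in L_i$. So lying in both means every $x_k\in L_1\cap L_2$. Symmetrically, $(L\otimes J_1)\cap(L\otimes J_2)=L\otimes(J_1\cap J_2)$. Second, use $L_i\otimes J_i=(L_i\otimes J)\cap(L\otimes J_i)$, which follows by the same coordinate argument after choosing a basis of $L$ extending a basis of $L_i$. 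Intersecting these four subspaces and regrouping gives $\big((L_1\cap L_2)\otimes J\big)\cap\big(L\otimes(J_1\cap J_2)\big)=(L_1\cap L_2)\otimes(J_1\cap J_2)$.

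The main point requiring care is justifying that the compositum $L_iJ_i$ equals the image of $L_i\otimes J_i$, rather than a larger ring. This holds because the image is a finite-dimensional domain over $K$, hence already a field. Once that is settled, the rest is a routine basis argument.
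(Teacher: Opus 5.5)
Your proof is correct. It rests on the same mechanism as the paper's proof: linear disjointness makes a $K$-basis of $J$ (extending a basis of $J_1$) into an $L$-basis of $LJ$, and you compare coefficients against it. You organize the argument differently, though.

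The paper works elementwise inside $LJ$. It writes $\delta\in L_1J_1\cap L_2J_2$ in both forms and expands the basis of $J_2$ in the extended basis of $J$, which gives $\delta\in(L_1\cap L_2)J_1$. Similarly $\delta\in(L_1\cap L_2)J_2$, and it then repeats the argument with a basis of $L_1\cap L_2$ to intersect the $J$-parts.

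You instead transport everything into $L\otimes_K J$. There you prove the field-free identity $(L_1\otimes J_1)\cap(L_2\otimes J_2)=(L_1\cap L_2)\otimes(J_1\cap J_2)$ for arbitrary subspaces, using the factorization $L_i\otimes J_i=(L_i\otimes J)\cap(L\otimes J_i)$.

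Your route isolates all the field theory in two facts. One is the injectivity of $L\otimes_K J\to LJ$. The other is that a compositum of finite extensions equals the $K$-span of products. The paper uses this second fact silently when it writes an arbitrary element of $L_1J_1$ as $\sum l_ij_i$ with $l_i\in L_1$. Your route also yields a more general statement about subspaces.

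The paper's version avoids tensor products and stays entirely inside $\bar K$, at the cost of a less transparent two-stage coefficient chase.
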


\begin{proof}
   Clearly $(L_1\cap L_2)(J_1\cap J_2)\subset L_1 J_1 \cap L_2 J_2$. Conversely, suppose $\delta \in L_1 J_1 \cap L_2 J_2$. Then $\delta=\Sigma_{i=1}^{[J_1:K]}\ l_ij_i=\Sigma_{k=1}^{[J_2:K]}\ l'_kj'_k$ where $\{j_i\}_{i=1}^{[J_1:K]}$ is a basis for $J_1/K$ and $\{j'_k\}_{k=1}^{[J_2:K]}$ is a basis for $J_2/K$ and all $l_i\in L_1$ and all $l'_k\in L_2$. Now we can extend this basis of $J_1/K$ to a basis $B=\{j_i\}_{i=1}^{[J_1:K]}\cup \{j''_i\}_{i=[J_1:K]+1}^{[J:K]}$ of $J/K$. By linear disjointness condition, $B$ is also a basis of $LJ/L$. Also we can write each $j'_k\in J$ in terms of basis $B$ as $j'_k=\Sigma_i\ k'_{ki} j_i + \Sigma_i\ k''_{ki} j''_i$ where all $k'_{ki}, k''_{ki}\in K$. Thus $\delta=\Sigma_k\ l'_k (\Sigma_i\ k'_{ki} j_i + \Sigma_i\ k''_{ki} j''_i)=\Sigma_i\ (\Sigma_k\ k'_{ki} l'_k) j_i + \Sigma_i\ (\Sigma_k\ k''_{ki} l'_k) j''_i=\Sigma_i\ l_i j_i$. As $B$ is a basis of $LJ/L$, we have that each $l_i=\Sigma_k\ k'_{ki} l'_k$. Thus each $l_i\in L_2$ so each $l_i\in L_1\cap L_2$. Thus $\delta\in (L_1\cap L_2) J_1$. Similarly we have $\delta\in (L_1\cap L_2)J_2$. Thus $\delta \in (L_1\cap L_2)J_1 \cap (L_1\cap L_2)J_2$. Now choosing basis of $(L_1\cap L_2)/K$ and repeating a similar argument as before, we have $\delta\in (L_1\cap L_2)(J_1\cap J_2)$. 
\end{proof}

\begin{remark}
 Suppose $M/K$ is obtained by general normal magnification from  $L/K$ through $J/K$. Consider $K\subset M'\subset M$. It is not necessary that $M'=L'J'$ for some $K\subset L'\subset L$ and $K\subset J'\subset J$. Following is an example demonstrating this.

\smallskip

Let $K=\mathbb{F}_p (x,y), L=K(x^{1/p}) , J=K(y^{1/p})$. So $M=LJ=K(x^{1/p}, y^{1/p})$. Now $L/K, J/K$ are  purely inseparable (hence normal) having degree $p$ and $M/K$ is purely inseparable of degree $p^2$. So $M/K$ is obtained by general normal magnification from $L/K$ through $J/K$. Let $M'=K (x^{1/p}+y^{1/p})$. As $x^{1/p}+y^{1/p}=(x+y)^{1/p}$, so $M'/K$ is also purely inseparable of degree $p$. As $M'\neq K,L,J,LJ$, so we are done.\smallskip

This example also demonstrates that: Suppose $M/K$ is obtained by general normal magnification from a subextension $L/K$ through $J/K$. Such a $J/K$ need not be unique. Observe that in the above example $M/K$ is also obtained by general normal magnification from subextension $L/K$ through $M'/K$ where $J\neq M'$.
\end{remark}

\begin{proposition}
    
\label{GNM prop}
    Suppose $M/K$ is obtained by general normal magnification from  $L/K$ through $J/K$. Let $\tilde{M}$ be normal closure of $M/K$ inside $\bar{K}$. Then 
\smallskip 

\begin{enumerate}

\smallskip  \item 
\begin{enumerate}
    \item $\tilde{M}/K$ is obtained by general normal magnification from $\tilde{L}/K$ through $\tilde{J}/K$.

     \item $M\tilde{L}$ and $\tilde{J}$ are linearly disjoint over $J$.

    \item $M\tilde{J}$ and $\tilde{L}$ are linearly disjoint over $L$.

    \item $M\tilde{L}$ and $ M\tilde{J}$ are linearly disjoint over $M$.

\end{enumerate}
\smallskip

 \item 
 \begin{enumerate}

 \item $M_S=L_SJ_S$.
 
\item If $[L_S:K]>1$, then $M_S/K$ is obtained by general normal magnification from $L_S/K$ through $J_S/K$.

\end{enumerate}

\smallskip

\item \begin{enumerate}
    \item $(\tilde{M})_S=(\tilde{L})_S(\tilde{J})_S$ and $(\tilde{M})_I=(\tilde{L})_I(\tilde{J})_I$.

    \item If $[L_S:K]>1$, then $(\tilde{M})_S/K$ is obtained by general normal magnification from $(\tilde{L})_S/K$ through $(\tilde{J})_S/K$.

    \item If $[(\tilde{L})_I:K]>1$, then $(\tilde{M})_I/K$ is obtained by general normal magnification from $(\tilde{L})_I/K$ through $(\tilde{J})_I/K$.
\end{enumerate}

\smallskip

\item \begin{enumerate}
    \item $M_I=L_IJ_I$.

    \item If $[L_I:K]>1$, then $M_I/K$ is obtained by general normal magnification from $L_I/K$ through $J_I/K$.
\end{enumerate}

    \end{enumerate}
\end{proposition}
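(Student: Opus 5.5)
The plan is to start with the normal closures, since most of the other parts reduce to part (1)(a) together with Lemma \ref{compositum}.

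For (1)(a): $M=LJ$, and every $K$-embedding of $M$ into $\bar K$ sends $L$ into $\tilde L$ and $J$ into $\tilde J$. Hence $\tilde M\subset \tilde L\tilde J$. Conversely, $\tilde M$ contains every conjugate of $L$ and of $J$, so by Proposition \ref{f'f} (1) it contains $\tilde L$ and $\tilde J$. Therefore $\tilde M=\tilde L\tilde J$. The normal closures of $\tilde L$ and $\tilde J$ are themselves, so the linear disjointness hypothesis transfers verbatim, and (1)(a) follows.

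For (1)(b)--(d), I would use the tower characterization of linear disjointness. Since $\tilde L$ and $\tilde J$ are linearly disjoint over $K$, we have $[\tilde L\tilde J:K]=[\tilde L:K][\tilde J:K]$. For (b), $M\tilde L=\tilde L J$, and $[\tilde L J:J]=[\tilde L:K]$ because $\tilde L$ and $J$ are linearly disjoint. Then
\[[\tilde L\tilde J:J]=[\tilde L:K][\tilde J:J]=[\tilde L J:J][\tilde J:J],\]
which gives linear disjointness of $\tilde LJ$ and $\tilde J$ over $J$. Part (c) is the symmetric statement. For (d), $M\tilde L=\tilde LJ$ and $M\tilde J=L\tilde J$, and I compare degrees over $M$: $[\tilde LJ:M]=[\tilde L:L]$ and $[L\tilde J:M]=[\tilde J:J]$, while $[\tilde M:M]=[\tilde L:L][\tilde J:J]$. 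All of these follow from the multiplicativity of degrees under linear disjointness of $\tilde L$ and $\tilde J$.

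For (2)(a), Lemma \ref{compositum} (1) gives $M_S=(LJ)_S=L_SJ_S$ directly. For (2)(b), I need $\widetilde{L_S}$ and $\widetilde{J_S}$ to be linearly disjoint. By Proposition \ref{f'f} (4), $\widetilde{L_S}=(\tilde L)_S\subset\tilde L$ and similarly $\widetilde{J_S}\subset\tilde J$. Subfields of linearly disjoint fields are linearly disjoint, and the hypothesis $[L_S:K]>1$ covers condition (1) of Definition \ref{GNM}.

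For (3)(a), the separable part $(\tilde M)_S=(\tilde L)_S(\tilde J)_S$ again comes from Lemma \ref{compositum} (1) applied to $\tilde M=\tilde L\tilde J$. The inseparable part $(\tilde M)_I=(\tilde L)_I(\tilde J)_I$ comes from Lemma \ref{compositum} (2), which applies since $\tilde L$ and $\tilde J$ are normal. For (3)(b) and (c), each piece is normal: $(\tilde L)_S$ is Galois by Proposition \ref{f'f} (4), and a purely inseparable extension is normal. So each piece is its own normal closure, and linear disjointness is inherited from that of $\tilde L,\tilde J$ as subfields. The nontriviality hypotheses give condition (1). For (3)(b) I also need $[(\tilde L)_S:K]>1$, which holds because $(\tilde L)_S=\widetilde{L_S}\supset L_S$.

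Part (4)(a) is the main obstacle, because Lemma \ref{compositum} (2) needs normality of $L$ and $J$, and Example \ref{interesting eg} shows that the naive statement fails in general for composita. The inclusion $L_IJ_I\subset M_I$ is clear. For the reverse, I would argue via degrees and linear disjointness inside the normal closures. First, $M_I\subset(\tilde M)_I=(\tilde L)_I(\tilde J)_I$. Since $(\tilde L)_I$ and $(\tilde J)_I$ are linearly disjoint and $M\cap \tilde L$ should equal $L$, I would apply Lemma \ref{lin disj lem} with $L_1=L$, $J_1=J$, $L_2=(\tilde L)_I$, $J_2=(\tilde J)_I$, inside the linearly disjoint pair $\tilde L,\tilde J$. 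This gives
\[M_I\subset LJ\cap(\tilde L)_I(\tilde J)_I=(L\cap(\tilde L)_I)(J\cap(\tilde J)_I)=L_IJ_I.\]
This lemma application is the key step, and I would double-check that its hypotheses (subextensions of $\tilde L$ and of $\tilde J$ respectively) are met.

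Finally, (4)(b) follows as before: $L_I$ and $J_I$ are purely inseparable, hence normal, and they are subfields of the linearly disjoint $\tilde L$ and $\tilde J$.
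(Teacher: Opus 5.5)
Your proposal is correct and follows the paper's proof essentially step for step. You establish $\tilde M=\tilde L\tilde J$, use degree counts for the linear-disjointness statements in (1) (where the paper cites Morandi's Theorem 20.12 and Lemma 20.4, you compute directly), and apply Lemma \ref{compositum} for (2) and (3). For (4)(a) you use the same key step as the paper, $M_I=LJ\cap(\tilde L)_I(\tilde J)_I=(L\cap(\tilde L)_I)(J\cap(\tilde J)_I)=L_IJ_I$, via Lemma \ref{lin disj lem} applied to the linearly disjoint pair $\tilde L,\tilde J$. You also spell out the magnification claims (2)(b), (3)(b),(c) and (4)(b), which the paper leaves implicit; your justification, that each piece is normal and lies inside $\tilde L$ or $\tilde J$, is the right one.
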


\begin{proof}\hfill

\begin{enumerate}
    \item Part (a) is trivial. Parts (b) and (c) follow from Theorem 20.12 in \cite{morandi2012field}. Part (d) follows from Lemma 20.4 in \cite{morandi2012field} by observing that $[M\tilde{L}:M]\cdot[M\tilde{J}:M]=[\tilde{L}:L]\cdot[\tilde{J}:J]=\frac{[\tilde{L}:K]\cdot[\tilde{J}:K]}{[L:K]\cdot[J:K]}=\frac{[\tilde{M}:K]}{[M:K]}=[\tilde{M}:M]$. 
    
\smallskip

\item Part (a) follows from Lemma \ref{compositum} (1). Now consider $[L_S:K]>1$. Since $\tilde{L}$ and $\tilde{J}$ are linearly disjoint over $K$. Thus the Galois closures $\tilde{L_S}$ and $\tilde{J_S}$, of $L_S/K$ and $J_S/K$ respectively, are linearly disjoint over $K$.\smallskip

\item 
Now $\tilde{M}=\tilde{L}\tilde{J}$, so we have $(\tilde{M})_S=(\tilde{L})_S(\tilde{J})_S$ by Lemma \ref{compositum} (1). Since $\tilde{L}/K$ and $\tilde{J}/K$ are normal, we get by Lemma \ref{compositum} (2) $(\tilde{M})_I=(\tilde{L}\tilde{J})_I=(\tilde{L})_I(\tilde{J})_I$.

  \smallskip

\item 
Clearly $M_I=M\cap (\tilde{M})_I$. So $M_I=LJ\cap (\tilde{L})_I(\tilde{J})_I$. Since $\tilde{L}$ and $\tilde{J}$ are linearly disjoint over $K$, so by Lemma \ref{lin disj lem}, $M_I=(L\cap (\tilde{L})_I)(J\cap (\tilde{J})_I)=L_IJ_I$.

\end{enumerate}\end{proof}

We generalize Corollary 8.1.5 in \cite{Bhagwat_2025} for our case. 

\begin{lemma}\label{isom lem}
    Let $M/K$ be obtained by general normal magnification from $L/K$ through $J/K$. The following are equivalent
    \begin{enumerate}
        \item  an extension $M'/K$ is isomorphic to $M/K$.

        \item $M'=L'J'$ where $L'/K$ is isomorphic to $L/K$ and $J'/K$ is isomorphic to $J/K$. 
    \end{enumerate}

    Further in this case the extensions $L'/K$ and $J'/K$ are unique and are given by $L'=M'\cap \tilde{L}$ and $J'=M'\cap \tilde{J}$.
\end{lemma}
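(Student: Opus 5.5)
Both directions reduce to extending $K$-isomorphisms to $\bar K$ and using the fact that $\tilde L$ and $\tilde J$ are normal, hence stable under every $K$-automorphism of $\bar K$. The uniqueness statement will come from Lemma \ref{lin disj lem}.

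For (1) $\Rightarrow$ (2), let $\sigma: M\to M'$ be a $K$-isomorphism. Extend it to a $K$-automorphism $\tilde\sigma$ of $\bar K$. Since $M=LJ$, we get $M'=\tilde\sigma(L)\tilde\sigma(J)$. Put $L'=\tilde\sigma(L)$ and $J'=\tilde\sigma(J)$; these are isomorphic to $L/K$ and $J/K$ respectively.

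For (2) $\Rightarrow$ (1), take $K$-isomorphisms $\sigma_1:L\to L'$ and $\sigma_2:J\to J'$. Extend them to $K$-automorphisms $\tau_1$ of $\tilde L$ and $\tau_2$ of $\tilde J$; this is possible because $\tilde L$ and $\tilde J$ are normal over $K$ and $L',J'$ lie in them. Because $\tilde L$ and $\tilde J$ are linearly disjoint over $K$, we have $\tilde L\tilde J\cong \tilde L\otimes_K\tilde J$. Hence $\tau_1\otimes\tau_2$ defines a $K$-automorphism $\tau$ of $\tilde L\tilde J=\tilde M$ with $\tau|_{\tilde L}=\tau_1$ and $\tau|_{\tilde J}=\tau_2$. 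Then $\tau(M)=\tau(L)\tau(J)=L'J'=M'$, so $M'$ is isomorphic to $M$ over $K$. This gluing step is the only real point of the proof. It needs the tensor product to be a field isomorphic to the compositum, which is exactly what linear disjointness of $\tilde L$ and $\tilde J$ gives. Separability plays no role here, so the argument is the same as in the separable setting of Corollary 8.1.5 in \cite{Bhagwat_2025}.

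For uniqueness, write $M'=L'J'$ with $L'\subset\tilde L$ and $J'\subset\tilde J$. Apply Lemma \ref{lin disj lem} to the linearly disjoint pair $\tilde L,\tilde J$ with $L_1=L'$, $J_1=J'$, $L_2=\tilde L$, $J_2=K$. This gives $M'\cap\tilde L=(L'\cap\tilde L)(J'\cap K)=L'$. Symmetrically, taking $L_2=K$ and $J_2=\tilde J$ gives $M'\cap\tilde J=J'$. So $L'$ and $J'$ are determined by $M'$ alone, which proves both the uniqueness claim and the stated formulas.
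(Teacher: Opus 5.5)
Your proof is correct. Direction (1)$\Rightarrow$(2) is essentially the paper's. For (2)$\Rightarrow$(1) and for uniqueness you take a different route.

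\textbf{(2)$\Rightarrow$(1).} The paper never leaves $LJ$. It defines $\sigma(\sum l_i j_i)=\sum \lambda(l_i)\nu(j_i)$ using a $K$-basis $\{l_i\}$ of $L$. It then checks well-definedness and injectivity by hand, using that $\{l_i\}$ and $\{\lambda(l_i)\}$ remain linearly independent over $\tilde J$. In effect it verifies $LJ\cong L\otimes_K J\to L'\otimes_K J'\cong L'J'$ directly, so it only uses that the pairs $L,J$ and $L',J'$ are linearly disjoint. Multiplicativity of $\sigma$ is left implicit there.

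You instead extend $\sigma_1,\sigma_2$ to automorphisms $\tau_1,\tau_2$ of the normal closures, which normality of $\tilde L,\tilde J$ permits. You then glue them via $\tilde L\otimes_K\tilde J\cong\tilde L\tilde J$. This uses the full hypothesis on $\tilde L,\tilde J$, but it gives multiplicativity for free. It also proves more: every pair in $Aut(\tilde L/K)\times Aut(\tilde J/K)$ is induced by a single automorphism of $\tilde M$. Restricting to $M$, this immediately gives the surjectivity needed later in Lemma~\ref{aut lem}.

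\textbf{Uniqueness.} The paper cites Theorem 20.12 of Morandi, i.e.\ that $\tilde L$ and $L'J'$ are linearly disjoint over $L'$. You specialize the paper's own Lemma~\ref{lin disj lem} with $J_2=K$ (resp.\ $L_2=K$), which keeps the argument within results already proved in the paper. Both give $M'\cap\tilde L=L'$ and $M'\cap\tilde J=J'$.

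One point to make explicit: any $L'$ that is $K$-isomorphic to $L$ automatically lies in $\tilde L$, and likewise for $J'$. This is what lets you apply the lemma to an arbitrary decomposition $M'=L'J'$.
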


\begin{proof}
 Suppose $M'/K$ is isomorphic to $M/K$. Let $\sigma : LJ\rightarrow M'$ be the isomorphism such that $\sigma|_{K}=id_{K}$. Let $\sigma(L)=L'$ and $\sigma(J)=J'$. So (1)$\implies$(2).\smallskip 

    Conversely, suppose $M'=L'J'$ where $L'/K$ is isomorphic to $L/K$ and $J'/K$ is isomorphic to $J/K$. Let $\lambda : L\rightarrow L'$ be the isomorphism such that $\lambda|_{K}=id_{K}$ and $\nu : J\rightarrow J'$ be the isomorphism such that $\nu|_{K}=id_{K}$. Let $\sigma: M=LJ\rightarrow M'=L'J'$ be such that $\sigma(l)=\lambda(l)$ for all $l\in L$ and $\sigma(j)=\nu(j)$ for all $j\in J$. Let $\{ l_i \}_{1 \leq i \leq [L:K]}$ be a $K$-basis for $L$.  Hence any element of $LJ$ is of the form $\Sigma\ l_i j_i$ for $j_i\in J$.\smallskip

    Suppose $\Sigma\ l_i j_i=0$. Since $l_i\in L\subset \tilde{L}$ are linearly independent over $K$, and $\tilde{L}$ and $\tilde{J}$ are linearly disjoint over $K$; it follows by Definition 20.1 in \cite{morandi2012field} that $\{ l_i \}_{1 \leq i \leq [L:K]}$ are linearly independent over $\tilde{J}$. Thus $j_i=0$ for all $i$. Now $\sigma (\Sigma\ l_i j_i)= \Sigma\ \lambda (l_i) \nu(j_i)=0$. Hence $\sigma$ is well defined homomorphism with $\sigma|_K=id_K$.\smallskip

    Suppose $\Sigma\ l_ij_i\in LJ$ such that $\sigma(\Sigma\ l_ij_i)=0$. Thus $\Sigma\ \lambda(l_i)\nu(j_i)=0$. Since $\lambda$ is a $K$-isomorphism, we have that $\{\lambda(l_i)\}_{1\leq i \leq [L:K]}$ is a $K$-basis for $L'$. Since $\lambda(l_i)\in L'\subset \tilde{L}$ are linearly independent over $K$, and $\tilde{L}$ and $\tilde{J}$ are linearly disjoint over $K$; $\lambda(l_i)$ are linearly independent over $\tilde{J}$. Thus $\nu(j_i)=0$ for all $i$. Since $\nu$ is a $K$-isomorphism, we have $j_i=0$ for all $i$. This $\Sigma\ l_ij_i=0$. Hence $\sigma$ is injective. Also since $\lambda$ and $\nu$ are $K$-isomorphisms, we have that $\sigma$ is surjective. So $\sigma$ is the required $K$-isomorphism.\smallskip

    As $L'\subset \tilde{L}$ and $J'\subset \tilde{J}$, so by Theorem 20.12 in \cite{morandi2012field}, we have $\tilde{L}$ and $L'J'$ to be linearly disjoint over $L'$ and $\tilde{J}$ and $L'J'$ to be linearly disjoint over $J'$. Thus $\tilde{L}\cap L'J'=L'$ and $\tilde{J}\cap L'J'=J'$. Hence the uniqueness of $L'$ and $J'$ follows.
\end{proof}

\begin{corollary}\label{isom lem cor}

 Let $M/K$ be obtained by general normal magnification from $L/K$ through $J/K$. The following are equivalent
    \begin{enumerate}
        \item  an extension $M'/J$ is isomorphic to $M/J$.

        \item $M'=L'J$ where $L'/K$ is isomorphic to $L/K$.
    \end{enumerate}

    Further in this case the extensions $L'/K$ is unique and is given by $L'=M'\cap \tilde{L}$.
    
\end{corollary}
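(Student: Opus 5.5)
The plan is to deduce Corollary \ref{isom lem cor} from Lemma \ref{isom lem}, with the extra condition that the $K$-isomorphism fix $J$ pointwise.

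\emph{Direction (1)$\implies$(2).} Let $\sigma: M=LJ\rightarrow M'$ be an isomorphism with $\sigma|_J=id_J$. Then $\sigma$ is in particular a $K$-isomorphism. Put $L'=\sigma(L)$. This is isomorphic to $L/K$, and $M'=\sigma(L)\sigma(J)=L'J$.

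\emph{Direction (2)$\implies$(1).} Suppose $M'=L'J$ with $L'/K$ isomorphic to $L/K$ via some $\lambda$. Apply the construction in the proof of Lemma \ref{isom lem}, taking $J'=J$ and $\nu=id_J$. It gives a well-defined $K$-isomorphism $\sigma: LJ\rightarrow L'J$ with $\sigma(\Sigma\ l_ij_i)=\Sigma\ \lambda(l_i)j_i$. The key inputs are the linear independence of a $K$-basis of $L$ (resp. $L'$) over $\tilde J$, which comes from the linear disjointness of $\tilde L$ and $\tilde J$, together with $L'\subset\tilde L$. By construction $\sigma$ restricts to the identity on $J$, so $M'/J$ is isomorphic to $M/J$.

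\emph{Uniqueness.} If $M'=L'J=L''J$ with $L',L''$ both isomorphic to $L$ over $K$, then $L',L''\subset\tilde L$. Applying the final paragraph of Lemma \ref{isom lem} with $J'=J$ (via Theorem 20.12 in \cite{morandi2012field}) gives $\tilde L\cap L'J=L'$. Hence $L'=M'\cap\tilde L=L''$.

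I expect no real obstacle. The only point needing care is checking that the lemma's construction genuinely allows $J'=J$ with $\nu$ the identity, so that the resulting $\sigma$ fixes $J$ and not merely $K$. This is immediate from the explicit formula for $\sigma$.
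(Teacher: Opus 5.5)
Your proposal is correct and is essentially the paper's proof. You take $L'=\sigma(L)$ for (1)$\Rightarrow$(2), rerun the construction from the proof of Lemma \ref{isom lem} with $\nu=id_J$ for (2)$\Rightarrow$(1), and get uniqueness from $L'J\cap\tilde{L}=L'$ via linear disjointness, exactly as the paper does.
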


\begin{proof}
    Suppose $M'/J$ is isomorphic to $M/J$. Let $\sigma : LJ\rightarrow M'$ be the isomorphism such that $\sigma|_{J}=id_{J}$. Let $\sigma(L)=L'$. So (1)$\implies$(2).
\smallskip

Conversely, suppose $M'=L'J$ where $L'/K$ is isomorphic to $L/K$. Let $\lambda : L\rightarrow L'$ be the isomorphism such that $\lambda|_{K}=id_{K}$ and $\nu : J\rightarrow J$ be the identity isomorphism. Let $\sigma: M=LJ\rightarrow M'=L'J$ be such that $\sigma(l)=\lambda(l)$ for all $l\in L$ and $\sigma(j)=\nu(j)=j$ for all $j\in J$. By proof of Lemma \ref{isom lem}, $\sigma$ is the required $J$-isomorphism. Also $L'J\cap \tilde{L}=L'$.\end{proof}

The following results justifies the terminology in Definition \ref{GNM}. 

\begin{proposition}
\label{GNM theorem}
 
 (General Normal Magnification I) Let $M/K$ be obtained by general normal magnification from $L/K$ through $J/K$. Then
 \begin{enumerate}
     \item $[M:K]=  [L:K]\cdot [J:K]$ and $s_K(M)= s_K(L)\cdot s_K(J)$.
\smallskip
     
     \item  $r_K(M)= r_K(L)\cdot r_K(J)$ and $l_K(M)=l_K(L)\cdot l_K(J)$.
 \smallskip
 
     \item  $t_K(M)= t_K(L)\cdot t_K(J)$ and $u_K(M)= u_K(L)\cdot u_K(J)$.
     \smallskip

\item $i_K(M)=i_K(L)\cdot i_K(J)$ and $a_K(M)=a_K(L)\cdot a_K(J)$.

 \end{enumerate}

 \end{proposition}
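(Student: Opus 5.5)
The plan is to reduce everything to Lemma \ref{isom lem} together with the structural results already proved for $M_S$, $M_I$ and the maximal normal/Galois subextensions.

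\emph{Part (1).} Since $L\subset\tilde L$ and $J\subset\tilde J$ and $\tilde L,\tilde J$ are linearly disjoint over $K$, the fields $L$ and $J$ are linearly disjoint, so $[M:K]=[L:K][J:K]$. For $s_K$, Lemma \ref{isom lem} gives a bijection between the distinct fields $M'\subset\bar K$ isomorphic to $M/K$ and the pairs $(L',J')$ with $L'\cong L$ and $J'\cong J$. The inverse map is $M'\mapsto (M'\cap\tilde L, M'\cap\tilde J)$, which is well defined by the uniqueness clause of that lemma. Hence $s_K(M)=s_K(L)s_K(J)$.

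\emph{Part (2).} By Proposition \ref{GNM prop} (2)(a), $M_S=L_SJ_S$. Linear disjointness of $\tilde{L_S}$ and $\tilde{J_S}$ gives $[M_S:K]=[L_S:K][J_S:K]$. Proposition \ref{r prop} (3) gives $r_K(X)=[X_S:K]/s_K(X)$, so part (1) yields $r_K(M)=r_K(L)r_K(J)$. Similarly $l_K(X)=[X:K]/s_K(X)$ by Proposition \ref{r prop} (3), so $l_K(M)=l_K(L)l_K(J)$.

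\emph{Part (4).} Proposition \ref{GNM prop} (4)(a) gives $M_I=L_IJ_I$, and linear disjointness gives $i_K(M)=i_K(L)i_K(J)$. Then Proposition \ref{ad prop} (1) gives $a_K=t_K\cdot i_K$, so this part follows once part (3) is proved.

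\emph{Part (3), the main step.} I will show that the largest Galois subextension $F_M$ of $M/K$ equals $F_LF_J$, where $F_L, F_J$ are the largest Galois subextensions of $L/K$ and $J/K$. Clearly $F_LF_J$ is Galois and lies in $M$, so $F_LF_J\subset F_M$. For the reverse inclusion I would use Proposition \ref{f'f} (3): $F_M=\cap_{i}(M_i)_S$ over all conjugates $M_i$ of $M$. By Lemma \ref{isom lem} each conjugate has the form $L'J'$, with $(L'J')_S=L'_SJ'_S$. Letting $L'$ and $J'$ run independently over all conjugates, Lemma \ref{lin disj lem} (applied to $\tilde L,\tilde J$, extended by induction to finite intersections) gives
\[
\bigcap_{L',J'} L'_SJ'_S=\Big(\bigcap_{L'}L'_S\Big)\Big(\bigcap_{J'}J'_S\Big)=F_LF_J,
\]
again using Proposition \ref{f'f} (3) for $L$ and $J$. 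Linear disjointness then gives $t_K(M)=t_K(L)t_K(J)$. Finally $u_K=[X_S:K]/t_K(X)$ by Proposition \ref{asc ind} (1), which gives $u_K(M)=u_K(L)u_K(J)$.

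The obstacle I expect is the induction on Lemma \ref{lin disj lem} for intersections of more than two fields. The subfields $L_1\cap L_2\subset\tilde L$ and $J_1\cap J_2\subset\tilde J$ remain in the same linearly disjoint pair, so iterating should be harmless. Alternatively, one could argue with Galois groups: $\mathrm{Gal}(\tilde M_S/K)$ is the product of the Galois groups of $\tilde L_S$ and $\tilde J_S$, and the normal core of a product subgroup is the product of the cores.
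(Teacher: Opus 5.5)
Your proof is correct. Parts (1), (2) and (4) match the paper's argument: the bijection of conjugates from Lemma \ref{isom lem}, then Proposition \ref{r prop} (3), Proposition \ref{GNM prop} (2)(a) and (4)(a), and Proposition \ref{ad prop} (1).

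The real difference is in part (3). The paper passes to $M_S$ via Proposition \ref{asc ind} (1) and splits into two cases:
\begin{itemize}
\item if $L_S=K$, then $M_S=J_S$;
\item if $[L_S:K]>1$, then $M_S/K$ is a general magnification of $L_S/K$ through $J_S/K$ by Proposition \ref{GNM prop} (2)(b), and the paper simply cites Proposition 3.3 of \cite{jaiswal2025variantsinverseclustersize}, the separable-case result.
\end{itemize}

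You instead prove $F_M=F_LF_J$ directly, using only tools from this paper:
\begin{itemize}
\item the conjugates of $M$ are exactly the $L'J'$, with $L'$ and $J'$ ranging independently over conjugates of $L$ and $J$;
\item $(L'J')_S=L'_SJ'_S$ by Lemma \ref{compositum} (1);
\item the iterated form of Lemma \ref{lin disj lem} gives $\bigcap L'_SJ'_S=\bigl(\bigcap L'_S\bigr)\bigl(\bigcap J'_S\bigr)$, which equals $F_LF_J$ by Proposition \ref{f'f} (3).
\end{itemize}
The induction is indeed harmless, since finite intersections of subfields of $\tilde L$ (resp. $\tilde J$) stay inside $\tilde L$ (resp. $\tilde J$).

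Your route is self-contained and needs no case split. It also gives an explicit description of the largest Galois subextension of $M$, which the paper only obtains later through Proposition \ref{unique chain GNM} (2), again via the external paper. Your argument is the same mechanism the paper uses afterwards in Proposition \ref{GNM thm II} (2). In effect you prove the special case $\tau_K(\tilde L\tilde J,LJ)=\tau_K(\tilde L,L)\,\tau_K(\tilde J,J)$, and there is no circularity because you only rely on part (1). The paper's route is shorter on the page but rests on the previously established separable theory.
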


\begin{proof}\hfill
\begin{enumerate}
    \item The first assertion is true by Lemma 20.4 in \cite{morandi2012field} and the second follows from Lemma \ref{isom lem}.

    \smallskip
    \item By Proposition \ref{GNM prop} (2) it follows that $[M_S:K]=[L_S:K]\cdot[J_S:K]$. By Proposition \ref{r prop} (3) and Part (1) we are done.

\smallskip
    \item By Proposition \ref{asc ind}, $t_K(M)=t_K(M_S)$ and $u_K(M)=u_K(M_S)$. If $L_S=K$, then by Proposition \ref{GNM prop} (2) we have $M_S=J_S$ and the result follows. Now if $[L_S:K]>1$, then by Proposition \ref{GNM prop} (2), $M_S/K$ is obtained by general normal magnification from $L_S/K$ through $J_S/K$. So we are done by Proposition 3.3 in \cite{jaiswal2025variantsinverseclustersize}.\smallskip

\item By Proposition \ref{GNM prop} (4) it follows that $[M_I:K]=[L_I:K][J_I:K]$. The other assertion follows from part (3) and Proposition \ref{ad prop} (1).

\end{enumerate}
 
\end{proof}

\begin{remark}
  We can now give an alternate proof for Proposition \ref{L'S prop} (4) as follows. Now $L=L_SL_I$. If $L_S=K$, then $L=L_I$. Thus $r_K(L)=1=r_K(L_S)$. If $[L_S:K]>1$ then $L/K$ is obtained by general normal magnification from $L_S/K$ through $L_I/K$. Thus by Proposition \ref{GNM theorem} (2), $r_K(L)=r_K(L_S)\cdot r_K(L_I)=r_K(L_S)$.
\end{remark}

The following is a generalization of Lemma 2.28 in \cite{jaiswal2025rootcapacityintersectionindicium}.

\begin{proposition}\label{GNM thm II}

(General Normal Magnification II) Consider $M'/L'/K$. Let $M'/K$ be obtained by general normal magnification from $M/K$ through $J'/K$ and $L'/K$ be obtained by general normal magnification from $L/K$ through $J/K$ where $L\subset M$ and $J\subset J'$. Then

    \begin{enumerate}
        \item $\rho_K(M',L')=\rho_K(M,L)\cdot \rho_K(J',J)$ and $\lambda_K(M',L')=\lambda_K(M,L)\cdot \lambda_K(J',J)$.
\smallskip

        \item $\tau_K(M',L')=\tau_K(M,L)\cdot \tau_K(J',J)$ and $\alpha_K(M',L')=\alpha_K(M,L)\cdot \alpha_K(J',J)$.

        \smallskip

\item $\gamma_K(M',L')=\gamma_K(M,L)\cdot \gamma_K(J',J)$ and $\Gamma_K(M',L')=\Gamma_K(M,L)\cdot \Gamma_K(J',J)$ and $\iota_K(M',L')=\iota_K(M,L)\cdot \iota_K(J',J)$.
    \end{enumerate}
\end{proposition}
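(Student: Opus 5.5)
The plan is to reduce everything to a description of the fields isomorphic to $L'$ inside $M'$, and then take products, intersections and composita. By Lemma \ref{isom lem} (applied to the magnification $L'=LJ$), a field $L''\subset \bar K$ is $K$-isomorphic to $L'$ if and only if $L''=L_1J_1$ with $L_1\cong L$ and $J_1\cong J$ over $K$. In that case $L_1=L''\cap\tilde L$ and $J_1=L''\cap \tilde J$ are uniquely determined.

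The first key step is to show that $L_1J_1\subset M'=MJ'$ if and only if $L_1\subset M$ and $J_1\subset J'$. One direction is trivial. For the other, note that $L_1\subset \tilde L=\tilde M$-side: $L_1\cong L\subset M$ gives $L_1\subset\tilde L\subset\tilde M$, and similarly $J_1\subset \tilde J\subset \tilde{J'}$. Since $\tilde M$ and $\tilde{J'}$ are linearly disjoint over $K$, Lemma \ref{lin disj lem} (with $L_1=L_1$, $L_2=M$, $J_1=J_1$, $J_2=J'$) gives
\[
L_1J_1\cap MJ'=(L_1\cap M)(J_1\cap J').
\]
If $L_1J_1\subset MJ'$, degree counting via linear disjointness (Lemma 20.4 in \cite{morandi2012field}) forces $[L_1\cap M:K][J_1\cap J':K]=[L_1:K][J_1:K]$. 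Hence $L_1\subset M$ and $J_1\subset J'$. So the fields isomorphic to $L'$ in $M'$ are in bijection, via $(L_1,J_1)\mapsto L_1J_1$, with pairs consisting of a field in $M$ isomorphic to $L$ and a field in $J'$ isomorphic to $J$. If $a,b$ denote these counts, $M'$ contains exactly $ab$ copies of $L'$. Part (1) then follows from Proposition \ref{GNM theorem} (2): $r_K(L')=r_K(L)r_K(J)$ and $l_K(L')=l_K(L)l_K(J)$. The case where no copy exists gives $0=0$ on both sides.

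For part (2), the intersection normal indicium is handled by Lemma \ref{lin disj lem} iterated, all fields lying in $\tilde M\tilde{J'}$ with $\tilde M,\tilde{J'}$ linearly disjoint:
\[
\bigcap_{i,j} L_iJ_j=\Big(\bigcap_i L_i\Big)\Big(\bigcap_j J_j\Big).
\]
Its degree is then the product of the two degrees. For $\tau$, I would use Proposition \ref{GNM prop} (2), which gives $(L_iJ_j)_S=(L_i)_S(J_j)_S$, and apply the same lemma to the separable parts. These lie in $(\tilde M)_S$ and $(\tilde{J'})_S$, which are still linearly disjoint as subfields.

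For part (3), the composita satisfy $\prod_{i,j}L_iJ_j=(\prod_i L_i)(\prod_j J_j)$. Linear disjointness of the two factors, as subfields of $\tilde M$ and $\tilde{J'}$, gives the degree product for $\Gamma$. For $\gamma$, I would combine this with Lemma \ref{compositum} (1). For $\iota$, I would use the analogue of Proposition \ref{GNM prop} (4): applying it to $E=\prod_i L_i$ and $E'=\prod_j J_j$, whose normal closures lie in $\tilde M$ and $\tilde{J'}$ and are therefore linearly disjoint, gives $(EE')_I=E_IE'_I$ with multiplicative degree. When $[E:K]=1$ the magnification hypothesis fails, but that case is trivial.

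The main obstacle is the containment step $L_1J_1\subset MJ'\Rightarrow L_1\subset M,\ J_1\subset J'$. It requires applying Lemma \ref{lin disj lem} inside the correct linearly disjoint pair $\tilde M,\tilde{J'}$, and checking that the hypothesis $L\subset M$, $J\subset J'$ puts all conjugates $L_1$ and $J_1$ in the appropriate factor.
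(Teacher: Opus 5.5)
Your proposal is correct and follows essentially the same route as the paper. Both identify the copies of $L'$ inside $M'$ as exactly the products $L_iJ_j$ with $L_i\subset M$ and $J_j\subset J'$, and then handle the intersections and composita with Lemma \ref{lin disj lem}, Lemma \ref{compositum} (1) and the argument of Proposition \ref{GNM prop} (4). The only difference is the containment step: the paper gets $M'\cap\tilde{J'}=J'$ (and similarly for $M$) from Proposition \ref{GNM prop} (1)(b), while you use Lemma \ref{lin disj lem} with a degree count, which works equally well.
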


\begin{proof} By Proposition \ref{GNM theorem} (1), $s_K(L')=s_K(L)\cdot s_K(J)$. Thus the $s_K(L')$ many distinct fields isomorphic to $L'/K$ are precisely $L_iJ_j$ for $1\leq i\leq s_K(L)$ and $1\leq j\leq s_K(J)$ where $L_i$'s are all the distinct fields isomorphic to $L/K$ and $J_j$'s are all the distinct fields isomorphic to $J/K$. Now suppose $L_i J_j\subset M'$. Thus $J_j\subset M'$. Hence $J_j\cap \tilde{J'}\subset M'\cap \tilde{J'}$. By Proposition \ref{GNM prop} (1)(b), $M'\tilde{M}\cap \tilde{J'}=J'$. So $M'\cap \tilde{J'}=J'$. Since $J_j\subset \tilde{J'}$ we have $J_j\subset J'$. Similarly $L_i\subset M$. Thus the distinct fields inside $M'$ isomorphic to $L'/K$ are precisely $\{L_iJ_j\}_{1\leq i\leq a, 1\leq j\leq b}$ where $\{L_i\}_{i=1}^a$ are fields inside $M$ isomorphic to $L/K$ and $\{J_j\}_{j=1}^b$ are fields inside $J'$ isomorphic to $J/K$. 

\begin{enumerate}
\item  Now $\rho_K(M',L')/r_K(L')=(\rho_K(M,L)/r_K(L))\cdot (\rho_K(J',J)/r_K(J))=(\lambda_K(M,L)/l_K(L))\cdot (\lambda_K(J',J)/l_K(J))=\lambda_K(M',L')/l_K(L')$. By Proposition \ref{GNM theorem} (2), we are done.

\smallskip

\item 

By Lemma \ref{lin disj lem}, $(\cap_{1\leq i\leq a, 1\leq j\leq b}\ L_iJ_j)=(\cap_{i=1}^a L_i)(\cap_{j=1}^b J_j)$. Also by Lemma \ref{compositum} (1),\\
$(\cap_{1\leq i\leq a, 1\leq j\leq b}\ (L_iJ_j)_S)=(\cap_{1\leq i\leq a, 1\leq j\leq b}\ (L_i)_S(J_j)_S)=(\cap_{i=1}^a (L_i)_S)(\cap_{j=1}^b (J_j)_S)$.

\smallskip

\item 

Clearly, $\Pi_{1\leq i\leq a, 1\leq j\leq b}\ L_i J_j=(\Pi_{i=1}^a\ L_i)(\Pi_{j=1}^b\ J_j)$. Also by Lemma \ref{compositum} (1),\\
$\Pi_{1\leq i\leq a, 1\leq j\leq b}\ (L_i J_j)_S=\Pi_{1\leq i\leq a, 1\leq j\leq b}\ (L_i)_S (J_j)_S=(\Pi_{i=1}^a\ (L_i)_S)(\Pi_{j=1}^b\ (J_j)_S)$. Also by proof of Proposition \ref{GNM prop} (4), $(\Pi_{1\leq i\leq a, 1\leq j\leq b}\ L_i J_j)_I=((\Pi_{i=1}^a\ L_i)(\Pi_{j=1}^b\ J_j))_I=(\Pi_{i=1}^a\ L_i)_I(\Pi_{j=1}^b\ J_j)_I$.
    
\end{enumerate}
    
\end{proof}

We have the following hereditary property for general normal magnification.

\begin{proposition} \label{hereditary}
    Suppose $M/K$ is obtained by general normal magnification from  $L/K$ through $J/K$ as in Def \ref{GNM}. Let $K\subset L' \subset L$ and $K\subset J'\subset J$ with $[L:L']>1$. Then $M/L'J'$ is obtained by general normal magnification from $LJ'/L'J'$ through $L'J/L'J'$.
    
\end{proposition}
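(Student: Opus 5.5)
We have to check the three conditions of Definition \ref{GNM} for the base field $K':=L'J'$, the subextension $LJ'/K'$, and the auxiliary extension $L'J/K'$. The plan is to reduce everything to the linear disjointness of $\tilde{L}$ and $\tilde{J}$ over $K$, using the tower results for linear disjointness from \cite{morandi2012field} (Lemma 20.4, Theorem 20.12) as in the proof of Proposition \ref{GNM prop}.

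\emph{Condition (3).} This is immediate: $(LJ')(L'J)=LJ=M$, since $L'\subset L$ and $J'\subset J$.

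\emph{Condition (1).} Since $\tilde{L}$ and $\tilde{J}$ are linearly disjoint over $K$, so are $L$ and $J'$. Hence $[LJ':K]=[L:K][J':K]$ and $[L'J':K]=[L':K][J':K]$, and therefore
\[
[LJ':L'J']=[L:L']>1.
\]

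\emph{Condition (2).} This is the main step. We need the normal closures of $LJ'/K'$ and $L'J/K'$ over $K'$ to be linearly disjoint over $K'$. I would not compute these closures directly; instead I would bound them by known normal extensions of $K'$.

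First, $\tilde{L}/K$ is normal, so $\tilde{L}K'/K'$ is normal and contains $LJ'$. Hence the normal closure of $LJ'/K'$ lies in $\tilde{L}K'=\tilde{L}J'$, because $L'\subset\tilde{L}$. Similarly, the normal closure of $L'J/K'$ lies in $\tilde{J}L'$.

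It therefore suffices to show that $\tilde{L}J'$ and $L'\tilde{J}$ are linearly disjoint over $L'J'$, since subfields of linearly disjoint fields are linearly disjoint. I would check this by a degree count, using the criterion that two finite extensions $E_1,E_2$ of $F$ are linearly disjoint iff $[E_1E_2:F]=[E_1:F][E_2:F]$. Linear disjointness of $\tilde{L}$ and $\tilde{J}$ over $K$ gives
\[
[\tilde{L}J':K]=[\tilde{L}:K][J':K],\quad [L'\tilde{J}:K]=[L':K][\tilde{J}:K],\quad [\tilde{L}\tilde{J}:K]=[\tilde{L}:K][\tilde{J}:K].
\]
Dividing each by $[L'J':K]=[L':K][J':K]$ yields
\[
[\tilde{L}J':L'J']\,[L'\tilde{J}:L'J']=\frac{[\tilde{L}:K][\tilde{J}:K]}{[L':K][J':K]}=[\tilde{L}\tilde{J}:L'J'].
\]
Since the compositum of $\tilde{L}J'$ and $L'\tilde{J}$ is $\tilde{L}\tilde{J}$, this is exactly the criterion, so linear disjointness holds.

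The one subtle point is the finiteness and degree bookkeeping: every field involved is a compositum of a subfield of $\tilde{L}$ with a subfield of $\tilde{J}$, and all the degree formulas above follow from linear disjointness of $\tilde{L}$ and $\tilde{J}$ over $K$ (via Lemma 20.4 in \cite{morandi2012field}). I do not expect separability to play any role, since only linear disjointness and normality are used. This completes the verification that $M/L'J'$ is obtained by general normal magnification from $LJ'/L'J'$ through $L'J/L'J'$.
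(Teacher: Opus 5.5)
Your proof is correct and follows essentially the same route as the paper. Both bound the two normal closures inside $\tilde{L}J'$ and $L'\tilde{J}$, prove these are linearly disjoint over $L'J'$ via the degree identity $[\tilde{L}J':L'J']\cdot[L'\tilde{J}:L'J']=[\tilde{L}\tilde{J}:L'J']$, and then pass to subfields; you just write out the degree bookkeeping that the paper leaves as a ``similar computation as in Proposition~\ref{GNM prop}~(1).''
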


\begin{proof}
 We check that the conditions in Definition \ref{GNM} hold. Let $L_1$ be normal closure of $LJ'/L'J'$ and $J_1$ be normal closure of $L'J/L'J'$. 
 
 \smallskip

Since $\tilde{L}$ and $\tilde{J}$ are linearly disjoint over $K$. Hence $\tilde{L}$ and $J'$ are linearly disjoint over $K$. Thus $\tilde{L}$ and $ L'J'$ are linearly disjoint over $L'$. Thus $[LJ':L'J']=[L:L']>1$. Also since $\tilde{L}/L$ is normal, we have $\tilde{L}J'/L'J'$ to be normal. Thus $L_1\subset \tilde{L}J'$. Similarly we have $J_1\subset L'\tilde{J}$.

\smallskip

Now $(\tilde{L}J')(L'\tilde{J})=\tilde{L}\tilde{J}=\tilde{M}$. By similar computation as in Proposition \ref{GNM prop} (1), we can show that $[\tilde{L}J':L'J']\cdot [L'\tilde{J}:L'J']=[\tilde{M}:L'J']$. Thus we have $\tilde{L}J'$ and $L'\tilde{J}$ are linearly disjoint over $L'J'$. Hence $L_1$ and $J_1$ are linearly disjoint over $L'J'$. Also $(LJ')(L'J)=LJ=M$. \end{proof}

By letting $J'=K$, we have the following.

\begin{corollary}
    Let $M/K$ be obtained by general normal magnification from $L/K$ through $J/K$. Then for any $K\subset L'\subset L$ with $[L:L']>1$, the extension $M/L'$ is obtained by general normal magnification from $L/L'$ through $L'J/L'$.
\end{corollary}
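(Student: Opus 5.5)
The corollary is Proposition \ref{hereditary} with $J'=K$. I will substitute this choice and check that the hypotheses of that proposition still hold and that its conclusion takes the stated form.

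\textbf{Hypotheses.} Take $J'=K$, which is a subextension $K\subset J'\subset J$. Take $L'$ as given, with $K\subset L'\subset L$ and $[L:L']>1$. These are exactly the hypotheses of Proposition \ref{hereditary}.

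\textbf{Conclusion.} That proposition says $M/L'J'$ is obtained by general normal magnification from $LJ'/L'J'$ through $L'J/L'J'$. With $J'=K$ the composita simplify as follows:
\begin{itemize}
\item $L'J'=L'K=L'$;
\item $LJ'=LK=L$;
\item $L'J$ is unchanged.
\end{itemize}
So $M/L'$ is obtained by general normal magnification from $L/L'$ through $L'J/L'$, which is the claim.

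\textbf{Sanity check.} The nondegeneracy condition (1) of Definition \ref{GNM} requires $[L:L']>1$, and this is assumed. The linear disjointness of the normal closures over $L'$ is supplied by Proposition \ref{hereditary} and is not reproved here.

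For reference, that linear disjointness comes from two facts about the proposition's proof.
\begin{itemize}
\item Taking $J'=K$, the normal closure of $L/L'$ lies in $\tilde{L}$, and the normal closure of $L'J/L'$ lies in $L'\tilde{J}$.
\item The degree count $[\tilde{L}:L']\cdot[L'\tilde{J}:L']=[\tilde{M}:L']$ follows from $\tilde{L}$ and $\tilde{J}$ being linearly disjoint over $K$, because $L'\subset\tilde{L}$.
\end{itemize}
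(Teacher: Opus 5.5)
Your proposal is correct and matches the paper, which obtains this corollary simply by setting $J'=K$ in Proposition \ref{hereditary}, so that $L'J'=L'$ and $LJ'=L$. Your supplementary remarks on why the normal closures are linearly disjoint over $L'$ are also accurate.
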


\subsection{Unique Chains and General normal magnification}

We have the notions of unique descending chains and unique ascending chains for extensions over perfect base field introduced in Section 7 in \cite{Bhagwat_2025}. We can similarly define the notions for any extension over any base field. Let $L/K$ be a nontrivial finite extension. \smallskip

In light of Proposition \ref{r prop} (1) we have a unique strictly descending chain of subextensions \[ L=N_0\supsetneq N_1 \supsetneq N_2 \supsetneq \dots \supsetneq N_k\]
        such that for all $1\leq i\leq k$, $N_i$ is the unique intermediate extension for $N_{i-1}/K$ such that $N_{i-1}/N_i$ is Galois of maximum possible degree. 
\smallskip

 In light of Definition \ref{asc ind def} we have a unique strictly ascending chain of subextensions  \[ K=F_0\subsetneq F_1 \subsetneq F_2 \subsetneq \dots \subsetneq F_l \] such that for all $1\leq j\leq l$, $F_j$ is the unique intermediate extension for $L/F_{j-1}$ such that $F_j/F_{j-1}$ is Galois of maximum possible degree.\smallskip

        Both the unique chains terminate since $L/K$ is finite. Also $r_K(N_k)=1$ and $t_{F_l}(L)=1$.

\begin{proposition}\label{LSNi}
 Consider the unique descending chain for $L/K$ as above. 
 \begin{enumerate}
\item If $r_K(L)>1$, then for all $1\leq i\leq k$, $L/(N_i)_S$ is obtained by general normal magnification from $L_S/(N_i)_S$ through $N_i/(N_i)_S$. \smallskip

\item For all $1\leq i\leq k$, $L_I=(N_i)_I$.

 \end{enumerate}
 
\end{proposition}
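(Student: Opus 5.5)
Both parts rest on one structural fact about each step of the chain: $N_i = N_{i-1}^{Aut(N_{i-1}/K)}$ and $N_{i-1}/N_i$ is Galois. So we induct on $i$ and use Remark \ref{LI in LA} together with Proposition \ref{L'S prop}.

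\emph{Part (2).} By Remark \ref{LI in LA} applied to $N_{i-1}/K$, we have $(N_{i-1})_I\subset N_{i-1}^{Aut(N_{i-1}/K)}=N_i$. Hence $(N_{i-1})_I\subset (N_i)_I$. The reverse inclusion is immediate from $N_i\subset N_{i-1}$, since a purely inseparable element of $N_i$ over $K$ lies in $N_{i-1}$. Therefore $(N_i)_I=(N_{i-1})_I$, and induction starting from $N_0=L$ gives $(N_i)_I=L_I$.

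\emph{Part (1).} We must check three conditions.
\begin{enumerate}
\item $[L_S:(N_i)_S]>1$. Since $r_K(L)>1$, we have $N_1=L^A\subsetneq L$, and Proposition \ref{L'S prop} (1) gives $[L_S:(L^A)_S]=r_K(L)>1$. Because $(N_i)_S\subset (N_1)_S=(L^A)_S$, the degree condition holds for every $i\geq 1$.
\item $L=L_S N_i$. This is the same argument as in Corollary \ref{cor}. The extension $L/N_i$ is a tower of Galois extensions, hence separable, and $L/L_S$ is purely inseparable. So $L/L_SN_i$ is both separable and purely inseparable, and therefore $L=L_SN_i$.
\item Linear disjointness over $(N_i)_S$ of the normal closures of $L_S/(N_i)_S$ and of $N_i/(N_i)_S$. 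The first extension is separable, so its normal closure $\widetilde{L_S}$ is Galois over $(N_i)_S$. The second extension $N_i/(N_i)_S$ is purely inseparable, since $(N_i)_S$ is the maximal separable subextension of $N_i/K$. Its normal closure is therefore itself and is purely inseparable over $(N_i)_S$. A separable extension and a purely inseparable extension of the same base are linearly disjoint (Example 20.13 in \cite{morandi2012field}, as used in Proposition \ref{gen norm for L} (1)).
\end{enumerate}

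The main obstacle I expect is the subtlety in the third check that the normal closure of $L_S/(N_i)_S$ should be taken inside $\bar K$ over the base $(N_i)_S$. It is separable over $(N_i)_S$, and that is all the disjointness criterion needs, so I would state this explicitly. I would also double-check that $(N_i)_S$ is indeed the base, i.e. that $(N_i)_S\subset L_S\cap N_i$, which is clear.
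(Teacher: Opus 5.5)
Your proposal is correct and follows essentially the paper's argument: part (2) is the same induction via Remark~\ref{LI in LA}, and in part (1) you check the same conditions with the same justifications, namely the degree bound $[L_S:(N_i)_S]\geq [L_S:(N_1)_S]=r_K(L)>1$ and the linear disjointness of a separable and a purely inseparable extension. The only cosmetic difference is that you get $L=L_SN_i$ in one step from the separability of the Galois tower $L/N_i$, whereas the paper substitutes $N_{j-1}=(N_{j-1})_SN_j$ repeatedly starting from $L=L_SN_1$.
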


\begin{proof}\hfill

\begin{enumerate}
\item    We know that $L=L_SL^{Aut(L/K)}=L_S N_1$. Similarly we can write $N_1=(N_1)_S N_2$. Thus $L=L_S((N_1)_S N_2)=L_S N_2$ as $(N_1)_S\subset L_S$. Proceeding similarly we get $L=L_SN_i$ for all $i$. Also $L_S/(N_i)_S$ is separable and $N_i/(N_i)_S$ is purely inseparable and $[L_S:(N_i)_S]\geq [L_S:(N_1)_S]=r_K(L)>1$. \smallskip

\item Since $L_I\subset L^A=N_1$ where $A=Aut(L/K)$. So $L_I=(N_1)_I$. Proceeding similarly we are done.

\end{enumerate}
\end{proof}

\begin{proposition}\label{same uniq asc}
    The unique ascending chain for $L/K$ is the unique ascending chain for $L_S/K$.
\end{proposition}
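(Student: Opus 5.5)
We argue by induction along the chain, comparing the two chains term by term. Write the unique ascending chain for $L/K$ as $K=F_0\subsetneq F_1\subsetneq\cdots\subsetneq F_l$ and that for $L_S/K$ as $K=E_0\subsetneq E_1\subsetneq\cdots\subsetneq E_m$. We claim that $F_j=E_j$ for all $j$ and that $l=m$.

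The key observation is the following. Let $F\subset L_S$ be any intermediate field. Then every subextension $G/F$ of $L/F$ which is Galois (hence separable) over $F$ lies in $L_S$. Indeed, $G/K$ is separable, since $G/F$ and $F/K$ are both separable, so $G\subset L_S$. Moreover, the maximal separable subextension of $L/F$ is exactly $L_S$: the extension $L/L_S$ is purely inseparable, and $L_S/F$ is separable. Consequently, the family of subextensions $G$ of $L/F$ with $G/F$ Galois coincides with the family of subextensions $G$ of $L_S/F$ with $G/F$ Galois. In particular, the largest Galois subextension of $L/F$ equals that of $L_S/F$. This is the relative version of the remark ``$F\subset L_S$'' in Definition \ref{asc ind def}, and also of $t_K(L)=t_K(L_S)$ in Proposition \ref{asc ind} (1).

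Now induct on $j$. For the base case, $F_0=E_0=K$. Suppose $F_{j-1}=E_{j-1}\subset L_S$. By definition, $F_j$ is the largest Galois subextension of $L/F_{j-1}$ and $E_j$ is the largest Galois subextension of $L_S/E_{j-1}$. By the observation above these coincide, so $F_j=E_j$, and $F_j\subset L_S$ as well, which keeps the induction going. The chain for $L/K$ terminates exactly when the largest Galois subextension of $L/F_l$ is $F_l$ itself, that is, when $t_{F_l}(L)=1$. By the observation this is equivalent to $t_{F_l}(L_S)=1$, which is the termination condition for the chain of $L_S/K$. Hence $l=m$.

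The only point needing care is that the base field changes as we move up the chain. Uniqueness of the largest Galois subextension over each $F_{j-1}$ is inherited from Definition \ref{asc ind def} applied to $L/F_{j-1}$ (a compositum of Galois subextensions is again Galois). The inclusion $F_{j-1}\subset L_S$ needed at every step is supplied by the induction hypothesis itself. I expect no real obstacle beyond stating this relative version of $F\subset L_S$ cleanly.
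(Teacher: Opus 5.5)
Your proof is correct and follows essentially the same route as the paper. The paper also observes that when $F_{j-1}\subset L_S$, the maximal separable subextension of $L/F_{j-1}$ is $L_S$, so the next Galois step lies inside $L_S$, and then iterates; your version just makes the induction and the termination condition ($t_{F_l}(L)=1\iff t_{F_l}(L_S)=1$) explicit.
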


\begin{proof}
   Consider the unique ascending chain for $L/K$. Clearly $F_1\subset L_S$. Suppose $M/F_1$ is the maximal separable subextension of $L/F_1$. Since $L_S/F_1$ is separable thus $L_S\subset M$. Also $M/L_S$ is separable. But since $L/L_S$ is purely inseparable we have $M/L_S$ to be purely inseparable as well. Thus $M=L_S$. Hence $F_2\subset L_S$. Proceeding in the same way we are done.
\end{proof}

\begin{lemma}\label{aut lem}
 Let $M/K$ be obtained by general normal magnification from $L/K$ through $J/K$. Then $Aut(M/K)\cong Aut(L/K)\times Aut(J/K)$.    
\end{lemma}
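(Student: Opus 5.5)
We will define the map $\Phi: Aut(L/K)\times Aut(J/K)\rightarrow Aut(M/K)$ by sending a pair $(\lambda,\nu)$ to the unique $K$-automorphism $\sigma$ of $M=LJ$ with $\sigma|_L=\lambda$ and $\sigma|_J=\nu$. The existence of such a $\sigma$ is exactly the construction in the proof of Lemma \ref{isom lem} with $L'=L$ and $J'=J$. There, linear disjointness of $\tilde{L}$ and $\tilde{J}$ over $K$ shows that a $K$-basis $\{l_i\}$ of $L$ stays linearly independent over $\tilde{J}$. Hence $\Sigma\, l_ij_i\mapsto \Sigma\, \lambda(l_i)\nu(j_i)$ is well defined, injective and surjective onto $LJ$. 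Uniqueness is clear because $M$ is generated by $L$ and $J$. From uniqueness we get that $\Phi$ is a homomorphism: $\Phi(\lambda\lambda',\nu\nu')$ and $\Phi(\lambda,\nu)\Phi(\lambda',\nu')$ agree on $L$ and on $J$. Injectivity follows for the same reason, since $\sigma=id$ forces $\lambda=id_L$ and $\nu=id_J$.

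The main step is surjectivity. Take $\sigma\in Aut(M/K)$. We need $\sigma(L)=L$ and $\sigma(J)=J$, so that restriction gives a pair $(\sigma|_L,\sigma|_J)$ with $\Phi(\sigma|_L,\sigma|_J)=\sigma$. Here we use the uniqueness clause of Lemma \ref{isom lem}. Take $M'=\sigma(M)=M$, which is isomorphic to $M/K$ through the decomposition $M=\sigma(L)\sigma(J)$. Then $\sigma(L)/K$ is isomorphic to $L/K$ and $\sigma(J)/K$ is isomorphic to $J/K$. The lemma says such a decomposition is unique, with the factors given by $M\cap\tilde{L}$ and $M\cap\tilde{J}$. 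Applying this to the decomposition $M=LJ$ itself gives $L=M\cap\tilde{L}$ and $J=M\cap\tilde{J}$. Therefore $\sigma(L)=M\cap\tilde{L}=L$ and $\sigma(J)=M\cap \tilde{J}=J$.

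A more direct alternative avoids the lemma. Since $\tilde{L}/K$ is normal, the image $\sigma(L)$ lies in $\tilde{L}$. Combined with $\sigma(L)\subset M$ and $M\cap\tilde{L}=L$, which holds by Theorem 20.12 in \cite{morandi2012field} as used at the end of Lemma \ref{isom lem}, this gives $\sigma(L)\subset L$. Equality then follows by comparing degrees, and the same argument applies to $J$. The only delicate point is this identification $M\cap\tilde{L}=L$, and it is already established in the proof of Lemma \ref{isom lem}. With $\Phi$ a bijective homomorphism, we conclude $Aut(M/K)\cong Aut(L/K)\times Aut(J/K)$.
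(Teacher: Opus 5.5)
Your proposal is correct and is essentially the paper's proof. The key step is the same: $\sigma(L)\subset M\cap\tilde{L}=L$ (and likewise for $J$), with the construction from Lemma \ref{isom lem} supplying the automorphism with prescribed restrictions. The only difference is that you write the isomorphism as $Aut(L/K)\times Aut(J/K)\to Aut(M/K)$, whereas the paper uses the restriction map $\sigma\mapsto(\sigma|_L,\sigma|_J)$.
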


\begin{proof}
    Suppose $\sigma\in Aut(M/K)$. Thus $\sigma:LJ\rightarrow LJ$ is a $K$-isomorphism. Let $\sigma(L)=L'$ where $L'/K$ is isomorphic to $L/K$. Since $L'\subset LJ$. Thus $L'\cap \tilde{L}\subset LJ\cap \tilde{L}$. Thus $L'\subset L$ so $\sigma(L)=L'=L$. Similarly we have $\sigma(J)=J$. Thus we have a well defined group homomorphism $\Phi : Aut(M/K)\rightarrow Aut(L/K)\times Aut(J/K)$ where $\Phi(\sigma)=(\sigma|_L,\sigma|_J)$. Now $\Phi$ is clearly injective. The surjectivity follows from the proof of Lemma \ref{isom lem}.
\end{proof}

Now we generalize Theorem 3.8 in \cite{jaiswal2025variantsinverseclustersize}.
   
\begin{proposition}
    
\label{unique chain GNM}

   Suppose the extension $M/K$ is obtained by general normal magnification from  $L/K$ through $J/K$. 
     
     \begin{enumerate}
\item Let the unique descending chain for $L/K$ be $L=N_0\supsetneq N_1 \supsetneq N_2 \supsetneq \dots \supsetneq N_k$ and the unique descending chain for $J/K$ be $J=N'_0\supsetneq N'_1 \supsetneq N'_2 \supsetneq \dots \supsetneq N'_{k'}$. Then the unique descending chain for $M/K$ is $M=N_0 N'_0\supsetneq N_1 N'_1 \supsetneq N_2 N'_2 \supsetneq \dots \supsetneq N_{k'} N'_{k'}\supsetneq N_{k'+1} N'_{k'}\supsetneq \dots  \supsetneq N_{k} N'_{k'}$ for $k\geq k'$ and $M=N_0 N'_0\supsetneq N_1 N'_1 \supsetneq N_2 N'_2 \supsetneq \dots \supsetneq N_{k} N'_{k}\supsetneq N_{k} N'_{k+1}\supsetneq \dots  \supsetneq N_{k} N'_{k'}$ for $k<k'$.

\smallskip

\item Let the unique ascending chain for $L/K$ be $K=F_0\subsetneq F_1 \subsetneq F_2 \subsetneq \dots \subsetneq F_l$ and the unique ascending chain for $J/K$ be $K=F'_0\subsetneq F'_1 \subsetneq F'_2 \subsetneq \dots \subsetneq F'_{l'}$. Then the unique ascending chain for $M/K$ is $K=F_0 F'_0\subsetneq F_1 F'_1 \subsetneq F_2 F'_2 \subsetneq \dots \subsetneq F_{l'} F'_{l'}\subsetneq F_{l'+1} F'_{l'}\subsetneq \dots  \subsetneq F_{l} F'_{l'}$ for $l\geq l'$ and $K=F_0 F'_0\subsetneq F_1 F'_1 \subsetneq F_2 F'_2 \subsetneq \dots \subsetneq F_{l} F'_{l}\subsetneq F_{l} F'_{l+1}\subsetneq \dots  \subsetneq F_{l} F'_{l'}$ for $l< l'$.
         
     \end{enumerate}
     
    \end{proposition}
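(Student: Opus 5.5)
The plan is to prove the descending statement (1) by induction on the position in the chain, with Lemma \ref{aut lem} as the key input; the ascending statement (2) is handled similarly after reducing to separable parts.

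For (1), write $A=Aut(L/K)$, $B=Aut(J/K)$. By Lemma \ref{aut lem}, $Aut(M/K)\cong A\times B$ via restriction, with the inverse given by the extension constructed in the proof of Lemma \ref{isom lem}. I will first show that $M^{Aut(M/K)}=L^AJ^B=N_1N'_1$. The inclusion $N_1N'_1\subset M^{Aut(M/K)}$ is clear, since each $(\sigma,\tau)$ fixes $L^A$ and $J^B$. For the reverse inclusion I compare degrees. By Proposition \ref{r prop} (1) and Lemma \ref{aut lem},
\[
[M:M^{Aut(M/K)}]=|A||B|=[L:N_1][J:N'_1].
\]
On the other hand, $\tilde L$ and $\tilde J$ are linearly disjoint over $K$, so $[M:K]=[L:K][J:K]$ and $[N_1N'_1:K]=[N_1:K][N'_1:K]$. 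This gives $[M:N_1N'_1]=[L:N_1][J:N'_1]$, and hence equality.

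Next, for the inductive step, I need $N_1N'_1/K$ to again be a general normal magnification of $N_1/K$ through $N'_1/K$. The normal closures of $N_1$ and $N'_1$ lie in $\tilde L$ and $\tilde J$ respectively, so they are still linearly disjoint. The only gap is condition (1) of Definition \ref{GNM}, $[N_1:K]>1$. When one side has become trivial ($N_i=K$ or $N'_j=K$), the compositum is just the other field and its chain continues on its own. This also explains the tail of the chain, where one index is frozen at $k$ or $k'$. Note that once $r_K(N_k)=1$, the group $Aut(N_kN'_j/K)\cong 1\times Aut(N'_j/K)$, so the fixed field is $N_kN'_{j+1}$; this uses Lemma \ref{aut lem} when $N_k\ne K$ and is trivial otherwise.

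Strictness of the inclusions follows from the degree formula together with $[N_{i-1}:N_i]>1$ or $[N'_{j-1}:N'_j]>1$. The induction on $i$ then proves (1).

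For (2), I will use Proposition \ref{same uniq asc} to replace $L$, $J$, $M$ by $L_S$, $J_S$, $M_S$; here $M_S=L_SJ_S$ by Proposition \ref{GNM prop} (2). This reduces (2) to the separable case, which is Theorem 3.8 (2) in \cite{jaiswal2025variantsinverseclustersize} for general magnification. The separable case applies once $[L_S:K]>1$ by Proposition \ref{GNM prop} (2)(b). If $L_S=K$, then the chain of $L$ is trivial and $M_S=J_S$, so the claim is immediate.

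The main obstacle is the bookkeeping at the stage where one chain has terminated. There I must check that $N_kN'_j$ is still handled correctly even though $N_k$ need not equal $K$; in that case $N_k$ may be a nontrivial field with $r_K(N_k)=1$, for instance $N_k=L_I$-type pieces. Lemma \ref{aut lem} covers this as long as $[N_k:K]>1$, and the trivial case needs no lemma.
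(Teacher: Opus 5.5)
Your proof is correct. Part (2) is the paper's argument: reduce to $L_S$, $J_S$ and $M_S=L_SJ_S$ via Proposition \ref{same uniq asc} and Proposition \ref{GNM prop} (2), then cite Theorem 3.8 (2) of \cite{jaiswal2025variantsinverseclustersize}.

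In part (1) you identify $M^{Aut(M/K)}$ with $N_1N'_1$ by a different route. The paper argues at the level of elements. It takes $y\in M^{Aut(M/K)}$ and writes $y=\sum l_ij_i$ with minimal length. It then lifts each $\lambda\in A$ to some $\sigma\in Aut(M/K)$ with $\sigma|_J=id_J$, and uses linear independence of the $j_i$ over $\tilde L$ to force $\lambda(l_i)=l_i$.

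You instead take the easy inclusion $N_1N'_1\subset M^{Aut(M/K)}$ and close it by counting degrees. You use Artin's theorem to get $[M:M^{Aut(M/K)}]=|A||B|$, and multiplicativity of degrees for the linearly disjoint pairs $(L,J)$ and $(N_1,N'_1)$. Both routes need the full isomorphism of Lemma \ref{aut lem}, surjectivity included. Yours is shorter and avoids the minimal-representation bookkeeping; the paper's gives a direct description of the fixed field.

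You also spell out the induction that the paper defers to Theorem 3.8 (1) of \cite{jaiswal2025variantsinverseclustersize}:
\begin{itemize}
\item $N_iN'_i/K$ is again a general normal magnification, since $\tilde{N_i}\subset\tilde L$ and $\tilde{N'_i}\subset\tilde J$.
\item Each inclusion is strict because $[N_{i-1}N'_{i-1}:N_iN'_i]=[N_{i-1}:N_i]\,[N'_{i-1}:N'_i]$.
\item Once one chain stops at a field with trivial automorphism group, possibly different from $K$, the other factor keeps descending. This produces exactly the tail in the statement, ending at $N_kN'_{k'}$, where $r_K(N_kN'_{k'})=r_K(N_k)\,r_K(N'_{k'})=1$.
\end{itemize}
That makes your version self-contained where the paper's is not.
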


\begin{proof} \hfill
    \begin{enumerate}
        \item By Proposition \ref{r prop} (1), $N_1=L^A$ and $N'_1=J^B$ where $A=Aut(L/K)$ and $B=Aut(J/K)$. Consider the unique intermediate extension $P_1/K$ of $M/K$ such that $M/P_1$ is Galois of maximum possible degree. So $P_1=M^C$ where $C=Aut(M/K)$. Let $x\in N_1$. Thus $x\in L$ such that $\lambda(x)=x$ for all $\lambda\in A$. Let $\sigma\in C$. By Lemma \ref{aut lem}, $\sigma_L\in A$. Thus $\sigma(x)=\sigma|_L(x)=x$. Thus $x\in P_1$. So $N_1\subset P_1$. Similarly we can show $N'_1\subset P_1$. So $N_1N'_1\subset P_1$.\smallskip
        
       Now suppose $y\in P_1$. Thus $y\in M=LJ$ such that $\sigma(y)=y$ for all $\sigma\in C$. Let $y=\Sigma_{i=1}^k\ l_i j_i$ and choose such a relation with minimal $k$. Thus both the sets $\{l_i\}_{i=1}^k$ and $\{j_i\}_{i=1}^k$ are linearly independent over $K$ (otherwise it will contradict minimality of $k$). Now $\Sigma\ \sigma(l_i)\sigma(j_i)=\Sigma\ l_i j_i$ for all $\sigma\in C$. By Lemma \ref{aut lem}, for any $\lambda \in A$ we have $\sigma\in C$ such that $\sigma_L=\lambda$ and $\sigma_J=id_J$. Thus for any $\lambda\in A$ we have $\Sigma\ \lambda(l_i)j_i=\Sigma\ l_i j_i$. Thus $\Sigma\ (\lambda(l_i)-l_i)j_i=0$. As $\tilde{L}$ and $\tilde{J}$ are linearly disjoint over $K$, $\{j_i\}_{i=1}^k$ are linearly independent over $\tilde{L}$. Hence we have for each $1\leq i\leq k$ that $\lambda(l_i)=l_i$ for all $\lambda\in A$. Thus $l_i\in N_1$ for all $i$. Similarly we can show $j_i\in N'_1$ for all $i$. Thus $y\in N_1 N'_1$. Thus $P_1=N_1N'_1$. Now the proof proceeds in the same way as proof of Theorem 3.8 (1) in \cite{jaiswal2025variantsinverseclustersize}.

        \smallskip

        \item The result is easy to see when $L_S=K$. For $[L_S:K]>1$, by Proposition \ref{GNM prop} (2), we have that $M_S/K$ is obtained by general normal magnification from $L_S/K$ through $J_S/K$. Thus the result follows from Proposition \ref{same uniq asc} and Theorem 3.8 (2) in \cite{jaiswal2025variantsinverseclustersize}.\end{enumerate}\end{proof}

\begin{remark}\label{ease notation}
    For ease of notation in statement of Theorem \ref{unique chain GNM}, for $k\geq k'$, one can define $N'_{k'+1}=N'_{k'+2}=\dots =N'_k=N'_{k'}$. Similarly one can define for $k<k',\ l\geq l',\ l<l'$. Thus the field in unique descending chain of $M/K$ at $i$-th step (where $i\leq max\{k,k'\}$) is simply $N_iN'_i$. Similarly we have for unique ascending chain.  
\end{remark}

The following generalizes Proposition 7.1.4 in \cite{Bhagwat_2025}.

\begin{proposition}
\label{symmetric sum}

Let $L/K$ be a nontrivial finite extension and $N$ be the unique intermediate extension for $L/K$ such that $L/N$ is Galois of maximum possible degree. Suppose $L=K(\alpha_1,\dots, \alpha_k)$ with $\alpha_i\in \bar{K}$. Let $f_i$ be respective minimal polynomial of $\alpha_i$ over K. Let $\rho_K(L,K(\alpha_i))=j_i$ and $\alpha_{i1},\dots , \alpha_{ij_i}$ be the distinct roots of $f_i$ contained in $L$. Then $$N=K(t_{11},\dots, t_{1j_1}, t_{21},\dots, t_{2j_2},\dots, t_{k1},\dots,t_{kj_k}).$$ where $t_{i1},\dots, t_{ij_i}$ are elementary symmetric sums of $\alpha_{i1},\dots, \alpha_{ij_i}$    
\end{proposition}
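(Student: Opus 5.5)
By Proposition \ref{r prop} (1), $N=L^{A}$ with $A=Aut(L/K)$. Put $N_0=K(t_{11},\dots,t_{kj_k})$. The plan is to show $N_0\subset N$, then that $L/N_0$ is Galois, and then to conclude by the maximality (uniqueness) statement in Proposition \ref{r prop} (1).

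\emph{Step 1: $N_0\subset N$.} Take $\sigma\in A$. It maps a root of $f_i$ lying in $L$ to a root of $f_i$ lying in $L$. Since $\sigma$ is injective on the finite set $\{\alpha_{i1},\dots,\alpha_{ij_i}\}$, it permutes this set. Each elementary symmetric sum $t_{im}$ is therefore fixed by $\sigma$, so $N_0\subset L^A=N$.

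\emph{Step 2: $L/N_0$ is Galois.} Set
\[g_i(x)=\prod_{m=1}^{j_i}(x-\alpha_{im}).\]
Its coefficients are $\pm t_{im}$, so $g_i\in N_0[x]$. Because $f_i$ may be inseparable, we use only the distinct roots, and $g_i$ is then separable. Each $\alpha_i$ is one of the $\alpha_{im}$, and all roots of $g_i$ lie in $L$. Hence $L=N_0(\alpha_1,\dots,\alpha_k)$ is generated over $N_0$ by roots of $g_1,\dots,g_k$, and all roots of each $g_i$ lie in $L$. So $L$ is the splitting field over $N_0$ of the separable polynomial $g_1\cdots g_k$, and $L/N_0$ is Galois.

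\emph{Step 3: conclude.} By Step 2, $L/N_0$ is Galois, so $[L:N_0]=|Aut(L/N_0)|\le |A|=[L:N]$. By Step 1, $N_0\subset N$, so $[L:N]\le [L:N_0]$. Hence the degrees agree and $N=N_0$. Alternatively, $Aut(L/N_0)\subset A$ together with Step 1 gives $Aut(L/N_0)=A$, and then $N_0=L^{Aut(L/N_0)}=L^{A}=N$.

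The step that needs care is Step 2 in the imperfect setting, specifically whether $L/N_0$ is separable. This is where we use $g_i$, the product over the distinct roots, rather than $f_i$ itself. A polynomial with distinct roots is separable regardless of the field of coefficients, so $L/N_0$ is separable even though $L/K$ need not be.
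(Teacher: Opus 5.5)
Your proof is correct and is essentially the paper's argument. Both show $K(t_{11},\dots,t_{kj_k})\subset L^{A}$ and that $L$ is the splitting field over it of the distinct-root polynomials $g_i$, hence Galois; your Step 3 simply spells out the maximality comparison that the paper compresses into ``so we are done.''

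One cosmetic point: if two $\alpha_i$ share a minimal polynomial, then $g_1\cdots g_k$ has repeated roots. It is cleaner to say that $L$ is the splitting field of the family of separable polynomials $g_i$, or to drop the duplicate factors.
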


\begin{proof}
    Now $N=L^A$ where $A=Aut(L/K)$. Let $M=K(t_{11},\dots, t_{1j_1}, t_{21},\dots, t_{2j_2},\dots, t_{k1},\dots,t_{kj_k})$. Each $\alpha_i$ satisfies the polynomial $g_i(x)=(x-\alpha_{i1})\cdots (x-\alpha_{ij_i})=x^{j_i}-t_{i1} x^{j_i-1} + \dots + (-1)^{j_i} t_{1j_i}$ over $M$. Clearly $M\subset N$. Also $L/M$ is both normal and separable, hence Galois, so we are done.
\end{proof}

\begin{corollary}
    Consider a simple extension $L/K$ with $L=K(\alpha)$ for $\alpha\in \bar{K}$, we have $(\tilde{L})_I=K(t_1,\dots , t_k)$ where $k=\rho_K(\tilde{L},L)=[L_S:K]$ and $t_i$ are elementary symmetric functions of the $k$ distinct roots of minimal polynomial $f$ of $\alpha$ over $K$.
\end{corollary}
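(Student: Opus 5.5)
This is a direct application of Proposition \ref{symmetric sum} to the extension $\tilde{L}/K$, which is generated by the distinct roots of $f$, followed by an identification of the fixed field $\tilde{L}^{A'}$ with $(\tilde{L})_I$, where $A'=Aut(\tilde{L}/K)$.

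First I describe $\tilde{L}$ as a simple-root-generated extension. By Proposition \ref{f'f} (1), $\tilde{L}=\Pi_{i=1}^s L_i$, where the $L_i$ run over the conjugate fields $K(\alpha')$ with $\alpha'$ a root of $f$. Hence $\tilde{L}=K(\alpha_1,\dots,\alpha_k)$, where $\alpha_1,\dots,\alpha_k$ are the distinct roots of $f$. All of these roots lie in $\tilde{L}$, so $\rho_K(\tilde{L},L)=k$. By Proposition \ref{multi root cap prop} (2), $\rho_K(\tilde{L},L)=[L_S:K]$, which gives $k=[L_S:K]$.

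Next I apply Proposition \ref{symmetric sum} to $\tilde{L}/K$ with generators $\alpha_1,\dots,\alpha_k$. All of these generators have the same minimal polynomial $f$. For each generator, the roots of $f$ lying in $\tilde{L}$ are all $k$ distinct roots. So every block of elementary symmetric sums in that proposition is the same list $t_1,\dots,t_k$. The proposition then yields $\tilde{L}^{A'}=K(t_1,\dots,t_k)$, the unique intermediate field over which $\tilde{L}$ is Galois of maximal degree. If $\tilde{L}=K$, everything is trivial; otherwise the extension is nontrivial as the proposition requires.

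Finally, $\tilde{L}/K$ is normal, so Corollary \ref{LI cor} applied to $\tilde{L}$ gives $\tilde{L}^{A'}=(\tilde{L})_I$. Combining this with the previous step gives $(\tilde{L})_I=K(t_1,\dots,t_k)$.

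The only step needing care is the application of Proposition \ref{symmetric sum}. I need to confirm that repeating the same generator's orbit causes no issue, and that the proposition's proof (that $\tilde{L}/M$ is both normal and separable) holds here. It does: each $\alpha_i$ is a root of the separable polynomial $\prod_{j}(x-\alpha_j)$, which has coefficients in $M=K(t_1,\dots,t_k)$. As a sanity check, $g=\prod_j(x-\alpha_j)$ satisfies $f=g^{p^{\mu}}$, so $t_i^{p^{\mu}}\in K$. This confirms independently that $K(t_1,\dots,t_k)/K$ is purely inseparable.
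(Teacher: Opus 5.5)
Your proof is correct and follows the paper's argument. Like the paper, you identify $\tilde{L}^{Aut(\tilde{L}/K)}$ with $(\tilde{L})_I$ via Corollary \ref{LI cor}, write $\tilde{L}=K(\alpha_1,\dots,\alpha_k)$, and apply Proposition \ref{symmetric sum}. Your extra details, namely $k=[L_S:K]$ via Proposition \ref{multi root cap prop} (2) and the check $t_i^{p^{\mu}}\in K$ from $f=g^{p^{\mu}}$, are valid supplements to what the paper leaves implicit.
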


\begin{proof}
    By Corollary \ref{LI cor}, $\tilde{L}^{Aut(\tilde{L}/K)}=(\tilde{L})_I$. Let $\alpha_1,\dots\alpha_k$ be the distinct roots of $f$. Then $\tilde{L}=K(\alpha_1,\dots, \alpha_k)$. By Proposition \ref{symmetric sum}, we are done.
\end{proof}

\begin{example}

Consider an imperfect field $K$ with $char(K)=p>0$ and let $L=K(\gamma)/K$ with $L_S=K(\gamma^{p^u})$ and $[L:L_S]=p^u$ and $[L_S:K]=n>2$. Let $g$ be minimal polynomial of $\gamma$ over $K$ and let $\gamma_1,\gamma_2, \dots, \gamma_n$ be distinct roots of $g$ in $\bar{K}$ where $\gamma=\gamma_1$. Now $g=h(x^{p^u})$ where $h$ is a separable polynomial over $K$ with distinct roots $\gamma_1^{p^u},\dots, \gamma_n^{p^u}$.\smallskip

For $1\leq k\leq n$, let $L_k=K(\gamma_1,\gamma_2,\dots, \gamma_k)$ and let $M_k=K(\gamma_1^{p^u},\gamma_2^{p^u},\dots, \gamma_k^{p^u})$. Now $L_k/M_k$ is purely inseparable and thus $(L_k)_S=M_k$. Also as $L_1/M_1$ is purely inseparable and $M_k/M_1$ is separable, we have $[L_k:M_k]\geq [L_1M_k:M_k]=[L_1:M_1]=p^u$. Let $t_i$ be elementary symmetric sums of $\gamma_i$ for $1\leq i\leq k$. Thus $t_i^{p^u}$ are elementary symmetric sums of $\gamma_i^{p^u}$ for $1\leq i\leq k$. Let $N_k=K(t_1,t_2,\dots, t_k)$. Thus $(N_k)_S=K(t_1^{p^u},t_2^{p^u},\dots, t_k^{p^u})$. We also have $L_k$ to be the splitting field of the separable polynomial $x^{k}-t_{1} x^{k-1} + \dots + (-1)^{k} t_{k}$ over $N_k$ and thus $L_k/N_k$ is Galois and $r_K(L_k)\geq [L_k:N_k]$. Similarly $M_k/(N_k)_S$ is Galois and $r_K(M_k)\geq [M_k:(N_k)_S]$. As $L_k/(M_kN_k)$ is both separable and purely inseparable, we have $L_k=M_kN_k$. As $M_k/(N_k)_S$ is separable and $N_k/(N_k)_S$ is purely inseparable, we have $[L_k:N_k]=[M_k:(N_k)_S]$.\smallskip

 Now suppose $\tilde{L_S}/K$ has Galois group $\mathfrak{S}_n$. Then clearly $r_K(L_S)=1$. Thus $r_K(L)=1$ and $K(\gamma_i)\neq K(\gamma_j)$ for $i\neq j$. For $k\leq n-2$, we have that $(N_k)_S=M_k^{Aut((M_k/K)}$ by Theorem 7.3.6 (1)(i) in \cite{Bhagwat_2025}. Also $[M_k:(N_k)_S]=k!$ and $[(N_k)_S:K]= {n\choose k}$. So $r_K(M_k)=k!$. Also $r_K(L_k)\geq [L_k:N_k]=[M_k:(N_k)_S]=r_K(M_k)$. Thus $r_K(L_k)=k!=r_K((L_k)_S)$ and $L_k^{Aut(L_k/K)}=N_k$. Also for $1\leq k\leq n-2$, $L_k\neq L_{k+1}$ as $(L_k)_S=M_k\neq M_{k+1}=(L_{k+1})_S$.

\end{example}

\subsection{Unique Normal Chains and General normal magnification}

Now we define the notions of unique normal descending chains and unique normal ascending chains for extensions. Let $L/K$ be a nontrivial finite extension. \smallskip

In light of Proposition \ref{unique desc normal}. we have a unique normal strictly descending chain of subextensions \[ L=N'_0\supsetneq N'_1 \supsetneq N'_2 \supsetneq \dots \supsetneq N'_{k'}\]
        such that for all $1\leq i\leq k'$, $N'_i$ is the unique intermediate extension for $N'_{i-1}/K$ such that $N'_{i-1}/N'_i$ is normal of maximum possible degree. 
\smallskip

Similarly in light of Proposition \ref{ad prop}, we have a unique normal strictly ascending chain of subextensions  \[ K=F'_0\subsetneq F'_1 \subsetneq F'_2 \subsetneq \dots \subsetneq F'_{l'} \] such that for all $1\leq j\leq l'$, $F'_j$ is the unique intermediate extension for $L/F'_{j-1}$ such that $F'_j/F'_{j-1}$ is normal of maximum possible degree.\smallskip

Both the unique chains terminate since $L/K$ is finite. Also $d_K(N'_{k'})=1$ and $a_{F'_{l'}}(L)=1$.

\begin{proposition}\label{uniq norm chain prop} Consider $L/K$.

\begin{enumerate}

\item If $d_K(L)>1$, then the unique normal descending chain for $L/K$ is $$L\supsetneq  \text{unique descending chain for} (L^A)_S.$$

\item If the unique normal ascending chain for $L/K$ is \[ K=F'_0\subsetneq F'_1 \subsetneq F'_2 \subsetneq \dots \subsetneq F'_{l'} \]

and the unique ascending chain for $L/K$ is \[ K=F_0\subsetneq F_1 \subsetneq F_2 \subsetneq \dots \subsetneq F_l \]

then $l\leq l'$ and for $1\leq i\leq l'$ we have $F'_i=F_i (L_I)_i$ where $(L_I)_i/F'_{i-1}$ is the maximal purely inseparable subextension of $L/F'_{i-1}$ and $F_i=F_l$ for $i\geq l$.\smallskip

 Futhermore if $L=L_SL_I$ then we have $l=l'$ and $F'_i=F_i L_I$.

\end{enumerate}
\end{proposition}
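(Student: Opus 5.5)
Both parts reduce to the one-step results already proved (Proposition \ref{unique desc normal} for descending, Proposition \ref{ad prop} for ascending), applied iteratively.

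\textbf{Part (1).} By Proposition \ref{unique desc normal} (3), the first step of the normal descending chain is $N'_1=(L^A)_S$, and $N'_1\subsetneq L$ because $d_K(L)>1$. For later steps we need the fact that a separable extension $E/K$ is normal over an intermediate field exactly when it is Galois over it. Now $N'_1/K$ is separable, and every subextension of a separable extension is separable. Hence for each $N'_{i-1}\subseteq N'_1$, "maximal degree normal subextension $N'_{i-1}/N'_i$" is the same condition as "maximal degree Galois subextension". By Proposition \ref{r prop} (1), that Galois step is $N'_i=N'_{i-1}{}^{Aut(N'_{i-1}/K)}$. So the normal chain from $N'_1$ onward coincides term by term with the unique descending chain of $(L^A)_S/K$. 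The two chains terminate at the same point, since for a separable field $d_K=r_K$ by Proposition \ref{unique desc normal} (3) together with $[E:E_S]=1$. Induction on the step index completes part (1).

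\textbf{Part (2).} We argue by induction on $i$, applying Proposition \ref{ad prop} (1) to $L/F'_{i-1}$. That result gives $F'_i=G_i\,(L_I)_i$, where $G_i$ is the largest Galois subextension of $L/F'_{i-1}$ and $(L_I)_i$ is the maximal purely inseparable subextension of $L/F'_{i-1}$. The claim is then that $G_i=F_i F'_{i-1}$ (with $F_i=F_l$ once $i\ge l$), and that this equals $F_i(L_I)_i$, because $F'_{i-1}\subseteq (L_I)_i\,F_{i-1}$.

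To establish this, write $F'_{i-1}=F_{i-1}E$ with $E/K$ purely inseparable, which holds by the induction hypothesis. Separable extensions of $F'_{i-1}$ inside $L$ should correspond bijectively to separable extensions of $F_{i-1}$ inside $L_S$. The correspondence is $X\mapsto XE$, in the spirit of the lemma that $S/K$ is Galois $\iff SI/I$ is Galois. Under it, Galois subextensions correspond and degrees are preserved, by linear disjointness of separable and purely inseparable extensions (Corollary 3 in VII \S7 of \cite{lang2012algebra}). So the largest Galois subextension of $L/F'_{i-1}$ has separable part $F_i$, using Proposition \ref{same uniq asc} to pass to $L_S$.

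Since $F'_i\supseteq F_i$ at every step and the ascending chain stabilises at $F_l$, we get $F'_{l'}\supseteq F_l$. This gives $l\le l'$, provided each strict step of the ordinary chain forces a strict step of the normal chain, which the inclusion $F_i\subseteq F'_i$ ensures. If $L=L_SL_I$, then $(L_I)_i=L_I$ for all $i$, because $L/L_I$ is separable; hence $F'_i=F_iL_I$. The normal chain stops exactly when $F_i$ stops, so $l=l'$.

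\textbf{Main obstacle.} The delicate step is the correspondence between Galois subextensions over $F'_{i-1}=F_{i-1}E$ and those over $F_{i-1}$ inside $L_S$. In particular we must check that the separable closure of $F'_{i-1}$ in $L$ is exactly $L_S F'_{i-1}$, which I would prove with the same argument as Proposition \ref{same uniq asc}. We must also check that $F_{i}$, being maximal Galois over $F_{i-1}$ in $L_S$, lifts to a maximal Galois extension over $F'_{i-1}$; for this I would use the restriction isomorphism $Gal(XE/F'_{i-1})\cong Gal(X/F_{i-1})$.
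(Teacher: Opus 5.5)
Part (1) is fine; the paper leaves it implicit, and your reduction ``normal $=$ Galois inside the separable field $(L^A)_S$'', together with $d_K=r_K$ for separable extensions, is what is needed. Part (2) follows the paper's route: apply Proposition \ref{ad prop} (1) to $L/F'_{i-1}$, then transport the ascending chain of $L_S$ to $L_SF'_{i-1}$, the separable closure of $F'_{i-1}$ in $L$. The step that fails is ``write $F'_{i-1}=F_{i-1}E$ with $E/K$ purely inseparable, which holds by the induction hypothesis.''

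The induction hypothesis only gives $F'_{i-1}=F_{i-1}(L_I)_{i-1}$, and $(L_I)_{i-1}$ is purely inseparable over $F'_{i-2}$, not over $K$. Every $E\subseteq L$ that is purely inseparable over $K$ lies in $L_I$. So for $i-1\ge 1$ your representation would force $F'_{i-1}=F_{i-1}L_I$, which is false in general. In Example \ref{interesting eg} one has $l=1$, $F_1=F_2=L_S$ and $L_I=K$, so $F'_1=L_S$. But $F'_2=L\neq L_S$, because $L/L_S$ is purely inseparable of degree $p$.

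The repair is local. Your correspondence only uses that $F'_{i-1}/F_{i-1}$ is purely inseparable, and this invariant does propagate. If $F'_{j-1}/F_{j-1}$ is purely inseparable, then $(L_I)_j$ is purely inseparable over $F_{j-1}\subseteq F_j$. Hence $F'_j=F_j(L_I)_j$ is purely inseparable over $F_j$. Now use $X\mapsto XF'_{i-1}$ for $F_{i-1}\subseteq X\subseteq L_S$, and apply the lemma ``$S/K$ Galois $\iff$ $SI/I$ Galois'' over the base $F_{i-1}$ with $I=F'_{i-1}$. This is also what the paper's ``proceeding similarly'' tacitly relies on.

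There are two smaller slips.
\begin{enumerate}
\item When $L=L_SL_I$, $(L_I)_i$ contains $F'_{i-1}$ by definition. So for $i\ge 2$ it equals $F'_{i-1}$, since $L/F'_{i-1}$ is separable, not $L_I$. Your conclusion survives, because $F'_i=F_iF'_{i-1}=F_iL_I$.
\item The inequality $l\le l'$ comes from degree preservation, not from the inclusion $F_i\subseteq F'_i$. If the normal chain stopped at some $l'<l$, then $F_{l'+1}F'_{l'}$ would be a Galois, hence normal, subextension of $L/F'_{l'}$ of degree $[F_{l'+1}:F_{l'}]>1$, a contradiction.
\end{enumerate}
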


\begin{proof}
    We prove part (2). Now $F'_1=F_1(L_I)_1$ by  By Proposition \ref{ad prop} (1). Now $L_S(L_I)_1/F'_1$ is the maximal separable subextension of $L/F'_1$. Thus the unique ascending chain for $L/F'_1$ is \[ F'_1=F_1 (L_I)_1\subsetneq F_2 (L_I)_1\subsetneq F_3 (L_I)_1\subsetneq \dots \subsetneq F_l (L_I)_1.\] Thus by Proposition \ref{ad prop} (1), $F'_2= (F_2(L_I)_1)(L_I)_2=F_2(L_I)_2$ as $(L_I)_2\supset F'_1=F_1(L_I)_1\supset (L_I)_1$.
Proceeding similalry we are done.\smallskip

If $L=L_SL_I$, then $L/L_I$ is separable thus $L/F_{i-1}$ is separable for all $i\geq 2$. Thus for $i\geq 2$, $(L_I)_i=F'_{i-1}$ and $F'_i=F_i F'_{i-1}$. As $F'_1=F_1 L_I$, so $F'_i=F_iL_I$.\end{proof}

\begin{proposition}

    The unique normal ascending chain of $L/K$ is the unique normal ascending chain of $L^A/K$ $\iff$ the unique ascending chain of $L/K$ is the unique ascending chain of $L^A/K$.

\end{proposition}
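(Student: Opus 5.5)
The plan is to reduce everything to Proposition \ref{uniq norm chain prop} (2). That result expresses the unique normal ascending chain of an extension through its unique ascending chain and the purely inseparable pieces $(L_I)_i$. So I will show that, for $L/K$ and $L^A/K$, these purely inseparable pieces coincide whenever the relevant base fields lie in $L^A$.

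\emph{Preliminary observation.} Let $E$ be any field with $K\subset E\subset L^A$. Then the maximal purely inseparable subextension of $L/E$ equals that of $L^A/E$. The argument is as follows.
\begin{enumerate}
\item Take $x\in L$ with $x^{p^m}\in E$.
\item Since $E\subset L^A$, $x$ is purely inseparable over $L^A$.
\item Since $L/L^A$ is Galois, $x$ is also separable over $L^A$.
\item Hence $x\in L^A$.
\end{enumerate}
The case $E=K$ gives $(L^A)_I=L_I$; this is also consistent with Remark \ref{LI in LA}.

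\emph{($\Leftarrow$).} Assume the unique ascending chains $F_0\subsetneq\dots\subsetneq F_l$ of $L/K$ and of $L^A/K$ coincide. In particular all $F_i\subset L^A$. I induct on $i$ to show that the normal ascending chains $F'_i$ coincide.
\begin{enumerate}
\item By Proposition \ref{ad prop} (1) and the observation, $F'_1=F_1L_I=F_1(L^A)_I$ for both extensions.
\item Suppose $F'_{i-1}$ agrees for both and lies in $L^A$. The observation with $E=F'_{i-1}$ shows that $(L_I)_i$ is the same for $L$ and for $L^A$.
\item Proposition \ref{uniq norm chain prop} (2) gives $F'_i=F_i(L_I)_i$ in both cases, so $F'_i$ agrees and again lies in $L^A$.
\end{enumerate}
Both chains stop at the first index where $F'_i$ no longer grows, and that index is determined by the common formula, so the lengths agree as well. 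One point needs checking here. In the proof of Proposition \ref{uniq norm chain prop} (2), the ascending chain of $L/F'_1$ is $F_i(L_I)_1$. I need the analogous chain for $L^A/F'_1$ to be the same, and this follows since both are built from the common $F_i$.

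\emph{($\Rightarrow$).} Assume the unique normal ascending chains coincide. I recover the ascending chain from the normal one by taking maximal separable subextensions. By induction, $F'_i=F_i(L_I)_i$ with $F_i/K$ separable and $(L_I)_i/K$ purely inseparable, the latter being a tower of purely inseparable steps. Lemma \ref{compositum} (1), or the argument of Proposition \ref{ad prop} (1), then gives $(F'_i)_S=F_i$, where $F_i=F_l$ for $i\ge l$. Consequently the ascending chain of $L$ is exactly the list of distinct fields among the $(F'_i)_S$. The same description applied to $L^A$, whose normal chain is the same sequence, yields the same list, so the two ascending chains coincide.

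The main obstacle is the identity $(F_i(L_I)_i)_S=F_i$, where the inseparable part is only purely inseparable over $F'_{i-1}$ rather than over $K$. I will handle this by noting that a tower of purely inseparable extensions is purely inseparable over $K$. Then $F'_i/F_i$ is purely inseparable and $F_i/K$ is separable, which forces $(F'_i)_S=F_i$.
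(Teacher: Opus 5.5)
Your overall strategy works and differs from the paper's in the ($\Rightarrow$) direction. One justification there is false as written and needs a repair.

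\textbf{Comparison with the paper.} Your ($\Leftarrow$) is essentially the paper's induction. Once $F'_{i-1}\subseteq L^A$ one has $\mathrm{Aut}(L/F'_{i-1})=A$, so the maximal purely inseparable subextension $(L_I)_i$ of $L/F'_{i-1}$ lies in $L^{A}$ by Remark~\ref{LI in LA}. This is exactly your preliminary observation, and it gives $F'_i=F_i(L_I)_i\subseteq L^A$.

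The paper reduces each chain equality to a containment: the chain of $L/K$ is the chain of $L^A/K$ iff all its fields lie in $L^A$. This holds because for $E\subseteq L^A$ the maximal normal (resp.\ Galois) subextension of $L^A/E$ is contained in that of $L/E$. With this criterion, ($\Rightarrow$) is immediate from $F_i\subseteq F'_i\subseteq L^A$.

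You instead recover the ascending chain as the distinct fields among the $(F'_i)_S$. This is a nice general fact and makes the length bookkeeping explicit. But it requires the identity $(F'_i)_S=F_i$, which the containment route never needs.

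\textbf{The error.} You claim $(L_I)_i/K$ is purely inseparable ``being a tower of purely inseparable steps'', and later that such a tower is purely inseparable over $K$. For $i\ge 2$ this is false. We have $(L_I)_i\supseteq F'_{i-1}\supseteq F_{i-1}$, and $F_{i-1}/K$ is separable and nontrivial whenever $F_1\neq K$. So the bottom step $F'_{i-1}/K$ of your tower is not purely inseparable. Consequently you cannot apply Lemma~\ref{compositum}(1) with $((L_I)_i)_S=K$.

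\textbf{The repair.} The correct statement is relative to $F_{i-1}$, proved by induction:
\begin{enumerate}
\item Base case: $F'_1=F_1L_I$ is purely inseparable over $F_1$.
\item Inductive step: if $F'_{i-1}/F_{i-1}$ is purely inseparable, the tower $F_{i-1}\subseteq F'_{i-1}\subseteq (L_I)_i$ shows $(L_I)_i/F_{i-1}$ is purely inseparable, i.e.\ $((L_I)_i)_S=F_{i-1}$.
\item Since $F_{i-1}\subseteq F_i$ (with $F_i=F_l$ for $i\ge l$), $F'_i=F_i(L_I)_i$ is then purely inseparable over $F_i$.
\item As $F_i/K$ is separable, this forces $(F'_i)_S=F_i$.
\end{enumerate}
With this correction your proof is complete. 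Alternatively, you can bypass the identity by using $F_i\subseteq F'_i$ together with the containment criterion, as the paper does.
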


\begin{proof}

    First suppose $F'_i\subset L^A$ for all $i$. Then clearly $F_i\subset L^A$ for all $i$. Conversely, suppose $F_i\subset L^A$ for all $i$. Since $F_1\in L^A$ and $L_I\subset L^A$. Thus $F'_1=F_1 L_I\subset L^A$. Thus $L^{Aut(L/F'_1)}=L^A$. So $(L_I)_2\subset L^A$. By Proposition \ref{uniq norm chain prop} (2), $F'_2=F_2 (L_I)_2$. As $F_2\subset L^A$, so $F'_2\subset L^A$. Proceeding similarly we are done.\end{proof}

\begin{corollary}\label{normal chains}
    
 Suppose $L/K$ is nontrivial normal extension. Then the unique normal descending chain is $L\supsetneq K$ and the unique normal ascending chain is $K\subsetneq L$. If $L/K$ is purely inseparable, then unique descending chain is singleton $L$ and unique ascending chain is singleton $K$. If $L/K$ is not purely inseparable, then unique descending chain is $L\supsetneq L^A=L_I$ and unique ascending chain is $K\subsetneq L_S$.

\end{corollary}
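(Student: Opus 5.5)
The plan is to read everything off Definitions \ref{dni ani} and \ref{asc ind def} together with Corollary \ref{LI cor}, and to split according to whether $L=L_I$.

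\emph{Normal chains.} Since $L/K$ is normal, $L/K$ itself is normal of the maximum possible degree $[L:K]$. By the uniqueness in Proposition \ref{unique desc normal} (1), this gives $N'_1=K$. So the unique normal descending chain is $L\supsetneq K$, and it stops there because $N'_1=K$. Dually, $F'_1$ is the maximal normal subextension of $L/K$, which is $L$ itself by Proposition \ref{ad prop} (3). Hence the unique normal ascending chain is $K\subsetneq L$. Both chains are strictly monotone because $L/K$ is nontrivial.

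\emph{Ordinary chains, descending side.} By Proposition \ref{r prop} (1), $N_1=L^A$ with $A=\mathrm{Aut}(L/K)$, and Corollary \ref{LI cor} gives $L^A=L_I$.

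If $L/K$ is purely inseparable, then $L_I=L$, so $N_1=L=N_0$. Thus the descending chain does not move and is the singleton $L$.

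Otherwise $L_I\subsetneq L$. By Remark \ref{LI in LA} the extension $L_I/K$ is purely inseparable, so $\mathrm{Aut}(L_I/K)$ is trivial. The next step therefore gives $N_2=L_I^{\mathrm{Aut}(L_I/K)}=L_I=N_1$, and the chain is $L\supsetneq L_I$.

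\emph{Ordinary chains, ascending side.} By Proposition \ref{same uniq asc}, the ascending chain of $L/K$ equals that of $L_S/K$. The field $L_S/K$ is Galois by Proposition \ref{r prop} (5) (via Corollary 2 in VII \S 7 of Lang), so $F_1=L_S$.

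If $L$ is purely inseparable, then $L_S=K$ and the chain is the singleton $K$.

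Otherwise $L_S\neq K$. This holds because $L=L_SL_I$ by Corollary \ref{LI cor}: if $L_S=K$ then $L=L_I$. The chain is therefore $K\subsetneq L_S$, and it terminates there since $t_{L_S}(L)=1$ by Proposition \ref{asc ind} (1).

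The only point needing care is termination: the descending chain stops at $L_I$ because purely inseparable extensions have trivial automorphism group, and the ascending chain stops at $L_S$ because $L/L_S$ is purely inseparable, so it admits no nontrivial Galois subextension over $L_S$.
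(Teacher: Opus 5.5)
Your proof is correct and follows the derivation the paper intends. The paper states this corollary without proof, as a direct consequence of Proposition \ref{unique desc normal}, Proposition \ref{ad prop}(3), Corollary \ref{LI cor} and Proposition \ref{same uniq asc}. One small citation slip: triviality of $\mathrm{Aut}(L_I/K)$ comes from the argument inside Remark \ref{LI in LA} (automorphisms fix elements $x$ with $x^{p^{\mu}}\in K$), not from the remark's statement; that $L_I/K$ is purely inseparable is simply the definition.
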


\begin{proposition}
    
\label{unique normal chain GNM}

   Suppose the extension $M/K$ is obtained by general normal magnification from  $L/K$ through $J/K$. 
     
     \begin{enumerate}
\item Let the unique normal descending chain for $L/K$ be $L=N_0\supsetneq N_1 \supsetneq N_2 \supsetneq \dots \supsetneq N_k$ and the unique normal descending chain for $J/K$ be $J=N'_0\supsetneq N'_1 \supsetneq N'_2 \supsetneq \dots \supsetneq N'_{k'}$. Then the unique normal descending chain for $M/K$ is $M=N_0 N'_0\supsetneq N_1 N'_1 \supsetneq N_2 N'_2 \supsetneq \dots \supsetneq N_{k'} N'_{k'}\supsetneq N_{k'+1} N'_{k'}\supsetneq \dots  \supsetneq N_{k} N'_{k'}$ for $k\geq k'$ and $M=N_0 N'_0\supsetneq N_1 N'_1 \supsetneq N_2 N'_2 \supsetneq \dots \supsetneq N_{k} N'_{k}\supsetneq N_{k} N'_{k+1}\supsetneq \dots  \supsetneq N_{k} N'_{k'}$ for $k<k'$.

\smallskip

\item Let the unique normal ascending chain for $L/K$ be $K=F_0\subsetneq F_1 \subsetneq F_2 \subsetneq \dots \subsetneq F_l$ and the unique normal ascending chain for $J/K$ be $K=F'_0\subsetneq F'_1 \subsetneq F'_2 \subsetneq \dots \subsetneq F'_{l'}$. Then the unique normal ascending chain for $M/K$ is $K=F_0 F'_0\subsetneq F_1 F'_1 \subsetneq F_2 F'_2 \subsetneq \dots \subsetneq F_{l'} F'_{l'}\subsetneq F_{l'+1} F'_{l'}\subsetneq \dots  \subsetneq F_{l} F'_{l'}$ for $l\geq l'$ and $K=F_0 F'_0\subsetneq F_1 F'_1 \subsetneq F_2 F'_2 \subsetneq \dots \subsetneq F_{l} F'_{l}\subsetneq F_{l} F'_{l+1}\subsetneq \dots  \subsetneq F_{l} F'_{l'}$ for $l< l'$.
         
     \end{enumerate}
     
    \end{proposition}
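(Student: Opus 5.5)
The plan is to reduce both parts to the already-proved Proposition \ref{unique chain GNM}, using the structural descriptions of the normal chains in Proposition \ref{uniq norm chain prop}. Throughout we use that, by Proposition \ref{GNM prop}, $M_S=L_SJ_S$ and $M_I=L_IJ_I$. We also use that, by Lemma \ref{aut lem}, $Aut(M/K)\cong Aut(L/K)\times Aut(J/K)$.

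\textbf{Descending chain.} First identify the first step. By Proposition \ref{unique desc normal} (3), the first term after $M$ is $(M^C)_S$, where $C=Aut(M/K)$. The proof of Proposition \ref{unique chain GNM} (1) shows $M^C=L^AJ^B$, where $A=Aut(L/K)$ and $B=Aut(J/K)$. Since $\tilde{L}$ and $\tilde{J}$ are linearly disjoint, Lemma \ref{compositum} (1) gives $(L^AJ^B)_S=(L^A)_S(J^B)_S$. So the first step is $N_1N'_1$.

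For the later steps, Proposition \ref{uniq norm chain prop} (1) says the normal descending chain of $L$ is $L$ followed by the ordinary unique descending chain of $(L^A)_S$, and similarly for $J$ and $M$. It then suffices to check two things.
\begin{enumerate}
\item $(M^C)_S/K$ is obtained by general normal magnification from $(L^A)_S/K$ through $(J^B)_S/K$. The normal closures sit inside $\tilde{L}$ and $\tilde{J}$, so linear disjointness is inherited. The degree condition $[(L^A)_S:K]>1$ needs a small case split.
\item Proposition \ref{unique chain GNM} (1) then applies to the separable extensions and yields the product chain.
\end{enumerate}

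The degenerate cases need separate handling. If $d_K(L)=1$, the chain for $L$ is just $L$, and we must interpret $N_i=L$ for all $i$ as in Remark \ref{ease notation}. If $(L^A)_S=K$, the magnification is trivial and the claim is immediate. I would handle these cases before applying magnification.

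\textbf{Ascending chain.} Here Proposition \ref{uniq norm chain prop} (2) gives $F_i=E_i(L_I)_i$, where $E_i$ is the ordinary ascending chain and $(L_I)_i$ is the maximal purely inseparable subextension of $L/F_{i-1}$. Analogous descriptions hold for $J$ and $M$. I would argue by induction on $i$, showing that the $i$-th normal ascending step of $M$ equals $F_iF'_i$.

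The base case follows from Proposition \ref{GNM theorem} and Proposition \ref{ad prop} (1): $F'_M=F_ML_{M,I}=(F_LF_J)(L_IJ_I)$, using Proposition \ref{unique chain GNM} (2) and Proposition \ref{GNM prop} (4).

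For the inductive step, apply Proposition \ref{hereditary} with $L'=F_{i-1}$ and $J'=F'_{i-1}$. This shows that $M/F_{i-1}F'_{i-1}$ is a general normal magnification of $LF'_{i-1}/F_{i-1}F'_{i-1}$ through $F_{i-1}J/F_{i-1}F'_{i-1}$. We then repeat the base case over the new base. This needs the identification that the maximal normal subextension of $LF'_{i-1}/F_{i-1}F'_{i-1}$ is $F_iF'_{i-1}$, which follows from Lemma \ref{lin disj lem} together with linear disjointness. The hypothesis $[L:F_{i-1}]>1$ of Proposition \ref{hereditary} fails once the chain of $L$ has terminated. In that range the magnification degenerates and the step is just the normal step for $J$ lifted by $F_l$, which gives the padded form of the chain.

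\textbf{Main obstacle.} I expect the hardest step to be justifying linear disjointness of the relevant normal closures over the new base fields at each inductive stage, in the presence of inseparability. This is exactly where Example \ref{interesting eg} shows that purely inseparable parts of composita can grow. I would try first to bound everything inside $\tilde{L}F'_{i-1}$ and $F_{i-1}\tilde{J}$, and use the degree count from the proof of Proposition \ref{hereditary}.
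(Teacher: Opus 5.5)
Your proposal follows the paper's proof essentially step for step. In (1) you identify the first term as $(M^C)_S=(L^A)_S(J^B)_S$ and then apply Proposition~\ref{unique chain GNM}~(1) to the separable extensions. In (2) you get the first step from Propositions~\ref{ad prop}~(1), \ref{unique chain GNM}~(2) and \ref{GNM prop}~(4), and then iterate via Proposition~\ref{hereditary} over the base $F_{i-1}F'_{i-1}$.

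The step you leave vague, identifying the maximal normal subextension of $LF'_{i-1}/F_{i-1}F'_{i-1}$ as $F_iF'_{i-1}$, is done in the paper as follows. The conjugates are listed as $L_jF'_{i-1}$ by Corollary~\ref{isom lem cor}. They are intersected using Proposition~\ref{f'f}~(1), and only then is Lemma~\ref{lin disj lem} applied. The linear disjointness you flag as the main obstacle is already supplied by Proposition~\ref{hereditary}, together with $\tilde{L}$ and $F_{i-1}\tilde{J}$ being linearly disjoint over $F_{i-1}$.
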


\begin{proof}
    \hfill

\begin{enumerate}
\item By Proposition \ref{unique desc normal} (3), $N_1=(L^A)_S$ and $N'_1=(J^B)_S$ where $A=Aut(L/K)$ and $B=Aut(J/K)$. Consider the unique intermediate extension $P_1/K$ of $M/K$ such that $M/P_1$ is normal of maximum possible degree. So $P_1=(M^C)_S$ where $C=Aut(M/K)$. By Proposition \ref{unique chain GNM} (1), $M^C=(L^A)(J^B)$. Thus by Lemma \ref{compositum} (1), $P_1=N_1N'_1$. Thus $P_1/K$ is obtained by general normal magnification from $N_1/K$ through $N'_1/K$. Since $P_1/K,\ N_1/K,\ N'_1/K$ are separable, their unique normal descending chains are precisely their unique descending chains and thus we are done by Proposition \ref{unique chain GNM} (1).

\smallskip

\item

By Proposition \ref{ad prop} (1), $F_1=F L_I$, $F'_1=F' J_I$ and $P_1=P M_I$ where $P_1/K$ is maximal normal subextension of $M/K$ and $F/K,\ F'/K, P/K$ are respective maximal Galois subextensions of $L/K,\ J/K,\ M/K$. By Proposition \ref{unique chain GNM} (2), $P=FF'$. By Proposition \ref{GNM prop} (4), $M_I=L_IJ_I$. Thus $P_1=F_1 F'_1$. \smallskip

Now as $\tilde{L}$ and $\tilde{J}$ are linearly disjoint over $K$, so $\tilde{L}$ and $P_1$ are linearly disjoint over $F_1$. Also $P_1/F_1$ is normal. Thus $LP_1/F_1$ is obtained by general normal magnification from $L/F_1$ through $P_1/F_1$. By Corollary \ref{isom lem cor}, all the distinct fields isomorphic to $LP_1/P_1$ are $L_iP_1$ where $L_i/F_1$ are all the distinct fields isomorphic to $L/F_1$. Thus by Proposition \ref{f'f} (1), the maximal normal subextension of $LP_1/P_1$ is $\cap_i\ (L_iP_1)$. Also $F_2=\cap_i\ L_i$. By Lemma \ref{lin disj lem}, $\cap_i\ (L_iP_1)=(\cap_i\ L_i) P_1=F_2P_1$. Similarly the maximal normal subextension of $JP_1/P_1$ is $F'_2P_1/P_1$. Let $P_2/P_1$ be the maximal normal subextension of $M/P_1$. By Proposition \ref{hereditary}, $M/P_1$ is obtained by general normal magnification from $LP_1/P_1$ through $JP_1/P_1$. Thus by previous paragraph, $P_2=(F_2P_1)(F'_2P_1)=F_2F'_2$. Proceeding similarly we are done.
\end{enumerate}
\end{proof}

The following is an interesting generalization of Theorem 2.4 in \cite{jaiswal2025variantsinverseclustersize}.

\begin{proposition}
     Consider a nontrivial finite extension $M/K$ that is obtained by general normal magnification from subextension $L/K$ through a normal subextension $J/K$.
     
   \begin{enumerate}
\item Suppose one of the fields in the unique descending chain of $M/K$ which is different from both $M$ and $K$, coincides with one of the fields in the unique ascending chain of $M/K$. Then $J/K$ is trivial and $M/K$ is separable.\smallskip

\item Suppose one of the fields in the unique normal descending chain of $M/K$ which is different from both $M$ and $K$, coincides with one of the fields in the unique normal ascending chain of $M/K$. Then $J/K$ is trivial and $i_K(M)=1$.\smallskip

\item  Suppose one of the fields in the unique descending chain of $M/K$ which is different from both $M$ and $K$, coincides with one of the fields in the unique normal descending chain of $M/K$. Then $J/K$ is Galois and $M/K$ is separable.

\smallskip

\item Suppose one of the fields in the unique ascending chain of $M/K$ which is different from both $M$ and $K$, coincides with one of the fields in the unique normal ascending chain of $M/K$. Then $J/K$ is Galois and $i_K(M)=1$.

\smallskip

\item Suppose either 

\begin{enumerate}
\item one of the fields in the unique descending chain of $M/K$ which is different from both $M$ and $K$, coincides with one of the fields in the unique normal ascending chain of $M/K$, or

\item one of the fields in the unique normal descending chain of $M/K$ which is different from both $M$ and $K$, coincides with one of the fields in the unique ascending chain of $M/K$.

\end{enumerate}

Then $J/K$ is purely inseparable.

\end{enumerate}
\end{proposition}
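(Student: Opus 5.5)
The plan is to reduce every part to a comparison of chain fields inside $\tilde{J}$, using the product description of the chains from Propositions \ref{unique chain GNM} and \ref{unique normal chain GNM}. Write the four chains of $L/K$ and of $J/K$ as in those propositions, with the padding convention of Remark \ref{ease notation}. Then each field in the corresponding chain of $M/K$ has the form $N_iN'_i$ or $F_jF'_j$, where $N_i,F_j\subset L$ and $N'_i,F'_j\subset J$. Suppose $N_iN'_i=F_jF'_j$. Since $\tilde{L}$ and $\tilde{J}$ are linearly disjoint over $K$, Lemma \ref{lin disj lem} (applied with one factor equal to $\tilde J$ and $K$ respectively) gives $N_iN'_i\cap\tilde{J}=N'_i$ and $F_jF'_j\cap\tilde{J}=F'_j$. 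Hence $N'_i=F'_j$, and symmetrically, intersecting with $\tilde{L}$, $N_i=F_j$.

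Next I will write down the four chains of $J$ explicitly, using that $J/K$ is normal; this is Corollary \ref{normal chains}. If $J\neq K$, the normal descending chain is $J\supsetneq K$ and the normal ascending chain is $K\subsetneq J$. The ordinary descending chain is $J$ alone when $J/K$ is purely inseparable, and $J\supsetneq J_I$ otherwise. The ordinary ascending chain is $K$ alone when $J/K$ is purely inseparable, and $K\subsetneq J_S$ otherwise. After padding, each $N'_i$ or $F'_j$ with index $\ge1$ lies in a two-element set, namely one of $\{J,J_I\}$, $\{K\}\cup\{J_S\}$, $\{K\}$ or $\{J\}$.

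For each part I then match the possible values. In (1), $N'_i\in\{J,J_I\}$ and $F'_j\in\{K,J_S\}$, and the indices cannot be $0$ because the common field differs from $M$ and $K$. If $J/K$ is purely inseparable, this forces $J=N'_i=F'_j=K$. Otherwise $N'_i=J_I$ and $F'_j=J_S$, which is impossible unless $J_I=J_S=K$, i.e.\ $J=K$. The same bookkeeping handles (2), where the sets are $\{K\}$ versus $\{J\}$ and give $J=K$. In (3), $\{J,J_I\}$ against $\{K\}$ gives $J_I=K$, so $J$ is Galois. In (4), $\{K,J_S\}$ against $\{J\}$ gives $J=J_S$, again Galois. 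In (5), case (a) compares $\{J,J_I\}$ with $\{J\}$; case (b) compares $\{K\}$ with $\{K,J_S\}$. In each case the match should force $J=J_I$ or $J_S=K$, i.e.\ $J$ purely inseparable. Some degenerate index cases, where one chain is a singleton, must be checked separately; there the common field would be $M$ or $K$, which is excluded.

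The remaining claims concern the field itself: $M$ separable in (1) and (3), and $i_K(M)=1$ in (2) and (4). For these I use that every descending-chain field contains $M_I$ (Proposition \ref{LSNi} (2)), and that the ordinary ascending-chain fields lie in $M_S$ (Proposition \ref{same uniq asc}). So a common field $E$ in (1) satisfies $M_I\subset E\subset M_S$, which gives $i_K(M)=1$. For the normal chains I use the description in Proposition \ref{uniq norm chain prop}: the first normal descending field is $(M^A)_S$, and the normal ascending fields are $F_i(M_I)_i$. The main obstacle is upgrading $i_K(M)=1$ to full separability of $M$ in (1) and (3). For this I would try $M=M_SM^A$ together with the equality $N_1=M^A$. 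The aim is to show that a descending field lying inside the separable field $E\subset M_S$ forces $M^A\subset M_S$, hence $M=M_S$. If that direct route fails, I would instead push the hypothesis down to $L$ via $N_i=F_j$ and invoke Theorem 2.4 of \cite{jaiswal2025variantsinverseclustersize} on $L_S$.
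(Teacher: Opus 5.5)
Your handling of the conclusions about $J$ is sound and is essentially the paper's argument. The paper also reads off the chains of $M$ from Propositions \ref{unique chain GNM}, \ref{unique normal chain GNM} and Corollary \ref{normal chains}, and separates components using $\tilde{L}\cap L'J'=L'$ and $\tilde{J}\cap L'J'=J'$. Your proof of $i_K(M)=1$ in (2) and (4) works directly inside $M$: a common field $E$ satisfies $M_I\subseteq E\subseteq M_S$. This is valid, and a mild variant of the paper, which instead descends to $L$ via $N_i=F_j$ and uses $M_I=L_IJ_I$.

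\textbf{The gap is the separability of $M$ in parts (1) and (3), which you leave as an unproved aim.} You want to show that ``$E\subseteq M_S$ forces $\mathrm{Aut}(M/K)$-fixed field $\subseteq M_S$'', but you give no mechanism. In general a separable subfield of $M^{\mathrm{Aut}(M/K)}$ says nothing about $M^{\mathrm{Aut}(M/K)}$ itself. Your fallback does not close it either. Theorem 2.4 of \cite{jaiswal2025variantsinverseclustersize} applied to $L_S$ only yields information about $L_S$, and cannot control $[L:L_S]$.

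\textbf{The missing observation} is that each step $N_{j-1}/N_j$ of the unique descending chain of $M$ is Galois. So for the common field $E=N_i$ with $i\ge 1$, the extension $M/E$ is separable. You already know $E\subseteq M_S$: in (1) by Proposition \ref{same uniq asc}, and in (3) because every normal descending field of index $\ge 1$ lies in the first one, $(M^{\mathrm{Aut}(M/K)})_S\subseteq M_S$. Hence $E/K$ is separable, and $M/K$ is separable by transitivity. Equivalently, as in the proof of Proposition \ref{LSNi}(1), one has $M=M_SN_i$ for every $i$. This is exactly what the paper uses: after reducing to $L$ with $N_i\subseteq L_S$, it concludes $L=L_SN_i=L_S$, and then $M=M_S$.
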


\begin{proof}
    Let the unique descending chain for $L/K$ be $L=N_0\supsetneq N_1 \supsetneq N_2 \supsetneq \dots \supsetneq N_k$ and the unique ascending chain for $L/K$ be $K=F_0\subsetneq F_1 \subsetneq F_2 \subsetneq \dots \subsetneq F_l$ and the unique normal descending chain for $L/K$ be $L=N'_0\supsetneq N'_1 \supsetneq N'_2 \supsetneq \dots \supsetneq N'_{k'}$ and the unique normal ascending chain for $L/K$ be $K=F'_0\subsetneq F'_1 \subsetneq F'_2 \subsetneq \dots \subsetneq F'_{l'}$. By Proposition \ref{unique chain GNM} and Proposition \ref{unique normal chain GNM} and Corollary \ref{normal chains}, the unique descending chain for $M/K$ is $M=LJ\supsetneq N_1J_I \supsetneq N_2J_I \supsetneq \dots \supsetneq N_kJ_I$ and the unique ascending chain for $M/K$ is $K\subsetneq F_1J_S \subsetneq F_2 J_S\subsetneq \dots \subsetneq F_lJ_S$ and the unique normal descending chain for $M/K$ is $M=LJ\supsetneq N'_1 \supsetneq N'_2 \supsetneq \dots \supsetneq N'_{k'}$ and the unique normal ascending chain for $M/K$ is $K\subsetneq F'_1J \subsetneq F'_2J \subsetneq \dots \subsetneq F'_{l'}J$. Also for any $K\subset L'\subset L$ and $K\subset J'\subset J$ we have $\tilde{L}\cap L'J'=L'$ and $\tilde{J}\cap L'J'=J'$.

\begin{enumerate}
\item If $N_iJ_I=F_jJ_S$, then $J_I=J_S$. Thus $J=K$ and $M=LJ=L$. Also $N_i=F_j\subset L_S$. By Proposition \ref{LSNi} (1), $L=L_SN_i=L_S$.

\smallskip

\item  If $N'_i=F'_jJ$, then $J=K$ and $M=L$. Also $F'_j=N'_i\subset L_S$. So $F'_1=F_1L_I\subset L_S$ and hence $L_I=K$.

\smallskip

\item  If $N_iJ_I=N'_j$, then $J_I=K$. So $J=J_SJ_I=J_S$. Also $N_i=N'_j\subset L_S$. Thus $L=L_SN_i=L_S$. So $M=LJ=L_SJ_S=(LJ)_S=M_S$.

\smallskip

\item  If $F_iJ_S=F'_j J$, then $J=J_S$. So $J_I=K$. Also $F'_j=F_i\subset L_S$. So $F'_1=F_1L_I\subset L_S$ and hence $L_I=K$. So $M_I=(LJ)_I=L_IJ_I=K$.

\smallskip

\item (a) If $N_i J_I= F'_j J$, then $J=J_I$, (b) If $N'_i=F_jJ_S$, then $J_S=K$. So $J=J_SJ_I=J_I$.

\end{enumerate}
    
\end{proof}

\section{Inverse Problems over Hilbertian fields}\label{inv prob section}

We begin with some lemmas.

\begin{lemma}\label{perm}
(Reformulation of Final Proposition \cite{perlisroots}) Let $G$ be a transitive subgroup of ${\mathfrak S}_n$ for some $n$. If there exists a finite Galois extension of a field $K$ with Galois group isomorphic to $G$, then there exists an extension $L/K$ of degree $n$ with Galois closure having Galois group $G$ over $K$.

\end{lemma}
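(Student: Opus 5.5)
This is Galois correspondence: the degree-$n$ extension comes from the fixed field of a point stabilizer. Let $N/K$ be a finite Galois extension with $Gal(N/K)\cong G$, and fix an isomorphism $\psi: Gal(N/K)\to G\subset {\mathfrak S}_n$. Put $H=\psi^{-1}(G_1)$, where $G_1=\{g\in G\mid g(1)=1\}$ is the stabilizer of the letter $1$. Set $L:=N^{H}$. The claim is that $L$ has the required properties.

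\textbf{Degree.} Since $G$ is transitive on $\{1,\dots,n\}$, the orbit--stabilizer theorem gives $[G:G_1]=n$. By the fundamental theorem of Galois theory, $[L:K]=[Gal(N/K):H]=n$. Note that $L/K$ is separable, since $L\subset N$ and $N/K$ is Galois. So its normal closure $\tilde{L}$ is the Galois closure, and $\tilde{L}\subset N$.

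\textbf{Normal closure.} By Proposition \ref{f'f} (1), $\tilde{L}=\Pi_{\sigma} \sigma(L)$, with $\sigma$ running over $Gal(N/K)$. Each conjugate is $\sigma(L)=N^{\sigma H\sigma^{-1}}$, so $\tilde{L}=N^{C}$ with
\[
C=\bigcap_{\sigma\in Gal(N/K)}\sigma H\sigma^{-1}.
\]
Under $\psi$, the group $C$ becomes $\bigcap_{g\in G} gG_1g^{-1}=\bigcap_{i=1}^n G_i$, using transitivity to identify the conjugates of $G_1$ with the stabilizers $G_i$. An element fixing every letter is the identity in ${\mathfrak S}_n$, so $C$ is trivial. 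Hence $\tilde{L}=N$, and $Gal(\tilde{L}/K)\cong G$.

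The only point needing care is this core computation: the intersection of the conjugates of the point stabilizer is trivial because $G$ acts faithfully on $\{1,\dots,n\}$. Everything else is standard Galois theory, and no perfectness assumption on $K$ is needed since we work inside the separable extension $N/K$.
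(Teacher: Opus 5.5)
Your proof is correct and is the standard point-stabilizer construction. The paper gives no proof of its own and simply cites Perlis's Final Proposition, which rests on exactly this argument: take $L=N^{H}$, get $[L:K]=[G:G_1]=n$ from transitivity, and observe that the core $\bigcap_\sigma \sigma H\sigma^{-1}$ is trivial by faithfulness, so $\tilde{L}=N$.

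Your version also shows slightly more than the statement. It identifies the action of $Gal(\tilde{L}/K)$ on the $n$ conjugates of $L$ with the given action of $G$ on $\{1,\dots,n\}$, since that is the action on the cosets of $G_1$. This stronger form is what the paper implicitly uses later when it reads off $r_K(L)=r$ from the chosen permutation group.
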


Let $K$ be a field. An element $\sigma\in GL_n(K)$ is called a pseudo-reflection if $ord(\sigma)$ is finite and $dim(ker(\sigma-1))=n-1$ (See \S 7.1, \cite{benson1993polynomial}). A group $G$ is said to be regular over $K$ if there exist indeterminates $t_1,\dots, t_r$ over $K$ such that $K(t_1,\dots,t_r)$ has a Galois extension $F$ which is regular over $K$ (i.e. $F/K$ is separable and $K$ is algebraically closed in $F$) with $Gal(F/K(t_1,\dots, t_r))\cong G$ (See \S 16.2, \cite{fried2005field}). We say that $K$ is Hilbertian if each separable Hilbert set of $K$ is non-empty (Refer \S 12.1, \cite{fried2005field} for a detailed definition). By results in Section 13.2 in \cite{fried2005field}, function fields of several variables over infinite fields are Hilbertian. By results in Section 13.3 in \cite{fried2005field}, global fields are Hilbertian.

\begin{lemma}\label{hilb fie}
    Let $K$ be a Hilbertian field. We have finite group $G$ to be regular over $K$ and hence realizable as a Galois group for infinitely many pairwise linearly disjoint Galois extensions over $K$ for the following cases.

    \begin{enumerate}
\item  $G={\mathfrak S}_{n}$ for any $n\in \mathbb{N}$.

\item  $G={\mathfrak A}_{n}$ for any $n\in \mathbb{N}$ when $char(K)\neq 2$ and for $n$ odd when $char(K)= 2$.

\item  $G$ which is any finite abelian group of order $n$ for any $n\in \mathbb{N}$.

\item Any finite group $G\subseteq GL_n(K)$ generated by pseudo-reflections with $|G|$ invertible in $K$.

\item $G$ which is any finite $p$-group when $char(K)>0$.

\item $G=A\rtimes B$ where $A$ is any finite abelian group and $B$ is regular over $K$ and $B$ acts on $A$. 


\end{enumerate}
\end{lemma}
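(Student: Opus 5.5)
The plan has two stages. First I show that each group in the list is regular over $K$. Then I apply the Hilbertian property to specialize the regular realization, repeatedly, into infinitely many pairwise linearly disjoint Galois extensions of $K$.

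\emph{Second stage.} This is uniform across all cases, so I treat it first. Suppose $F/K(t_1,\dots,t_r)$ is Galois with group $G$ and $F/K$ is regular. Assume Galois extensions $E_1,\dots,E_k$ of $K$ with group $G$ are already constructed and are pairwise linearly disjoint, and let $E$ be their compositum. Since $F$ is regular over $K$, $F$ and $E(t_1,\dots,t_r)$ are linearly disjoint over $K(t_1,\dots,t_r)$. Hence $FE/E(t_1,\dots,t_r)$ is Galois with group $G$, and it is still regular over $E$. Write $F=K(\underline t)(z)$ with $z$ integral and with minimal polynomial $h(\underline t,Z)$. The condition that a specialization $\underline t\mapsto \underline a\in K^r$ keeps $h$ irreducible over $E$ defines a separable Hilbert subset of $K^r$ relative to the finite separable extension $E/K$. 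By Fried--Jarden, Corollary 12.2.3, a Hilbertian field remains Hilbertian relative to such extensions, so this Hilbert set is nonempty. Choosing $\underline a$ in it gives $E_{k+1}=K(z(\underline a))$ with $\mathrm{Gal}(E_{k+1}/K)\cong G$ and $E_{k+1}\cap E=K$. Induction on $k$ then yields infinitely many pairwise linearly disjoint realizations. This is the content of Fried--Jarden, Lemma 16.2.6 / Proposition 16.2.8, and I would cite it there rather than redo it.

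\emph{First stage.} I take the regularity statements case by case, quoting the literature.
\begin{enumerate}
\item[(1)] For $\mathfrak S_n$, let $x_1,\dots,x_n$ be indeterminates and $t_i$ the elementary symmetric functions. By the fundamental theorem on symmetric functions, $K(x_1,\dots,x_n)/K(t_1,\dots,t_n)$ is Galois with group $\mathfrak S_n$. This extension is regular because $K(\underline x)$ is purely transcendental over $K$.
\item[(2)] For $\mathfrak A_n$, I expect the main obstacle, because the fixed field $K(\underline t,\sqrt{\Delta})$ is not obviously purely transcendental. I would quote the Fried--Jarden treatment (Example 16.2.? / Section 16.9), which handles $\mathrm{char}\neq 2$ via a suitable one-parameter family and the case $\mathrm{char}=2$, $n$ odd, via the Abhyankar-type trinomials. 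I would check that the stated hypotheses match the ones there.
\item[(3)] For finite abelian $G$, I quote the regularity of abelian groups over arbitrary fields (Fried--Jarden, Proposition 16.3.5).
\item[(4)] For $G\subseteq GL_n(K)$ generated by pseudo-reflections with $|G|$ invertible, the Chevalley--Shephard--Todd theorem in the nonmodular case (Benson, \S 7.2) shows that $K[x_1,\dots,x_n]^G$ is a polynomial ring. Hence $K(\underline x)^G=K(f_1,\dots,f_n)$ is purely transcendental, and $K(\underline x)/K(\underline x)^G$ is Galois with group $G$ by Artin's theorem. Regularity follows because $K(\underline x)$ is purely transcendental over $K$.
\item[(5)] For finite $p$-groups in characteristic $p$, I quote the Witt / embedding-problem result that such groups are regular (Fried--Jarden, Section 16.4 / Proposition 16.4.?).
\item[(6)] For $A\rtimes B$, I use the wreath-product / semidirect-product stability result: if $B$ is regular over $K$ and $A$ is abelian, then $A\rtimes B$ is regular. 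This is in Fried--Jarden, Section 16.4, building on the realization of $A$ in (3) and a twisting argument.
\end{enumerate}

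I expect the only real difficulty to be bookkeeping. Concretely, I need to locate precise references and verify the characteristic restrictions, especially in (2) and (4). The passage from regularity to infinitely many linearly disjoint realizations is standard once the Hilbertian property relative to finite separable extensions is invoked.
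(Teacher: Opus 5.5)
Your proposal takes the same route as the paper. Each regular realization is quoted case by case, with the same Chevalley--Shephard--Todd argument for (4). The passage to infinitely many pairwise linearly disjoint realizations then uses the Hilbertian specialization results of Fried--Jarden \S 16.2, which you additionally sketch correctly (up to the routine step of avoiding finitely many bad specializations). The one loose end is (2): the paper relies on Brink's results on $\mathfrak{A}_n$ over Hilbertian fields, covering $char(K)\neq 2$ and odd $n$ in characteristic $2$. You should replace the unspecified Fried--Jarden section with a precise citation of that kind.
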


\begin{proof}

\hfill

\begin{enumerate}
\item  Corollary 16.2.7 (a) in \cite{fried2005field}.  \smallskip


\item By the results in \cite{brink2004alternating} on realizability of $\mathfrak{A}_n$ as a Galois group over Hilbertian fields (when $n\in\mathbb{N}$ and $char(K)\neq 2$ or when $n$ is odd and $char(K)=2$) and by results in \S 16.2 in \cite{fried2005field} on Hilbertian fields, we have for $G={\mathfrak A}_{n}$.\smallskip

\item Corollary 16.3.6 in \cite{fried2005field}.\smallskip

\item Consider the ring $K[X_1,X_2,\dots, X_n]$ (where $X_i$'s are indeterminates) and let $G$ act linearly on the ring (fixing $K$). By the Chevalley-Shephard-Todd Theorem (Theorem 7.2.1 in \cite{benson1993polynomial}) we have that $(K[X_1,X_2,\dots, X_n])^G$ is a polynomial ring over $K$ i.e. $(K[X_1,X_2,\dots, X_n])^G=K[Y_1,Y_1,\dots, Y_n]$ (where $Y_i$'s are algebraically independent). Thus by Proposition 1.2.4 in \cite{smith1995polynomial}, we have that $K(X_1,X_2,\dots, X_n)/K(Y_1,Y_1,\dots, Y_n)$ is a finite Galois extension with Galois group $G$.  As $K$ is Hilbertian, by results in \S 16.2 in \cite{fried2005field}, we are done.
\smallskip

\item Theorem 16.4.7 in \cite{fried2005field}.\smallskip

\item By Corollary 16.4.8 in \cite{fried2005field} and by results in \S 16.2 in \cite{fried2005field}.

\end{enumerate}\end{proof}

By applying Lemma \ref{perm}, we have the following.

\begin{corollary}\label{Sn ref}
 Let $K$ be a Hilbertian field. Then there exist infinitely many pairwise linearly disjoint extensions over $K$ of degree $n$ with Galois closures having Galois group $G$ over $K$ for the cases in part (1), (2) and (3) above.

\end{corollary}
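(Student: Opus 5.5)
We derive Corollary \ref{Sn ref} from Lemma \ref{hilb fie} together with Lemma \ref{perm}.

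Fix one of the groups $G$ in parts (1), (2), (3) and view it as a transitive subgroup of ${\mathfrak S}_n$. For ${\mathfrak S}_n$ and ${\mathfrak A}_n$ ($n\ge 3$) we take the natural action on $\{1,\dots,n\}$. For a finite abelian group of order $n$ we take the regular action of $G$ on itself, which embeds $G$ transitively in ${\mathfrak S}_n$.

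By Lemma \ref{hilb fie}, $G$ is realizable as the Galois group of infinitely many pairwise linearly disjoint Galois extensions $N_1,N_2,\dots$ of $K$. For each $N_j$ we apply the construction of Lemma \ref{perm}. Let $\iota:G\hookrightarrow{\mathfrak S}_n$ be the fixed transitive embedding and let $H$ be the stabilizer of a point. Transitivity gives $[G:H]=n$. Set $L_j:=N_j^{H}$, the fixed field inside $N_j$ of the subgroup corresponding to $H$ under $Gal(N_j/K)\cong G$. Then $[L_j:K]=n$, and $L_j/K$ is separable because $N_j/K$ is Galois.

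The normal closure of $L_j$ is $N_j^{\mathrm{core}(H)}$. Since $\iota$ is faithful, $\mathrm{core}_G(H)=\bigcap_g gHg^{-1}$ is the kernel of the permutation action, which is trivial. So the Galois closure of $L_j/K$ is $N_j$, with group $G$. This is exactly what Lemma \ref{perm} provides; we have only made explicit that the resulting $L_j$ lies inside the prescribed $N_j$.

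Linear disjointness of the $L_j$ follows from that of the $N_j$, since $L_j\subset N_j$ and subfields of linearly disjoint fields are linearly disjoint. This holds for any finite subfamily, and it also makes the $L_j$ pairwise distinct, since $[L_j\cap L_k:K]=1<n$ whenever $n>1$. Hence there are infinitely many such $L_j$.

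The only point needing care is the small and degenerate cases. For ${\mathfrak A}_n$ with $n\le 2$ the group is trivial, so $G$ is not transitive on $n=2$ points. Here the statement is read with the natural convention that $n$ is the degree of a transitive action; for $n=1$ the claim is vacuous, with $L=K$. In characteristic $2$ the restriction to odd $n$ for ${\mathfrak A}_n$ is already part of the hypothesis inherited from Lemma \ref{hilb fie} (2).
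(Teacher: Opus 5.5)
Your proof is correct and follows the paper's argument: the paper applies Lemma \ref{perm} to the infinitely many pairwise linearly disjoint $G$-extensions from Lemma \ref{hilb fie}, and you make explicit the step it leaves implicit, namely that the degree-$n$ field is the fixed field of a point stabilizer inside the given Galois extension, so linear disjointness carries over to it. Your remark on degenerate cases is apt, since $\mathfrak{A}_2$ is not transitive on two points and so that case needs $n\ge 3$; but for $n=1$ the assertion is not vacuous but false, because only one field of degree $1$ exists, so you should simply assume $n>1$.
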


\begin{lemma}\label{lin disj}
    Given separable $L/K$. Suppose $G$ is regular over $K$. If $K$ is Hilbertian then we have an extension $L'/K$ such that $Gal(L'/K)=G$ and $L'$ and $L$ are linearly disjoint over $K$.
   
\end{lemma}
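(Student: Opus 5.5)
We will reduce the statement to Hilbert irreducibility applied to a regular realization of $G$ over a base field that contains $L$. Since $G$ is regular over $K$, there are indeterminates $t_1,\dots,t_r$ and a Galois extension $F/K(\mathbf t)$ with $Gal(F/K(\mathbf t))\cong G$, where $F/K$ is regular, i.e. $K$ is algebraically closed in $F$ and $F/K$ is separable.

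First we may enlarge $L$ to its Galois closure $\tilde L$. This is harmless: it is still a finite separable extension, and linear disjointness from $\tilde L$ implies linear disjointness from $L$. Since $\tilde L/K$ is algebraic and $F/K$ is regular, $F$ and $\tilde L(\mathbf t)$ are linearly disjoint over $K(\mathbf t)$. Hence $F\tilde L/\tilde L(\mathbf t)$ is Galois with group $G$, and it is again regular over $\tilde L$. This is the standard fact that regularity is preserved under algebraic base change (\S 16.2 of \cite{fried2005field}).

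Next we specialize. Choose a primitive element $z$ of $F/K(\mathbf t)$ that is integral over $K[\mathbf t]$, with minimal polynomial $h(\mathbf t,X)$. Using the previous step, $h$ stays irreducible over $\tilde L(\mathbf t)$ with Galois group $G$. The key input is the result that a finite separable extension of a Hilbertian field is Hilbertian, and more precisely that every separable Hilbert subset of $\tilde L$ contains a separable Hilbert subset of $K$ (Corollary 12.2.3 in \cite{fried2005field}). Via Hilbert's irreducibility theorem we therefore get $\mathbf a\in K^r$ such that the specialization $h(\mathbf a,X)$ is separable and irreducible over $\tilde L$, and its splitting field over $\tilde L$ has Galois group $G$. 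Let $L'$ be the splitting field of $h(\mathbf a,X)$ over $K$, so $L'/K$ is Galois with group a subgroup of $G$. Since $L'\tilde L$ is the splitting field over $\tilde L$, we have $|G|=[L'\tilde L:\tilde L]\le [L':K]\le |G|$. Equality then forces $Gal(L'/K)\cong G$ and $[L'\tilde L:\tilde L]=[L':K]$, which is exactly linear disjointness of $L'$ and $\tilde L$ over $K$, because $L'/K$ is Galois. Hence $L'$ and $L$ are linearly disjoint over $K$.

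The main obstacle is justifying that the Hilbert set condition can be imposed over $\tilde L$ while the point $\mathbf a$ still lies in $K^r$. This is precisely where the descent of Hilbert sets along finite separable extensions is needed, and I would cite it from \cite{fried2005field} rather than reprove it. The separability of $L$ is essential here: for inseparable extensions the analogous statement needs the more delicate non-separable Hilbert sets.
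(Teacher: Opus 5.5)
Your argument is correct, but it takes a different route from the paper's. The paper uses as a black box the consequence of \S 16.2 of \cite{fried2005field} that a group $G$ regular over a Hilbertian $K$ is the Galois group of infinitely many pairwise linearly disjoint Galois extensions of $K$. It then finishes combinatorially. The separable extension $L/K$ has only finitely many intermediate fields, and each nontrivial one lies in at most one member of a pairwise linearly disjoint family. So some realization $L'$ satisfies $L'\cap L=K$, which suffices because $L'/K$ is Galois.

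You instead open up the Hilbertian machinery. You base change the regular realization to $\tilde L(\mathbf t)$, descend separable Hilbert sets from $\tilde L$ to $K$ (Corollary 12.2.3 of \cite{fried2005field}), and specialize at a $K$-rational point. Linear disjointness then follows from the degree count $|G|\le[L'\tilde L:\tilde L]\le[L':K]\le|G|$. Your proof is essentially the argument that underlies the result the paper cites. It is therefore more self-contained, and it produces the disjoint realization in one step, without an infinite family or the count of intermediate fields. The paper's proof is shorter given its citation. It uses separability only through the finiteness of intermediate fields, whereas you use it only for the descent of Hilbert sets.

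One small correction to your closing remark: separability is needed for your citation, not for the statement. Because $L'/K$ is Galois, $L'\cap L$ is separable over $K$ and hence contained in $L_S$. A Galois $L'$ is linearly disjoint from $L$ as soon as $L'\cap L=K$. So running your argument with $L_S$ (or its Galois closure) in place of $L$ gives the lemma for every finite $L/K$, with no non-separable Hilbert sets required.
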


\begin{proof}
 Since $G$ is regular over $K$ which is Hilbertian, there are infinitely many pairwise linearly disjoint extension $L'/K$ such that $Gal(L'/K)=G$. Call the infinite set of these extensions as $\mathfrak L$. Since $L/K$ is separable, it has finitely many intermediate extensions. Consider the subset $\mathfrak L'$ of $\mathfrak L$ consisiting of those $L'/K$ which contain atleast one non-trivial intermediate extension of $L/K$. Due to pairwise linear disjointness, $\mathfrak L'$ is finite. Thus we have an extension $L'/K$ such that $Gal(L'/K)=G$ and $L'\cap L=K$. As $L'/K$ is Galois so $L'$ and $L$ are linearly disjoint over $K$.
\end{proof}

\begin{lemma}\label{p insep}
    Consider an imperfect field $K$ with $char(K)=p$. Then 
    
\begin{enumerate}
\item  For any $u\in \mathbb{N}\cup \{0\}$, there exists a purely inseparable simple extension $L/K$ of degree $p^u$.

\smallskip

 \item   If $M/K$ is a finite extension, then $M$ is imperfect.

\end{enumerate}
\end{lemma}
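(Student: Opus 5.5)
For part (1), I plan to take an element $a\in K\setminus K^p$, which exists because $K$ is imperfect, and set $L=K(\alpha)$ with $\alpha^{p^u}=a$. Then $\alpha$ is a root of $f(x)=x^{p^u}-a=(x-\alpha)^{p^u}$, so $L/K$ is purely inseparable and simple. The only real step is to show $f$ is irreducible over $K$. Any monic factor of $f$ in $K[x]$ has the form $(x-\alpha)^m$ with $0<m\le p^u$. Write $m=p^k m'$ with $p\nmid m'$; then
\[
(x-\alpha)^m=(x^{p^k}-\alpha^{p^k})^{m'} .
\]
The coefficient of $x^{p^k(m'-1)}$ in this polynomial is $-m'\alpha^{p^k}$, which lies in $K$, and $m'$ is a unit, so $\alpha^{p^k}\in K$.

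Now let $b=\alpha^{p^k}\in K$. Then $b^{p^{u-k}}=a$. If $k<u$, this gives $a=(b^{p^{u-k-1}})^p\in K^p$, which is a contradiction. Hence $k=u$, so $m=p^u$ and $f$ is irreducible. This gives $[L:K]=p^u$. The case $u=0$ is trivial, with $L=K$.

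For part (2), I will argue by contradiction. Suppose $M$ is perfect, so that $M=M^p$. Take $a\in K\setminus K^p$ and put $\beta=a^{1/p}\in\bar K$. Perfectness of $M$ gives $\beta\in M$. The same reasoning applies to every $\beta_u=a^{1/p^u}$: since $M$ is perfect, each $p$-th root of an element of $M$ lies in $M$, so by induction $\beta_u\in M$ for all $u$. By part (1), $[K(\beta_u):K]=p^u$. Since $K(\beta_u)\subseteq M$, this forces $[M:K]\ge p^u$ for every $u$, contradicting finiteness of $M/K$. Therefore $M$ is imperfect.

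The main obstacle is the irreducibility of $x^{p^u}-a$ in part (1). The coefficient argument above handles it, using only $a\notin K^p$.
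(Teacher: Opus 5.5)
Your proposal is correct and follows the paper's proof. You use the same extension $K(a^{1/p^u})$ with $a\in K\setminus K^p$, the same irreducibility strategy (a monic factor must be $(x-\alpha)^m$, which forces $\alpha^{p^v}\in K$ for some $v<u$ and hence $a\in K^p$), and the same part (2) argument (the fields $K(a^{1/p^n})\subseteq M$ have unbounded degree); the only difference is cosmetic. You read off $\alpha^{p^k}\in K$ from the coefficient of $x^{p^k(m'-1)}$, whereas the paper takes the constant term $\alpha^m\in K$ and combines it with $\alpha^{p^u}=a$ via a gcd argument to get $\alpha^{p^v}\in K$, where $p^v=\gcd(p^u,m)$.
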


\begin{proof}

Since $K$ is imperfect, there is an $a\in K$ such that $a\not \in K^p$. 

\begin{enumerate}
    \item 

 Consider the polynomial $f(x)=x^{p^u}-a$ over $K$. Let $L=K(\alpha)$ where $\alpha\in \bar{K}$ is a root of $f$. We claim that $f$ is irreducible over $K$. Now $f(x)=x^{p^u}-a=(x-\alpha)^{p^u}$. If $f$ is reducible, say $f(x)=g(x)h(x)$, then $g(x)=(x-\alpha)^m\in K[x]$ for some $1\leq m<p^u$. Thus $\alpha^m\in K$. As $a=\alpha^{p^u}\in K$, we have $\alpha^{p^v}\in K$ where $p^v=gcd(p^u,m)$. As $v<u$, we get $a\in K^p$ which is a contradiction. Thus $L/K$ is the required extension.\smallskip

\item Suppose $M$ is perfect. As $a\in M$ and $M=M^p$, so $a^{1/p}\in M$. Similarly we have $a^{1/p^n}\in M$ for all $n\in \mathbb{N}$. By proof of part (1), $[K(a^{1/p^n}):K]=p^n$ for all $n$. Thus $p^n\mid [M:K]$ for all $n$ which is a contradiction.

\end{enumerate}
\end{proof}

\begin{remark}
  Global function fields are finite extensions of $\mathbb{F}_q(t)$ where $q=p^k$ for $p$ prime and $k\geq 1$ and $t$ is an indeterminate. Since $t^{1/p}\not \in \mathbb{F}_q(t)$, so $F_q(t)$ is imperfect. By Lemma \ref{p insep} (2), global function fields are imperfect.
\end{remark}

\begin{lemma}

Consider the field $K=\mathbb{F}_q (t)$ where $q=p^m$ and $m$ is a positive integer and $t$ is an indeterminate. Then for any $n\geq 1$, the group $G_n=GL_n(\mathbb{F}_q)$ is realizable as a Galois group over $K$ for infinitely many pairwise linearly disjoint Galois extensions over $K$.
\end{lemma}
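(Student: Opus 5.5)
We need to prove: for $K=\mathbb{F}_q(t)$ and any $n\geq 1$, $GL_n(\mathbb{F}_q)$ is realizable as a Galois group over $K$ for infinitely many pairwise linearly disjoint Galois extensions. The plan is to show that $GL_n(\mathbb{F}_q)$ is regular over $K$ and then invoke Hilbertianity, exactly as in Lemma \ref{hilb fie}.

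The most direct route uses the Dickson invariants. Let $G=GL_n(\mathbb{F}_q)$ act linearly on $K[X_1,\dots,X_n]$ through its natural action on the span of the $X_i$, fixing $K$ and hence $\mathbb{F}_q(t)$. By Dickson's theorem, $\mathbb{F}_q[X_1,\dots,X_n]^G$ is a polynomial ring generated by the algebraically independent Dickson invariants $c_{n,0},\dots,c_{n,n-1}$. This holds over any field containing $\mathbb{F}_q$, so also over $K$.

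By the same argument as in Lemma \ref{hilb fie} (4) (Proposition 1.2.4 in \cite{smith1995polynomial}), the extension
\[
K(X_1,\dots,X_n)/K(c_{n,0},\dots,c_{n,n-1})
\]
is Galois with group $G$. Here it matters that $G$ acts faithfully, which is clear. One cannot simply cite Lemma \ref{hilb fie} (4): $|G|$ is divisible by $p$, so it is not invertible in $K$, and $G$ is not generated by pseudo-reflections in general anyway. This is exactly why Dickson's theorem, which works in the modular case, replaces Chevalley--Shephard--Todd.

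Next comes regularity. The top field $K(X_1,\dots,X_n)$ is purely transcendental over $K$, so $K$ is algebraically closed in it and the extension is separable over $K$. Since the base $K(c_{n,0},\dots,c_{n,n-1})$ is rational over $K$, this shows $G$ is regular over $K$ in the sense of \S 16.2 of \cite{fried2005field}.

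Finally, $K=\mathbb{F}_q(t)$ is Hilbertian, being a global field (Section 13.3 of \cite{fried2005field}). The results of \S 16.2 of \cite{fried2005field} then let us specialise $(c_{n,0},\dots,c_{n,n-1})$ to values in $K$, via Hilbert sets, keeping the Galois group $G$. Iterating this, and at each step choosing the specialisation so that the new extension is linearly disjoint from the compositum of the previous ones (as in the proof of Lemma \ref{lin disj}), yields infinitely many pairwise linearly disjoint Galois extensions with group $G$.

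The main obstacle is justifying the Galois correspondence in the modular case. I expect this to be fine, since Galois theory for a finite group acting faithfully on a field needs no coprimality, and Dickson's theorem supplies the rationality of the fixed field.

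An alternative fallback is Carlitz/Drinfeld module torsion: for $\phi_T$ of rank $n$ with generic coefficients, $\mathrm{Gal}(K(\phi[T])/K)\cong GL_n(\mathbb{F}_q)$ by the work of Abhyankar. The invariant-theory route above is cleaner, however.
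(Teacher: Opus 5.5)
Your argument is correct. It rests on the same two ingredients as the paper's proof:
\begin{itemize}
\item Dickson's theorem (the paper cites Smith's Proposition 8.1.4) makes the fixed field of $GL_n(\mathbb{F}_q)$ purely transcendental;
\item Hilbertianity of $\mathbb{F}_q(t)$, together with \S 16.2 of Fried--Jarden, then produces the realizations over $K$.
\end{itemize}

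The set-up differs. The paper stays over $\mathbb{F}_q$ and identifies the $n$-variable field $\mathbb{F}_q(t,t_1,\dots,t_{n-1})=K(t_1,\dots,t_{n-1})$ with the Dickson-invariant field, so that $t$ becomes one of its generators. It then specialises only $n-1$ variables, and handles $n=1$ separately through the cyclic group $\mathbb{F}_q^{\times}$.

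You instead extend scalars to $K$ and treat all $n$ Dickson invariants as indeterminates over $K$. This costs one extra variable. In return, regularity of the top field over $K$ is automatic, since $K(X_1,\dots,X_n)$ is purely transcendental over $K$. Your route also covers $n=1$ uniformly, via $K(X)/K(X^{q-1})$.

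In the paper's identification, regularity of $\mathbb{F}_q(X_1,\dots,X_n)$ over the copy of $\mathbb{F}_q(t)$ depends on how $t$ is matched, and the paper does not verify it. For example, let $t$ correspond to the Dickson invariant of top degree $q^n-1$, which is $\pm L_n^{q-1}$ with $L_n$ the Moore determinant, and let $q>2$. Then $L_n$ is algebraic over $\mathbb{F}_q(t)$ but does not lie in it, because it is not $GL_n(\mathbb{F}_q)$-invariant. Regularity is exactly what \S 16.2 needs to yield infinitely many pairwise linearly disjoint extensions rather than a single one. So your version is the more careful of the two.

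One aside in your proposal is false, though it does not affect the proof. $GL_n(\mathbb{F}_q)$ \emph{is} generated by pseudo-reflections: the transvections $I+E_{ij}$ ($i\neq j$) together with the matrices $\mathrm{diag}(a,1,\dots,1)$ generate it. This is forced anyway, since its invariant ring is polynomial (Serre's theorem). The only obstruction to using Lemma \ref{hilb fie} (4) is that $p$ divides $|G|$, and that is precisely where Dickson's theorem takes over from Chevalley--Shephard--Todd.
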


\begin{proof}

For $n=1$, $G_1=\mathbb{F}_q^{\times}$ is cyclic. As $K$ is Hilbertian, we have $G_1$ to be realizable as a Galois group over $K$ by Lemma \ref{hilb fie} (3). Now let $n\geq 2$. By Proposition 8.1.4 in \cite{smith1995polynomial}, $G_n$ is realizable as a Galois group over $K(t_1,t_2,\dots, t_{n-1})$ where $t_i$'s are algebraically independent over $K$. As $K$ is Hilbertian, by results in \S 16.2 in \cite{fried2005field}, we have the result.\end{proof}

\begin{corollary}
    For any $n\geq 1$, $PGL_n(\mathbb{F}_q)$ is realizable as a Galois group over $K$ for infinitely many pairwise linearly disjoint Galois extensions over $K=\mathbb{F}_q(t)$.
\end{corollary}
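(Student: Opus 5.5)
The plan is to deduce the corollary from the preceding lemma: take the Galois extensions $E/K$ with $Gal(E/K)\cong GL_n(\mathbb{F}_q)$ provided there, and pass to fixed fields of the centre. Let $Z\subseteq GL_n(\mathbb{F}_q)$ be the subgroup of nonzero scalar matrices. It is the centre, so it is normal and $GL_n(\mathbb{F}_q)/Z=PGL_n(\mathbb{F}_q)$.

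Let $\{E_i\}_{i\in\mathbb{N}}$ be the infinitely many pairwise linearly disjoint Galois extensions of $K$ with $Gal(E_i/K)\cong GL_n(\mathbb{F}_q)$ given by the preceding lemma. Choose such an isomorphism for each $i$, and let $Z_i\subseteq Gal(E_i/K)$ correspond to $Z$. Set $F_i=E_i^{Z_i}$.

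By the fundamental theorem of Galois theory, $Z_i$ is normal in $Gal(E_i/K)$. Hence $F_i/K$ is Galois with
$$Gal(F_i/K)\cong Gal(E_i/K)/Z_i\cong PGL_n(\mathbb{F}_q).$$

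For linear disjointness of $F_i$ and $F_j$ with $i\neq j$, recall that two Galois extensions are linearly disjoint over $K$ exactly when their intersection is $K$. We have $F_i\cap F_j\subseteq E_i\cap E_j=K$, the last equality because $E_i$ and $E_j$ are linearly disjoint. So $F_i$ and $F_j$ are linearly disjoint over $K$.

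It remains to show that infinitely many of the $F_i$ are distinct. This is the only point needing care, and it fails only if $PGL_n(\mathbb{F}_q)$ is trivial. In that case $F_i=K$ for every $i$, and the statement holds trivially: the trivial group is realized by $K/K$, which is linearly disjoint from itself. If $PGL_n(\mathbb{F}_q)$ is nontrivial, then each $F_i\neq K$. Pairwise intersections equal to $K$ then force $F_i\neq F_j$ for $i\neq j$, so the $F_i$ form infinitely many distinct, pairwise linearly disjoint Galois extensions of $K$ with Galois group $PGL_n(\mathbb{F}_q)$.

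This applies for every $n\geq 1$. For $n=1$ the group $PGL_1(\mathbb{F}_q)$ is trivial, which is the degenerate case above.
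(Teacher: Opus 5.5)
Your argument is correct and matches the deduction the paper leaves implicit (it gives no written proof). You take the $GL_n(\mathbb{F}_q)$-extensions $E_i$ from the preceding lemma and pass to the fixed fields of the scalar matrices; linear disjointness and distinctness carry over because $F_i\cap F_j\subseteq E_i\cap E_j=K$. Your remark that $PGL_1(\mathbb{F}_q)$ is trivial, so the statement holds only degenerately for $n=1$, is a fair caveat about the statement itself, not a gap in your proof.
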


\begin{remark}
    For any extensions $L/K$ and $M/K$, if $\lambda_K(M,L)=[L:K]-[L:L_S]$ then $r_K(L)=1$. Dividing by $[L:L_S]$ we have $\rho_K(M,L)=[L_S:K]-1$. As $\rho_K(M,L)=a\cdot r_K(L)$ for some $a\leq s_K(L)$ and $[L_S:K]=r_K(L)\cdot s_K(L)$, so we have $(s_K(L)-a)\cdot r_K(L)=1$. Thus $r_K(L)=1$ and $s_K(L)=[L_S:K]$ and $a=[L_S:K]-1$.
\end{remark}

\begin{proof}[Proof of Theorem \ref{root cap gen res}]
We prove the theorem in two parts.

\begin{enumerate}
\item First we will show that: Given $(n,r,\rho)$ where $n>2$ and $r\mid n$ and $r\mid \rho$ and $\rho\leq n$ and $\rho\neq n-1$. There exist separable extensions $L/K$ and $M/K$ such that $[L:K]=n$ and cluster size $r_K(L)=r$ and root capacity $\rho_K (M,L)=\rho$. For $\rho\neq 0$, we get $M/K$ as an extension of $L/K$ contained in $\tilde{L}$.\smallskip

Let $n/r=s$ and $\rho/r = a$. For $r=1$, we have $\rho=a$ and so $a\neq n-1$. By Corollary \ref{Sn ref}, there exists an $L/K$ of degree $n$ with Galois closure having Galois group ${\mathfrak S}_n$. This satisfies $r_K(L)=1$ and $s_K(L)=n$. Let $L_1,L_2,\dots, L_n$ be distinct fields $K$-isomorphic to $L$. Then for $a=0$, $M=K$ works. Let $M=L_1\cdots L_a$ for $1\leq a\leq n-2$ and let $M=L_1\cdots L_{n-1}$ for $a=n$. Thus $\rho_K(M,L)=a$.
 
 \smallskip

 Now for $r>1$. We construct a solvable group $G\subseteq{\mathfrak S}_n$ (as in solutions of Exercises 3 and 4 in \cite{perlisroots}) with the following properties: its action is transitive on $n$ points, and a point stabiliser fixes precisely $r$ points. We divide the $n$ points into $n/r=s$ packets of size $r$. Let $G$ be the group of permutations on these $n$ points generated by independent
cyclic permutations on each packet, together with a cyclic permutation
on the overall set of packets. Thus, we have $G=(\Z/r\Z)^s \rtimes \Z / s\Z$ with semidirect product group law \[ ((a_1,\dots, a_s),b)\cdot((c_1,\dots,c_s),d)=((a_1,\dots, a_s) + (b\cdot (c_1,\dots, c_s)), b+d), \]
  where $b\cdot(c_1,\dots, c_s)=(c_{b+1},\dots, c_s,c_1,\dots, c_{b})$ for $b\neq 0$ \&  $0\cdot(c_1,\dots, c_s)=(c_1,\dots, c_s)$.\smallskip
  
  One can verify $((a_1,\dots,a_s),b)^{-1}=((-a_{s-b+1},\dots,-a_{s}, -a_{1},\dots, -a_{s-b}),-b)$ and \[((a_1,\dots, a_s),b)\cdot((c_1,\dots,c_{s-1},0),0)\cdot ((a_1,\dots,a_s),b)^{-1}= ((c_{b+1},\dots, c_{s-1}, 0, c_1,\dots,c_b),0).\]

 Any point stabiliser is isomorphic to $(\Z/r\Z)^{s-1}$. Now $(\Z/r\Z)^s$ is finite abelian. Also $\Z / s\Z$ is regular over $K$ by Lemma 16.3.4 in \cite{fried2005field} and it clearly acts on $(\Z/r\Z)^s$ as above. Thus by Lemma \ref{hilb fie} (6), $G$ is realizable as a Galois group over $K$. By Lemma \ref{perm}, there exists a separable extension $L/K$ with $[L:K]=n$ and with Galois closure having Galois group $G$ over $K$. This $L/K$ satisfies $r_K(L)=r$. The $s=s_K(L)$ many subgroups of $G$ fixing the $s$ many distinct fields $L_i$'s isomorphic to $L/K$ are $H_i=((\Z/r\Z)^{i-1}\times 0 \times (\Z/r\Z)^{s-i} )\times 0$ for $1\leq i\leq s$. Observe that $G \supsetneq H_1 \supsetneq H_1 \cap H_2 \supsetneq \dots \supsetneq H_1 \cap H_2 \cap \cdots \cap H_s = 0$. Let $M=L_1L_2\cdots L_a$ for $1\leq a \leq s$. Thus $\rho_K(M,L)=a\cdot r_K(L)=a\cdot r=\rho$.

 \medskip

 \item Now we prove the theorem. We break the problem into cases.\smallskip 
 
 Case 1: $n=p^{\mu}$. Now $l=r\cdot p^{\mu}\mid n$. So $r=1$ and $l=p^{\mu}=n$. As $l\mid \lambda$ and $\lambda\leq n$. So $\lambda=l$. Thus $\rho=r$. As $K$ is imperfect, by Lemma \ref{p insep} (1), we have a purely inseparable simple extension $L/K$ where $L=K(\alpha)$ for some $\alpha\in \bar{K}$ with $[L:K]=p^{\mu}=n$. Thus $r_K(L)=1$ and $l_K(L)=l$. As $n>1$, so for $M=K$ we have $\rho_K(M,L)=0=\lambda_K(M,L)$. For $M=L$ we have $\rho_K(M,L)=r_K(L)=r$ and $\lambda_K(M,L)=l_K(L)=l$.

 \smallskip
 
Case 2: $n>2p^{\mu}$.  Let $n'=n/p^{\mu}$. By the assumptions, we have $n'>2$ and $r\mid n'$ and $r\mid \rho$ and $\rho\leq n'$ and $\rho\neq n'-1$. As $K$ is Hilbertian, so by part (1), we have separable extensions $S/K$ and $M'/K$ with $S\subset M'\subset \tilde{S}$ such that $[S:K]=n'$ and $r_K(S)=r$ and $\rho_K(M',S)=\rho$. As $K$ is imperfect, by Lemma \ref{p insep} (1), we have a purely inseparable simple extension $I/K$ with $[I:K]=p^{\mu}$.  Let $L=SI$. So $L/S$ is purely inseparable and $L/I$ is separable. Also $S\subset L_S$ and $I\subset L_I$. As $L_S/S$ and $L_I/I$ are both separable and purely inseparable, so $L_S=S$ and $L_I=I$ and $L=L_SL_I$. By Proposition \ref{L'S prop} (3), $[L:L_S]=i_K(L)=[L_I:K]=p^{\mu}$. By Proposition \ref{L'S prop} (4), $r_K(L)=r_K(L_S)$ and thus $s_K(L)=s_K(L_S)$. So $r_K(L)=r$ and $l_K(L)=r_K(L)\cdot [L:L_S]=r\cdot p^{\mu}=l$. By Proposition \ref{gen norm for L} (1) and Proposition \ref{GNM thm II},  $\rho_K(M,L)=\rho_K(M',S)\cdot \rho_K(I,I)=\rho\cdot 1=\rho$ and $\lambda_K(M,L)=\lambda_K(M',S)\cdot \lambda_K(I,I)=\rho\cdot p^{\mu}=\lambda$.

\smallskip

Case 3: $n=2p^{\mu}$. Let $a=\rho/r=\lambda/l$. As $\lambda\leq n$, so $l\cdot a\leq 2p^{\mu}$. Thus $r\cdot p^{\mu}\cdot a\leq 2p^{\mu}$ and so $r\cdot a \leq 2$. As we have assumed $r\neq 1$ when $n=2p^{\mu}$, we have $r=2$. So $a=1$ and $\rho=2$ and $\lambda=l=2p^{\mu}=n$. By Lemma \ref{hilb fie} (3), $\mathbb{Z}/2\mathbb{Z}$ is realizable as a Galois group over $K$. Let $S/K$ be the degree $2$ Galois extension. So $r_K(S)=2$. As $K$ is imperfect, by Lemma \ref{p insep} (1), we have a purely inseparable simple extension $I/K$ with $[I:K]=p^{\mu}$.  Let $L=SI$. So $L_S=S$ and $L_I=I$ and $L=L_SL_I$ and $L/K$ is normal. Thus $[L:L_S]=i_K(L)=[L_I:K]=p^{\mu}$ and $[L:K]=2p^{\mu}$ and $r_K(L)=r_K(L_S)=2$ and thus $s_K(L)=s_K(L_S)=1$. So $l_K(L)=r_K(L)\cdot [L:L_S]=2\cdot p^{\mu}=n$. For $M=L$ we have $\rho_K(M,L)=r_K(L)=2$ and $\lambda_K(M,L)=l_K(L)=n$.

\smallskip

In Case 2 and Case 3, we observe that as $S/K$ is separable, so by primitive element theorem $S=K(\beta)$ for some $\beta\in \bar{K}$. As $I/K$ is a purely inseparable simple extension, $I=K(\gamma)$ for some $\gamma\in \bar{K}$. So $L=K(\beta, \gamma)$. By stronger version of primitive element theorem, Theorem 1 in \cite{brown2012mathematics} (or by Theorem C.1 in \cite{conradseparability}), $L=K(\alpha)$ for some $\alpha\in \bar{K}$.\end{enumerate}\end{proof}


\begin{proof}[Proof of Theorem \ref{tau gen res}]
    We prove the theorem in two parts.

\begin{enumerate}
\item First we will show that: Given $(n,t,\tau)$ where $n>2$ and $t\ |\ \tau\ |\ n$. Assume that in the case when $t=1$ we have $\tau\neq 2$ and $n\neq 2\tau$ and assume that in the case when $t$ is odd we have either $\tau\neq 2t$ or $n\neq 2\tau$. There exist separable extensions $L/K$ and $M/K$ such that $[L:K]=n$ and ascending index $t_K(L)=t$ and intersection indicium $\tau_K (M,L)=\tau$. We get $M/K$ as an extension of $L/K$ contained in $\tilde{L}$.

\smallskip

We break the problem into cases.\smallskip 

Case 1 : Suppose $\tau=t$ or $\tau=n$. We claim that given $n>2$ with $t\mid n$ we have an extension $L/K$ with $[L:K]=n$ and $t_K(L)=t$. For $t=1$, consider the $L/K$ for $r=1$ in proof of Theorem \ref{root cap gen res} (1). By Example 2.3 (2) in \cite{jaiswal2025rootcapacityintersectionindicium}, $t_K(L)=1=t$. For $t=n$, consider the $L/K$ for $r=n$ in proof of Theorem \ref{root cap gen res} (1). Since $L/K$ is Galois, $t_k(L)=n=t$. For $1<t<n$, consider the $L/K$ for $r=n/t$ in proof of Theorem \ref{root cap gen res} (1). By computation in Theorem 7.3.4 in \cite{Bhagwat_2025}, $r_K(L)\cdot t_K(L)=n$. Thus $t_K(L)=n/r=t$. Thus for $M=\tilde{L}$ we have $\tau_K(M,L)=t_K(L)=t$ and for $M=L$ we have $\tau_K(M,L)=[L:K]=n$. \smallskip

 Case 2 : Suppose $n/\tau>2$ and $\tau>2$. Then by Case 1, we have an extension $L'/K$ of degree $n/\tau$ such that $t_K(L')=1$. Since we have $t\mid \tau$ and $\tau>2$. Thus by Case 1, we have $J/K$ with $[J:K]=\tau$ and $t_K(J)=t$. Now by Lemma \ref{lin disj} we can choose the fields such that $\tilde{L'}\cap \tilde{J}=K$.
   Let $L=L'J$ and $M=\tilde{L'}J$. Thus by Proposition \ref{GNM theorem} (1), (3) and Proposition \ref{GNM thm II} (2), we have $[L:K]=[L':K][J:K]=(n/\tau)\cdot \tau=n$ and $t_K(L)=t_K(L')t_K(J)=1\cdot t=t$ and $\tau_K(M,L)=[J:K]t_K(L')=\tau\cdot1=\tau$.\smallskip

Case 3 : Suppose $n=2\tau$ and $t>1$ and $\tau/t>2$. Then we have an extension $J/K$ of degree $\tau/t$ such that $t_K(J)=1$. Since we have $t\mid 2t$ and $t>1$. Thus we have $L'/K$ with $[L':K]=2t$ and $t_K(L')=t$. Now we can choose the fields such that $\tilde{L'}\cap \tilde{J}=K$.
   Let $L=L'J$ and $M=\tilde{L'}J$. Thus we have $[L:K]=[L':K][J:K]=(2t)\cdot (\tau/t)=2\tau=n$ and $t_K(L)=t_K(L')t_K(J)=t\cdot 1=t$ and $\tau_K(M,L)=[J:K]t_K(L')=(\tau/t)\cdot t=\tau$.\smallskip

   Case 4 : Suppose $n=2\tau$ and $\tau=2t$ and $t=2m$ with $m>1$. So $n=8m$ and $\tau=4m$. We have extension $L'/K$ with $[L':K]=4$ and $t_K(L')=2$ and extension $J/K$ with $[J:K]=2m$ and $t_K(J)=m$. Now we can choose the fields such that $\tilde{L'}\cap \tilde{J}=K$.
   Let $L=L'J$ and $M=\tilde{L'}J$. Thus we have $[L:K]=[L':K][J:K]=4\cdot 2m=8m=n$ and $t_K(L)=t_K(L')t_K(J)=2\cdot m=2m=t$ and $\tau_K(M,L)=[J:K]t_K(L')=2m\cdot 2=4m=\tau$.
   
\smallskip

   Case 5 : $n=8,\tau=4,t=2$. First let $K=\mathbb{Q}$. Consider the polynomial $x^8-3$ which is irreducible over $\mathbb{Q}$ having a solvable Galois group by results in \cite{jacobson1990galois}. Let $a=3^{1/8}$ be the positive real root of the polynomial and $\iota$ be a primitive $4$-th root of unity. Let $L=\mathbb{Q}(a)$ and $M=\mathbb{Q}(a,\iota)$. Thus $\tau_{\mathbb{Q}}(M,L)=[\mathbb{Q}(a)\cap \mathbb{Q}(ai):\mathbb{Q}]=[\mathbb{Q}(a^2):\mathbb{Q}]=4$. Also by Example 2.3 (1) in \cite{jaiswal2025rootcapacityintersectionindicium}, $t_{\mathbb{Q}}(L)=[\mathbb{Q}(a^4):\mathbb{Q}]=2$.   By Proposition 1 in \cite{jacobson1990galois} and Theorem A in \cite{jacobson1990galois} the Galois group of Galois closure $\tilde{L}$ over $\mathbb{Q}$ is $G=\mathbb{Z}/8 \mathbb{Z} \rtimes (\mathbb{Z}/8 \mathbb{Z})^{\times}=\mathbb{Z}/8 \mathbb{Z} \rtimes (\mathbb{Z}/2 \mathbb{Z}\times \mathbb{Z}/2 \mathbb{Z})$ and $M\subset \tilde{L}$. Now consider $K$ to be any Hilbertian field. Now $(\Z/8\Z)$ is finite abelian. Also $\Z / 2\Z\times \Z/2\Z$ is regular over $K$ by Lemma \ref{hilb fie} (3) and it clearly acts on $(\Z/8\Z)$. Thus by Lemma \ref{hilb fie} (6), $G$ is realizable as a Galois group over $K$. By Lemma \ref{perm} we are done.

\medskip

\item 
Now we prove the theorem. We break the problem into cases.\smallskip 
 
 Case 1: $n=p^{\mu}$. Now $a=t\cdot p^{\mu}\mid n$. So $t=1$ and $a=p^{\mu}=n$. As $a\mid \alpha\mid  n$. So $\alpha=a$. Thus $\tau=t$. As $K$ is imperfect, by Lemma \ref{p insep} (1), we have a purely inseparable simple extension $L/K$ where $L=K(\alpha)$ for some $\alpha\in \bar{K}$ with $[L:K]=p^{\mu}=n$. Thus $t_K(L)=1$ and $a_K(L)=a$. As $n>1$, so for $M=K$ we have $\tau_K(M,L)=0=\alpha_K(M,L)$. For $M=L$ we have $\tau_K(M,L)=[L_S:K]=1$ and $\alpha_K(M,L)=[L:K]=p^{\mu}$.

 \smallskip
 
Case 2: $n>2p^{\mu}$.  Let $n'=n/p^{\mu}$. By the assumptions, we have $n'>2$ and $t\ |\ \tau\ |\ n'$. Also in the case when $t=1$ we have $\tau\neq 2$ and $n'\neq 2\tau$ and in the case when $t$ is odd we have either $\tau\neq 2t$ or $n'\neq 2\tau$. As $K$ is Hilbertian, so by part (1), we have separable extensions $S/K$ and $M'/K$ with $S\subset M'\subset \tilde{S}$ such that $[S:K]=n'$ and $t_K(S)=t$ and $\tau_K(M', S)=\tau$. As $K$ is imperfect, by Lemma \ref{p insep} (1), we have a purely inseparable simple extension $I/K$ with $[I:K]=p^{\mu}$.  Let $L=SI$ and $M=M'I$. So $L_S=S$ and $L_I=I$ and $L=L_SL_I$. By Proposition \ref{gen norm for L} (1) and Proposition \ref{GNM theorem} and Proposition \ref{GNM thm II}, $t_K(L)=t_K(L_S)$. So $t_K(L)=t$ and $a_K(L)=t_K(L)\cdot i_K(L)=t\cdot p^{\mu}=a$ and $\tau_K(M,L)=\tau_K(M',S)\cdot \tau_K(I,I)=\tau\cdot 1=\tau$ and $\alpha_K(M,L)=\alpha_K(M',S)\cdot \alpha_K(I,I)=\tau\cdot p^{\mu}=\alpha$.\smallskip

Case 3: $n=2p^{\mu}$. As $a\mid n$, so $t\cdot p^{\mu}\mid 2p^{\mu}$ and so $t \mid 2$. As we have assumed $t\neq 1$ when $n=2p^{\mu}$, we have $t=2$. So $\alpha=a=2p^{\mu}=n$ and thus $\tau=2$. By Lemma \ref{hilb fie} (3), $\mathbb{Z}/2\mathbb{Z}$ is realizable as a Galois group over $K$. Let $S/K$ be the degree $2$ Galois extension. So $t_K(S)=2$. As $K$ is imperfect, by Lemma \ref{p insep} (1), we have a purely inseparable simple extension $I/K$ with $[I:K]=p^{\mu}$.  Let $L=SI$. So $L_S=S$ and $L_I=I$ and $L=L_SL_I$ and $L/K$ is normal. Thus $[L:K]=2p^{\mu}$ and $t_K(L)=t_K(L_S)=2$. So $a_K(L)=t_K(L)\cdot i_K(L)=2\cdot p^{\mu}=n$. For $M=L$ we have $\tau_K(M,L)=2$ and $\alpha_K(M,L)=n$.

\smallskip

In Case 2 and Case 3, we observe that as $S/K$ is separable, so by primitive element theorem $S=K(\beta)$ for some $\beta\in \bar{K}$. As $I/K$ is a purely inseparable simple extension, $I=K(\gamma)$ for some $\gamma\in \bar{K}$. So $L=K(\beta, \gamma)$. By stronger version of primitive element theorem, Theorem 1 in \cite{brown2012mathematics} (or by Theorem C.1 in \cite{conradseparability}), $L=K(\alpha)$ for some $\alpha\in \bar{K}$.

\end{enumerate}
\end{proof}

\begin{proof}[Proof of Theorem \ref{gamma gen res}]

 We prove the theorem in two parts.

\begin{enumerate}
\item First we will show that: Given $(n,\gamma)$ where $n>2$ and $n \mid \gamma\mid  n!$. Assume that $n=2^m a_1a_2\cdots a_k$ with each $a_i>2$ and $m=0$ or $1$ and $\gamma=2^m b_1b_2\cdots b_k$ with each (i) $b_i=\ ^{a_i}P_j$ for $j\leq a_i$ or (ii) $b_i=a_i\phi(a_i/l)$ for $a_i$ odd and $l\mid a_i$ or (iii) $b_i=a_i\cdot r^{a-1}$ for $r>1$ and $r\mid a_i$ and $a\leq (a_i/r)$. There exist separable extensions $L/K$ and $M/K$ such that $[L:K]=n$ and compositum indicium $\gamma_K (M,L)=\gamma$. We get $M/K$ as an extension of $L/K$ contained in $\tilde{L}$.\smallskip

   Firstly we show existence of $L_i/K$ and $M_i/K$ with $M_i\subset \tilde{L_i}$ for $1\leq i\leq k$ such that $[L_i:K]=a_i$ and $\gamma_K(M_i,L_i)=b_i$.
   
 \smallskip  
   
Case (i): $b_i=\ ^{a_i}P_j$ for $j\leq a_i$. This follows from Example 2.8 in \cite{jaiswal2025rootcapacityintersectionindicium} and Corollary \ref{Sn ref} for $\mathfrak{S}_n$.

\smallskip

Case (ii): $b_i=a_i\phi(a_i/l)$ for $a_i$ odd and $l\mid a_i$. For $K=\mathbb{Q}$, this case follows from Proposition 2.22 in \cite{jaiswal2025rootcapacityintersectionindicium}. By Proposition 1 in \cite{jacobson1990galois} and Theorem A in \cite{jacobson1990galois}, the Galois group of Galois closure $\tilde{L_i}$ over $\mathbb{Q}$ is $\mathbb{Z}/a_i\mathbb{Z}\rtimes (\mathbb{Z}/a_i\mathbb{Z})^{\times}$ and $M_i\subset \tilde{L_i}$. Now consider $K$ to be any Hilbertian field. Now $(\Z/a_i\Z)$ is finite abelian. Also $(\Z / a_i\Z)^{\times}$ is regular over $K$ by Lemma \ref{hilb fie} (3) and it clearly acts on $(\Z/a_i\Z)$. Thus by Lemma \ref{hilb fie} (6), $G$ is realizable as a Galois group over $K$. By Lemma \ref{perm} we are done.

\smallskip

Case (iii) $b_i=a_i\cdot r^{a-1}$ for $r>1$ and $r\mid a_i$ and $a\leq (a_i/r)$. Consider the construction in first part of the proof of Theorem \ref{root cap gen res} for $r>1$. So we have $L_i/K$ and $M_i/K$ such that $[L_i:K]=a_i$ and $r_K(L_i)=r$ and $\rho_K(M_i,L_i)=\rho=a\cdot r$. For that $M_i/K$ we have by computation that $\gamma_K(M_i,L_i)=[M_i:K]=a_i\cdot r^{a-1}$.

\smallskip

Now by Lemma \ref{lin disj}, $L_i$'s can be chosen such that for each $1\leq t\leq k-1$ we have that $\tilde{L_1}\cdots \tilde{L_{t}}$ and $\tilde{L_{t+1}}$ are linearly disjoint over $K$. Let $L'=L_1\cdots L_k$ and $M'=M_1\cdots M_k$. Thus by Propositions \ref{GNM theorem} and \ref{GNM thm II}, it follows that $[L':K]=a_1\cdots a_k$ and $\gamma_K(M',L')=\gamma_K(M_1,L_1)\cdots\gamma_K(M_k,L_k)=b_1\cdots b_k$. Thus we are done for $m=0$. Now suppose $m=1$, By Lemma \ref{hilb fie} (3) and Lemma \ref{lin disj}, there exist a Galois extension $F/K$ of degree $2$ such that $\tilde{L'}$ and $F$ are linearly
disjoint over $K$. Let $L=L'F$ and $M=M'F$. Thus $[L:K]=2 a_1\cdots a_k = n$ and $\gamma_K(M,L)=\gamma_K(M',L')\cdot \gamma_K(F,F)=2b_1\cdots b_k=\gamma$.

\medskip

\item Now we prove the theorem. We break the problem into cases.\smallskip

Case 1: $n=p^{\mu}$. Then $\Gamma=p^{\mu}$ and $\gamma=1$. As $K$ is imperfect, by Lemma \ref{p insep} (1), we have a purely inseparable simple extension $L/K$ where $L=K(\alpha)$ for some $\alpha\in \bar{K}$ with $[L:K]=p^{\mu}=n$. As $n>1$, so for $M=K$ we have $\gamma_K(M,L)=0=\Gamma_K(M,L)$. For $M=L$ we have $\gamma_K(M,L)=[L_S:K]=1$ and $\Gamma_K(M,L)=[L:K]=p^{\mu}$. 

\smallskip

Case 2: $n>2p^{\mu}$.  Let $n'=n/p^{\mu}$. By the assumptions, we have $n'>2$. Also $n'=2^m a_1a_2\cdots a_k$ with each $a_i>2$ and $m=0$ or $1$ and $\gamma=2^m b_1b_2\cdots b_k$ with each (i) $b_i=\ ^{a_i}P_j$ for $j\leq a_i$ or (ii) $b_i=a_i\phi(a_i/l)$ for $a_i$ odd and $l\mid a_i$ or (iii) $b_i=a_i\cdot r^{a-1}$ for $r>1$ and $r\mid a_i$ and $a\leq (a_i/r)$. As $K$ is Hilbertian, so by part (1), we have separable extensions $S/K$ and $M'/K$ with $S\subset M'\subset \tilde{S}$ such that $[S:K]=n'$ and compositum indicium $\gamma_K (M',S)=\gamma$. As $K$ is imperfect, by Lemma \ref{p insep} (1), we have a purely inseparable simple extension $I/K$ with $[I:K]=p^{\mu}$.  Let $L=SI$ and $M=M'I$. So $L_S=S$ and $L_I=I$ and $L=L_SL_I$. By Proposition \ref{gen norm for L} (1) and Proposition \ref{GNM thm II}, $\gamma_K(M,L)=\gamma_K(M',S)\cdot \gamma_K(I,I)=\gamma\cdot 1=\gamma$ and $\Gamma_K(M,L)=\Gamma_K(M',S)\cdot \Gamma_K(I,I)=\gamma\cdot p^{\mu}=\Gamma$.

\smallskip

Case 3: $n=2p^{\mu}$. Then $\Gamma=2p^{\mu}$ and $\gamma=2$. By Lemma \ref{hilb fie} (3), $\mathbb{Z}/2\mathbb{Z}$ is realizable as a Galois group over $K$. Let $S/K$ be the degree $2$ Galois extension. As $K$ is imperfect, by Lemma \ref{p insep} (1), we have a purely inseparable simple extension $I/K$ with $[I:K]=p^{\mu}$.  Let $L=SI$. So $L_S=S$ and $L_I=I$ and $L=L_SL_I$ and $L/K$ is normal. Thus $[L:K]=2p^{\mu}$. For $M=L$ we have $\gamma_K(M,L)=2$ and $\Gamma_K(M,L)=n$.

\smallskip

We also have $\iota_K(M,L)=i_K(L)=\Gamma/\gamma$ in all the cases. In Case 2 and Case 3, we observe that as $S/K$ is separable, so by primitive element theorem $S=K(\beta)$ for some $\beta\in \bar{K}$. As $I/K$ is a purely inseparable simple extension, $I=K(\gamma)$ for some $\gamma\in \bar{K}$. So $L=K(\beta, \gamma)$. By stronger version of primitive element theorem, Theorem 1 in \cite{brown2012mathematics} (or by Theorem C.1 in \cite{conradseparability}), $L=K(\alpha)$ for some $\alpha\in \bar{K}$.

\end{enumerate}\end{proof}

    

\bigskip

\noindent {\it Acknowledgements:} The author would like to thank his Post Doc Mentor Prof Tony Puthenpurakal, IIT Bombay for suggesting to work on the theory of root clusters for global fields with non zero characteristic which resulted in this paper. The author is also thankful to Prof Dipendra Prasad, Prof Sudhir Ghorpade and Prof P Vanchinathan for interesting discussions related to this topic. The author would also like to acknowledge support of IIT Bombay Institute Post Doctoral Fellowship during this work.

\medskip

\bibliographystyle{plain}
 \bibliography{mybib}

\end{document}